\theoremstyle{plain}
\newtheorem{theorem}{Theorem}[section]
\newtheorem{lemma}[theorem]{Lemma}
\newtheorem{proposition}[theorem]{Proposition}
\newtheorem{corollary}[theorem]{Corollary}
\theoremstyle{definition}
\newtheorem{definition}[theorem]{Definition}
\newtheorem{example}[theorem]{Example}
\theoremstyle{remark}
\newtheorem{remark}[theorem]{Remark}
\numberwithin{equation}{section}
\newcommand{\R}{\mathbb{R}}
\newcommand{\C}{\mathbb{C}}
\newcommand{\diag}{\mathrm{diag}}
\renewcommand{\d}{\mathrm{d}}
\newcommand{\eps}{\epsilon}
\newcommand{\imag}{\mathrm{Im}\,}
\newcommand{\supp}{\mathrm{supp}}
\newcommand{\sddots}{\smash{\raisebox{-4pt}{\footnotesize{$\ddots$}}}}
\DeclareMathOperator*{\muesssup}{\mu-ess\,sup}
\numberwithin{equation}{section}
\begin{document}
  
  \title{Spectra of Jacobi operators via connection coefficient matrices}
  \author{Marcus Webb \thanks{Department of Mathematics, University of Manchester, Manchester, UK.
      (\texttt{marcus.webb@manchester.ac.uk}, \texttt{https://personalpages.manchester.ac.uk/staff/marcus.webb/})} \and Sheehan Olver\thanks{Department of Mathematics, Imperial College, London, UK.
      (\texttt{s.olver@imperial.ac.uk}, \texttt{http://wwwf.imperial.ac.uk/\~solver/})}}

  \date{\today}

 \maketitle 
\begin{abstract}
  We address the computational spectral theory of Jacobi operators that are compact perturbations of the free Jacobi operator via the asymptotic properties of a connection coefficient matrix. In particular, for Jacobi operators that are finite-rank perturbations  we show that the computation of the spectrum can be reduced to a polynomial root finding problem, from a polynomial that is derived explicitly from the entries of a connection coefficient matrix. A formula for the spectral measure of the operator is also derived explicitly from these entries. The analysis is extended to trace-class perturbations. We address issues of computability in the framework of the Solvability Complexity Index, proving that the spectrum of compact perturbations of the free Jacobi operator is computable in finite time with guaranteed error control in the Hausdorff metric on sets. 
\end{abstract}

  \noindent \textbf{Keywords} \, Jacobi operator, spectral measure, Solvability Complexity Index, orthogonal polynomials
  
  \noindent \textbf{AMS classification numbers} \, Primary 47B36, Secondary 47A10, 47A75, 65J10
    

  \section{Introduction}

  A Jacobi operator is a selfadjoint operator on $\ell^2 = \ell^2(\{0,1,2,\ldots\})$, which with respect to the standard orthonormal basis $\{e_0, e_1,e_2,
  \ldots \}$ has a tridiagonal matrix representation,
  \begin{equation}
  J = \left(\begin{array}{cccc}
  \alpha_0 & \beta_0  &          &  \\
  \beta_0  & \alpha_1 & \beta_1  &  \\
  & \beta_1  & \alpha_2 & \sddots \\
  &          &  \sddots  & \sddots
  \end{array} \right),
  \end{equation}
  where $\alpha_k$ and $\beta_k$ are real numbers with $\beta_k > 0$. In the special case where $\{\beta_k\}_{k=0}^\infty$ is a constant sequence, this operator can be interpreted as a discrete Schr\"odinger operator on the half line, and its spectral theory is arguably its most important aspect.
  
  The spectral theorem for Jacobi operators guarantees the existence of a probability measure $\mu$ supported on the spectrum $\sigma(J) \subset \R$, called the spectral measure, and a unitary operator $U:\ell^2 \to L^2_\mu(\R)$ such that
  \begin{equation}
  U J U^* [f](s) = sf(s),
  \end{equation}
  for all $f \in L^2_\mu(\R)$ \cite{deift2000orthogonal}. The coefficients $\{\alpha_k\}_{k\geq 0}$ and $\{\beta_k\}_{k\geq 0}$ are the three--term recurrence coefficients of the orthonormal polynomials $\{P_k\}_{k\geq0}$ with respect to $\mu$, which are given by $P_k = Ue_k$.
  
  Suppose we have a second Jacobi operator, $D$, for which the spectral theory is known analytically. The point of this paper is to show that for certain classes of Jacobi operators $J$ and $D$, the computation and theoretical study of the spectrum and spectral measure of $J$ can be conducted effectively using the {\it connection coefficient matrix} between $J$ and $D$, combined with known properties of $D$.
  
   \begin{definition}\label{def:connection}
    The {\it connection coefficient matrix} $C=C_{J\to D} = (c_{ij})_{i,j =
      0}^\infty$ is the upper triangular matrix representing the change of basis
    between the orthonormal polynomials $(P_k)_{k=0}^\infty$ of $J$, and the orthonormal polynomials $(Q_k)_{k=0}^\infty$ of $D$, whose entries satisfy,
    \begin{equation}
    P_k(s) = c_{0k}Q_0(s) + c_{1k}Q_1(s) + \cdots + c_{kk} Q_k(s).
    \end{equation}
  \end{definition} 
 
We pay particular attention to the case where $D$ is the so-called \emph{free Jacobi operator},
 \begin{equation}
 \Delta = \left(\begin{array}{cccc}
 0    & \frac12 &         &  \\
 \frac12 &    0    & \frac12 &  \\
 & \frac12 &    0    & \sddots \\
 &         & \sddots  & \sddots
 \end{array} \right),
 \end{equation}
 and $J$ is a Jacobi operator of the form $J = \Delta + K$, where $K$ is compact. In the discrete Schr\"odinger operator setting, $K$ is a diagonal potential function which decays to zero at infinity \cite{simon1979trace}. Another reason this class of operators is well studied is because the Jacobi operators for the classical Jacobi polynomials are of this form \cite{olverDLMF}. Since scaling and shifting by the identity operator affects the spectrum in a trivial way, results about these Jacobi operators $J$ apply to all Jacobi operators which are compact perturbations of a Toeplitz operator  ({\it Toeplitz-plus-compact}).
 
 The spectral theory of Toeplitz operators such as $\Delta$ is known explicitly \cite{bottcher2013analysis}. The spectral measure of $\Delta$ is the semi-circle $\d\mu_\Delta(s) = \frac{2}{\pi}(1-s^2)^{\frac12}$ (restricted to $[-1,1]$), and the orthonormal polynomials are the Chebyshev polynomials of the second kind, $U_k(s)$. We prove the following new results.
 
 If $J$ is a Jacobi operator which is a  \emph{finite rank perturbation} of $\Delta$ ({\it Toeplitz-plus-finite-rank}), i.e. there exists $n$ such that
 \begin{equation}
 \alpha_k = 0, \quad \beta_{k-1} = \frac12 \text{ for all } k \geq n,
 \end{equation}
 \begin{itemize}
   \item Theorem \ref{thm:connectioncoeffs}: The connection coefficient matrix $C_{J\to\Delta}$ can be decomposed into $C_{\rm Toe} + C_{\rm fin}$ where $C_{\rm Toe}$ is Toeplitz, upper triangular and has bandwidth $2n-1$, and the entries of $C_{\rm fin}$ are zero outside the $n-1 \times 2n-1$ principal submatrix.
   \item Theorem \ref{thm:joukowskimeasure}: Let $c$ be the Toeplitz symbol of $C_{\rm Toe}$. It is a degree $2n-1$ polynomial with $r \leq n$ roots inside the complex open unit disc $\mathbb{D}$, all of which are real and simple. The spectrum of $J$ is
   \begin{equation}\label{eqn:introsigmaJ}
   \sigma(J) = [-1,1] \cup \left\{\lambda(z_k) : c(z_k) = 0, z_k \in \mathbb{D}\right\},
   \end{equation}
   where $\lambda(z) := \frac12(z+z^{-1}) : \mathbb{D} \to \C\setminus[-1,1]$ is the Joukowski transformation. The spectral measure of $J$ is given by the formula
   \begin{equation}\label{eqn:intromu}
   \d\mu(s) = \frac{\d\mu_\Delta(s)}{|c(e^{i\theta})|^2} + \sum_{k=1}^r 
   \frac{(z_k-z_k^{-1})^2}{z_kc'(z_k)c(z_k^{-1})}\d\delta_{\lambda(z_k)}(s),
   \end{equation}
    where $\cos(\theta) = s$. The denominator in the first term can be expressed using the polynomial, $|c(e^{i\theta})|^2= \sum_{k=0}^{2n-1} \langle C^Te_k ,C^T e_0 \rangle U_k(s)$.
 \end{itemize}
 
 \begin{figure}[h!]
   \begin{center}
     \includegraphics[width=.3\textwidth]{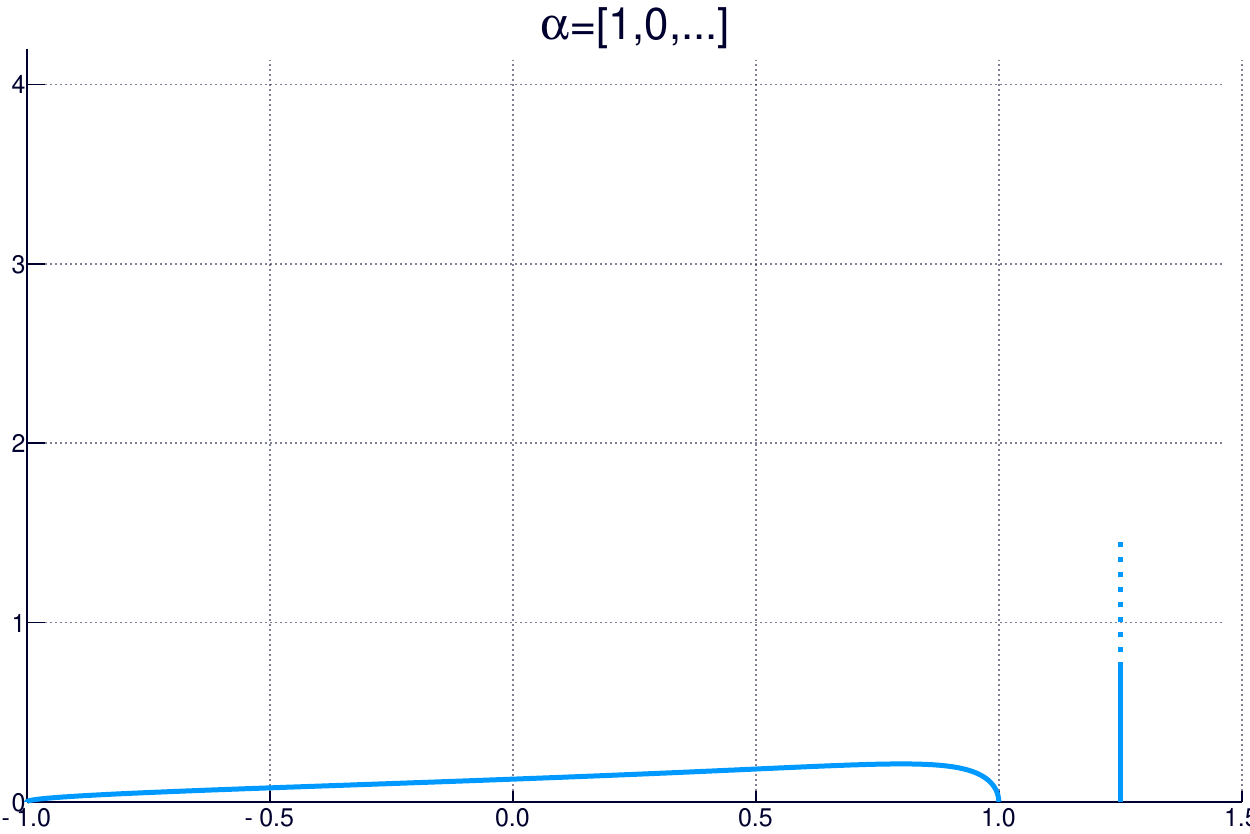}\hskip8pt\includegraphics[width=.3\textwidth]{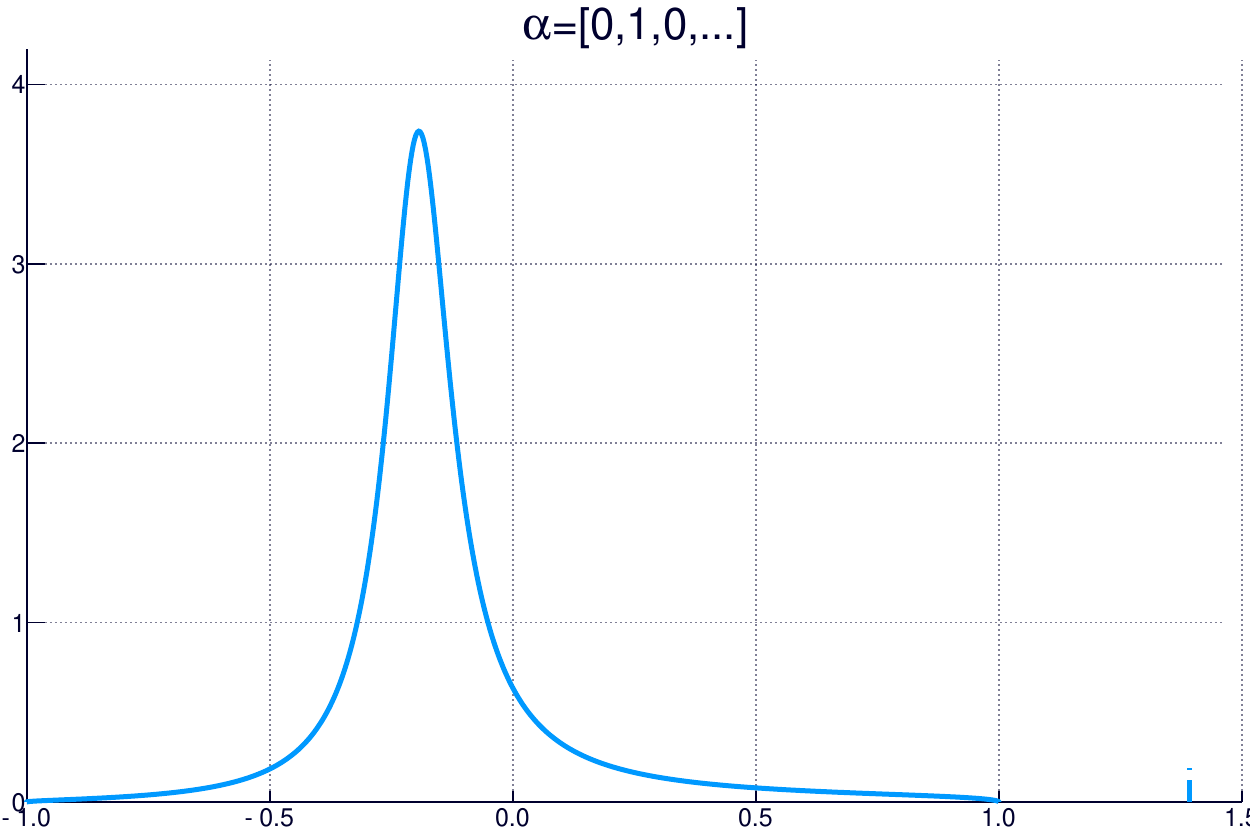}\hskip8pt\includegraphics[width=.3\textwidth]{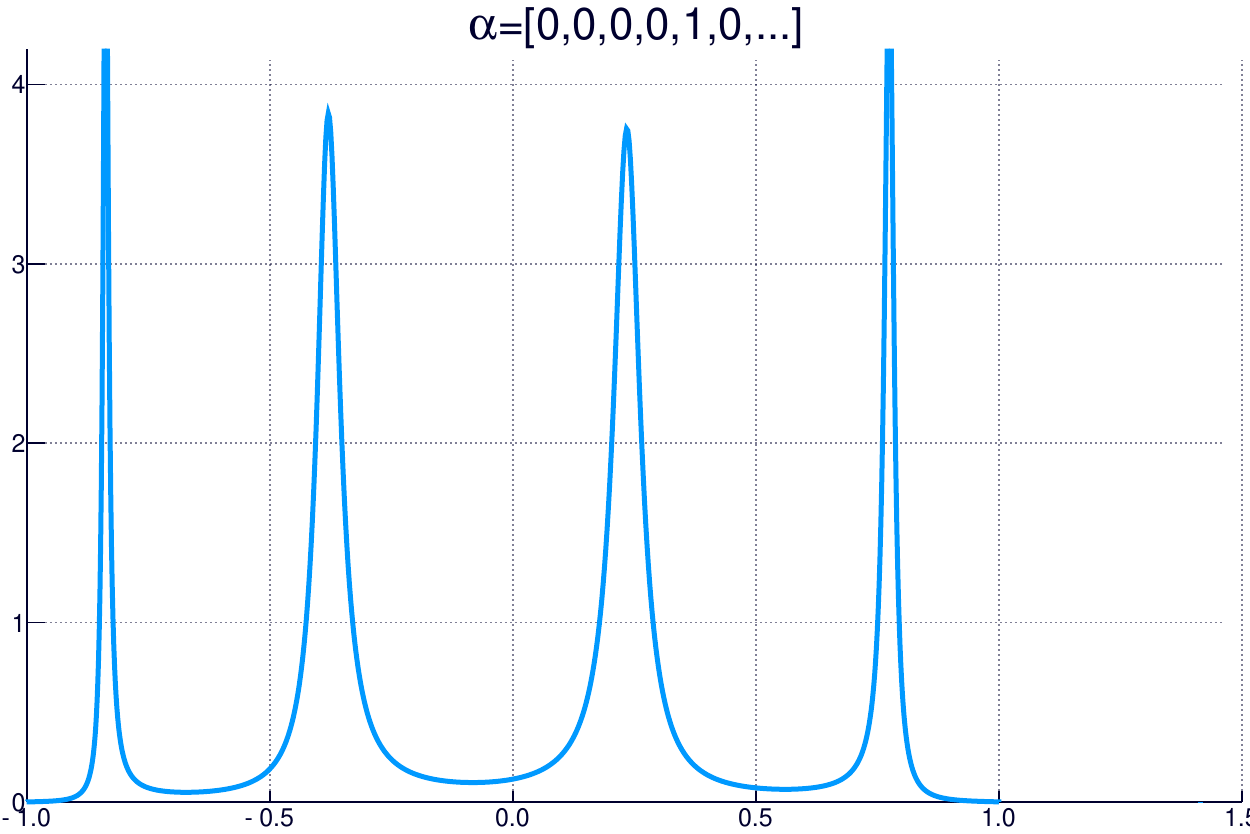}
     \caption{These are the spectral measures of three different Jacobi operators; each differs from $\Delta$ in only one entry. The left plot is of the spectral measure of the Jacobi operator which is $\Delta$ except the $(0,0)$ entry is 1, the middle plot is that except the $(1,1)$ entry is 1, and the right plot is that except the $(4,4)$ entry is 1. This can be interpreted as a discrete Schr\"odinger operator with a single Dirac potential at different points along $[0,\infty)$. The continuous parts of the measures are given exactly by the computable formula in equation \eqref{eqn:intromu}, and each has a single Dirac delta corresponding to discrete spectrum (the weight of the delta gets progressively smaller in each plot), the location of which can be computed with guaranteed error using interval arithmetic (see Appendix \ref{sec:numericalexperiments})}\label{fig:delta}.
   \end{center}
 \end{figure}

For $R  > 0 $, define the Banach space $\ell^1_R$ to be scalar sequences such that $\sum_{k=0}^\infty |v_k| R^k < \infty$. If $J$ is a \emph{trace class perturbation} of $\Delta$ ({\it Toeplitz-plus-trace-class}), i.e.,
 \begin{equation}
 \sum_{k=0}^\infty \left|\alpha_k\right| + \left|\beta_k-\frac12\right| < \infty,
 \end{equation}
 \begin{itemize}
   \item Theorem \ref{thm:traceclassC}: $C = C_{J\to\Delta}$ is bounded as an operator from $\ell^1_{R}$ into itself, for all $R > 1$. Further, we have the decomposition $C = C_{\rm Toe} + C_K$ where $C_{\rm Toe}$ is upper triangular Toeplitz and $C_K$ is compact as an operator from $\ell^1_{R}$ into itself for all $R > 1$.
   \item Theorem \ref{thm:symbolsconverge} and Theorem \ref{thm:traceclasszresolvent} : The Toeplitz symbol of $C_{\rm Toe}$, $c$,  is analytic in the complex unit disc. The discrete eigenvalues, as in the Toeplitz-plus-finite-rank case are of the form $\frac12(z_k+z_k^{-1})$ where $z_k$ are the roots of $c$ in the open unit disc.
 \end{itemize}
 
 The relevance of the space $\ell^1_R$ here is that for an upper triangular Toeplitz matrix which is bounded as an operator from $\ell^1_R$ to itself for all $R>1$, the symbol of that operator is analytic in the open unit disc.
 
  Following the pioneering work of Ben-Artzi--Colbrook--Hansen--Nevanlinna--Seidel on the Solvability Complexity Index \cite{ben2015can,ben2015new,hansen2011solvability,colbrook2019foundations,colbrook2019foundations2}, we prove two theorems about computability. We assume real number arithmetic, and the results do not necessarily apply to algorithms using floating point arithmetic.
 \begin{itemize}
   \item Theorem \ref{thm:computToeplusfinite}: If $J$ is a Toeplitz-plus-finite-rank Jacobi operator, then in a finite number of operations, the absolutely continuous part of the spectral measure is computable exactly, and the locations and weights of the discrete part of the spectral measure are computable to any desired accuracy. If the rank of the perturbation is known {\it a priori}  then the algorithm can be designed to terminate with guaranteed error control.
   \item Theorem \ref{thm:computeTpluscomp}: If $J=\Delta+K$ is a Toeplitz-plus-compact Jacobi operator, then in a finite number of operations, the spectrum of $J$ is computable to any desired accuracy in the Hausdorff metric on subsets of $\R$. If the quantity $\sup_{k \geq m} |\alpha_k| + \sup_{k\geq m} |\beta_k-\frac12|$ can be estimated for all $m$, then the algorithm can be designed to terminate with guaranteed error control.
 \end{itemize} 
 
 The present authors consider these results to be the beginning of a long term project on the computation of spectra of structured operators. Directions for future research are outlined in Section \ref{sec:conclusions}.
 
  \subsection{Relation to existing work on connection coefficients}
 
 Recall that $J$ and $D$ are Jacobi operators with orthonormal polynomials $\{P_k\}_{k=0}^\infty$ and $\{Q_k\}_{k=0}^\infty$ respectively, and spectral measures $\mu$ and $\nu$ respectively. Uvarov gave expressions for the relation between $\{P_k\}_{k=0}^\infty$ and $\{Q_k\}_{k=0}^\infty$ in the case where the Radon--Nikodym derivative $\frac{\d\nu}{\d\mu}$ is a rational function, utilising connection coefficients \cite{uvarov1959relation,uvarov1969connection}. Decades later, Kautsky and Golub related properties of $J$, $D$ and the connection coefficients matrix $C = C_{J\to D}$ with the Radon-Nikodym derivative $\frac{\d\nu}{\d\mu}$, with applications to practical computation of Gauss quadrature rules in mind \cite{kautsky1983calculation}. In the setting of Gauss quadrature rules, the connection coefficients are usually known as modified or mixed moments, (see \cite{gautschi1970construction,sack1971algorithm,wheeler1974modified}). The following results, which we prove in Section 3 for completeness, are straightforward generalisations of what can be found in the papers cited in this paragraph from the 1960's, 1970's and 1980's.
 
The Jacobi operators and the connection coefficients satisfy
 \begin{equation}\label{eqn:JCDC}
 CJ = DC,
 \end{equation} 
 which makes sense as operators acting on finitely supported sequences (this is made clear and proved in Theorem \ref{thm:Ccommute}). A finite-dimensional version of this result with a rank-one remainder term first appears in \cite[Lem.~1]{kautsky1983calculation}. The connection coefficients matrix also determines the existence and certain properties of the Radon--Nikodym derivative $\frac{\d\nu}{\d\mu}$:
 \begin{itemize}
   \item Proposition \ref{prop:Cmeasuremultplier}: $\frac{\d\nu}{\d\mu} \in L^2_\mu(\R)$ if and only if the first row of $C$ is an $\ell^2$ sequence, in which case
   \begin{equation}\label{ref:modifiedmoments}
   \frac{\mathrm{d}\nu}{\mathrm{d}\mu} = \sum_{k=0}^\infty c_{0,k} P_k.
   \end{equation}
     \item Proposition \ref{prop:Cbounded}: If $\frac{\mathrm{d}\nu}{\mathrm{d}\mu} \in L^\infty_\mu(\R)$ then $C$ is a bounded operator on $\ell^2$ and
     \begin{equation}
     \|C\|_2^2 = \muesssup_{s \in \sigma(J)} \left|\frac{\mathrm{d}\nu}{\mathrm{d}\mu}(s)\right|.
     \end{equation}
     \item Corollary \ref{cor:Cbddinvert}: If both $\frac{\mathrm{d}\nu}{\mathrm{d}\mu} \in L^\infty_\mu(\R)$ and $\frac{\mathrm{d}\mu}{\mathrm{d}\nu} \in L^\infty_\nu(\R)$ then $C$ is bounded and invertible on $\ell^2$.
  \end{itemize}
 
\subsection{Relation to existing work on spectra of Jacobi operators}

There has been extensive work in the last 20 years or so on the spectral theory of Jacobi operators which are compact perturbations of the free Jacobi operator, particularly with applications to quantum theory and random matrix theory in mind \cite{killip2003sum,damanik2006jost1,damanik2006jost2,simon2010szegHo,dubbs2015infinite,gamboa2016sum}. The literature focuses on so-called sum rules, which are certain remarkable relationships between the spectral measures and the entries of Jacobi operators, and builds upon some 20th century developments in the Szeg\H{o} theory of orthogonal polynomials \cite{szegHo1939orthogonal,geronimus1977orthogonal,geronimo1980scattering,dombrowski1986orthogonal,van1989asymptotics,van1990asymptotics,van1991orthogonal,nevai1992compact,van1994chebyshev}.

For a Jacobi operator $J$, which is a compact perturbation of the free Jacobi operator $\Delta$, with orthogonal polynomials $\{P_k\}_{k=0}^\infty$ and spectral measure $\d\mu$, the central analytical object of interest is the function
\begin{equation}\label{eqn:defineJost}
u_0(z) = \lim_{k\to\infty} (1-z^2) z^k P_k\left(\frac12\left(z+z^{-1}\right) \right), \qquad z \in \mathbb{D}.
\end{equation}
Conditions on the perturbation $J-\Delta$ yield a nicer $u_0$. For instance, when $J-\Delta$ is trace class, $u_0$ is analytic and its roots in the unit disc are in one-to-one correspondence with the eigenvalues of $J$, and when $J - \Delta$ is of finite rank, $u_0$ is a polynomial \cite{killip2003sum}. 

The function $u_0$ defined as in \eqref{eqn:defineJost} can be arrived at through several different definitions: the Jost functions, the perturbation determinant, the Szeg\H{o} function and the Geronimo--Case functions (see the introduction of \cite{killip2003sum}). One contribution of the present paper is that it provides a new interpretation of $u_0$: the Toeplitz symbol $c(z)$. Furthermore, this new interpretation also has a matrix associated to it, the connection coefficients matrix $C$, which is defined for \emph{any} two Jacobi operators, not only perturbations of the free Jacobi operator. This opens the door to generalisation, something the present authors intend to pursue in the future.

Very recently, Colbrook, Bogdan, and Hansen have introduced techniques for computing spectra with error control \cite{colbrook2019compute} which work on quite broad classes of operators.  Colbrook \cite{colbrook2019computing} extended these techniques to computing spectral measures of operators, including Jacobi operators as a special case. While this work applies to broader classes of operators than ours, it gives less accurate results for the class of Jacobi operators we consider, and in particular does not produce explicit formulae such as Theorem~\ref{thm:joukowskimeasure} or as precise results as Theorem~\ref{thm:computeTpluscomp}. In particular, our assumptions on the structure of the operator  are sufficient to produce better results in terms the SCI  hierarchy: we can compute the discrete  spectra and spectral measures of trace-class perturbations or compact perturbations with known decay  with error control ($\Delta_1$ in the notation of \cite{colbrook2019computing}),  the spectral measure  of Jacobi operators that are finite rank perturbations of $\Delta$ in finite operations ($\Delta_0$), and the absolutely continuous spectrum is always $[-1,1]$.   This  compares favourably to \cite{colbrook2019computing} which proves $\Delta_2$ classification results (one limit with no error control) for the spectral measures, projections, functional calculus and Radon-Nikodym derivatives of a larger class of operators. Furthermore, the generality of \cite{colbrook2019computing} means that the location of the absolutely continuous and pure point spectra are no longer known, as is the case for our class of operators. This causes the computation of spectral decompositions to become very difficult and higher up in the SCI hierarchy.

\subsection{Outline of the paper}

\begin{itemize}
\item In Section \ref{sec:spectraltheory} we outline basic, established results about spectral theory of Jacobi operators.
\item In Section \ref{sec:connectioncoeffs} we discuss the basic properties of the connection coefficients matrix $C_{J \to D}$ for general Jacobi operators $J$ and $D$, and how they relate to the spectra of $J$ and $D$.
\item In Section \ref{sec:Toeplitzplusfinite} we show how connection coefficient matrices apply to Toeplitz-plus-finite-rank Jacobi operators, and in Section \ref{sec:toeplitzplustraceclass} we extend these results to the Toeplitz-plus-trace-class case.
\item Section \ref{sec:computability} is devoted to issues of computability.
\item Appendix \ref{sec:numericalexperiments} gives an array of numerical examples produced using an open source Julia package SpectralMeasures.jl \cite{SpectralMeasuresjl} that  implements the ideas of this paper. It makes extensive use of the  open source Julia package  {ApproxFun.jl} \cite{olverapproxfun,olver2014practical}, in particular the features for defining and manipulating functions and infinite-dimensional operators.  
\end{itemize}

 \section*{Acknowledgements}
The authors thank the following people for helpful discussions on this work as it developed: Anders Hansen, Arieh Iserles, Andrew Swan, Peter Clarkson, Alfredo Dea\~no, Alex Townsend, Walter Van Assche, Bernhard Beckermann, Matthew Colbrook, Giles Shaw and David Sanders, amongst others. We are also grateful to the anonymous referees for their comments which greatly improved the quality of the paper. The first author was supported by the London Mathematical Society Cecil King Travel Scholarship to visit the University of Sydney in 2016, the UK Engineering and Physical Sciences Research Council (EPSRC) grant EP/H023348/1 for the University of Cambridge Centre for Doctoral Training, the Cambridge Centre for Analysis, by the FWO research project G.A004.14 at KU Leuven, Belgium, and by an FWO Postdoctoral Fellowship. The second author was supported by a Leverhulme Trust Research Project Grant. Portions of this manuscript were submitted in an earlier form as part of the PhD thesis of first author at the University of Cambridge in 2017.
 

\section{Spectral theory of Jacobi operators}\label{sec:spectraltheory}
  
  In this section we present well known results about the spectra of Jacobi operators. This gives a self-contained account of what is required to prove the results later in the paper, and sets the notation.
%
%
  
  \begin{definition}\label{def:resolvent}
    Define the principal resolvent function for $\lambda \in \C \setminus \sigma(J)$,
    \begin{equation}
      G(\lambda) = \langle e_0, (J-\lambda)^{-1} e_0\rangle
    \end{equation}
    where
    $$
    \langle x,y \rangle := \sum_{k=0}^\infty \bar x_k  y_k.
    $$
  \end{definition}
  \begin{theorem}[\cite{deift2000orthogonal,teschl2000jacobi,simon1979trace}]\label{thm:G}
  Let $J$ be a bounded Jacobi operator.
    \begin{enumerate}[(i)]
      \item There exists a unique compactly supported probability measure $\mu$ 
on $\R$, called the spectral measure of $J$, such that
      \begin{equation}
	G(\lambda) = \int (s-\lambda)^{-1} \,\d\mu(s).
      \end{equation}
      \item For any $s_1 < s_2$ in $\R$,
      \begin{equation}
	\frac12 \mu(\{s_1\}) + \mu((s_1,s_2))+\frac12\mu(\{s_2\}) = \lim_{\eps \searrow 0} 
\frac{1}{\pi} \int_{s_1}^{s_2} \imag G(s+i\eps) \, \d s.
      \end{equation}
      \item The spectrum of $J$ is
      \begin{equation}
       \sigma(J) = \mathrm{supp}(\mu) = \overline{\{ s \in \R : \liminf_{\eps 
\searrow 0} \imag G(s+i\eps) > 0  \}}.
      \end{equation}
       
       The point spectrum $\sigma_p(J)$ of $J$ is the set of points $s \in \R$ such that the limit
        \begin{equation}\label{eqn:pointspec}
         \mu(\{s\}) = \lim_{\eps \searrow 0} \frac{\eps}{i} G(s+i\eps)
        \end{equation}
        exists and is positive.
       
       The continuous spectrum of $J$ is the set of points $s \in \R$ such that $\mu(\{s\}) = 0$ but
       \begin{equation}
        \liminf_{\eps \searrow 0} \imag G(s + i\eps) > 0.
       \end{equation}
    \end{enumerate}
  \end{theorem}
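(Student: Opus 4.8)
The plan is to reduce everything to the spectral theorem for bounded self-adjoint operators together with the boundary behaviour of Stieltjes (Herglotz) transforms. For part (i), I would apply the spectral theorem to obtain the projection-valued measure $E$ associated with $J$, supported on the compact set $\sigma(J)$, and define $\mu(B) = \langle e_0, E(B) e_0\rangle$ for Borel sets $B$. Positivity of the projections $E(B)$ makes $\mu$ a positive measure, $\mu(\R) = \|e_0\|^2 = 1$ makes it a probability measure, and boundedness of $J$ confines its support to the compact set $\sigma(J)$. The functional calculus identity $(J-\lambda)^{-1} = \int (s-\lambda)^{-1}\,dE(s)$ then yields $G(\lambda) = \int(s-\lambda)^{-1}\,d\mu(s)$ upon pairing with $e_0$. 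Uniqueness I would defer to the inversion formula established in part (ii), which recovers $\mu$ on all intervals with non-atomic endpoints and hence determines $\mu$ entirely.

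For part (ii) the computation is explicit. Writing $\lambda = s + i\eps$ gives $\imag G(s+i\eps) = \int \eps\big((t-s)^2 + \eps^2\big)^{-1}\,d\mu(t)$, a Cauchy--Poisson integral of $\mu$. I would then apply Fubini to $\frac1\pi\int_{s_1}^{s_2}\imag G(s+i\eps)\,ds$, evaluate the resulting inner integral in $s$ as the difference of arctangents $\frac1\pi\big(\arctan\frac{s_2-t}{\eps}-\arctan\frac{s_1-t}{\eps}\big)$, and pass to the limit $\eps\searrow0$. This kernel is bounded by $1$ uniformly in $\eps$ and converges pointwise to $1$ for $t\in(s_1,s_2)$, to $\tfrac12$ at $t=s_1,s_2$, and to $0$ outside $[s_1,s_2]$; since $\mu$ is finite, dominated convergence delivers exactly $\mu((s_1,s_2)) + \tfrac12\mu(\{s_1\}) + \tfrac12\mu(\{s_2\})$.

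For part (iii) I would first record that $e_0$ is cyclic for $J$: because the off-diagonal entries $\beta_k$ are nonzero, $\mathrm{span}\{e_0, Je_0, \ldots, J^ne_0\} = \mathrm{span}\{e_0,\ldots,e_n\}$, so $J$ is unitarily equivalent to multiplication by $s$ on $L^2_\mu(\R)$ and therefore $\sigma(J) = \supp\mu$. The atomic formula $\mu(\{s\}) = \lim_{\eps\searrow0}\frac\eps i G(s+i\eps)$ follows from a dominated-convergence computation entirely parallel to part (ii): splitting $\frac\eps i G(s+i\eps)$ into real and imaginary parts, the real-part integrand $\eps^2((t-s)^2+\eps^2)^{-1}$ tends to the indicator of $\{t=s\}$ while the imaginary-part integrand is bounded by $\tfrac12$ and tends to $0$, so the limit is $\int \mathbf{1}_{\{t=s\}}\,d\mu(t) = \mu(\{s\})$. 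The remaining descriptions of $\supp\mu$ and of the continuous spectrum in terms of $\liminf_{\eps\searrow0}\imag G$ I would obtain from the classical boundary theory of Herglotz functions.

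The main obstacle is precisely this last point, the equality $\supp\mu = \overline{\{s : \liminf_{\eps\searrow0}\imag G(s+i\eps) > 0\}}$. One inclusion is elementary: if $s$ lies outside $\supp\mu$ then $\mu$ vanishes on a neighbourhood and a crude bound gives $\imag G(s+i\eps) \leq \eps\,\delta^{-2}\to0$, so the positivity set sits inside $\supp\mu$ and so does its closure. The reverse inclusion requires that the positivity set be dense in $\supp\mu$, which I would extract from the de la Vall\'ee Poussin decomposition $\mu = \mu_{ac} + \mu_{sc} + \mu_{pp}$ together with Fatou's theorem on nontangential limits: atoms force $\imag G\to+\infty$, the part $\mu_{ac}$ contributes its density $\tfrac1\pi\lim\imag G$ Lebesgue-almost everywhere, and $\mu_{sc}$-almost every point has $\imag G\to+\infty$; contrapositively, a neighbourhood on which $\liminf\imag G\equiv0$ can carry no mass from any of the three parts. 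This analytic input is the only step not reducible to a direct computation, and it is where I would lean on the cited references.
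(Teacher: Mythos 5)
The paper does not prove this theorem: it is quoted verbatim from the cited references (Deift, Teschl, Simon) as standard background, so there is no in-paper argument to compare against. Your sketch is a correct and essentially complete reconstruction of the standard proof found in those references --- spectral theorem plus cyclicity of $e_0$ for (i) and the identification $\sigma(J)=\supp\mu$, the Poisson-kernel/dominated-convergence computation for (ii) and for the atomic formula, and Herglotz boundary theory for the $\liminf\imag G$ characterisation --- and I see no gap in it.
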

   
   
The measure $\mu$ is the spectral measure that appears in the spectral theorem for self-adjoint operators on Hilbert space \cite{dunford1971linear}, as demonstrated by the following theorem \cite{deift2000orthogonal,teschl2000jacobi,simon1979trace}.
 
  \begin{definition}\label{orthpolys} 
    The orthonormal polynomials for $J$ are $P_0, P_1, P_2, \ldots$ defined by 
the three term recurrence
    \begin{align}
     sP_k(s) &= \beta_{k-1} P_{k-1}(s) + \alpha_k P_k(s) + \beta_k P_{k+1}(s), 
\\
     P_{-1}(s) &= 0, \quad P_0(s) = 1.
    \end{align}
  \end{definition}

  \begin{theorem}[\cite{deift2000orthogonal}]\label{thm:maindeift}
  Let $J$ be a bounded Jacobi operator and let $P_0, P_1, P_2, \ldots$ be as defined in Definition \ref{orthpolys}. Then 
we have the following.
    \begin{enumerate}[(i)]
     \item The polynomials are such that $P_k(J)e_0 = e_k$.
     \item The polynomials are orthonormal with respect to the spectral measure 
of $J$,
      \begin{equation}
	\int P_j(s)P_k(s)\,\d\mu(s) = \delta_{jk}.
      \end{equation}
      
      \item Define the unitary operator $U : \ell^2 \to L^2_\mu(\R)$ such that 
$Ue_k = P_k$. Then for all $f \in L^2_\mu(\R)$,
      \begin{equation}
	UJU^*[f](s) = sf(s).
      \end{equation}
      \item For all polynomials $f$, the entries of $f(J)$ are equal to,
     \begin{equation}
      \langle e_i, f(J) e_j\rangle = \int f(s) P_i(s)P_j(s) \,\d\mu(s).
     \end{equation}
     For $f \in L^1_\mu(\R)$, this formula defines the matrix $f(J)$.
    \end{enumerate}
  \end{theorem}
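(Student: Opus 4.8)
The plan is to establish the four parts in sequence, with part (i) supplying the purely algebraic backbone and Theorem \ref{thm:G} providing the analytic bridge between the functional calculus of $J$ and integration against $\mu$. First I would prove (i) by induction on $k$. Since $P_0 \equiv 1$ we have $P_0(J)e_0 = e_0$, and the $k=0$ recurrence $sP_0 = \alpha_0 P_0 + \beta_0 P_1$ together with the tridiagonal action $Je_0 = \alpha_0 e_0 + \beta_0 e_1$ yields $P_1(J)e_0 = e_1$. For the inductive step I would read the three-term recurrence as an operator identity applied to $e_0$, namely $J P_k(J)e_0 = \beta_{k-1}P_{k-1}(J)e_0 + \alpha_k P_k(J)e_0 + \beta_k P_{k+1}(J)e_0$, substitute the inductive hypotheses $P_{k-1}(J)e_0 = e_{k-1}$ and $P_k(J)e_0 = e_k$, and compare with $Je_k = \beta_{k-1}e_{k-1} + \alpha_k e_k + \beta_k e_{k+1}$; since $\beta_k > 0$ this forces $P_{k+1}(J)e_0 = e_{k+1}$.

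The key analytic ingredient, established next, is the moment identity $\langle e_0, J^n e_0\rangle = \int s^n \,\d\mu(s)$ for all $n \geq 0$. This follows by expanding the resolvent as a Neumann series $(J-\lambda)^{-1} = -\sum_{n\geq 0}\lambda^{-n-1}J^n$ for large $|\lambda|$, comparing the resulting Laurent expansion of $G(\lambda)$ with that of its Stieltjes representation $\int(s-\lambda)^{-1}\,\d\mu(s)$ from Theorem \ref{thm:G}(i), and matching coefficients. By linearity this extends to $\langle e_0, r(J)e_0\rangle = \int r\,\d\mu$ for every polynomial $r$. Part (ii) then drops out: using (i), self-adjointness of $J$, and the fact that the $P_k$ have real coefficients (so $P_j(J)^* = P_j(J)$), we obtain $\delta_{jk} = \langle e_j, e_k\rangle = \langle e_0, (P_jP_k)(J)e_0\rangle = \int P_j P_k \,\d\mu$.

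For (iii), orthonormality from (ii) makes $U\colon e_k \mapsto P_k$ an isometry onto its range; to upgrade to unitarity I would invoke completeness of the polynomials in $L^2(\mu)$, which holds because $\mu$ is compactly supported (Weierstrass approximation makes polynomials uniformly, hence $L^2(\mu)$-, dense in $C(\supp\mu)$, itself dense in $L^2(\mu)$) and because $\deg P_k = k$ guarantees the $P_k$ span all polynomials. The intertwining relation is then checked on the basis: $UJe_k = \beta_{k-1}P_{k-1} + \alpha_k P_k + \beta_k P_{k+1} = sP_k = M_s U e_k$ by the recurrence, so $UJU^* = M_s$. Finally (iv) follows by defining $f(J) := U^* M_f U$ and computing $\langle e_i, f(J)e_j\rangle = \langle P_i, fP_j\rangle_{L^2(\mu)} = \int f P_i P_j\,\d\mu$, again using that $P_i$ is real; since $P_i, P_j$ are bounded on the compact support, $fP_iP_j \in L^1(\mu)$ for any $f \in L^1(\mu)$, which is precisely what lets $f(J)$ be interpreted as a map $\ell_{\mathcal{F}} \to \ell_{\mathcal{F}}^\star$ with the stated entries.

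The main obstacle is the moment identity together with the density of polynomials in $L^2(\mu)$: these are the only points where genuine analysis enters, rather than bookkeeping driven by the three-term recurrence, and they are exactly what tie the abstract spectral measure furnished by Theorem \ref{thm:G} to the concrete orthogonality and functional-calculus statements. Once the moment identity is in hand, parts (i)--(iv) assemble essentially mechanically.
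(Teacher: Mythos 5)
Your proposal is correct, but note that the paper offers no proof of this theorem at all --- it is quoted as a known result from the cited reference \cite{deift2000orthogonal}, so there is nothing internal to compare against. Your argument (induction via the three-term recurrence for (i), the moment identity obtained by matching the Neumann series of the resolvent against the Laurent expansion of the Stieltjes transform from Theorem \ref{thm:G}(i), Weierstrass density for unitarity of $U$ in (iii), and the observation that for $f \in L^1(\mu)$ the entries $\int f P_i P_j\,\d\mu$ are finite because $P_iP_j$ is bounded on the compact support, so that $f(J)$ is well defined only as a map $\ell_{\mathcal{F}} \to \ell_{\mathcal{F}}^\star$) is exactly the standard derivation one finds in the literature, and all the steps hold.
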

  
  The following definition is standard in orthogonal polynomial theory.
  
  \begin{definition}[\cite{gautschi2004orthogonal,van1991orthogonal}]\label{associatedpolys}
   The first associated polynomials for $J$ are 
$P^{\mu}_0,P^{\mu}_1,P^{\mu}_2,\ldots$ defined by the three term recurrence
   \begin{align}
     \lambda P^{\mu}_k(\lambda) &= \beta_{k-1} P^{\mu}_{k-1}(\lambda) + 
\alpha_k P^{\mu}_k(\lambda) + \beta_k P^{\mu}_{k+1}(\lambda), \\
     P^{\mu}_0(\lambda) &= 0, \quad P^{\mu}_1(\lambda) = \beta_0^{-1}.
   \end{align}
  \end{definition}
 
 The relevance of the first associated polynomials for this work is the following integral formula.
 
 \begin{lemma}(\cite[pp.~17,18]{gautschi2004orthogonal})\label{lem:associatedpolys}
   The first associated polynomials are given by the integral formula
   \begin{equation}
   P^{\mu}_k(\lambda) = \int \frac{P_k(s) - P_k(\lambda)}{s-\lambda} 
   \,\d\mu(s), \quad \lambda \in \C\setminus\sigma(J).
   \end{equation}
 \end{lemma}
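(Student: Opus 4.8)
The plan is to show that the right-hand side, regarded as a sequence of functions of $\lambda$, satisfies exactly the defining recurrence and initial conditions of the first associated polynomials in Definition \ref{associatedpolys}; since a three-term recurrence with $\beta_k>0$ has a unique solution once two consecutive values are fixed, this forces equality. First I would set $Q_k(\lambda) := \int \frac{P_k(s)-P_k(\lambda)}{s-\lambda}\,\d\mu(s)$ and observe that, for fixed $\lambda$, the integrand is genuinely a polynomial in $s$ of degree $k-1$ (because $s-\lambda$ divides $P_k(s)-P_k(\lambda)$), so each $Q_k(\lambda)$ is a finite, well-defined quantity and no convergence subtleties arise; in fact each $Q_k$ is a polynomial in $\lambda$.

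Next I would verify the two base cases. Since $P_0 \equiv 1$, the numerator defining $Q_0$ vanishes identically, giving $Q_0(\lambda)=0=P^\mu_0(\lambda)$. From the $k=0$ instance of the recurrence in Definition \ref{orthpolys} one has $P_1(s)=\beta_0^{-1}(s-\alpha_0)$, so $\frac{P_1(s)-P_1(\lambda)}{s-\lambda}=\beta_0^{-1}$ is constant in $s$, and because $\mu$ is a probability measure, $Q_1(\lambda)=\beta_0^{-1}=P^\mu_1(\lambda)$.

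For the recurrence I would write the three-term recurrence for $P_k$ at the points $s$ and $\lambda$ and subtract them, obtaining
\begin{equation}
sP_k(s)-\lambda P_k(\lambda) = \beta_{k-1}\big(P_{k-1}(s)-P_{k-1}(\lambda)\big) + \alpha_k\big(P_k(s)-P_k(\lambda)\big) + \beta_k\big(P_{k+1}(s)-P_{k+1}(\lambda)\big).
\end{equation}
Dividing through by $s-\lambda$ turns the right-hand side into $\beta_{k-1}$, $\alpha_k$, $\beta_k$ times the integrands defining $Q_{k-1}, Q_k, Q_{k+1}$. The key manipulation is on the left: using the decomposition $sP_k(s)-\lambda P_k(\lambda)=\lambda\big(P_k(s)-P_k(\lambda)\big)+(s-\lambda)P_k(s)$, division by $s-\lambda$ produces $\lambda\,\frac{P_k(s)-P_k(\lambda)}{s-\lambda}+P_k(s)$. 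Integrating against $\d\mu$ and invoking orthonormality in the form $\int P_k\,\d\mu = \int P_kP_0\,\d\mu = \delta_{k0}$, the stray term $\int P_k(s)\,\d\mu(s)$ vanishes for every $k\ge 1$, leaving exactly $\lambda Q_k(\lambda)$ on the left. Hence for $k\ge1$,
\begin{equation}
\lambda Q_k(\lambda) = \beta_{k-1}Q_{k-1}(\lambda)+\alpha_k Q_k(\lambda)+\beta_k Q_{k+1}(\lambda),
\end{equation}
which is precisely the recurrence of Definition \ref{associatedpolys}.

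Having matched both initial conditions and the recurrence, a routine induction (the recurrence solves for $Q_{k+1}$ in terms of $Q_k$ and $Q_{k-1}$ since $\beta_k>0$) yields $Q_k=P^\mu_k$ for all $k$, establishing the claim. I expect the only delicate point to be the left-hand bookkeeping: one must choose the decomposition of $sP_k(s)-\lambda P_k(\lambda)$ that isolates a factor of $\lambda$ multiplying the $Q_k$-integrand, so that the leftover integral reduces to $\int P_k\,\d\mu$ and is annihilated by orthogonality for $k\ge1$. The restriction $\lambda\in\C\setminus\sigma(J)$ plays no essential role in the argument beyond matching the stated domain, since each $Q_k$ is a polynomial in $\lambda$.
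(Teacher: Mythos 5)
Your proof is correct: verifying that the integral expression satisfies the same three-term recurrence and the same two initial values ($Q_0=0$, $Q_1=\beta_0^{-1}$) as $P^\mu_k$, with the term $\int P_k\,\d\mu=\delta_{k0}$ killed by orthonormality for $k\ge 1$, is exactly the standard argument for this classical fact. The paper offers no proof of its own, simply citing Gautschi, and the cited argument is essentially the one you give; your observation that the restriction $\lambda\in\C\setminus\sigma(J)$ is immaterial (the integrand being a polynomial in $s$) is also accurate.
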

 
 For notational convenience we also define the $\mu$-derivative of a general 
polynomial.
 
 \begin{definition}\label{def:muderiv}
 Let $\mu$ be a probability measure compactly supported on the real line and let $f$ 
be a polynomial. The $\mu$-derivative of $f$ is the polynomial defined by
  \begin{equation}
   f^{\mu}(\lambda) = \int \frac{f(s)-f(\lambda)}{s-\lambda} \,\d\mu(s).
  \end{equation}
 \end{definition}

 \section{Connection coefficient matrices}\label{sec:connectioncoeffs}
  
  In this section we give preliminary results to indicate the relevance of connection coefficient matrices to spectral theory of Jacobi operators.
  
  \subsection{Basic properties}
  
  As in the introduction, consider a second bounded Jacobi operator,
  \begin{equation*}
    D = \left(\begin{array}{cccc}
      \gamma_0 & \delta_0 & & \\
      \delta_0 & \gamma_1 & \delta_1 & \\
      & \delta_1 & \gamma_2 & \sddots \\
      & & \sddots & \sddots
    \end{array} \right),
  \end{equation*}
  with principal resolvent function $H(z)$, spectral measure $\nu$ and 
orthogonal polynomials denoted $Q_0, Q_1, Q_2, \ldots$. In the introduction (Definition \ref{def:connection}) we defined the connection coefficient matrix between $J$ and $D$, $C = C_{J\to D}$ to have entries satisfying
  \begin{equation}
  P_k(s) = c_{0k}Q_0(s) + c_{1k}Q_1(s) + \cdots + c_{kk} Q_k(s).
  \end{equation}
  
  \begin{definition}
    Denote the space of complex-valued sequences with finitely many nonzero elements by $\ell_{\mathcal{F}}$, and its algebraic dual, the space of all complex-valued sequences, by $\ell_{\mathcal{F}}^\star$.
  \end{definition}
  
  Note that $C : \ell_{\mathcal{F}} \rightarrow \ell_{\mathcal{F}}$, because it is upper triangular, and $C^T : \ell_{\mathcal{F}}^\star \rightarrow \ell_{\mathcal{F}}^\star$, because it is lower triangular, and thus we may write
  \begin{equation*}
  \left(\begin{array}{c} P_0(s) \\ P_1(s) \\ P_2(s) \\ \vdots \end{array}
  \right) = C^T \left(\begin{array}{c} Q_0(s) \\ Q_1(s) \\ Q_2(s) \\ \vdots
  \end{array} \right) \text{ for all } s \in \C.
  \end{equation*}
  
   By orthonormality of the polynomial sequences the entries can also be interpreted as
  \begin{equation}\label{eqn:Centries}
    C_{J\to D} = \left(\begin{array}{cccc}
      \langle P_0, Q_0 \rangle_\nu &  \langle P_1,Q_0 \rangle_\nu & \langle P_2, Q_0 \rangle_\nu & \cdots \\
      0 & \langle P_1,Q_1 \rangle_\nu & \langle P_2,Q_1 \rangle_\nu & \cdots \\
      0 & 0  & \langle P_2,Q_2 \rangle_\nu & \cdots \\
      \vdots & \vdots & \vdots & \sddots
    \end{array}
    \right),
  \end{equation}
  where $\langle \cdot, \cdot \rangle_\nu$ is the standard inner product on $L^2_\nu(\R)$.
   


A recurrence relationship for the connection coefficients matrix was discovered by Sack and Donovan \cite{sack1971algorithm} and independently by Wheeler \cite{wheeler1974modified}, in the context of Gauss quadrature formulae.

 \begin{lemma}{\cite{sack1971algorithm,wheeler1974modified}}\label{lem:5ptsystem}
  The entries of the connection coefficients matrix $C_{J \to D}$ satisfy the following 5-point discrete system:
  \begin{equation*}
    -\delta_{i-1}c_{i-1,j} + \beta_{j-1}c_{i,j-1} + (\alpha_j-\gamma_i)c_{ij} + \beta_{j}c_{i,j+1} -\delta_{i}c_{i+1,j} = 0, \text{ for all } 0 \leq i < j,
    \end{equation*}
 with boundary conditions
   \begin{equation*}
   c_{ij} = \begin{cases}
             1 &\text{ if } i=j=0, \\
             0 &\text{ if } j = 0 \text{ and } i\neq 0, \\
             0 &\text{ if } j = -1 \text{ or } i = -1.
            \end{cases}
  \end{equation*}
 \end{lemma}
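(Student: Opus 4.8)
The plan is to exploit the integral representation $c_{ij} = \langle P_j, Q_i \rangle_\nu = \int P_j(s) Q_i(s)\,\d\nu(s)$ recorded in \eqref{eqn:Centries}, together with the two three-term recurrences available to us: the recurrence from Definition \ref{orthpolys} satisfied by the $P_k$ (with the coefficients $\alpha_k,\beta_k$ of $J$), and the analogous recurrence $sQ_k = \delta_{k-1}Q_{k-1} + \gamma_k Q_k + \delta_k Q_{k+1}$ satisfied by the $Q_k$ (with the coefficients $\gamma_k,\delta_k$ of $D$). The key device is to evaluate the single quantity $\int s\,P_j(s)Q_i(s)\,\d\nu(s)$ in two ways: once by letting the factor $s$ act on $P_j$, and once by letting it act on $Q_i$. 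Since both $\mu$ and $\nu$ are compactly supported (as $J$ and $D$ are bounded) and $P_j,Q_i$ are polynomials, every integral in sight converges absolutely, so this manipulation is legitimate.

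Carrying this out, expanding $sP_j$ by the $J$-recurrence gives
\begin{equation*}
\int s\,P_j(s) Q_i(s)\,\d\nu(s) = \beta_{j-1}c_{i,j-1} + \alpha_j c_{ij} + \beta_j c_{i,j+1},
\end{equation*}
while expanding $sQ_i$ by the $D$-recurrence gives
\begin{equation*}
\int P_j(s)\, s\, Q_i(s)\,\d\nu(s) = \delta_{i-1}c_{i-1,j} + \gamma_i c_{ij} + \delta_i c_{i+1,j}.
\end{equation*}
Equating the two expressions and moving everything to one side produces exactly the asserted relation $\beta_{j-1}c_{i,j-1} + (\alpha_j-\gamma_i)c_{ij} + \beta_j c_{i,j+1} - \delta_{i-1}c_{i-1,j} - \delta_i c_{i+1,j} = 0$. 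Conceptually this is nothing but the entrywise form of the operator identity $DC = CJ$ (equivalently \eqref{eqn:JCDC}), obtained by matching the $(i,j)$ entries of $DC$ and $CJ$ using the tridiagonal structure of $D$ and $J$; the integral computation is simply the cleanest self-contained route to it before that identity has been established.

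For the boundary data I would argue directly from the integral representation. Using $P_0 = Q_0 \equiv 1$ and that $\nu$ is a probability measure gives $c_{00} = \int 1 \,\d\nu = 1$; and for $i \neq 0$, orthonormality of the $Q_i$ against $\nu$ gives $c_{i0} = \langle 1, Q_i\rangle_\nu = \langle Q_0, Q_i\rangle_\nu = 0$. The conventions $c_{i,-1} = c_{-1,j} = 0$ are inherited from the standard conventions $P_{-1} = Q_{-1} = 0$, via $c_{i,-1} = \langle P_{-1}, Q_i\rangle_\nu = 0$ and $c_{-1,j} = \langle P_j, Q_{-1}\rangle_\nu = 0$.

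There is no genuine analytic obstacle here; the statement is essentially a commutation identity, and the one place needing a little care is purely bookkeeping over the index range $0 \le i < j$. In this range $j \ge 1$, so the column indices $j-1,j,j+1$ never go out of range, and the only out-of-range entry actually invoked is $c_{-1,j}$, appearing in the $i=0$ instance through the term $-\delta_{-1}c_{-1,j}$, which the convention $c_{-1,j}=0$ correctly annihilates. Checking that this edge case is consistent is the extent of the work; once confirmed, the derivation above applies verbatim.
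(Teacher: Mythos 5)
Your argument is exactly the paper's: the paper also evaluates $\langle Q_i(s), sP_j(s)\rangle_\nu = \langle sQ_i(s), P_j(s)\rangle_\nu$ by expanding $sP_j$ with the $J$-recurrence and $sQ_i$ with the $D$-recurrence, equates the results, and obtains the boundary conditions from $c_{i0} = \langle Q_i, P_0\rangle_\nu$ together with the convention $c_{i,-1} = c_{-1,j} = 0$. The proposal is correct and takes essentially the same route.
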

 \begin{proof}
  Assume by convention that $c_{ij} = 0$ if $i = -1$ or $j=-1$. Now using this boundary condition and the three term recurrences for the polynomial sequences, we see that
  \begin{align*}
    \langle Q_i(s),sP_j(s) \rangle_\nu &= \beta_{j-1} \langle 
Q_i,P_{j-1}\rangle_\nu + \alpha_j \langle Q_i,P_j \rangle_\nu + \beta_j \langle 
Q_i,P_{j+1} \rangle_\nu \\
    &= \beta_{j-1} c_{i,j-1} + \alpha_j c_{ij} + \beta_j c_{i,j+1},
  \end{align*}
  and
  \begin{align*}
    \langle sQ_i(s),P_j(s) \rangle_\nu &= \delta_{i-1} \langle 
Q_{i-1},P_j\rangle_\nu + \gamma_i \langle Q_i,P_j \rangle_\nu + \delta_i 
\langle Q_{i+1},P_j \rangle_\nu \\
    &= \delta_{i-1} c_{i-1,j} + \gamma_i c_{ij} + \delta_i c_{i+1,j}.
  \end{align*}
  Since $\langle sQ_i(s),P_j(s) \rangle_\nu = \langle Q_i(s),sP_j(s) 
\rangle_\nu$, we have the result for the interior points $0 \leq i < j$. 

The remaining boundary conditions come from $c_{i0} = \langle Q_i, P_0 \rangle_\nu$ which equals 1 if $i=0$ and 0 otherwise.
\end{proof}

 The 5-point recurrence formula can be restated as infinite-vector-valued three-term recurrence relations for rows and columns of $C$.
 
 \begin{corollary}\label{cor:rowsandcolsofC}
 The columns of $C$ satisfy
 \begin{align*}
  c_{*,0} &= e_0 \\
  Dc_{*,0} &= \alpha_0 c_{*,0} + \beta_0 c_{*,1} \\
  Dc_{*,j} &= \beta_{j-1}c_{*,j-1} + \alpha_j c_{*,j} + \beta_j c_{*,j+1}.
 \end{align*}
Consequently the $j$th column can be written $c_{*,j} = P_j(D)e_0$.

  The rows of $C$ satisfy
  \begin{align*}
   c_{0,*}J &= \gamma_0 c_{0,*} + \delta_0 c_{1,*}, \\
   c_{i,*}J &= \delta_{i-1} c_{i-1,*} + \gamma_i c_{i,*} + \delta_i 
c_{i+1,*}. 
  \end{align*}
Consequently, the $i$th row can be written $c_{i,*} = c_{0,*}Q_i(J)$. 
 \end{corollary}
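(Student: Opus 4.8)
The plan is to read off both vector recurrences directly from the scalar 5-point system of Lemma~\ref{lem:5ptsystem}, and then to recognise each family of vectors as the image of $e_0$ (resp.\ of the row $c_{0,*}$) under a polynomial in $D$ (resp.\ in $J$). The only subtlety is that I need the 5-point identity at all index pairs, not merely the interior ones.

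I would begin with the columns. First I would observe that the identity $\langle sQ_i, P_j\rangle_\nu = \langle Q_i, sP_j\rangle_\nu$ underlying the proof of Lemma~\ref{lem:5ptsystem} in fact holds for every $i,j\ge 0$, since expanding each side by the three-term recurrences for $P$ and $Q$ uses only the conventions $P_{-1}=Q_{-1}=0$ and $c_{i,-1}=c_{-1,j}=0$; the restriction $0\le i<j$ in the lemma is there only to single out the genuinely interior relations. Read off for fixed $j$ and all $i$, this identity becomes $\delta_{i-1}c_{i-1,j}+\gamma_i c_{ij}+\delta_i c_{i+1,j}=\beta_{j-1}c_{i,j-1}+\alpha_j c_{ij}+\beta_j c_{i,j+1}$, which is exactly the coordinate form of $Dc_{*,j}=\beta_{j-1}c_{*,j-1}+\alpha_j c_{*,j}+\beta_j c_{*,j+1}$; the boundary values $c_{00}=1$, $c_{i0}=0$ give $c_{*,0}=e_0$, and the $j=0$ instance supplies the remaining base relation $Dc_{*,0}=\alpha_0 c_{*,0}+\beta_0 c_{*,1}$. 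For the closed form I would use $Q_i(D)e_0=e_i$ from Theorem~\ref{thm:maindeift}(i) applied to $D$, so that the finite sum $c_{*,j}=\sum_i c_{ij}e_i=\sum_i c_{ij}Q_i(D)e_0=\big(\sum_i c_{ij}Q_i\big)(D)e_0=P_j(D)e_0$, using $P_j=\sum_i c_{ij}Q_i$ from Definition~\ref{def:connection}.

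The rows are handled by reading the same identity off for fixed $i$ and all $j$: it rearranges to $\beta_{j-1}c_{i,j-1}+\alpha_j c_{ij}+\beta_j c_{i,j+1}=\delta_{i-1}c_{i-1,j}+\gamma_i c_{ij}+\delta_i c_{i+1,j}$, whose left-hand side is the $j$th entry of $c_{i,*}J$ and whose right-hand side is the $j$th entry of $\delta_{i-1}c_{i-1,*}+\gamma_i c_{i,*}+\delta_i c_{i+1,*}$, giving the row recurrence (the $i=0$ case coming from the convention $c_{-1,*}=0$). The closed form $c_{i,*}=c_{0,*}Q_i(J)$ I would obtain by a uniqueness argument rather than by direct summation. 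Writing $r_i:=c_{0,*}Q_i(J)$, the three-term recurrence for the $Q_i$ together with the commutation $Q_i(J)J=JQ_i(J)$ gives $r_iJ=c_{0,*}\big(JQ_i(J)\big)=\delta_{i-1}r_{i-1}+\gamma_i r_i+\delta_i r_{i+1}$, with $r_0=c_{0,*}$ (as $Q_0=1$) and $r_{-1}=0$. Since $\delta_i>0$, solving this recurrence for $r_{i+1}$ determines the whole sequence from $r_0$; as $(c_{i,*})$ satisfies the identical recurrence and initial data, the two sequences coincide.

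I expect the main obstacle to be the row closed form. The direct analogue of the column computation would express the $j$th entry of $c_{0,*}Q_i(J)$ as $\sum_k c_{0k}\langle e_k,Q_i(J)e_j\rangle=\int Q_iP_j\big(\sum_k c_{0k}P_k\big)\,\d\mu$ via Theorem~\ref{thm:maindeift}(iv), but the inner series $\sum_k c_{0k}P_k$ need not converge for a general bounded $D$ --- its convergence is precisely the content of Proposition~\ref{prop:Cmeasuremultplier}, which is not available at this point. The uniqueness argument above is what circumvents this, at the price of first establishing the row recurrence for all $j$; accordingly I would take care to note that the 5-point identity of Lemma~\ref{lem:5ptsystem} remains valid at the boundary indices and not only on $0\le i<j$.
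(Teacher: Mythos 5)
Your proof is correct and, for the rows, coincides with the paper's: the paper likewise obtains the closed forms from uniqueness of solutions of a second-order difference equation with two given initial data (adjoining $c_{-1,*}=0$ and $c_{*,-1}=0$). For the columns you take a slightly different and arguably cleaner route: rather than invoking uniqueness again, you identify $c_{*,j}=\sum_i c_{ij}e_i$ with $P_j(D)e_0$ directly via $Q_i(D)e_0=e_i$ and $P_j=\sum_i c_{ij}Q_i$, which is legitimate because the sum is finite ($C$ is upper triangular); and your remark that the analogous direct computation fails for the rows --- since $\sum_k c_{0k}P_k$ need not converge before Proposition \ref{prop:Cmeasuremultplier} is available --- correctly explains why the uniqueness argument is the right tool there. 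You are also more careful than the paper's two-line proof in one respect that matters: the vector identity $Dc_{*,j}=\beta_{j-1}c_{*,j-1}+\alpha_j c_{*,j}+\beta_j c_{*,j+1}$ requires the 5-point relation in every coordinate $i$, including $i\ge j$, whereas Lemma \ref{lem:5ptsystem} states it only for $0\le i<j$; your observation that the derivation via $\langle sQ_i,P_j\rangle_\nu=\langle Q_i,sP_j\rangle_\nu$ remains valid for all $i,j\ge 0$ under the stated conventions closes this small gap.
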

 \begin{proof}
 The 5-point discrete system described in Lemma~\ref{lem:5ptsystem} can be used to find an explicit linear recurrence to compute the entries of $C$,
 \begin{align}
 c_{0,0} &= 1 \label{connectionrecurrence1}\\
 c_{0,1} &= (\gamma_0-\alpha_0)/\beta_0 \\
 c_{1,1} &= \delta_0/\beta_0 \\
 c_{0,j} &= \left((\gamma_0-\alpha_{j-1})c_{0,j-1} + \delta_0c_{1,j-1} - 
 \beta_{j-2} c_{0,j-2}\right)/\beta_{j-1} \\
 c_{i,j} &= \left(\delta_{i-1}c_{i-1,j-1} + (\gamma_i-\alpha_{j-1})c_{i,j-1} 
 + \delta_ic_{i+1,j-1} - \beta_{j-2}c_{i,j-2}\right)/\beta_{j-1}. 
 \label{connectionrecurrence2}
 \end{align}
 The recurrences of the rows and columns of $C$ are these written in vectorial form.
 
 The consequences follow from the uniqueness of solution to second order difference 
equations with two initial data (adding $c_{-1,*} = 0$ and $c_{*,-1} = 0$).
\end{proof}

 \subsection{Connection coefficients and spectral theory}
 
 The following theorems give precise results about how the connection coefficients matrix $C$ can be useful for studying and computing the spectra of Jacobi operators.
 
 \begin{theorem}[\cite{kautsky1983calculation}]\label{thm:Ccommute}
  Let $J$ and $D$ be bounded Jacobi operators and $C = C_{J \to D}$ the connection coefficients matrix. For all polynomials $p$, we have the following as operators from $\ell_{\mathcal{F}}$ to $\ell_{\mathcal{F}}$,
$$
 Cp(J) = p(D)C.
$$ 
\begin{remark}
  This is a generalisation of the result of Kausky and Golub \cite[Lem~1]{kautsky1983calculation}, that
  \begin{equation*}
  C_{N\times N}J_{N\times N} = D_{N\times N}C_{N\times N} + e_Nc_N^T,
  \end{equation*}
  where $C_{N\times N},J_{N\times N},D_{N\times N}$ are the principal $N\times N$ submatrices of $C,J,D$, and $c_N$ is a certain vector in $\R^N$.
\end{remark}
    \begin{proof}
     First we begin with the case $p(z) = z$. By definition,
\begin{align*}
 CJe_0 &= C(\alpha_0 e_0 + \beta_0 e_1) \\ 
       &= \alpha_0 Ce_0 + \beta_0 C e_1 \\
       &= \alpha_0 c_{*,0}+ \beta_0 c_{*,1}.
\end{align*}
Then by Corollary~\ref{cor:rowsandcolsofC}, this is equal to $Dc_{*,0}$, which is equal to $DCe_0$. Now, for any $j > 0$,
\begin{align*}
 CJe_j &= C(\beta_{j-1} e_{j-1} + \alpha_j e_j + \beta_j e_{j+1}) \\
       &= \beta_{j-1} c_{*,j-1} + \alpha_k c_{*,j} + \beta_j c_{*,j+1}.
\end{align*}
Then by Corollary~\ref{cor:rowsandcolsofC}, this is equal to $Dc_{*,j}$, which is equal to $DCe_j$. Hence $CJ = DC$.

Now, when $f(z) = z^k$ for any $k >0$, $D^kC = D^{k-1}CJ = \cdots = CJ^k$. By 
linearity $Cf(J) = f(D)C$ for all polynomials $f$. 
\end{proof}
\end{theorem}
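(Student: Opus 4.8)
The plan is to reduce everything to the single relation $CJ = DC$ and then bootstrap to arbitrary polynomials by induction and linearity. Since all maps in sight send $\ell_{\mathcal{F}}$ to $\ell_{\mathcal{F}}$ — $C$ because it is upper triangular, and $J$, $D$ because they are tridiagonal — it suffices to verify the operator identities on the standard basis vectors $e_j$, where no convergence questions arise.

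First I would establish $CJ = DC$ by evaluating both sides at an arbitrary $e_j$. Using tridiagonality of $J$, I have $CJe_j = C(\beta_{j-1}e_{j-1} + \alpha_j e_j + \beta_j e_{j+1}) = \beta_{j-1}c_{*,j-1} + \alpha_j c_{*,j} + \beta_j c_{*,j+1}$, recalling that $Ce_j = c_{*,j}$. The three-term column recurrence from Corollary \ref{cor:rowsandcolsofC} identifies this combination precisely with $Dc_{*,j} = DCe_j$. The boundary index $j=0$ is handled by the corresponding initial relations $c_{*,0}=e_0$ and $Dc_{*,0} = \alpha_0 c_{*,0} + \beta_0 c_{*,1}$. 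Hence $CJe_j = DCe_j$ for every $j$, so $CJ = DC$ as operators on $\ell_{\mathcal{F}}$.

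Next, for monomials $p(z) = z^k$ I would argue by induction: assuming $CJ^{k-1} = D^{k-1}C$, I can write $CJ^k = (CJ)J^{k-1} = (DC)J^{k-1} = D(CJ^{k-1}) = D(D^{k-1}C) = D^k C$, where the middle step uses the base case and associativity of composition on $\ell_{\mathcal{F}}$. Extending to a general polynomial $p$ is then immediate by linearity. For the remark, since $C$ is upper triangular with nonzero diagonal entries it is invertible as a map $\ell_{\mathcal{F}} \to \ell_{\mathcal{F}}$, with inverse $C_{D\to J}$; multiplying $CJ = DC$ on the left by $C^{-1}$ yields $J = C^{-1}DC$.

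I do not expect a genuine obstacle here: essentially all of the content is carried by Corollary \ref{cor:rowsandcolsofC}, which repackages the five-point connection recurrence as the statement that the columns of $C$ satisfy the three-term recurrence governed by $D$. The only points demanding care are the bookkeeping at the boundary $j=0$ and the insistence that every identity be read strictly as an identity of operators on $\ell_{\mathcal{F}}$, so that the associativity manipulations above are legitimate without invoking any boundedness or $\ell^2$-invertibility hypotheses on $C$.
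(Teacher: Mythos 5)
Your proposal is correct and follows essentially the same route as the paper's proof: verify $CJe_j = DCe_j$ on basis vectors via the column recurrence of Corollary \ref{cor:rowsandcolsofC} (treating $j=0$ separately), then pass to monomials by iteration and to general polynomials by linearity, all as operators on $\ell_{\mathcal{F}}$. No substantive differences to report.
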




We believe that the basic properties relating the Radon-Nikodym derivative $\frac{\mathrm{d}\nu}{\mathrm{d}\mu}$ and the connection coefficients matrix $C$ have not been given in the literature before, but follow naturally from discussions in, for example, \cite[Ch.~5]{golub2009matrices}, on mixed moments and modifications of weight functions for orthogonal polynomials.

 \begin{proposition}\label{prop:Cmeasuremultplier}
  Let $J$ and $D$ be bounded Jacobi operators with spectral measures $\mu$ and $\nu$ respectively, and connection coefficient matrix $C = C_{J\to D}$. Then
  \begin{align*}
  \frac{\d\nu}{\d\mu} \in L^2_\mu(\R) \text{ if and only if } c_{0,*} \in \ell^2,
  \end{align*}
  in which case
  \begin{equation}
\frac{\d\nu}{\d\mu} = \sum_{k=0}^\infty c_{0,k} P_k.
  \end{equation}
   \begin{proof}
     Suppose first that $\frac{\d\nu}{\d\mu} \in L^2_\mu(\R)$. Then $\frac{\d\nu}{\d\mu} = \sum_{k=0}^\infty a_k P_k$, for some $a\in\ell^2$, because $P_0,P_1,P_2,\ldots$ is an orthonormal basis of $L^2_\mu(\R)$. The coefficients are given by,
     \begin{align*}
     a_k &= \int P_k(s) \frac{\d\nu}{\d\mu}(s) \,\d\mu(s) \\
         &= \int P_k(s) \,\d\nu(s) \qquad \text{(definition of R--N derivative)}\\
         &= c_{0,k} \qquad \text{(equation \eqref{eqn:Centries}).}
     \end{align*}
     Hence $c_{0,*} \in \ell^2$ and gives the $P_k$ coefficients of $\frac{\d\nu}{\d\mu}$.
     
     Conversely, suppose that $c_{0,*} \in \ell^2$. Then the function $\sum_{k=0}^\infty c_{0,k}P_k$ is in $L^2_\mu(\R)$, and by the same manipulations as above its projections onto polynomial subspaces are equal to that of $\frac{\d\nu}{\d\mu}$.
  \end{proof}
  \end{proposition}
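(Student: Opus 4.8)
The plan is to exploit two structural facts and then reduce everything to a Parseval/moment-matching argument. First, from the interpretation of the entries in equation \eqref{eqn:Centries}, the top row of $C$ is $c_{0,k} = \langle P_k, Q_0\rangle_\nu$; since $\nu$ is a probability measure its zeroth orthonormal polynomial is the constant $Q_0\equiv 1$, so $c_{0,k} = \int P_k\,\d\nu$. Second, because $\mu$ is compactly supported, the orthonormal polynomials $\{P_k\}$ form an orthonormal basis of $L^2_\mu(\R)$ (polynomials are dense by the Weierstrass theorem on the compact support). These two observations are the engine of both directions.

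For the forward direction I would assume $\frac{\d\nu}{\d\mu}\in L^2_\mu(\R)$ and expand it in the basis as $\sum_k a_k P_k$. The coefficient is $a_k = \langle \frac{\d\nu}{\d\mu}, P_k\rangle_\mu = \int P_k\,\frac{\d\nu}{\d\mu}\,\d\mu = \int P_k\,\d\nu = c_{0,k}$, where the third equality is simply the defining property of the Radon--Nikodym derivative. By Parseval the coefficient sequence is square-summable, so $c_{0,*}\in\ell^2$, and at the same time the claimed identity $\frac{\d\nu}{\d\mu} = \sum_k c_{0,k}P_k$ is established.

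For the converse I would assume $c_{0,*}\in\ell^2$ and set $h := \sum_k c_{0,k}P_k$, which converges in $L^2_\mu(\R)$ by Riesz--Fischer. The goal is to show $h\,\d\mu = \d\nu$ as measures, since this immediately gives $\nu\ll\mu$ with $\frac{\d\nu}{\d\mu} = h \in L^2_\mu(\R)$. To that end I would test against polynomials: orthonormality gives $\int P_j\,h\,\d\mu = c_{0,j} = \int P_j\,\d\nu$, and by linearity $\int p\,h\,\d\mu = \int p\,\d\nu$ for every polynomial $p$.

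The main obstacle is upgrading this equality of polynomial integrals to genuine equality of the two measures $h\,\d\mu$ and $\nu$, rather than mere agreement of their projections onto polynomial subspaces. Here I would use that both are finite Borel measures supported on the common compact set $\sigma(J)\cup\sigma(D)$, on which polynomials are dense in the continuous functions by Weierstrass; hence the two measures agree as bounded linear functionals on this space and therefore coincide, which is exactly the determinacy of compactly supported measures. A minor point to record along the way is that $h\geq 0$ holds $\mu$-almost everywhere, which is automatic once $h\,\d\mu = \d\nu$ with $\nu$ a positive measure.
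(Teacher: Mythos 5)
Your proof is correct and follows essentially the same route as the paper's: identify $c_{0,k}=\int P_k\,\d\nu$ from \eqref{eqn:Centries}, then match coefficients against the orthonormal basis $\{P_k\}$ of $L^2_\mu(\R)$. In fact your converse is more careful than the paper's one-line version, which tacitly speaks of the projections of $\frac{\d\nu}{\d\mu}$ before its existence is established; your step showing $h\,\d\mu=\d\nu$ as measures, via Weierstrass density of polynomials on the common compact support, is exactly the detail needed to close that gap.
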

 \begin{remark}
 If we have a situation in which $c_{0,*} \in \ell^2$, we can by Proposition \ref{prop:Cmeasuremultplier} and the existence of the Radon--Nikodym derivative on $\supp(\mu)$ deduce that $\sigma(D) \subset \sigma(J)$ and the function defined by $\sum_{k=0}^\infty c_{0,k}P_k$ is zero on $\sigma(J) \setminus \sigma(D)$. This observation translates into a rootfinding problem in Section \ref{sec:Toeplitzplusfinite}.
 \end{remark}
 
 \begin{lemma}\label{lem:CTC}
   Let $J$ and $D$ be bounded Jacobi operators with spectral measures $\mu$ and $\nu$ respectively, and connection coefficient matrix $C = C_{J\to D}$. If $\nu$ is absolutely continuous with respect to $\mu$, then as operators mapping $\ell_{\mathcal{F}} \to \ell_{\mathcal{F}}^\star$,
   \begin{equation*}
   C^TC = \frac{\d\nu}{\d\mu}(J).
   \end{equation*}
   Here the matrix $\frac{\d\nu}{\d\mu}(J)$ is interpreted as in Theorem \ref{thm:maindeift} part (iii).
   \begin{proof}
     Note first that since $C:\ell_{\mathcal{F}} \to \ell_{\mathcal{F}}$ and $C^T : \ell_{\mathcal{F}}^\star \to \ell_{\mathcal{F}}^\star$, $C^TC$ is well-defined $\ell_{\mathcal{F}} \to \ell_{\mathcal{F}}^\star$. Then we have,
     \begin{align*}
     \langle e_i, C^T C e_j \rangle &= \langle e_0, P_i(D)P_j(D) e_0\rangle \qquad \text{(Corollary \ref{cor:rowsandcolsofC})}\\
     &= \int P_i(s) P_j(s) \,\d\nu(s) \qquad \text{(Theorem \ref{thm:maindeift} part (iii))}\\
     &= \int P_i(s) P_j(s) \frac{\d\nu}{\d\mu}(s) \,\d\mu(s) \qquad \text{(definition of R--N derivative)} \\
     &= \left\langle e_i, \frac{\d\nu}{\d\mu}(J) e_j \right\rangle \qquad \text{(Theorem \ref{thm:maindeift} part (iii))}.
     \end{align*}
     This completes the proof.
   \end{proof}
 \end{lemma}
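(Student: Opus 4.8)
The plan is to prove the operator identity entrywise, since both sides are well-defined maps $\ell_{\mathcal{F}} \to \ell_{\mathcal{F}}^\star$ and any such map is completely determined by the scalars $\langle e_i, \cdot\, e_j\rangle$ for $i,j \geq 0$. First I would confirm that $C^TC$ genuinely makes sense here, as the lemma's hypothesis is really about this composition: because $C$ is upper triangular it maps $\ell_{\mathcal{F}}$ into $\ell_{\mathcal{F}}$, so $Ce_j \in \ell_{\mathcal{F}}$ has finitely many nonzero entries, and all entries $c_{ij}$ are real (being inner products of real polynomials against a real measure). Hence $C^T$ coincides with the adjoint and $\langle e_i, C^TCe_j\rangle = \langle Ce_i, Ce_j\rangle = \sum_k c_{ki}c_{kj}$ is an unambiguous finite sum. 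It therefore suffices to compute this pairing and match it against $\langle e_i, \frac{\d\nu}{\d\mu}(J) e_j\rangle$.

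The key step is to rewrite the columns of $C$ via Corollary \ref{cor:rowsandcolsofC}, which identifies the $j$th column as $Ce_j = c_{*,j} = P_j(D)e_0$. Since each $P_j$ has real coefficients and $D$ is bounded selfadjoint, $P_j(D)$ is selfadjoint, giving $\langle e_i, C^TCe_j\rangle = \langle P_i(D)e_0, P_j(D)e_0\rangle = \langle e_0, P_i(D)P_j(D)e_0\rangle$. I would then apply Theorem \ref{thm:maindeift}(iv) to the operator $D$: its orthonormal polynomials satisfy $Q_0 \equiv 1$, so taking the polynomial $f = P_iP_j$ yields $\langle e_0, (P_iP_j)(D)e_0\rangle = \int P_i(s)P_j(s)\,\d\nu(s)$.

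Finally I would invoke absolute continuity to pass from the $\nu$-integral to a $\mu$-integral, $\int P_iP_j\,\d\nu = \int P_iP_j\,\frac{\d\nu}{\d\mu}\,\d\mu$, and recognise the result through a second application of Theorem \ref{thm:maindeift}(iv), this time to $J$ with $f = \frac{\d\nu}{\d\mu}$, obtaining $\langle e_i, \frac{\d\nu}{\d\mu}(J)e_j\rangle$. The one point needing care — and the main, if modest, obstacle — is the legitimacy of this last application, which requires $\frac{\d\nu}{\d\mu} \in L^1_\mu(\R)$; this holds automatically because $\int \frac{\d\nu}{\d\mu}\,\d\mu = \nu(\R) = 1 < \infty$, and since $\mu$ is compactly supported the polynomials $P_i, P_j$ are bounded on $\supp\mu$, so that $P_iP_j\frac{\d\nu}{\d\mu}$ is integrable and every integral in the chain converges absolutely. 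Matching the two ends of the resulting chain of equalities for each $(i,j)$ then gives $C^TC = \frac{\d\nu}{\d\mu}(J)$ as operators $\ell_{\mathcal{F}} \to \ell_{\mathcal{F}}^\star$.
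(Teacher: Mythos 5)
Your proposal is correct and follows essentially the same route as the paper's own proof: an entrywise computation using Corollary \ref{cor:rowsandcolsofC} to write $Ce_j = P_j(D)e_0$, then two applications of Theorem \ref{thm:maindeift}(iv) (to $D$ and then to $J$) linked by absolute continuity. The extra care you take over the integrability of $\frac{\d\nu}{\d\mu}$ and the finiteness of the sums is a welcome but minor elaboration of the same argument.
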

 
 \begin{proposition}\label{prop:Cbounded}
   Let $J$ and $D$ be bounded Jacobi operators with spectral measures $\mu$ and $\nu$ respectively, and connection coefficient matrix $C = C_{J\to D}$. If $\frac{\mathrm{d}\nu}{\mathrm{d}\mu} \in L^\infty_\mu(\R)$ then $C$ is a bounded operator on $\ell^2$ and
 \begin{equation*}
 \|C\|_2^2 = \muesssup_{s \in \sigma(J)} \left|\frac{\mathrm{d}\nu}{\mathrm{d}\mu}(s)\right|.
 \end{equation*}
 Here $\|\cdot \|_2$ is the operator norm from $\ell^2 \to \ell^2$ and $\muesssup$ is the supremum up to $\mu$-almost everywhere equivalence of functions.
 \begin{proof}
Since $\mu$ is a probability measure (and hence $\sigma$-finite), we have the standard characterisation,
   $$
   \muesssup_{s \in \sigma(J)} \left|\frac{\mathrm{d}\nu}{\mathrm{d}\mu}(s)\right| = \sup_{\substack{g \in L_\mu^1(\R) \\ \|g\|_1 \leq 1}} \int \frac{\d\nu}{\d\mu}(s) |g(s)| \, \d \mu(s),
   $$
   which can be modified to
    $$
   \muesssup_{s \in \sigma(J)} \left|\frac{\mathrm{d}\nu}{\mathrm{d}\mu}(s)\right| = \sup_{\substack{f \in L_\mu^2(\R) \\ \|f\|_2 \leq 1}} \int \frac{\d\nu}{\d\mu}(s) (f(s))^2 \, \d \mu(s),
   $$
   by associating positive functions $g \in L^1_\mu(\R)$ with their square-roots $f \in L_\mu^2(\R)$. Now, since $\frac{\mathrm{d}\nu}{\mathrm{d}\mu} \in L_\mu^\infty(\R)$, this supremum can actually be taken over all polynomials by the following argument. If $f \in L_\mu^2(\R)$ with $\|f\|_{2,\mu} \leq 1$ then for any $\eps > 0$ there exists polynomial $p$ such that $\|p\|_{2,\mu} \leq 1$ and $\|f-p\|_2 \leq \eps$, since polynomials are dense in $L_\mu^2(\R)$ (this follows, for example, from the compact support of $\mu$ \cite{akhiezer1965classical}). It is readily shown using H\"older and triangle inequalities that
   \begin{equation*}
   \int \frac{\d\nu}{\d\mu}(s) \left((f(s))^2 - (p(s))^2 \right) \, \d \mu(s) \leq \muesssup_{s \in \sigma(J)} \left|\frac{\mathrm{d}\nu}{\mathrm{d}\mu}(s)\right| 2\eps,
   \end{equation*}
   so that any supremum can be arbitrarily approximated using a polynomial.

    Since $\{P_k \}_{k=0}^\infty$ is a complete orthonormal basis of $L_\mu^2(\R)$ (completeness holds as a result of $\mu$ having compact support), then $f \in L_\mu^2(\R)$ if and only if there is a unique sequence $v \in \ell^2$ such that $f = \sum_{k=0}^\infty v_k P_k$ with the series converging in the $L_\mu^2(\R)$ norm. Furthermore, $\|f\|_2 = \|v\|_2$. Hence,
    \begin{eqnarray*}
   \muesssup_{s \in \sigma(J)} \left|\frac{\mathrm{d}\nu}{\mathrm{d}\mu}(s)\right| &=& \sup_{\substack{v \in \ell_{\mathcal{F}} \\ \|v\|_2 = 1}} \int \frac{\d\nu}{\d\mu}(s) \left( \sum_{k} v_k P_k(s) \right)^2 \, \d \mu(s) \\
   &=&  \sup_{\substack{v \in \ell_{\mathcal{F}} \\ \|v\|_2 = 1}} \int \sum_{j,k}  v_j v_k \frac{\d\nu}{\d\mu}(s) P_j(s) P_k(s) \, \d \mu(s). \\
   &=& \sup_{\substack{v \in \ell_{\mathcal{F}} \\ \|v\|_2 = 1}} \sum_{j,k} v_j v_k \int \frac{\d\nu}{\d\mu}(s) P_j(s) P_k(s) \, \d \mu(s). 
   \end{eqnarray*}

 Since $\ell_{\mathcal{F}}$ is a dense subspace of $\ell^2$, we have $\|C\|_2 = \sup_{\substack{v \in \ell_{\mathcal{F}}},\\ \|v\|_2 = 1} \|Cv\|_2$. Now, $\|Cv\|_2^2 = \langle v, C^T C v\rangle$, and by Lemma \ref{lem:CTC}, $C^TC = \frac{\d\nu}{\d\mu}(J)$. Therefore,
   \begin{eqnarray*}
     \|C\|_2^2 &=&  \sup_{\substack{v \in \ell_{\mathcal{F}} \\ \|v\|_2 = 1}} \left\langle v, \frac{\d\nu}{\d\mu}(J) v \right\rangle = \sup_{\substack{v \in \ell_{\mathcal{F}} \\ \|v\|_2 = 1}} \sum_{j,k} v_j v_k \left[\frac{\d\nu}{\d\mu}(J)\right]_{j,k}.
     \end{eqnarray*}
   By Theorem \ref{thm:maindeift} part (iv), we conclude that
   \begin{equation}\label{eqn:normCint}
     \|C\|_2^2 = \sup_{\substack{v \in \ell_{\mathcal{F}} \\ \|v\|_2 = 1}}  \sum_{j,k=0}^\infty  v_j v_k \int \frac{\d\nu}{\d\mu}(s) P_j(s) P_k(s) \, \d \mu(s).
     \end{equation}
   Therefore, $\|C\|_2^2 =  \muesssup_{s \in \sigma(J)} \left|\frac{\mathrm{d}\nu}{\mathrm{d}\mu}(s)\right|$ and $C$ is bounded.  
 \end{proof}
 \end{proposition}

\begin{corollary}\label{cor:Cbddinvert}
Let $J$ and $D$ be bounded Jacobi operators with spectral measures $\mu$ and $\nu$ respectively, and connection coefficient matrix $C = C_{J\to D}$. If $\frac{\mathrm{d}\nu}{\mathrm{d}\mu} \in L^\infty_\mu(\R)$ and $\frac{\mathrm{d}\mu}{\mathrm{d}\nu} \in L^\infty_\nu(\R)$ then $C$ is bounded and invertible on $\ell^2$.
\begin{proof}
  By Proposition \ref{prop:Cbounded}, $C_{J\to D}$ is bounded if $\frac{\mathrm{d}\nu}{\mathrm{d}\mu} \in L^\infty_\mu(\R)$, and $C_{D\to J}$ is bounded if $\frac{\mathrm{d}\mu}{\mathrm{d}\nu} \in L^\infty_\nu(\R)$. Combining this the fact that $C_{J\to D}^{-1} = C_{D \to J}$, as operators from $\ell_{\mathcal{F}}$ to itself, we complete the proof.
\end{proof}
\end{corollary}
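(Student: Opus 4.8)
The plan is to prove both directions by applying Proposition~\ref{prop:Cbounded} twice, once to $C_{J\to D}$ and once to the reverse connection coefficient matrix $C_{D\to J}$, and then to glue the two characterizations together using the algebraic identity $C_{J\to D}^{-1} = C_{D\to J}$ on $\ell_{\mathcal{F}}$. Proposition~\ref{prop:Cbounded} tells us that $C_{J\to D}$ is bounded on $\ell^2$ if and only if $\frac{\d\nu}{\d\mu} \in L^\infty_\mu(\R)$; applying the same proposition with the roles of $J$ and $D$ (and hence of $\mu$ and $\nu$) interchanged gives that $C_{D\to J}$ is bounded on $\ell^2$ if and only if $\frac{\d\mu}{\d\nu} \in L^\infty_\nu(\R)$. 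So the real content is to show that \emph{$C_{J\to D}$ is bounded and invertible on $\ell^2$} is equivalent to \emph{both $C_{J\to D}$ and $C_{D\to J}$ are bounded on $\ell^2$}.

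For the forward-easy direction I would assume both $\frac{\d\nu}{\d\mu} \in L^\infty_\mu(\R)$ and $\frac{\d\mu}{\d\nu} \in L^\infty_\nu(\R)$. By the two applications of Proposition~\ref{prop:Cbounded} above, this makes both $C := C_{J\to D}$ and $C_{D\to J}$ bounded operators on $\ell^2$. On the dense subspace $\ell_{\mathcal{F}}$ we already know $C \, C_{D\to J} = C_{D\to J}\, C = I$; since both operators are bounded and $\ell_{\mathcal{F}}$ is dense in $\ell^2$, these identities extend by continuity to all of $\ell^2$. Hence $C$ is a bounded bijection of $\ell^2$ with bounded two-sided inverse $C_{D\to J}$, i.e.\ $C$ is bounded and invertible on $\ell^2$.

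For the converse I would assume $C = C_{J\to D}$ is bounded and invertible on $\ell^2$. Boundedness already yields $\frac{\d\nu}{\d\mu}\in L^\infty_\mu(\R)$ by Proposition~\ref{prop:Cbounded}, so it remains to produce $\frac{\d\mu}{\d\nu}\in L^\infty_\nu(\R)$, which by the reversed Proposition~\ref{prop:Cbounded} amounts to showing $C_{D\to J}$ is bounded on $\ell^2$. Here is the one step that needs care: the bounded $\ell^2$-inverse $C^{-1}$ (bounded automatically by the open mapping theorem) must be identified with the algebraically defined matrix $C_{D\to J}$. The identification is a density argument: for any $v \in \ell_{\mathcal{F}}$ we have $C_{D\to J} v \in \ell_{\mathcal{F}} \subset \ell^2$ and $C\,(C_{D\to J} v) = v$, so applying the bounded operator $C^{-1}$ gives $C^{-1} v = C_{D\to J} v$ for all $v \in \ell_{\mathcal{F}}$. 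Thus $C_{D\to J}$ agrees on the dense set $\ell_{\mathcal{F}}$ with the bounded operator $C^{-1}$, and therefore $C_{D\to J}$ is bounded on $\ell^2$, giving $\frac{\d\mu}{\d\nu}\in L^\infty_\nu(\R)$.

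I expect this last identification of $C^{-1}$ with $C_{D\to J}$ to be the only genuinely non-formal point: everything else is a direct invocation of Proposition~\ref{prop:Cbounded} applied symmetrically. The subtlety is that Proposition~\ref{prop:Cbounded} characterizes boundedness of the \emph{matrix} $C_{D\to J}$, whereas invertibility of $C$ on $\ell^2$ only hands us an abstract bounded inverse, so one must check that the abstract inverse is exactly the reverse connection coefficient matrix rather than merely some bounded operator inverting $C$. Once the two operators are shown to coincide on the dense subspace $\ell_{\mathcal{F}}$, uniqueness of continuous extensions closes the gap and the corollary follows.
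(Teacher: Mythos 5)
Your proposal is correct and follows essentially the same route as the paper: apply Proposition \ref{prop:Cbounded} to both $C_{J\to D}$ and $C_{D\to J}$ and combine via the identity $C_{J\to D}^{-1} = C_{D\to J}$ on $\ell_{\mathcal{F}}$. The only difference is that you spell out the density argument identifying the abstract $\ell^2$-inverse with the matrix $C_{D\to J}$, a step the paper leaves implicit in its one-line proof.
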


 The following definition and lemma are useful later.
 
 \begin{definition}\label{def:muconnection}
   Given polynomial sequences $P_0,P_1,P_2,\ldots$ and $Q_0,Q_1,Q_2,\ldots$ for 
Jacobi operators $J$ and $D$ respectively, we define the matrix 
$C^\mu$ to be the connection coefficients matrix between 
$P_0^\mu,P_1^\mu,P_2^\mu,\ldots$ and $Q_0,Q_1,Q_2,\ldots$ as in Definition 
\ref{def:connection}, where $P_0^\mu,P_1^\mu,P_2^\mu,\ldots$ are the first 
associated polynomials for $J$ as in Definition \ref{associatedpolys}. 
Noting that the lower triangular matrix $(C^\mu)^T$ is a well defined operator from $\ell_{\mathcal{F}}^\star$ into itself, we have
   \begin{equation*}
   \left(\begin{array}{c} P^\mu_0(s) \\ P^\mu_1(s) \\ P^\mu_2(s) \\ \vdots 
\end{array} \right) = (C^\mu)^T \left(\begin{array}{c} Q_0(s) \\ Q_1(s) \\ Q_2(s) 
\\ \vdots \end{array} \right) \text{ for all } s.
  \end{equation*} 
  \begin{remark}
   Note that $C^\mu$ is \emph{strictly} upper triangular, because the first 
associated polynomials have their degrees one less than their indices.
  \end{remark}
  \end{definition} 
 
 \begin{lemma}\label{lem:Jmu}
  The operator $C^\mu$ as defined above for $C_{J\to D}$ is in fact 
$\beta_0^{-1}\left(0 , C_{J^\mu \to D} \right)$, where
  \begin{equation*}
   J^\mu = \left(\begin{array}{cccc}
      \alpha_1 & \beta_1 & & \\
      \beta_1 & \alpha_2 & \beta_2 & \\
      & \beta_2 & \alpha_3 & \sddots \\
      & & \sddots & \sddots
    \end{array} \right).
  \end{equation*}
\begin{proof}
 The (unique) orthonormal polynomials for $J^\mu$ are 
$\beta_0 P_1^\mu,\beta_0P_2^\mu,\beta_0P_3^\mu,\ldots$, and $P^\mu_0 = 0$.
\end{proof}
 \end{lemma}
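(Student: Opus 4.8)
The plan is to reduce the statement to a single identification between polynomial families: the orthonormal polynomials of the truncated operator $J^\mu$ coincide, up to reindexing and the scalar $\beta_0$, with the first associated polynomials of $J$. Once this is established, the claim relating $C^\mu$ and $C_{J^\mu\to D}$ follows by comparing expansions in the common basis $(Q_k)$ and invoking linear independence.

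First I would record the two boundary facts from Definition \ref{associatedpolys}: $P_0^\mu = 0$, so the zeroth column of $C^\mu$ vanishes (this is the ``$0$'' in the claimed block form), and $P_1^\mu = \beta_0^{-1}$, so $\beta_0 P_1^\mu = 1$ is a legitimate degree-zero orthonormal polynomial. The core step is then to show that the rescaled, reindexed family $\tilde P_k := \beta_0 P_{k+1}^\mu$ is exactly the orthonormal polynomial system of $J^\mu$. Writing the associated-polynomial recurrence of Definition \ref{associatedpolys} at index $k+1$ and multiplying through by $\beta_0$ gives
\begin{equation*}
s\,\tilde P_k = \beta_k \tilde P_{k-1} + \alpha_{k+1}\tilde P_k + \beta_{k+1}\tilde P_{k+1},
\end{equation*}
which is precisely the three-term recurrence of Definition \ref{orthpolys} generated by $J^\mu$, since the diagonal entries of $J^\mu$ are $\alpha_{k+1}$ and its off-diagonals are $\beta_{k+1}$. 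At $k=0$ the condition $P_0^\mu=0$ forces $\tilde P_{-1}=\beta_0 P_0^\mu=0$, so the $k=0$ instance collapses to the correct first equation $s\tilde P_0 = \alpha_1\tilde P_0 + \beta_1\tilde P_1$ with $\tilde P_0 = 1$. Uniqueness of solutions of this second-order difference equation with the two initial data (exactly as invoked in Corollary \ref{cor:rowsandcolsofC}) then identifies $\tilde P_k$ as the orthonormal polynomials of $J^\mu$.

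Finally I would translate this into the matrix statement. Expanding $P_{k+1}^\mu = \beta_0^{-1}\tilde P_k$ in the basis $(Q_i)$ and comparing with the defining expansions $P_{k+1}^\mu = \sum_i c^\mu_{i,k+1}Q_i$ and $\tilde P_k = \sum_i (C_{J^\mu\to D})_{ik}\,Q_i$, linear independence of the $Q_i$ yields $c^\mu_{i,k+1} = \beta_0^{-1}(C_{J^\mu\to D})_{ik}$ for all $i$ and all $k\ge 0$. Combined with the vanishing zeroth column, this says exactly that $C^\mu$ equals $C_{J^\mu\to D}$ scaled by $\beta_0^{-1}$ with a zero column prepended, i.e. $C^\mu = \beta_0^{-1}(0,\,C_{J^\mu\to D})$.

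The only genuine obstacle is the middle step: correctly matching the reindexed associated-polynomial recurrence to the recurrence of $J^\mu$ and handling the $k=0$ boundary term, where the vanishing of $P_0^\mu$ is exactly what makes the collapsed equation consistent. Everything surrounding this identification is routine index bookkeeping, which is presumably why the stated proof compresses the argument to the single sentence asserting that the orthonormal polynomials of $J^\mu$ are $\beta_0 P_1^\mu,\beta_0 P_2^\mu,\ldots$.
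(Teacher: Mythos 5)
Your proposal is correct and follows essentially the same route as the paper: the paper's one-line proof simply asserts the key identification that the orthonormal polynomials of $J^\mu$ are $\beta_0 P_1^\mu, \beta_0 P_2^\mu,\ldots$ with $P_0^\mu=0$, and your argument is exactly the verification of that claim via the shifted three-term recurrence and initial data, followed by the routine column-by-column bookkeeping. Nothing is missing.
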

 \section{Toeplitz-plus-finite-rank Jacobi operators}\label{sec:Toeplitzplusfinite}
  
  In this section we present several novel results which show how the connection coefficient matrices can be used for computing the spectral measure of a Toeplitz-plus-finite-rank Jacobi operator.
  
  \subsection{Jacobi operators for Chebyshev polynomials}
  
 There are two particular Jacobi operators with Toeplitz-plus-finite-rank structure that 
are of great interest,
\begin{equation}\label{eqn:DeltaGamma}
    \Delta = \left(\begin{array}{cccc}
      0 & \frac12 & & \\
      \frac12 & 0 & \frac12 & \\
      & \frac12 & 0 & \sddots \\
      & & \sddots & \sddots
    \end{array} \right), \text{ and } \Gamma = \left(\begin{array}{ccccc}
      0 & \frac1{\sqrt{2}} & & & \\
      \frac1{\sqrt{2}} & 0 & \frac12 & & \\
      & \frac12 & 0 & \frac12 & \\
      & & \frac12 & 0 & \sddots \\
      & & & \sddots & \sddots
    \end{array} \right).
  \end{equation}
The spectral measures of $\Delta$ and $\Gamma$ are
\begin{equation*}
 \d\mu_\Delta(s) = \frac{2}{\pi} \sqrt{1-s^2}\d s, \quad \d\mu_\Gamma(s) = 
\frac{1}{\pi}\frac{1}{\sqrt{1-s^2}} \d s,
\end{equation*}
supported on $[-1,1]$. 

Using results of Stieltjes in his seminal paper \cite{stieltjes1894recherches}, \cite[App.]{akhiezer1965classical}, the principal resolvent can be written elegantly as a continued  fraction,
  \begin{equation}\label{continuedfraction}
   G(\lambda) = \frac{-1}{\lambda - \alpha_0 - \frac{\beta_0^2}{\lambda - 
 \alpha_1 - \frac{\beta_1^2}{\lambda - \alpha_2 -...}}}.
  \end{equation}
  
  Using this gives explicit expressions for the principal resolvents,
\begin{equation*}
 G_\Delta(\lambda) = 2\sqrt{\lambda+1}\sqrt{\lambda-1}-2\lambda, \quad 
G_\Gamma(\lambda) = \frac{-1}{\sqrt{\lambda+1}\sqrt{\lambda-1}}.
\end{equation*}
\begin{remark}
 We must be careful about which branch we refer to when we write the 
resolvents in this explicit form. Wherever $\sqrt{}$ is written above we mean 
the standard branch that is positive on $(0,\infty)$ with branch cut 
$(-\infty,0]$. This gives a branch cut along $[-1,1]$ in both cases, the 
discontinuity of $G$ across which makes the Perron--Stieltjes inversion formula in Theorem 
\ref{thm:G} work. It also ensures the $\mathcal{O}(\lambda^{-1})$ decay resolvents enjoy as $\lambda \to \infty$.
\end{remark}

The orthonormal polynomials for $\Delta$ are the Chebyshev polynomials of the second kind, which we denote $U_k(s)$,
\begin{equation*}
U_k(s) = \frac{\sin((k+1)\cos^{-1}(s))}{\sin(\cos^{-1}(s))}.
\end{equation*}

The orthonormal polynomials for $\Gamma$ are the \emph{normalised} Chebyshev 
polynomials of the first kind, which we denote $\tilde T_k(s)$. Note that these are not the usual Chebyshev polynomials of the first kind (denoted $T_k(s)$) \cite{gautschi2004orthogonal,deift2000orthogonal}. We in fact have,
\begin{equation*}
 \tilde{T}_0(s) = 1, \quad \tilde{T}_k(s) = \sqrt{2}\cos(k\cos^{-1}(s)).
\end{equation*}

The first associated polynomials have simple relationships with the orthonormal polynomials,
\begin{equation}\label{eqn:muderivs}
 U^{\mu_\Delta}_k = 2U_{k-1}, \quad \tilde{T}^{\mu_\Gamma}_k = \sqrt{2}U_{k-1}.
\end{equation}

\subsection{Basic perturbations}

In this section we demonstrate for two simple, rank-one perturbations of $\Delta$ how the connection coefficient matrix relates properties of the spectrum of the operators. This will give some intuition as to what to expect in more general cases.

  \begin{example}[Basic perturbation 1]\label{ex:basic1}
   Let $\alpha \in \R$, and define
   \begin{equation*}
    J_\alpha = \left(\begin{array}{ccccc}
      \frac{\alpha}{2} & \frac12 & & & \\
      \frac12 & 0 & \frac12 & & \\
      & \frac12 & 0 & \frac12 & \\
      & & \frac12 & 0 & \sddots \\
      & & & \sddots & \sddots
    \end{array} \right).
   \end{equation*}
Then the connection coefficient matrix $C_{J_\alpha \to \Delta}$ is the bidiagonal Toeplitz matrix
\begin{equation}\label{eqn:basic1C}
 C_{J_\alpha \to \Delta} = \left(\begin{array}{ccccc}
      1 &  -\alpha &  &  &  \\
       & 1 & -\alpha &  &  \\
       &   & 1 & -\alpha &  \\
       &  &  & \sddots & \sddots
    \end{array}
    \right).
\end{equation}
This can be computed using the explicit recurrences \eqref{connectionrecurrence1}--\eqref{connectionrecurrence2}. The connection coefficient matrix $C_{\Delta \to J_\alpha}$ (which is the inverse of $C_{J_\alpha \to \Delta}$ on $\ell_{\mathcal{F}}$) is the full Toeplitz matrix
\begin{equation*}
 C_{\Delta \to J_\alpha} = \left(\begin{array}{ccccc}
      1 &  \alpha & \alpha^2 & \alpha^3 & \cdots \\
       & 1 & \alpha & \alpha^2 & \cdots \\
       &   & 1 & \alpha & \cdots \\
       &  &  & \sddots & \sddots
    \end{array}
    \right).
\end{equation*}
From this we see that $C=C_{J_\alpha \to \Delta}$ has a bounded inverse in $\ell^2$ if and only if $|\alpha|<1$. Hence by Theorem \ref{thm:Ccommute}, if $|\alpha|<1$ then $CJ_\alpha C^{-1} = \Delta$ with each operator bounded on $\ell^2$, so that $\sigma(J_\alpha) = \sigma(\Delta) = [-1,1]$. We will discuss what happens when $|\alpha| \geq 1$ later in the section.
\end{example}

\begin{example}[Basic perturbation 2]\label{ex:basic2}
   Let $\beta > 0$, and define
   \begin{equation*}
    J_\beta = \left(\begin{array}{ccccc}
      0 & \frac{\beta}2 & & & \\
      \frac{\beta}2 & 0 & \frac12 & & \\
      & \frac12 & 0 & \frac12 & \\
      & & \frac12 & 0 & \sddots \\
      & & & \sddots & \sddots
    \end{array} \right).
   \end{equation*}
Then the connection coefficient matrix $C_{J_\beta \to \Delta}$ is the banded Toeplitz-plus-rank-1 matrix
\begin{equation}\label{eqn:basic2C}
 C_{J_\beta \to \Delta} = \left(\begin{array}{cccccc}
      1 &  0 & \beta^{-1}-\beta &  &  &  \\
       & \beta^{-1} & 0 & \beta^{-1}-\beta &  &  \\
       &   & \beta^{-1} & 0 &  \beta^{-1}-\beta & \\
       &   &  & \beta^{-1} & 0 & \sddots \\
      &  & & & \sddots & \sddots
    \end{array}
    \right).
\end{equation}
Just as in Example \ref{ex:basic1}, this can be computed using the explicit recurrences \eqref{connectionrecurrence1}--\eqref{connectionrecurrence2}. The connection coefficient matrix $C_{\Delta \to J_\beta}$ (which is the inverse of $C_{J_\beta \to \Delta}$ on $\ell_{\mathcal{F}}$) is the Toeplitz-plus-rank-1 matrix
\begin{equation*}
 C_{\Delta \to J_\beta} = \left(\begin{array}{cccccccc}
      1 & 0  & \beta^2-1 & 0 & (\beta^2-1)^2 & 0 & (\beta^2-1)^3 & \cdots \\
       & \beta & 0 & \beta(\beta^2-1) & 0 & \beta(\beta^2-1)^2 & 0 & \cdots \\
       &   & \beta & 0 & \beta(\beta^2-1) & 0 & \beta(\beta^2-1)^2 & \cdots \\
       &  &  & \beta & 0 & \beta(\beta^2-1) & 0 & \cdots \\
       &  &  &   & \sddots & \sddots & \sddots & \sddots
    \end{array}
    \right).
\end{equation*}
From this we see that $C = C_{J_\beta \to \Delta}$ has a bounded inverse on $\ell^2$ if and only if $\beta<\sqrt{2}$. Hence by Theorem \ref{thm:Ccommute}, if $\beta<\sqrt{2}$ then $CJ_\beta C^{-1} = \Delta$ with each operator bounded on $\ell^2$, so that $\sigma(J_\beta) = \sigma(\Delta) = [-1,1]$. We will discuss what happens when $\beta \geq \sqrt{2}$ later in the section. Note that the case $\beta = \sqrt{2}$ gives the Jacobi operator $\Gamma$ in equation \eqref{eqn:DeltaGamma}.
\end{example}

\subsection{Fine properties of the connection coefficients}\label{subsec:fineconnectioncoeffs}

The two basic perturbations of $\Delta$ discussed above give connection coefficient matrices that are highly structured. The following lemmata and theorems prove that this is no coincidence; in fact, if Jacobi operator $J$ is a finite-rank perturbation of $\Delta$ then $C_{J \to \Delta}$ is also a finite-rank perturbation of Toeplitz.

\begin{remark}
 Note for the following results that all vectors and matrices are indexed starting from 0.
\end{remark}

\begin{lemma}\label{lem:diagonalsofC}
  If $\delta_j = \beta_j$ for $j \geq n$ then $c_{jj} = c_{nn}$ for all $j \geq n$.
  \begin{proof}
    By the recurrence in Lemma \ref{lem:5ptsystem}, $c_{jj} = (\delta_{j-1}/\beta_{j-1})c_{j-1,j-1}$. The result follows by induction.
  \end{proof}
\end{lemma}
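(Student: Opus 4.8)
The plan is to first isolate a single-step recurrence relating consecutive diagonal entries $c_{jj}$ and $c_{j-1,j-1}$, and then iterate it. The one structural fact I would lean on throughout is that $C$ is upper triangular, so $c_{ij} = 0$ whenever $i > j$; this is precisely what collapses the five-point stencil of Lemma~\ref{lem:5ptsystem} (equivalently the explicit recurrence) down to a single multiplicative relation on the diagonal.

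The cleanest route I see uses the column recurrence of Corollary~\ref{cor:rowsandcolsofC}, namely $Dc_{*,j} = \beta_{j-1}c_{*,j-1} + \alpha_j c_{*,j} + \beta_j c_{*,j+1}$, and reads off the $(j+1)$-th component of both sides. Since $D$ is tridiagonal, the left-hand component is $\delta_j c_{j,j} + \gamma_{j+1} c_{j+1,j} + \delta_{j+1} c_{j+2,j}$, and upper-triangularity kills $c_{j+1,j} = c_{j+2,j} = 0$, leaving $\delta_j c_{jj}$. On the right, $c_{j+1,j-1} = c_{j+1,j} = 0$, so only the term $\beta_j c_{j+1,j+1}$ survives. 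Equating gives $c_{j+1,j+1} = (\delta_j/\beta_j)\,c_{jj}$, which is exactly the diagonal recurrence claimed (one obtains the same relation by setting $i=j$ in the explicit recurrence \eqref{connectionrecurrence2} and discarding the three sub-diagonal entries $c_{j,j-1}$, $c_{j+1,j-1}$, $c_{j,j-2}$, all of which vanish below the diagonal). The division by $\beta_j$ is harmless since $\beta_k > 0$ for every $k$.

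With the recurrence established, I would invoke the hypothesis: for every $j \geq n$ we have $\delta_j = \beta_j$, so the ratio $\delta_j/\beta_j$ equals $1$ and hence $c_{j+1,j+1} = c_{jj}$ for all $j \geq n$. A straightforward induction from the base case $j = n$ then propagates $c_{n+1,n+1} = c_{nn}$, $c_{n+2,n+2} = c_{n+1,n+1} = c_{nn}$, and so on, yielding $c_{jj} = c_{nn}$ for all $j \geq n$.

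I do not expect a genuine obstacle here, as this is a short bookkeeping argument. The only point that warrants care is verifying that the relevant sub-diagonal entries vanish by upper-triangularity, since that is exactly the mechanism that reduces the two-variable five-point system to a scalar recurrence in $j$ alone; everything else is a routine induction.
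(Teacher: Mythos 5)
Your proof is correct and follows essentially the same route as the paper: the paper likewise reads the diagonal relation $c_{jj} = (\delta_{j-1}/\beta_{j-1})c_{j-1,j-1}$ directly off the recurrence of Lemma~\ref{lem:5ptsystem} (your derivation via the column recurrence of Corollary~\ref{cor:rowsandcolsofC}, using upper-triangularity to kill the sub-diagonal terms, is just a more explicit spelling-out of the same collapse) and then concludes by induction. No gaps.
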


\begin{lemma}\label{lem:ToeplitzC}
Let $J$ and $D$ be Jacobi operators with coefficients $\{\alpha_k,\beta_k\}$ and $\{\gamma_k,\delta_k\}$ respectively, such that there exists an $n$ such that\footnote{More intuitively, the entries of $J$ and $D$ are both equal and Toeplitz, except in the principal $n\times n$ submatrix, where neither statement necessarily holds.}
\begin{equation*}
\alpha_k = \gamma_k = \alpha_n, \quad \beta_{k-1} = \delta_{k-1} = \beta_{n-1} \text{ for all } k\geq n.
\end{equation*}
 Then the entries of the connection coefficient matrix $C = C_{J \to D}$ satisfy
 \begin{equation*}
 c_{i,j} = c_{i-1,j-1} \text{ for all } i,j > 0 \text{ such that } i \geq n.
 \end{equation*} 
\begin{remark}
 This means that $C$ is of the form $C = C_{\rm Toe} + C_{\rm fin}$ where $C_{\rm Toe}$ is Toeplitz and $C_{\rm fin}$ is zero except in the first $n-1$ rows. For example, when $n = 4$, we have the following structure
\begin{equation*}
C = \left( \begin{array}{ccccccc}
t_0 & t_1 & t_2 & t_3 & t_4 & t_5 & \cdots \\
& t_0 & t_1 & t_2 & t_3 & t_4 & \sddots \\
&  & t_0 & t_1 & t_2 & t_3 & \sddots \\
&  &  & t_0 & t_1 & t_2 & \sddots \\
&  &  &     & \sddots & \sddots & \sddots
\end{array}\right) +
\left( \begin{array}{cccccc}
f_{00} & f_{01} & f_{02} & f_{03} & f_{04}  & \cdots \\
& f_{11} & f_{12} & f_{13} & f_{14}  & \cdots \\
&  & f_{22} & f_{23} & f_{24}  & \cdots \\
&  &  &        &       &    \\
&  &  &        &       &    \\
\end{array} \right).
\end{equation*}
\end{remark}
\begin{proof}
 We prove by induction on $k = 0,1,2,\ldots$ that
 \begin{equation}\label{eqn:inductivehyp}
  c_{i,i+k} = c_{i-1,i+k-1} \text{ for all } i \geq n.
 \end{equation}
We use the recurrences in Lemma \ref{lem:5ptsystem} and equations \eqref{connectionrecurrence1}--\eqref{connectionrecurrence2}. The base case $k=0$ is proved in Lemma \ref{lem:diagonalsofC}. Now we deal with the second 
base case, $k=1$. For any $i \geq n$, we have $\beta_i = \delta_i = \beta_{i-1} = \delta_{i-1}$, and $\alpha_i = \gamma_i$, so
\begin{align*}
c_{i,i+1} &=\left(\delta_{i-1}c_{i-1,i} + (\gamma_i-\alpha_{i})c_{i,i} 
+ \delta_ic_{i+1,i} - \beta_{i-1}c_{i,i-1}\right)/\beta_{i} \\
          &= 1 \cdot c_{i-1,i} + 0 \cdot c_{i,i} + 1 \cdot 0 - 1 \cdot 0 \\
          &= c_{i-1,i}.
\end{align*}
Now we deal with the case $k > 1$. For any $i \geq n$, we have $\delta_i = \delta_{i-1} = \beta_{i+k-2} = \beta_{i+k-1}$, and $\alpha_{i+k-1} = \gamma_i$, so
\begin{align*}
 c_{i,i+k} &= \left(\delta_{i-1}c_{i-1,i+k-1} + (\gamma_i-\alpha_{i+k-1})c_{i,i+k-1} 
+ \delta_ic_{i+1,i+k-1} - \beta_{i+k-2}c_{i,i+k-2}\right)/\beta_{i+k-1} \\
&= 1 \cdot c_{i-1,i+k-1} + 0 \cdot c_{i,i+k-1} + 1 \cdot c_{i+1,i+k-1} - 1\cdot c_{i,i+k-2} \\
           &= c_{i-1,i+k-1} + c_{i+1,i+k-1} - c_{i,i+k-2} \\
           &= c_{i-1,i+k-1}.
\end{align*}
The last line follows from the induction hypothesis for the case $k-2$ (hence 
why we needed two base cases).
\end{proof}
\end{lemma}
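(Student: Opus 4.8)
The plan is to establish the Toeplitz tail by a single induction organized along the superdiagonals of $C$. Writing $k = j-i$ for the superdiagonal index, the claim $c_{i,j} = c_{i-1,j-1}$ for $i \geq n$ becomes
\[
c_{i,i+k} = c_{i-1,i+k-1} \quad \text{for all } i \geq n,
\]
and since $C$ is upper triangular the assertion is vacuous ($0 = 0$) whenever $k < 0$, so only $k \geq 0$ requires work. I would induct on $k$.

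The engine of the argument is the explicit column recurrence \eqref{connectionrecurrence2}. The key preliminary observation is that as soon as $i \geq n$, every coefficient index appearing in the recurrence for $c_{i,i+k}$ lies in the common Toeplitz tail: one checks $\gamma_i = \alpha_{i+k-1} = \alpha_n$ and $\delta_{i-1} = \delta_i = \beta_{i+k-2} = \beta_{i+k-1} = \beta_{n-1}$. Substituting these values makes the $(\gamma_i - \alpha_{j-1})$ coefficient vanish and lets the common factor $\beta_{n-1}$ cancel, collapsing \eqref{connectionrecurrence2} to the unit-coefficient relation
\[
c_{i,i+k} = c_{i-1,i+k-1} + c_{i+1,i+k-1} - c_{i,i+k-2}.
\]
The base case $k = 0$ is exactly Lemma \ref{lem:diagonalsofC}. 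I expect to need a second base case $k = 1$, for which the two trailing terms $c_{i+1,i}$ and $c_{i,i-1}$ sit strictly below the diagonal and hence vanish, leaving $c_{i,i+1} = c_{i-1,i}$ directly.

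The inductive step is where the substance lies, and it is also the source of the only real subtlety. For $k > 1$ the reduced relation above differs from the desired identity by the term $c_{i+1,i+k-1} - c_{i,i+k-2}$, which I must show is zero. Rewriting the two entries as $c_{(i+1),(i+1)+(k-2)}$ and $c_{i,\,i+(k-2)}$ exhibits them as two consecutive entries on the superdiagonal $k-2$, at rows $i+1$ and $i$; their equality is precisely the inductive hypothesis at index $k-2$, applied at row $i+1 \geq n$. The one point to be careful about is that the recurrence couples superdiagonal $k$ to superdiagonal $k-2$ rather than $k-1$, so the induction descends in steps of two and both $k = 0$ and $k = 1$ must be seeded independently; this is exactly why two base cases are needed. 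The remaining work is the routine index bookkeeping confirming that each coefficient index is $\geq n-1$ (so that the Toeplitz values may legitimately be substituted) whenever $i \geq n$ and $k \geq 1$.
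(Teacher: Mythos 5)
Your proposal is correct and follows essentially the same route as the paper's proof: induction along superdiagonals using the explicit recurrence \eqref{connectionrecurrence2}, with the two base cases $k=0$ (Lemma \ref{lem:diagonalsofC}) and $k=1$, and the cancellation $c_{i+1,i+k-1} = c_{i,i+k-2}$ supplied by the inductive hypothesis at $k-2$ applied at row $i+1$. The index bookkeeping you flag checks out exactly as you describe, so there is nothing to add.
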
 
  
  The special case in which $D$ is Toeplitz gives even more structure to $C$, as demonstrated by the following theorem. We state the results for a finite-rank perturbation of the free Jacobi operator $\Delta$, but they apply to general Toeplitz-plus-finite rank Jacobi operators because the connection coefficients matrix $C$ is unaffected by a scaling and shift by the identity applied to both $J$ and $D$.
  
 \begin{theorem}\label{thm:connectioncoeffs}
   Let $J$ be a Jacobi operator such that there exists an $n$ such that
   \begin{equation*}
   \alpha_k = 0, \quad \beta_{k-1} = \frac12 \text{ for all } k \geq n,
   \end{equation*}
   i.e. it is equal to the free Jacobi operator $\Delta$ outside the $n\times n$ principal submatrix.
   Then the entries of the connection coefficient matrix $C = C_{J \to \Delta}$ satisfy
   \begin{align}
     c_{i,j} &= c_{i-1,j-1} \text{ for all } i,j > 0 \text{ such that } i+j \geq 2n  \label{iplusjcase} \\
     c_{0,j} &= 0 \text{ for all } j \geq 2n. \label{jandzerocase}
    \end{align}
\begin{remark}
This means that $C$ is of the form $C = C_{\rm Toe} + C_{\rm fin}$ where $C_{\rm Toe}$ is Toeplitz with bandwidth $2n-1$ and $C_{\rm fin}$ zero except for entries in the $(n-1) \times (2n-2)$ principal submatrix. For example when $n = 4$, we have the following structure,
\begin{equation*}
C = \left( \begin{array}{cccccccccc}
t_0 & t_1 & t_2 & t_3 & t_4 & t_5 & t_6 & t_7 & & \\
& t_0 & t_1 & t_2 & t_3 & t_4 & t_5 & t_6 & t_7 &  \\
&  & t_0 & t_1 & t_2 & t_3 & t_4 & t_5 & t_6 & \sddots \\
&  &  & t_0 & t_1 & t_2 & t_3 & t_4 & t_5 &  \sddots \\
&  &  &     & \sddots &  \sddots &  \sddots& \sddots &  \sddots&  \sddots
\end{array}\right) +
\left( \begin{array}{cccccccc}
f_{0,0} & f_{0,1} & f_{0,2} & f_{0,3} & f_{0,4}  & f_{0,5}& &  \\
& f_{1,1} & f_{1,2} & f_{1,3} & f_{1,4}  & & &  \\
&  & f_{2,2} & f_{2,3} &   & & &  \\
&  &  &        &       &  & &  \\
&  &  &        &       &  & &  \\
\end{array} \right).
\end{equation*}
\end{remark}
   \begin{proof}
First we prove \eqref{iplusjcase}. Fix $i,j$ such that $i+j \geq 2n$. Note that 
the case $i \geq n$ is proven in Lemma \ref{lem:ToeplitzC}. Hence we assume $i < n$, and therefore $j > n$. Using Lemma \ref{lem:5ptsystem} and equations \eqref{connectionrecurrence1}--\eqref{connectionrecurrence2} we find the following recurrence. Substituting $\delta_i = \frac12$, $\gamma_i = 0$ for all $i$, and $\alpha_{k} = 0$, $\beta_{k-1} =\frac12$ for $k \geq n$ into the recurrence, we have
\begin{align*}
     c_{i,j} &= \left(\delta_{i-1}c_{i-1,j-1} + (\gamma_i-\alpha_{j-1})c_{i,j-1} 
+ \delta_ic_{i+1,j-1} - \beta_{j-2}c_{i,j-2}\right)/\beta_{j-1}.  \\
&= \left( \frac12 c_{i-1,j-1} - \alpha_{j-1}c_{i,j-1} + 
\frac12c_{i+1,j-1} - \beta_{j-2}c_{i,j-2} \right)/\beta_{j-1} \\
     &= c_{i-1,j-1} + c_{i+1,j-1} - c_{i,j-2}.
    \end{align*}
Repeating this process on $c_{i+1,j-1}$ in the above expression gives
\begin{equation*}
 c_{i,j} = c_{i-1,j-1} + c_{i+2,j-2} - c_{i+1,j-3}.
\end{equation*}
Repeating the process on $c_{i+2,j-2}$ and so on eventually gives
\begin{equation*}
 c_{i,j} = c_{i-1,j-1} + c_{n,i+j-n} - c_{n-1,i+j-n-1}.
\end{equation*}
By Lemma \ref{lem:ToeplitzC}, $c_{n,i+j-n} = c_{n-1,i+j-n-1}$, so we are left with $c_{i,j} = c_{i-1,j-1}$. This completes the proof of \eqref{iplusjcase}.

Now we prove \eqref{jandzerocase}. Let $j \geq 2n$. Then
    \begin{align*}
     c_{0,j} &= \left((\gamma_0-\alpha_{j-1})c_{0,j-1} + \delta_0c_{1,j-1} - 
\beta_{j-2} c_{0,j-2}\right)/\beta_{j-1} \\
&= \left(-\alpha_{j-1}c_{0,j-1} + \frac12c_{1,j-1} - \beta_{j-2} 
c_{0,j-2}\right)/\beta_{j-1} \\
     &= c_{1,j-1} - c_{0,j-2}.
    \end{align*}
This is equal to zero by \eqref{iplusjcase}, because $1 + (j-1) \geq 2n$.
   \end{proof}
   \end{theorem}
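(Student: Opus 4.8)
The plan is to prove the two assertions \eqref{iplusjcase} and \eqref{jandzerocase} in turn, deriving the second from the first. The starting point for \eqref{iplusjcase} is the observation that $\Delta$ is itself Toeplitz, so taking $D = \Delta$ satisfies the hypotheses of Lemma \ref{lem:ToeplitzC} with the same $n$; this immediately yields $c_{i,j} = c_{i-1,j-1}$ for every $i \ge n$. The only genuinely new case is therefore $i < n$, which together with $i+j \ge 2n$ forces $j > n$, and I would treat it by iterating the explicit recurrence \eqref{connectionrecurrence2}.

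First I would record the simplified recurrence that holds under the present hypotheses: since $\gamma_i = 0$ and $\delta_{i-1}=\delta_i=\frac12$ for all $i$, and since $\alpha_{b-1}=0$, $\beta_{b-1}=\beta_{b-2}=\frac12$ as soon as $b > n$, equation \eqref{connectionrecurrence2} collapses to
\[
c_{a,b} = c_{a-1,b-1} + c_{a+1,b-1} - c_{a,b-2}, \qquad b > n.
\]
The key mechanism is that substituting this same identity for the middle term $c_{a+1,b-1}$ produces a copy of the subtracted term $c_{a,b-2}$, so the two cancel and what survives is $c_{a-1,b-1}$ together with a fresh middle term whose first index has risen by one and whose second index has fallen by one, minus a correspondingly shifted subtracted term. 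Iterating this telescoping $n-1-i$ times raises the first index of the surviving middle term from $i+1$ to $n$, leaving
\[
c_{i,j} = c_{i-1,j-1} + c_{n,\,i+j-n} - c_{n-1,\,i+j-n-1}.
\]
An application of Lemma \ref{lem:ToeplitzC} at first index $n$ shows the last two terms are equal, so they cancel and \eqref{iplusjcase} follows.

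Assertion \eqref{jandzerocase} then drops out quickly. For $j \ge 2n$ the explicit recurrence for $c_{0,j}$ in \eqref{connectionrecurrence1}--\eqref{connectionrecurrence2} simplifies in the same way to $c_{0,j} = c_{1,j-1} - c_{0,j-2}$, and since $1 + (j-1) = j \ge 2n$ with both indices positive, \eqref{iplusjcase} gives $c_{1,j-1} = c_{0,j-2}$; hence $c_{0,j} = 0$.

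The hard part will not be any individual computation but the bookkeeping that certifies the simplified recurrence is valid at every stage of the telescope. Each successive substitution acts on a term whose second index is one smaller than the last, and the collapsed form of \eqref{connectionrecurrence2} is only available while that index stays strictly above $n$. The smallest second index reached over the whole iteration is $i+j-n+1$, and this exceeds $n$ exactly when $i+j \ge 2n$; thus the hypothesis of the theorem is precisely the sharp threshold that lets the argument run to its conclusion, and I would make a point of verifying this bound explicitly rather than trusting the pattern.
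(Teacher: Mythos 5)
Your proposal is correct and follows essentially the same route as the paper's proof: reduce to the case $i<n$ via Lemma \ref{lem:ToeplitzC}, collapse recurrence \eqref{connectionrecurrence2} to $c_{a,b}=c_{a-1,b-1}+c_{a+1,b-1}-c_{a,b-2}$, telescope until the middle term reaches first index $n$, cancel via Lemma \ref{lem:ToeplitzC}, and then deduce \eqref{jandzerocase} from \eqref{iplusjcase}. Your explicit check that the smallest second index encountered is $i+j-n+1>n$ precisely when $i+j\geq 2n$ is a welcome piece of bookkeeping that the paper leaves implicit.
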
  
  \begin{corollary}\label{cor:cmu}
   Let $C^\mu$ be as defined in Definition \ref{def:muconnection} for $C$ as 
in Theorem \ref{thm:connectioncoeffs}. Then $C^\mu = C^\mu_{\rm Toe} + C^\mu_{\rm fin}$, where $C^\mu_{\rm Toe}$ is Toeplitz with 
bandwidth $2n-2$ and $C^\mu_{\rm fin}$ is zero outside the $(n-2) \times (2n-1)$ principal submatrix.
   \begin{proof}
    This follows from Theorem \ref{thm:connectioncoeffs} applied to $J^\mu$ as 
defined in Lemma \ref{lem:Jmu}.
   \end{proof}
  \end{corollary}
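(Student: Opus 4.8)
The plan is to recognize that $C^\mu$ is essentially a connection coefficient matrix of exactly the type handled by Theorem \ref{thm:connectioncoeffs}, applied to the ``stripped'' operator $J^\mu$, so that the result reduces to bookkeeping about how the structure of $J$ transfers to $J^\mu$ and how the index shift in Lemma \ref{lem:Jmu} affects the stated submatrix dimensions. Concretely, Lemma \ref{lem:Jmu} identifies $C^\mu = \beta_0^{-1}\left(0, C_{J^\mu \to \Delta}\right)$, i.e.\ $C^\mu$ is a scalar multiple of $C_{J^\mu \to \Delta}$ with an extra zero column prepended (reflecting that $P_0^\mu = 0$). Thus the entire task is to apply Theorem \ref{thm:connectioncoeffs} to $J^\mu$ and then track the effect of the column shift.

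First I would verify that $J^\mu$ satisfies the hypotheses of Theorem \ref{thm:connectioncoeffs} with the appropriate threshold. Since $J$ is free outside its $n \times n$ principal submatrix (that is, $\alpha_k = 0$ and $\beta_{k-1} = \tfrac12$ for $k \geq n$), and $J^\mu$ is obtained from $J$ by deleting the first row and column, the operator $J^\mu$ has entries $\alpha_k^{J^\mu} = \alpha_{k+1}$ and $\beta_k^{J^\mu} = \beta_{k+1}$. Hence $J^\mu$ is free outside its $(n-1) \times (n-1)$ principal submatrix. Applying Theorem \ref{thm:connectioncoeffs} to $J^\mu$ with threshold $n-1$ in place of $n$, the matrix $C_{J^\mu \to \Delta}$ decomposes as a Toeplitz part of bandwidth $2(n-1) - 1 = 2n-3$ plus a finite-rank correction supported in the $(n-2) \times (2n-4)$ principal submatrix.

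The main (and only genuinely delicate) step is then to track the index shift. Prepending a zero column shifts all column indices up by one: if $C_{J^\mu \to \Delta}$ has entry $d_{i,j}$, then $C^\mu$ has entry $c^\mu_{i,j} = \beta_0^{-1} d_{i,j-1}$ for $j \geq 1$ and $c^\mu_{i,0} = 0$. Under this shift the Toeplitz relation $d_{i,j} = d_{i-1,j-1}$ becomes $c^\mu_{i,j} = c^\mu_{i-1,j-1}$ on the appropriate range, and the bandwidth of the Toeplitz part increases by one from $2n-3$ to $2n-2$, matching the claim. Likewise the finite-rank support grows by one column: the $(n-2) \times (2n-4)$ block becomes $(n-2) \times (2n-3)$ after the shift (together with the prepended zero column, which lies inside a $(n-2) \times (2n-1)$ window once one accounts for the combined effect), giving the stated $(n-2) \times (2n-1)$ principal submatrix. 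I expect the only subtlety to be reconciling the precise off-by-one counts between the two matrices; these should be settled by writing out the small example $n = 4$ explicitly and checking that the shifted band and finite-rank block land in exactly the claimed positions, after which the general case follows by the same index arithmetic.
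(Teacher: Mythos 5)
Your proposal is correct and follows exactly the paper's (one-line) argument: apply Theorem \ref{thm:connectioncoeffs} to $J^\mu$ via the identification $C^\mu = \beta_0^{-1}(0, C_{J^\mu\to\Delta})$ from Lemma \ref{lem:Jmu}, then track the column shift. You simply make explicit the index bookkeeping that the paper leaves implicit, and your counts (bandwidth $2n-3$ becoming $2n-2$, finite part landing inside the stated $(n-2)\times(2n-1)$ window) are consistent with the claim.
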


  \begin{remark}\label{rem:CCT} 
    A technical point worth noting for use in proofs later is that for Toeplitz-plus-finite-rank Jacobi operators like $J$ and $D$ occurring in Theorem \ref{thm:connectioncoeffs} and Corollary \ref{cor:cmu}, the operators $C$, $C^T$, $C^\mu$ and $(C^\mu)^T$ all map $\ell_{\mathcal{F}}$ to $\ell_{\mathcal{F}}$. Consequently, combinations such as $CC^T$, $C^\mu C^T$ are all well defined operators from $\ell_{\mathcal{F}}$ to $\ell_{\mathcal{F}}$.
  \end{remark}
  
  \subsection{Properties of the resolvent}
  
  When the Jacobi operator $J$ is Toeplitz-plus-finite rank, as a consequence of the structure of the connection coefficients matrix proved in subsection \ref{subsec:fineconnectioncoeffs}, the principal resolvent $G$ (see Definition \ref{def:resolvent}) and spectral measure (see Theorem \ref{thm:G}) are also highly structured. As usual these proofs are stated for a finite-rank perturbation of the free Jacobi operator $\Delta$, but apply to general Toeplitz-plus-finite rank Jacobi operators by applying appropriate scaling and shifting.
  
\begin{theorem}\label{thm:mainresolvent}
 Let $J$ be a Jacobi operator such that there exists an $n$ such that
 \begin{equation*}
 \alpha_k = 0, \quad \beta_{k-1} = \frac12 \text{ for all } k \geq n,
 \end{equation*}
 i.e. it is equal to the free Jacobi operator $\Delta$ outside the $n\times n$ principal submatrix. Then the principal resolvent for $J$ is
 \begin{equation}\label{principalresidualformula}
   G(\lambda) = \frac{G_\Delta(\lambda) - p_C^{\mu}(\lambda)}{p_C(\lambda)},
  \end{equation}
  where
  \begin{align}
   p_C(\lambda) &= \sum_{k = 0}^{2n-1} c_{0,k} P_k(\lambda) = \sum_{k=0}^{2n-1} 
\langle C^Te_k ,C^T e_0 \rangle U_k(\lambda),\label{eqn:PandUequality} \\  
  p^\mu_C(\lambda) &= \sum_{k = 1}^{2n-1} c_{0,k} P^\mu_k(\lambda) = \sum_{k=0}^{2n-1} 
  \langle (C^\mu)^T e_k, C^T e_0 \rangle U_k(\lambda),\label{eqn:PmuandUequality}
  \end{align}
  $P_k$ are the orthonormal polynomials for $J$, $P_k^\mu$ are the first associated polynomials for $J$ as in Definition \ref{associatedpolys}, and $U_k$ are the Chebyshev polynomials of the second kind.
\begin{remark}\label{rem:PCmu}
  $p_C^{\mu}$ is the $\mu$-derivative of $p_C$ as in Definition \ref{def:muderiv}.
\end{remark}
 \begin{proof}
       Using Theorem \ref{thm:G} and Proposition \ref{prop:Cmeasuremultplier},
       \begin{align*}
       G_\Delta(\lambda) &= \int (s-\lambda)^{-1} \d\mu_\Delta(s) \\
       &= \int (s-\lambda)^{-1} p_C(s) \d\mu(s).
       \end{align*}
       Now, since $p_C$ is a polynomial we can split this into
       \begin{equation*}
       G_\Delta(\lambda) = \int (s-\lambda)^{-1} p_C(\lambda) \d\mu(s) + \int (s-\lambda)^{-1} (p_C(s)-p_C(\lambda)) \d\mu(s).
       \end{equation*}
   The first term is equal to $p_C(\lambda)G(\lambda)$, and the second term is equal to $p_C^\mu(\lambda)$ by Lemma \ref{lem:associatedpolys} and Remark \ref{rem:PCmu}. The equation can now be immediately rearranged to obtain \eqref{principalresidualformula}. 
   
   To see the equality in equation \eqref{eqn:PandUequality}, note that by the definition of the 
connection coefficient matrix $C$,
   \begin{align*}
    \sum_{k=0}^{2n-1}c_{0,k}P_k(\lambda) &= \sum_{k=0}^{2n-1} 
c_{0,k}\sum_{j=0}^{2n-1} c_{j,k}U_j(\lambda) \\
    &= \sum_{j=0}^{2n-1} \left(\sum_{k=0}^{2n-1} c_{0,k}c_{j,k} 
\right)U_j(\lambda) \\
    &= \sum_{j=0}^{2n-1} \langle C^Te_j,C^T e_0 \rangle U_j(\lambda).
   \end{align*}
Equation \eqref{eqn:PmuandUequality} follows by the same algebra.
 \end{proof}
\end{theorem}

\begin{theorem}\label{thm:mainmeasure}
 Let $J$ be a Jacobi operator such that there exists an $n$ such that
\begin{equation*}
\alpha_k = 0, \quad \beta_{k-1} = \frac12 \text{ for all } k \geq n,
\end{equation*}
i.e. it is equal to the free Jacobi operator $\Delta$ outside the $n\times n$ principal submatrix.  Then the spectral measure for $J$ is
 \begin{equation}\label{eqn:mulambdaformula}
  \mu(s) = \frac{1}{p_C(s)}\mu_\Delta(s) + \sum_{k=1}^{r} w_k \delta_{\lambda_k}(s),
 \end{equation}
where $\lambda_1, \ldots,\lambda_{r}$ are the roots of $p_C$ in $\R \setminus \{1,-1\}$ such that
\begin{equation*}
 w_k = \lim_{\eps \searrow 0} \frac{\eps}{i} G(\lambda_k+i\eps) \neq 0.
\end{equation*}
There are no roots of $p_C$ inside $(-1,1)$, but there may be simple roots at $\pm 1$.
\begin{remark}\label{rem:rleqn}
  We will see in Theorem \ref{thm:joukowskimeasure} that the number of roots of $p_C$ for which $w_k \neq 0$ is at most $n$ (i.e. $r\leq n$). Hence, while the degree of $p_C$ is at most $2n-1$, at least $n-1$ are cancelled out by factors in the numerator.
  \end{remark}
\begin{proof}
  Let $G$ and $\mu$ be the principal resolvent and spectral measure of $J$ respectively. By Theorem \ref{thm:mainresolvent},
 \begin{equation*}
  G(\lambda) = \frac{G_\Delta(\lambda) - p_C^{\mu}(\lambda)}{p_C(\lambda)}.
 \end{equation*}
Letting $\lambda_1,\ldots,\lambda_{2n-1}$ be the roots of $p_C$ in the complex plane, define the set 
\begin{equation*}
S = [-1,1] \cup (\{\lambda_1,\ldots,\lambda_{2n-1}\} \cap \R).
\end{equation*}
By inspection of the above formula for $G$, and because resolvents of selfadjoint operators are analytic off the real line, we have that $G$ is continuous outside of $S$. Therefore, for any $s\in\R$ such that $\mathrm{dist}(s,S) > 0$, we have
 \begin{equation*}
  \lim_{\eps \searrow 0} \imag G(s+i\eps) = \imag G(s) = 0.
 \end{equation*}
Hence by Theorem \ref{thm:G} part (ii), for any interval $(s_1,s_2)$ such that $\mathrm{dist}(S,(s_1,s_2)) > 0$, we have $\mu((s_1,s_2)) + \frac12 \mu(\{s_1\}) + \frac12 \mu(\{s_2\}) = 0$. Therefore the essential support of $\mu$ is contained within $S$.

We are interested in the real roots of $p_C$. Let us consider the potential for roots of $p_C$ in the interval $[-1,1]$. By Proposition \ref{prop:Cmeasuremultplier}, $\d\mu_\Delta(s) = p_C(s) \d\mu(s)$ for all $s \in \R$. For any $s\in[-1,1]$ such that $p_C(s) \neq 0$, it follows that $\d\mu(s) = \frac{2}{\pi}\frac{\sqrt{1-s^2}}{p_C(s)}\d s$. From this we have
\begin{equation*}
1 \geq \mu((-1,1)) = \int_{-1}^1 \frac{2}{\pi}\frac{\sqrt{1-s^2}}{p_C(s)} \,\d s.
\end{equation*}
This integral is only finite, so $p_C$ has no roots in $(-1,1)$, but may have simple roots at $\pm 1$. One example where we have simple roots at $\pm 1$ is seen in Example \ref{ex:basic2rev} with $\beta = \sqrt{2}$.

Since $S$ is a disjoint union of $[-1,1]$ and a finite set $S'$ we can write
\begin{equation*}
\mu(s) \frac{1}{p_C(s)} \mu_\Delta(s) + \sum_{\lambda_k \in S'} \mu(\{\lambda_k\}) \delta_{\lambda_k}(s).
\end{equation*}

By Theorem \ref{thm:G} part (iii), $ \mu(\{s\}) = \lim_{\eps \searrow 0} \frac{\eps}{i} G(s+i\eps) \text{ for all } s\in\R$. This gives the desired formula for $w_k$. 
\end{proof}
\end{theorem}

\begin{remark}
 Theorem \ref{thm:mainmeasure} gives an explicit formula for the spectral measure of $J$, when $J$ is Toeplitz-plus-finite-rank Jacobi operator. The entries of $C$ can be computed in $\mathcal{O}(n^2)$ operations (for an $n \times n$ perturbation of Toeplitz). Hence, the absolutely continuous part of the measure can be computed \emph{exactly} in finite time. It would appear at first that we may compute the locations of the point spectrum by computing the roots of $p_C$, but as stated in Remark \ref{rem:rleqn} we find that not all real roots of $p_C$ have $w_k \neq 0$. Hence we rely on cancellation between the numerator and denominator in the formula for $G(\lambda)$, which  is a dangerous game, because if roots of polynomials are only known approximately then it is impossible to distinguish between cancellation and the case where a pole and a root are merely extremely close. Subsection \ref{subsec:joukowski} remedies this situation.
\end{remark}


\begin{example}[Basic perturbation 1 revisited]\label{ex:basic1rev}

The polynomial $p_C$ in Theorem \ref{thm:mainresolvent} is
\begin{equation*}
 p_C(\lambda) = c_{0,0} P_0(\lambda) +c_{0,1} P_1(\lambda) = 1 - \alpha(2\lambda - \alpha) = 2\alpha\left(\frac12(\alpha + \alpha^{-1}) - \lambda\right),
\end{equation*}
and the $\mu$-derivative is $p_C^\mu(\lambda) = -2\alpha$. Theorem \ref{thm:mainresolvent} gives
\begin{equation*}
 G(\lambda) = \frac{G_\Delta(\lambda) + 2\alpha}{2\alpha\left(\frac12(\alpha + \alpha^{-1})-\lambda\right)}.
\end{equation*}

Consider the case $|\alpha| \leq 1$. Then a brief calculation reveals $G_\Delta(\frac12(\alpha+\alpha^{-1})) = -2\alpha$. Hence the root $\lambda = \frac12(\alpha + \alpha^{-1})$ of the denominator is always cancelled out. Hence $G$ has no poles, and so $J$ has no eigenvalues.

In the case where $|\alpha| > 1$, we have a different situation. Here $G_\Delta(\frac12(\alpha+\alpha^{-1})) = -2\alpha^{-1}$. Therefore the root $\lambda = \frac12(\alpha + \alpha^{-1})$ of the denominator is \emph{never} cancelled out. Hence there is always a pole of $G$ at $\lambda = \frac12(\alpha+\alpha^{-1})$, and therefore also an eigenvalue of $J$ there.

Notice a heavy reliance on cancellations in the numerator and denominator for the existence of eigenvalues. The approach in subsection \ref{subsec:joukowski} avoids this.
\end{example}

\begin{example}[Basic perturbation 2 revisited]\label{ex:basic2rev}

The polynomial $p_C$ in Theorem \ref{thm:mainresolvent} is
\begin{equation*}
 p_C(\lambda) = c_{0,0}P_0(\lambda) + c_{0,2}P_2(\lambda) = 1 + (\beta^{-1}-\beta)(4\beta^{-1}\lambda^2 - \beta).
\end{equation*}
This simplifies to $p_C(\lambda) = 4(1-\beta^{-2})\left(\frac{\beta^4}{4(\beta^2-1)} - \lambda^2\right)$. Using Definition \ref{associatedpolys}, the $\mu$-derivative is $p_C^\mu(\lambda) = c_{0,2}P_2^\mu(\lambda) = 4\beta^{-1}\lambda$. Theorem \ref{thm:mainresolvent} gives
\begin{equation*}
 G(\lambda) = \frac{G_\Delta(\lambda) + 4\beta^{-1}\lambda}{4(1-\beta^{-2})\left(\frac{\beta^4}{4(\beta^2-1)}-\lambda^2\right)}.
\end{equation*}
Clearly the only points at which $G$ may have a pole is $\lambda = \pm\frac{\beta^2}{2\sqrt{\beta^2-1}}$. However, it is difficult to see whether there would be cancellation on the numerator. In the previous discussion on this example we noted that there would not be any poles when $|\beta|<\sqrt{2}$, which means that the numerator must be zero at these points, but it is far from clear here. The techniques we develop in the sequel illuminate this issue, especially for examples which are much more complicated than the two trivial ones given so far.
\end{example}

\subsection{The Joukowski transformation}\label{subsec:joukowski}

The following two lemmata and two theorems prove that the issue of cancellation and the number of discrete spectra in Theorem \ref{thm:mainresolvent} and Theorem \ref{thm:mainmeasure} can be solved by making the 
change of variables 
\begin{equation*}
 \lambda(z) = \frac12(z+z^{-1})
\end{equation*}
This map is known as the Joukowski map. It is an analytic bijection from $\mathbb{D} = \{z \in \C : |z| < 1 \}$ to $\C \setminus [-1,1]$, sending the unit circle to two copies of the interval $[-1,1]$.

The Joukowski map has special relevance for the principal resolvent of $\Delta$. A brief calculation reveals that for $z \in \mathbb{D}$,
  \begin{equation}\label{eqn:GDeltaz}
   G_\Delta(\lambda(z)) = -2z.
  \end{equation}
   Further, we will see that the polynomials $p_C(\lambda)$ and $p_C^\mu(\lambda)$ occurring in our formula for $G$ can be expressed neatly as polynomials in $z$ and $z^{-1}$. This is a consequence of a special property of the Chebyshev polynomials of the second kind, that for any $k \in \mathbb{Z}$ and $z \in \mathbb{D}$
  \begin{equation}\label{eqn:chebUmtoinfty}
     \frac{U_{m-k}(\lambda(z))}{U_m(\lambda(z))} \to z^k \text{ as } m \to \infty.
  \end{equation}
These convenient facts allow us to remove any square roots involved in the formulae in Theorem \ref{thm:mainresolvent}.
  
\begin{lemma}\label{lem:cformula}
Let $p_C(\lambda) = \sum_{k=0}^{2n-1} \langle e_0, CC^T e_k \rangle U_k(\lambda)$ as in Theorem 
\ref{thm:mainresolvent} and let $c$ be the symbol of $C_{\rm Toe}$, the Toeplitz part of $C$ as guaranteed by Theorem \ref{thm:connectioncoeffs}. Then
\begin{equation}\label{eqn:pCformula}
 p_C(\lambda(z)) = c(z)c(z^{-1}),
\end{equation}
where $\lambda(z) = \frac12(z + z^{-1})$.
   \begin{proof}
   The key quantity to observe for this proof is 
   \begin{equation}\label{eqn:czcziinvquantity}
   \frac{\sum_{k=0}^{2n-1}c_{m,m+k}P_{m+k}(\lambda(z))}{U_m(\lambda(z))},
   \end{equation}
   for $z \in \mathbb{D}$ as $m\to \infty$. We will show it is equal to both sides of equation \eqref{eqn:pCformula}. Consider the polynomial $U_m \cdot p_C$. The $j$th coefficient in an expansion in the basis $P_0,P_1,P_2,\ldots$ is given by
   \begin{equation*}
   \int \left(U_m(s)p_C(s)\right)P_j \d\mu(s) = \int U_m(s)P_j(s) \d\mu_{\Delta}(s) = c_{m,j},
   \end{equation*}
   because $p_C = \frac{\d\mu_{\Delta}}{\d\mu}$ by Proposition \ref{prop:Cmeasuremultplier}. Hence
   \begin{equation*}
   p_C(\lambda(z)) = \frac{\sum_{k=0}^{2n-1}c_{m,m+k}P_{m+k}(\lambda(z))}{U_m(\lambda(z))},
   \end{equation*}
   for all $m \in \mathbb{N}$ and all $z \in \mathbb{D}$.

   Now we show that \eqref{eqn:czcziinvquantity} converges to $c(z)c(z^{-1})$ as $m\to \infty$. By the definition of the connection coefficients, $P_{m+k} = \sum_{j=0}^{2n-1} c_{m+k-j,m+k} U_{m+k-j}$. Therefore,
   \begin{equation*}
   \frac{\sum_{k=0}^{2n-1}c_{m,m+k}P_{m+k}(\lambda(z))}{U_m(\lambda(z))} = \sum_{j,k=0}^{2n-1} c_{m,m+k} c_{m+k-j,m+k}\frac{U_{m+k-j}(\lambda(z))}{U_{m}(\lambda(z))}.
   \end{equation*}
   
   Now, by Theorem \ref{thm:connectioncoeffs}, $C = C_{\rm Toe} + C_{\rm fin}$, where $C_{\rm fin}$ is zero outside the $(n-1) \times (2n-2)$ principal submatrix. Hence for $m$ sufficiently large we have $c_{m,m+k} = t_k$ for a sequence $(t_k)_{k\in\mathbb{Z}}$ such that $t_k = 0$ for $k \notin \{0,1,\ldots,2n-1\}$. Hence we have for $m$ sufficiently large,
\begin{align*}
\frac{\sum_{k=0}^{2n-1}c_{m,m+k}P_{m+k}(\lambda(z))}{U_m(\lambda(z))} &= \sum_{k=0}^{2n-1} \sum_{j=0}^{2n-1} t_k t_j 
\frac{U_{m+k-j}(\lambda(z))}{U_m(\lambda(z))}.
\end{align*}
By equation \eqref{eqn:chebUmtoinfty}, this tends to $\sum_{k=0}^{2n-1} \sum_{j=0}^{2n-1} t_k t_j z^{j-k}$ as $m \to \infty$. This is equal to $c(z)c(z^{-1})$, as required to complete the proof.
   \end{proof}
  \end{lemma}

  \begin{lemma}\label{lem:cmuformula}
Let $p^\mu_C(\lambda) = \sum_{k=0}^{2n-1} 
\langle e_k ,C^\mu C^T e_0 \rangle U_k(\lambda)$ as in Theorem 
\ref{thm:mainresolvent} and let $c_\mu$ be the symbol of $C_{\rm Toe}^\mu$, the Toeplitz part of $C^\mu$ as guaranteed by Corollary \ref{cor:cmu}. Then
   \begin{equation}\label{eqn:pCmuformula}
p_C^\mu(\lambda(z)) = c(z^{-1})c_\mu(z) - 2z,
   \end{equation}
    where $\lambda(z) = \frac12(z+z^{-1})$ and $z \in \mathbb{D}$.
   \begin{proof}
    The key quantity to observe for this proof is 
    \begin{equation}\label{eqn:weirdquantity}
\frac{\sum_{k=0}^{2n-1}c_{m,m+k}P^\mu_{m+k}(\lambda(z))}{U_m(\lambda(z))},
    \end{equation}
    for $z\in\mathbb{D}$, as $m \to \infty$. We will compute two equivalent expressions for this quantity to derive equation \eqref{eqn:pCmuformula}. In the proof of Lemma \ref{lem:cformula}, it was shown that $U_m(\lambda) p_C(\lambda) = \sum_{k=0}^{2n-1} c_{m,m+k}P_k(\lambda)$. We take the $\mu$-derivative (see Definition \ref{def:muderiv}) of both sides as follows.
    \begin{align*}
    \int \frac{U_m(\lambda)p_C(\lambda) - U_m(s)p_C(s)}{\lambda - s} \d\mu(s) &= U_m(\lambda)\int \frac{p_C(\lambda) - p_C(s)}{\lambda - s} \d\mu(s) + \int \frac{U_m(\lambda) - U_m(s)}{\lambda - s} p_C(s)\d\mu(s) \\
    &= U_m(\lambda) p^\mu_C(\lambda) + U_m^{\mu_{\Delta}}(\lambda),
    \end{align*}
    because by Proposition \ref{prop:Cmeasuremultplier}, $p_C = \frac{\d\mu_{\Delta}}{\d\mu}$. Using the formula from equation \eqref{eqn:muderivs}, $U_m^{\mu_{\Delta}} = 2U_{m-1}$, we find that the $\mu$-derivative of $U_m(s) p_C(s)$ is equal to $ U_m(\lambda) p^\mu_C(\lambda) + 2U_{m-1}(\lambda)$. Taking the $\mu$-derivative from the other side gives $\sum_{k=0}^{2n-1} c_{m,m+k}P^\mu_k(\lambda)$. Taking the limit as $m\to\infty$ and using equation \eqref{eqn:chebUmtoinfty}, we have our first limit for the quantity in equation \eqref{eqn:weirdquantity}:
    \begin{equation*}
    \frac{\sum_{k=0}^{2n-1}c_{m,m+k}P^\mu_{m+k}(\lambda(z))}{U_m(\lambda(z))} \to p_C^\mu(\lambda(z)) + 2z \text{ as } m \to \infty.
    \end{equation*}
    
    Now we show that \eqref{eqn:weirdquantity} converges to $c_\mu(z)c(z^{-1})$ as $m\to \infty$. By the definition of the connection coefficients matrix $C^\mu$, $P^\mu_{m+k} = \sum_{j=1}^{2n-2} c^\mu_{m+k-j,m+k} U_{m+k-j}$. Therefore,
    \begin{equation*}
    \frac{\sum_{k=0}^{2n-1}c_{m,m+k}P^\mu_{m+k}(\lambda(z))}{U_m(\lambda(z))} = \sum_{k=0}^{2n-1}\sum_{j=1}^{2n-2} c_{m,m+k} c^\mu_{m+k-j,m+k}\frac{U_{m+k-j}(\lambda(z))}{U_{m}(\lambda(z))}.
    \end{equation*}
    
    By Corollary \ref{cor:cmu}, $C^\mu = C^\mu_{\rm Toe} + C^\mu_{\rm fin}$, where $C^\mu_{\rm fin}$ is zero outside the principal $(n-2) \times (2n-1)$ submatrix. Hence for $m$ sufficiently large we have $c^\mu_{m,m+k} = t^\mu_k$ for a sequence $(t^\mu_k)_{k\in\mathbb{Z}}$ such that $t^\mu_k = 0$ for $k \notin \{1,\ldots,2n-2\}$. Hence we have for  sufficiently large $m$,
    \begin{align*}
    \frac{\sum_{k=0}^{2n-1}c_{m,m+k}P_{m+k}(\lambda(z))}{U_m(\lambda(z))} &= \sum_{k=0}^{2n-1} \sum_{j=1}^{2n-2} t_k t^\mu_j 
    \frac{U_{m+k-j}(\lambda(z))}{U_m(\lambda(z))}.
    \end{align*}
    By equation \eqref{eqn:chebUmtoinfty}, this tends to $\sum_{k=0}^{2n-1} \sum_{j=1}^{2n-2} t_k t^\mu_j z^{j-k}$ as $m \to \infty$. This is equal to $c_\mu(z)c(z^{-1})$. Equating this with the other equality for equation \eqref{eqn:weirdquantity} gives $p^\mu(\lambda(z)) = c(z^{-1})c_\mu(z) - 2z$ as required.
   \end{proof}
  \end{lemma}
  
  The following theorem describes Theorem \ref{thm:mainresolvent} under the change of variables induced by the Joukowski map. The remarkable thing is that the resolvent is expressible as a rational function inside the unit disc.
    
  \begin{theorem}\label{thm:joukowskiresolvent}
Let $J$ be a Jacobi operator such that there exists an $n$ such that
\begin{equation*}
\alpha_k = 0, \quad \beta_{k-1} = \frac12 \text{ for all } k \geq n,
\end{equation*}
i.e. it is equal to the free Jacobi operator $\Delta$ outside the $n\times n$ principal submatrix. By Theorem \ref{thm:connectioncoeffs} the connection coefficient matrix can be decomposed into $C = C_{\rm Toe} + C_{\rm fin}$. By Corollary \ref{cor:cmu}, we similarly have $C^\mu = C^\mu_{\rm Toe} + C^\mu_{\rm fin}$. If $c$ and $c_\mu$ are the Toeplitz symbols of $C_{\rm Toe}$ and $C^\mu_{\rm Toe}$ respectively, then for $\lambda(z) = \frac12(z+z^{-1})$ with $z\in\mathbb{D}$, the principal resolvent $G$ is given by the rational function
   \begin{equation}\label{eqn:resolventequalscmuoverc}
    G(\lambda(z)) = -\frac{c_\mu(z)}{c(z)}.
   \end{equation}
    \begin{proof}
 Combining Theorem \ref{thm:mainresolvent}, equation \eqref{eqn:GDeltaz} and Lemmata \ref{lem:cformula} and \ref{lem:cmuformula}, we have
  \begin{align*}
   G(\lambda(z)) &= \frac{G_\Delta(\lambda(z)) - 
p^\mu(\lambda(z))}{p(\lambda(z))} \\
                 &= \frac{-2z - (c(z^{-1})c_\mu(z)-2z)}{c(z)c(z^{-1})} \\
                 &= -\frac{c_\mu(z)}{c(z)}.
  \end{align*}
  This completes the proof.
 \end{proof}
 \end{theorem}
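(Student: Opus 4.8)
The plan is to assemble the pieces already established and substitute them into a single rational expression, since the substantive analytic work has been carried out in the two Joukowski lemmata. Theorem \ref{thm:mainresolvent} expresses the principal resolvent as
\begin{equation*}
G(\lambda) = \frac{G_\Delta(\lambda) - p_C^\mu(\lambda)}{p_C(\lambda)},
\end{equation*}
so I would evaluate each of the three constituent pieces at $\lambda = \lambda(z)$ for $z \in \mathbb{D}$. The denominator is supplied by Lemma \ref{lem:cformula}, namely $p_C(\lambda(z)) = c(z)c(z^{-1})$; the numerator is assembled from equation \eqref{eqn:GDeltaz}, $G_\Delta(\lambda(z)) = -2z$, together with Lemma \ref{lem:cmuformula}, $p_C^\mu(\lambda(z)) = c(z^{-1})c_\mu(z) - 2z$.

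First I would substitute these three evaluations to obtain
\begin{equation*}
G(\lambda(z)) = \frac{-2z - \bigl(c(z^{-1})c_\mu(z) - 2z\bigr)}{c(z)c(z^{-1})}.
\end{equation*}
The decisive simplification is the exact cancellation of the two $-2z$ contributions in the numerator: the $+2z$ correction term carried by Lemma \ref{lem:cmuformula} (which originated from the remainder in Lemma \ref{lem:Jremainder} via $U_{m-1}/U_m \to z$) is annihilated by the $-2z$ value of $G_\Delta$. What remains in the numerator is precisely $-c(z^{-1})c_\mu(z)$, whereupon the common factor $c(z^{-1})$ cancels against the identical factor in the denominator, leaving $G(\lambda(z)) = -c_\mu(z)/c(z)$.

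The only point requiring care, and hence the main (if modest) obstacle, is justifying the cancellation of $c(z^{-1})$ and reading the resulting identity correctly on $\mathbb{D}$. Both the numerator and $p_C(\lambda(z))$ are analytic in $z$ on $\mathbb{D}$, and the identity between $G(\lambda(z))$ and the rational function $-c_\mu(z)/c(z)$ should be understood as an equality of meromorphic functions on $\mathbb{D}$; at isolated points where $c(z^{-1})$ vanishes the cancellation is the removal of a common zero rather than a genuine division, so the identity persists there by continuity and no information is lost. It is also worth confirming that the branch conventions are consistent: equation \eqref{eqn:GDeltaz} fixes the branch of $G_\Delta$ to the one decaying like $\mathcal{O}(\lambda^{-1})$, which is exactly the branch underlying Theorem \ref{thm:mainresolvent}, and the Joukowski map carries $\mathbb{D}$ bijectively onto $\C \setminus [-1,1]$, so the composition $G \circ \lambda$ is unambiguous. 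With these bookkeeping points noted, the computation is immediate.
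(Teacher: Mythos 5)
Your proposal is correct and follows essentially the same route as the paper: substitute $G_\Delta(\lambda(z)) = -2z$, $p_C(\lambda(z)) = c(z)c(z^{-1})$, and $p_C^\mu(\lambda(z)) = c(z^{-1})c_\mu(z) - 2z$ into the formula of Theorem \ref{thm:mainresolvent} and cancel. The additional remarks about removing the common factor $c(z^{-1})$ and branch consistency are sensible bookkeeping but do not change the argument.
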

 
 The following theorem gives a better description of the weights $w_k$ in Theorem \ref{thm:mainmeasure}, utilising the Joukowski map and the Toeplitz symbol $c$.
 
 \begin{theorem}\label{thm:joukowskimeasure}
 Let $J$ be a Jacobi operator such that there exists an $n$ such that
 \begin{equation*}
 \alpha_k = 0, \quad \beta_{k-1} = \frac12 \text{ for all } k \geq n,
 \end{equation*}
 i.e. it is equal to the free Jacobi operator $\Delta$ outside the $n\times n$ principal submatrix. By Theorem \ref{thm:connectioncoeffs} the connection coefficient matrix can be written $C = C_{\rm Toe} + C_{\rm fin}$. If $c$ is the Toeplitz symbol of $C_{\rm Toe}$, then the spectral measure of $J$ is
   \begin{equation*}
    \mu(s) = \frac{1}{p_C(s)}\mu_\Delta(s) + \sum_{k=1}^r 
\frac{(z_k-z_k^{-1})^2}{z_kc'(z_k)c(z_k^{-1})}\delta_{\lambda(z_k)}(s).
   \end{equation*}
Here $z_k$ are the roots of $c$ that lie in the open unit disk, which are all real and simple. The only roots of $c$ on the unit circle are $\pm 1$, which can also only be 
simple. Further, $r \leq n$.
 \begin{proof}
 By Theorem \ref{thm:mainmeasure},
 \begin{equation*}
  \mu(s) = \frac{1}{p_C(s)}\mu_\Delta(s) + \sum_{k=1}^{r} w_k \delta_{\lambda_k}(s),
 \end{equation*}
 where $r \leq n$. Hence we just need to prove something more specific about the roots of $c$, $\lambda_1,\ldots,\lambda_r$, and $w_1,\ldots,w_r$.
 
By Theorem \ref{thm:joukowskiresolvent}, $G(\lambda(z)) = -c_\mu(z)/c(z)$ for $z \in \mathbb{D}$. By Lemma \ref{lem:cmuformula}, $c(z^{-1})c_\mu(z)-2z = p^\mu(\lambda(z)) = p^\mu(\lambda(z^{-1})) = c(z)c_\mu(z^{-1})-2z^{-1}$, so
\begin{equation}\label{eqn:rootsofc}
 c(z^{-1})c_\mu(z) - c(z)c_\mu(z^{-1}) = 2(z - z^{-1}).
\end{equation}
Therefore $c(z)$ and $c_\mu(z)$ cannot simultaneously be zero unless $z = z^{-1}$, which only happens at $z = \pm 1$. By the same reasoning, $c(z)$ and $c(z^{-1})$ also cannot be simultaneously zero unless $z = \pm 1$. Since the Joukowski map $\lambda$ is a bijection from $\mathbb{D}$ to $\C \setminus [-1,1]$, this shows that the (simple and real) poles of $G$ in $\C \setminus [-1,1]$ are precisely $\lambda(z_1), \ldots,\lambda(z_r)$, where $z_1,\ldots,z_r$ are the (necessarily simple and real) roots of $c$ in $\mathbb{D}$.

What are the values of the weights of the Dirac deltas, $w_1,\ldots,w_r$? By Theorem \ref{thm:mainmeasure},
 \begin{align*}
  w_k &= \lim_{\eps \searrow 0}\frac{\eps}{i} G(\lambda(z_k)+i\eps) \\
                     &= \lim_{\lambda \to \lambda(z_k)} (\lambda(z_k)-\lambda) 
G(\lambda) \\
                     &= \lim_{z \to z_k} \frac12(z_k+z_k^{-1} - z-z^{-1}) (-1) 
\frac{c_\mu(z)}{c(z)} \\
                     &= \lim_{z \to z_k} \frac12 z^{-1}(z-z_k)(z-z_k^{-1}) 
\frac{c_\mu(z)}{c(z)} \\
                     &= \frac12 z_k^{-1}(z_k-z_k^{-1})c_\mu(z_k) \lim_{z \to 
z_k}\frac{(z-z_k)}{c(z)} \\
                     &= \frac12 z_k^{-1}(z_k-z_k^{-1})\frac{c_\mu(z_k)}{c'(z_k)}.
\end{align*}

By equation \eqref{eqn:rootsofc}, since $c(z_k) = 0$, we have $c_\mu(z_k) = 2(z_k - z_k^{-1})/c(z_k^{-1})$. This gives
\begin{equation*}
 w_k = \frac{(z_k - z_k^{-1})^2}{z_k c(z_k^{-1}) c'(z_k)}.
\end{equation*}

Note that if $c(z) = 0$ then $c(\overline{z}) = 0$ because $c$ has real coefficients. If $c$ has a root $z_0$ on the unit circle, then $c(z_0) = c(z_0^{-1}) = 0$ because $\overline{z_0} = z_0^{-1}$, which earlier in the proof we showed only occurs if $z_0 = \pm 1$. Hence $c$ does not have roots on the unit circle except possibly $\pm 1$.
 \end{proof}
  \end{theorem}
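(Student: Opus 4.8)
The plan is to take the measure representation already furnished by Theorem \ref{thm:mainmeasure}, namely
\begin{equation*}
\mu(s) = \frac{1}{p_C(s)}\mu_\Delta(s) + \sum_{k=1}^{r} w_k \delta_{\lambda_k}(s), \qquad w_k = \lim_{\eps\searrow 0}\frac{\eps}{i}G(\lambda_k+i\eps),
\end{equation*}
together with the facts (also from Theorem \ref{thm:mainmeasure}) that $r\le n$, that $p_C$ has no roots in $(-1,1)$, and that $p_C$ has at most simple roots at $\pm1$, and then to re-express everything in the Joukowski variable $z$. The three things that remain to be shown are: that the atoms are located exactly at $\lambda(z_k)$ for the roots $z_k$ of $c$ in $\mathbb{D}$; that those roots (and any unit-circle roots) have the claimed reality and simplicity; and that the weights evaluate to the stated closed form.

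First I would extract the governing symmetry identity. Since $\lambda(z)=\lambda(z^{-1})$ and $p_C^\mu$ is a polynomial, $p_C^\mu(\lambda(z)) = p_C^\mu(\lambda(z^{-1}))$; feeding both sides through Lemma \ref{lem:cmuformula} gives
\begin{equation*}
c(z^{-1})c_\mu(z) - c(z)c_\mu(z^{-1}) = 2(z - z^{-1}).
\end{equation*}
From this relation two structural facts drop out at once: $c$ and $c_\mu$ cannot vanish at the same point unless $z=z^{-1}$, i.e.\ $z=\pm1$, and likewise $c(z)$ and $c(z^{-1})$ cannot vanish simultaneously except at $z=\pm1$. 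This is the crux of the argument, because it rules out any cancellation between numerator and denominator of the resolvent.

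Next I would read off the pole structure of $G$. By Theorem \ref{thm:joukowskiresolvent}, $G(\lambda(z)) = -c_\mu(z)/c(z)$, and the Joukowski map is an analytic bijection from $\mathbb{D}$ onto $\C\setminus[-1,1]$, locally biholomorphic near each root of $c$ (note $c(0)=c_{nn}\neq0$, so no root sits at the singular point $z=0$). Because $G$ is the Stieltjes transform of a positive measure it is analytic off $\R$ and its poles are simple; combined with the no-cancellation fact this forces the poles of $G$ in $\C\setminus[-1,1]$ to be exactly $\lambda(z_k)$ with $z_k$ the roots of $c$ in $\mathbb{D}$. A non-real $z_k$ would produce a non-real pole, which is impossible, so every such $z_k$ is real; simplicity of the poles of $G$ transfers through the local biholomorphism (using $c_\mu(z_k)\neq0$) to simplicity of the roots of $c$. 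For roots on the unit circle, the real coefficients of $c$ give $c(\bar z)=0$ whenever $c(z)=0$, and $\bar z = z^{-1}$ on the circle then forces $c(z)=c(z^{-1})=0$, so $z=\pm1$; their simplicity follows by comparing $p_C(\lambda(z))=c(z)c(z^{-1})$ (Lemma \ref{lem:cformula}) near $z=\pm1$ with the already-established simplicity of the roots of $p_C$ at $\pm1$, using that $\lambda(z)\mp1$ vanishes to second order there.

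Finally I would compute the weights by a residue calculation. Writing $w_k = \lim_{\lambda\to\lambda_k}(\lambda_k-\lambda)G(\lambda)$, substituting $\lambda=\lambda(z)$ and $G=-c_\mu/c$, and using $\lambda(z)-\lambda(z_k) = \tfrac12 z^{-1}(z-z_k)(z-z_k^{-1})$, the product becomes $\tfrac12 z^{-1}(z-z_k)(z-z_k^{-1})c_\mu(z)/c(z)$; the simple root gives $\lim_{z\to z_k}(z-z_k)/c(z)=1/c'(z_k)$, so that
\begin{equation*}
w_k = \tfrac12 z_k^{-1}(z_k-z_k^{-1})\frac{c_\mu(z_k)}{c'(z_k)}.
\end{equation*}
Substituting $c_\mu(z_k)=2(z_k-z_k^{-1})/c(z_k^{-1})$, read off from the symmetry identity with $c(z_k)=0$, yields the claimed formula. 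Every factor here is nonzero, so each root of $c$ in $\mathbb{D}$ genuinely carries positive mass; hence the atom count coincides with the $r\le n$ of Theorem \ref{thm:mainmeasure}. The main obstacle is the third step---pinning down reality, simplicity, and the exact root-to-pole correspondence---since it requires marrying the analytic consequences of $G$ being a Stieltjes transform with the Joukowski change of variables and the symmetry identity; the residue algebra itself is routine.
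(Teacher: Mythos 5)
Your proposal follows essentially the same route as the paper's proof: derive the symmetry identity $c(z^{-1})c_\mu(z) - c(z)c_\mu(z^{-1}) = 2(z-z^{-1})$ from Lemma \ref{lem:cmuformula} and $\lambda(z)=\lambda(z^{-1})$, use it to rule out cancellation and identify the poles of $G=-c_\mu/c$ with the roots of $c$ in $\mathbb{D}$, and evaluate the weights by the same residue computation via $\lambda(z)-\lambda(z_k)=\tfrac12 z^{-1}(z-z_k)(z-z_k^{-1})$. The extra touches you add (noting $c(0)=c_{nn}\neq 0$, deducing reality and simplicity from $G$ being a Stieltjes transform, and checking simplicity at $\pm1$ against $p_C(\lambda(z))=c(z)c(z^{-1})$) are sound refinements of points the paper states more tersely, but the argument is the same.
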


 \begin{example}[Basic perturbation 1 re-revisited]\label{ex:basic1rerev}

Considering the connection coefficient matrix in equation \eqref{eqn:basic1C}, we see that the Toeplitz symbol $c$ is $c(z) = 1 - \alpha z$. By Theorem \ref{thm:joukowskimeasure} the roots of $c$ in the unit disc correspond to eigenvalues of $J_\alpha$. As is consistent with our previous considerations, $c$ has a root in the unit disc if and only if $|\alpha| > 1$, and those eigenvalues are $\lambda(\alpha^{-1}) = \frac12(\alpha + \alpha^{-1})$. See Appendix~\ref{sec:numericalexperiments} for figures depicting the spectral measure and the resolvent.
\end{example}

\begin{example}[Basic perturbation 2 re-revisited]\label{ex:basic2rerev}

Considering the connection coefficient matrix in equation \eqref{eqn:basic2C}, we see that the Toeplitz symbol $c$ is $c(z) = \beta^{-1} + (\beta^{-1}-\beta) z^2$. By Theorem \ref{thm:joukowskimeasure} the roots of $c$ in the unit disc correspond to eigenvalues of $J_\beta$. The roots of $c$ are $\pm\frac{1}{\sqrt{\beta^2-1}}$. If $\beta \in \left(0,\sqrt{2}\right]\setminus\{1\}$ then $\left|\pm\frac{1}{\sqrt{\beta^2-1}} \right| \geq 1$ so there are no roots of $c$ in the unit disc, as is consistent with the previous observations. What was difficult to see before is, if $\beta > \sqrt{2}$ then $\left|\pm\frac{1}{\sqrt{\beta^2-1}} \right| < 1$, so there is a root of $c$ inside $\mathbb{D}$, and it corresponds to an eigenvalue,
\begin{equation*}
 \lambda\left(\pm\frac{1}{\sqrt{\beta^2-1}}\right) = \pm\frac12\left( \frac{1}{\sqrt{\beta^2-1}} + \sqrt{\beta^2-1}\right)= \pm\frac{\beta^2}{2\sqrt{\beta^2-1}}.
\end{equation*}
See Appendix \ref{sec:numericalexperiments} for figures depicting the spectral measure and the resolvent.
\end{example}
 \section{Toeplitz-plus-trace-class Jacobi operators}\label{sec:toeplitzplustraceclass}
    
    In this section we extend the results of the previous section to the case where the Jacobi operator is Toeplitz-plus-trace-class. This cannot be done as a direct extension of the work in the previous section as the formulae obtained depended on the fact that some of the functions involved were merely polynomials in order to have a function defined for all $\lambda$ in an a priori known region of the complex plane. We admit that it may be possible to perform the analysis directly, but state that it is not straightforward. We are interested in feasible (finite) computation so are content to deal directly with the Toeplitz-plus-finite-rank case and perform a limiting process. The crucial question for computation is, can we approximate the spectral measure of a Toeplitz-plus-trace-class Jacobi operator whilst reading only finitely many entries of the matrix?
    
    Here we make clear the definition of a Toeplitz-plus-trace-class Jacobi operator.
    
    \begin{definition}
     An operator $K : \ell^2 \to \ell^2$ is said to be trace class if $\sum_{k=0}^{\infty} e_k^T (K^T K)^{1/2} e_k < \infty$. Hence we say that a Jacobi operator $J$ such that $\alpha_k \to 0$, $\beta_k \to \frac12$ as $k \to \infty$ is Toeplitz-plus-trace-class if
    \begin{equation*}
     \sum_{k=0}^\infty \left|\beta_k - \frac12\right| + |\alpha_k| < \infty.
    \end{equation*}
    \end{definition}
    
\subsection{Jacobi operators for Jacobi polynomials}\label{subsec:Jacobipolys}
    
    The most well known class of orthogonal polynomials is the Jacobi polynomials, whose measure of orthogonality is 
\begin{equation*}
 \d\mu(s) = 
\left(2^{\alpha+\beta+1}B(\alpha+1,\beta+1)\right)^{-1}(1-s)^\alpha(1+s)^\beta \bigg|_{s \in [-1,1]} \d s,
\end{equation*}
where $\alpha$,$\beta > -1$ and $B$ is Euler's Beta function. The Jacobi 
operator for the normalised Jacobi polynomials with respect to this probability measure, and hence the three-term recurrence coefficients, are given by \cite{olverDLMF},
 \begin{align*}
  \alpha_k &= \frac{\beta^2-\alpha^2}{(2k+\alpha+\beta)(2k+\alpha+\beta+2)} \\
  \beta_{k-1} &= 
2\sqrt{\frac{k(k+\alpha)(k+\beta)(k+\alpha+\beta)}{
(2k+\alpha+\beta-1)(2k+\alpha+\beta)^2(2k+\alpha+\beta+1)}}
 \end{align*}
  Note that $|\alpha_k| = \mathcal{O}(k^{-2})$ and 
  \begin{equation*}
   \beta_{k-1} = \frac12\sqrt{1+\frac{(4-8\alpha^2-8\beta^2)k^2+\mathcal{O}(k)}{(2k+\alpha+\beta-1)(2k+\alpha+\beta)^2(2k+\alpha+\beta+1)}} = \frac12 + \mathcal{O}(k^{-2}).
  \end{equation*}
Hence the Jacobi operators for the Jacobi polynomials are Toeplitz-plus-trace-class for all $\alpha,\beta > -1$.

The Chebyshev polynomials $T_k$ and $U_k$ discussed in the previous section are specific cases of Jacobi polynomials, with $\alpha,\beta = -\frac12,-\frac12$ for $T_k$ and $\alpha,\beta = \frac12,\frac12$ for $U_k$.

In Appendix \ref{sec:numericalexperiments} numerical computations of the spectral measures and resolvents of these Jacobi operators are presented.

\subsection{Toeplitz-plus-finite-rank approximations}

We propose to use the techniques from Section \ref{sec:Toeplitzplusfinite}. Therefore for a Jacobi operator $J$, we can define the Toeplitz-plus-finite-rank approximations $J^{[m]}$, where
\begin{equation}\label{eqn:Jm}
J^{[m]}_{i,j} = \begin{cases}
J_{i,j} & \text{ if } 0 \leq i,j < m \\
\Delta_{i,j} & \text{ otherwise.}
\end{cases}
\end{equation}
Each Jacobi operator $J^{[m]}$ has a spectral measure $\mu^{[m]}$ which can be computed using Theorem \ref{thm:joukowskimeasure}. The main question for this section is: how do the computable measures $\mu^{[m]}$ approximate the spectral measure $\mu$ of $J$?

\begin{proposition}\label{prop:weakconvergence}
  Let $J$ a Jacobi operator (bounded, but with no assumed structure imposed) and let $\mu$ be its spectral measure. Then the measures $\mu^{[1]},\mu^{[2]},\ldots$ which are the spectral measures of $J^{[1]},J^{[2]},\ldots$ converge to $\mu$ in a weak sense. Precisely,
  \begin{equation*}
  \lim_{m \to \infty} \int f(s) \,\d\mu^{[m]}(s) = \int f(s) \,\d\mu(s),
  \end{equation*}
  for all $f \in C_b(\mathbb{R})$.
  \begin{proof}
    Each spectral measure $\mu^{[m]}$ and $\mu$ are supported on the spectra of $J^{[m]}$ and $J$, each of which are contained within $[-\|J^{[m]}\|_2,\|J^{[m]}\|_2]$ and $[-\|J\|_2,\|J\|_2]$. Since $\|J^{[m]}\|_2$ and $\|J\|_2$ are less than
    \begin{equation*}
    M = 3\left(\sup_{k \geq 0} |\alpha_k| + \sup_{k \geq 0} |\beta_k|\right),
    \end{equation*}
    we have that all the spectral measures involved are supported within the interval $[-M,M]$. Hence we can consider integrating functions $f \in C([-M,M])$ without ambiguity.
    
    By Weierstrass' Theorem, polynomials are dense in $C([-M,M])$, so we only need to consider polynomials as test functions, and by linearity we only need to consider the orthogonal polynomials for $J$. The first polynomial $P_0$ has immediate convergence, since the measures are all probability measures. Now consider $P_k$ for some $k>0$, which satisfies $\int P_k(s) \,\d\mu(s) = 0$. For $m > k$, $P_k$ is also the $k$th orthogonal polynomial for $J^{[m]}$, hence $\int P_k(s) \,\d\mu^{[m]}(s) = 0$. This completes the proof.
  \end{proof}
\end{proposition}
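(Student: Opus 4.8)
The plan is to exploit the fact that, for measures supported on a common compact set, weak convergence against bounded continuous test functions is equivalent to convergence of all moments, and then to observe that the low-order moments of $\mu^{[m]}$ agree \emph{exactly} with those of $\mu$ once $m$ is large enough.

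First I would pin all the measures onto a single compact interval. Since $J$ is a bounded Jacobi operator, writing $J$ as a diagonal part plus a weighted shift and its adjoint yields a standard operator-norm bound such as $\|J\|_2 \leq \sup_k|\alpha_k| + 2\sup_k|\beta_k|$. Because each truncation $J^{[m]}$ is obtained from $J$ by overwriting some entries with those of $\Delta$ (whose off-diagonal entries are $\tfrac12$ and whose diagonal is $0$), the very same quantity bounds $\|J^{[m]}\|_2$ uniformly in $m$. Fixing a common upper bound $M$, every $\mu^{[m]}$ and $\mu$ is supported in $[-M,M]$, so I may replace the test class $C_b(\mathbb{R})$ by $C([-M,M])$ without loss of information.

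Next, by the Weierstrass approximation theorem the polynomials are uniformly dense in $C([-M,M])$, and since all the measures in play are probability measures on this fixed interval, a uniform approximation $\|f-p\|_{\infty,[-M,M]}<\varepsilon$ controls $|\int f\,\d\nu - \int p\,\d\nu|$ by $\varepsilon$ simultaneously for $\nu=\mu$ and $\nu=\mu^{[m]}$. Hence it suffices to prove $\int p\,\d\mu^{[m]} \to \int p\,\d\mu$ for every polynomial $p$, and by linearity I may equivalently test against the orthonormal polynomials $P_0,P_1,\ldots$ of $J$, which span the polynomials. The crucial step is the observation that $P_k$ is determined solely by the recurrence coefficients $\alpha_0,\ldots,\alpha_{k-1}$ and $\beta_0,\ldots,\beta_{k-1}$ of $J$ (Definition \ref{orthpolys}), and that these coincide with the corresponding coefficients of $J^{[m]}$ as soon as $m>k$. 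Consequently, for $m>k$ the polynomial $P_k$ is simultaneously the $k$th orthonormal polynomial of $J^{[m]}$, so Theorem \ref{thm:maindeift}(ii) applied to $J^{[m]}$ gives $\int P_k\,\d\mu^{[m]} = \langle P_k,P_0\rangle_{\mu^{[m]}} = \delta_{k0}$, which equals $\int P_k\,\d\mu$. Thus each sequence $\bigl(\int P_k\,\d\mu^{[m]}\bigr)_m$ is not merely convergent but eventually constant and equal to its target, and weak convergence follows.

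I expect the only genuine subtlety to be the reduction to moments: one must secure a \emph{uniform} compact support (equivalently, a uniform norm bound giving tightness) before invoking Weierstrass, because for general probability measures on $\mathbb{R}$ convergence of all moments need not imply weak convergence against $C_b(\mathbb{R})$. Once the uniform support is established the argument is essentially automatic, the decisive input being the structural coincidence of the leading recurrence coefficients of $J$ and $J^{[m]}$, which makes the finitely many relevant moments stabilize after finitely many steps.
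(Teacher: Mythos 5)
Your proposal is correct and follows essentially the same route as the paper's proof: a uniform operator-norm bound places all spectral measures on a common compact interval, Weierstrass reduces the test class to polynomials, and the key observation that $P_k$ is also the $k$th orthonormal polynomial of $J^{[m]}$ for $m>k$ makes the relevant integrals eventually exactly equal. Your explicit remark on why the uniform compact support is needed before invoking Weierstrass is a sound clarification of a point the paper leaves implicit.
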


\subsection{Asymptotics of the connection coefficients}

Here we formulate a lower triangular block operator equation $\mathcal{L}\underline{c} = e_0^0$ , where $e_0^0 = (e_0, 0, 0, \ldots)^\top$, satisfied by the entries of the connection coefficient matrices encoded into a vector $\underline{c}$. For Toeplitz-plus-trace-class Jacobi operators we give appropriate Banach spaces upon which the operator $\mathcal{L}$ is bounded and invertible, enabling precise results about the asymptotics of the connection coefficients to be derived.

\begin{lemma}\label{lem:systemforti}
 Let $J$ and $D$ be Jacobi operators with entries $\{\alpha_k,\beta_k\}_{k=0}^\infty$ and $\{\gamma_k,\delta_k\}_{k=0}^\infty$ respectively. If we decompose the upper triangular part of $C_{J\to D}$ into a sequence of sequences, stacking each diagonal on top of each other, we get the following block linear system,
\begin{equation}\label{eqn:infblockinfsystem}
 \left(\begin{array}{ccccc} B_{-1} &  & & & \\ 
                           A_0 & B_0 &  & & \\ 
                           B_0^T & A_1 & B_1 & &\\ 
                            & B_1^T & A_2 & B_2 & \\ 
                           &  & \sddots & \sddots  & \sddots
                           \end{array}\right) 
                           \left(\begin{array}{c} c_{*,*} \\ c_{*,*+1} \\ c_{*,*+2} \\ c_{*,*+3} \\ \vdots \end{array}\right) = \left(\begin{array}{c} e_0 \\ 0 \\ 0 \\ 0 \\ \vdots \end{array}\right),
\end{equation}
where for each $i$,
\begin{equation*}
B_i = 2\left(\begin{array}{cccc} \beta_i &  & & \\ 
        -\delta_0 & \beta_{i+1} &  & \\
         & -\delta_1 & \beta_{i+2} &  \\
         &  & \sddots & \sddots
       \end{array}\right), \quad A_i = 2\left(\begin{array}{cccc} \alpha_i- \gamma_0 &  & & \\ 
         &\alpha_{i+1}- \gamma_{1} &  & \\
         &  & \alpha_{i+2}- \gamma_{2} & \\
         &  &  & \sddots
       \end{array}\right).
\end{equation*}
For $B_{-1}$ to make sense we define $\beta_{-1} = 1/2$.
\begin{proof}
 This is simply the 5-point discrete system in Lemma \ref{lem:5ptsystem} rewritten.
\end{proof}
\end{lemma}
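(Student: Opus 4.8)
The plan is to show that \eqref{eqn:infblockinfsystem} is nothing more than the 5-point relation of Lemma \ref{lem:5ptsystem} together with the normalisation $c_{0,0}=1$, reorganised so that the unknowns are grouped by diagonal rather than by column. Write $v^{(d)} = c_{*,*+d} = (c_{i,i+d})_{i\ge 0}$ for the $d$-th superdiagonal of $C$; this is exactly the $d$-th block of the stacked vector $\underline c$, and since $C$ is upper triangular only $d\ge 0$ occur. The first step is to record the master identity in entry form: by Lemma \ref{lem:5ptsystem} (equivalently Corollary \ref{cor:rowsandcolsofC}, or Theorem \ref{thm:Ccommute} with $p(z)=z$, reading off the $(i,j)$ entry of $CJ=DC$) we have
\[
\beta_{j-1}c_{i,j-1} + (\alpha_j-\gamma_i)c_{ij} + \beta_j c_{i,j+1} - \delta_{i-1}c_{i-1,j} - \delta_i c_{i+1,j} = 0,
\]
valid for all $i,j\ge 0$ under the convention that entries with a negative index or with $i>j$ vanish.

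The second step is the bookkeeping of diagonals. Setting $j=i+d$, each stencil neighbour lands on a definite diagonal: $c_{i,j-1}$ and $c_{i+1,j}$ lie on diagonal $d-1$ (at rows $i$ and $i+1$), $c_{ij}$ on diagonal $d$ (row $i$), and $c_{i,j+1}$ and $c_{i-1,j}$ on diagonal $d+1$ (at rows $i$ and $i-1$). Grouping the stencil by diagonal therefore yields, for each $i$, a relation between $v^{(d-1)}$, $v^{(d)}$ and $v^{(d+1)}$ in which the row-shifts $i\mapsto i\pm1$ coming from $c_{i+1,j}$ and $c_{i-1,j}$ are precisely what place the $-\delta$ coefficients on an off-diagonal. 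I would then verify by matching coefficients that the diagonal-$d$ relation reads $B_{d-1}^T v^{(d-1)} + A_d v^{(d)} + B_d v^{(d+1)} = 0$ with $A_d,B_d$ as defined (the overall factor $2$ is harmless since that block-row vanishes); the transpose on $B_{d-1}$ appears exactly because the row index within diagonal $d-1$ is shifted \emph{up} by one in $c_{i+1,j}$, giving the superdiagonal entry $-2\delta_i$ of $B_{d-1}^T$, whereas the \emph{downward} shift in $c_{i-1,j}$ produces the subdiagonal entry $-2\delta_{i-1}$ of $B_d$. This is block-row $k=d+1$ of \eqref{eqn:infblockinfsystem} for $d\ge 1$, and the $d=0$ case (block-row $1$) is the $i=j$ instance of the same symmetry computation $\langle sQ_i,P_i\rangle_\nu = \langle Q_i,sP_i\rangle_\nu$: since $c_{i,i-1}=c_{i+1,i}=0$ this collapses to $A_0 v^{(0)} + B_0 v^{(1)} = 0$.

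Finally I would treat block-row $0$, namely $B_{-1}v^{(0)} = e_0$, separately, as it encodes boundary data rather than the commutator. Its zeroth component is $2\beta_{-1}c_{0,0}=1$, which with the convention $\beta_{-1}=\tfrac12$ is just the normalisation $c_{0,0}=\langle Q_0,P_0\rangle_\nu=1$; its remaining components are $\beta_{i-1}c_{i,i}=\delta_{i-1}c_{i-1,i-1}$, which is exactly the diagonal recurrence of Lemma \ref{lem:diagonalsofC}. Equating the reorganised relations with \eqref{eqn:infblockinfsystem} then completes the proof. I expect the only genuine obstacle to be the index bookkeeping of the middle step — keeping straight which of $\alpha,\beta,\gamma,\delta$ and which row-shift attaches to each diagonal, and in particular recognising that the sub-subdiagonal blocks must be the transposes $B_{d-1}^T$ and that block-row $0$ is a boundary condition (normalisation plus Lemma \ref{lem:diagonalsofC}) rather than an instance of the 5-point stencil.
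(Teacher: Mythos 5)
Your proposal is correct and is exactly the paper's argument: the paper's proof consists of the single sentence that the block system is the 5-point discrete system of Lemma \ref{lem:5ptsystem} rewritten, and your diagonal-by-diagonal bookkeeping (including the identification of block-row $0$ with the normalisation $c_{0,0}=1$ plus the diagonal recurrence of Lemma \ref{lem:diagonalsofC}, and of the transposed blocks $B_{d-1}^T$ with the upward row shift in $c_{i+1,j}$) is precisely the verification that sentence leaves implicit. All index checks in your middle step come out right, so nothing further is needed.
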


We write the infinite-dimensional-block-infinite-dimensional system \eqref{eqn:infblockinfsystem} in the form,
\begin{equation}\label{eqn:LC}
  \mathcal{L} \underline{c} = e_0^0.
\end{equation}
For general Jacobi operators $J$ and $D$, the operators $A_i$ and $B_i$ are well defined linear operators from $\ell_{\mathcal{F}}^\star$ to $\ell_{\mathcal{F}}^\star$. The block operator $\mathcal{L}$ is whence considered as a linear operator from the space of sequences of real sequences, $\ell_{\mathcal{F}}^\star(\ell_{\mathcal{F}}^\star)$ to itself. We will use this kind of notation for other spaces as follows.

\begin{definition}[Vector-valued sequences]\label{def:vecvalsec}
If $\ell_X$ is a vector space of scalar-valued sequences, and $Y$ is another vector space then we let $\ell_X(Y)$ denote the vector space of sequences of elements of $Y$. In many cases in which $\ell_X$ and $Y$ are both normed spaces, then $\ell_X(Y)$ naturally defines a normed space in which the norm is derived from that of $\ell_X$ by replacing all instances of absolute value with the norm on $Y$. For example, $\ell^p(\ell^\infty)$ is a normed space with norm $\|(a_k)_{k=0}^\infty\|_{\ell^p(\ell^\infty)} = \left(\sum_{k=0}^\infty \|a_k\|_\infty^p\right)^{\frac1{p}}$.
\end{definition}

The following two spaces are relevant for the Toeplitz-plus-trace-class Jacobi operators.

\begin{definition}[Sequences of bounded variation]
 Following \cite[Ch.~IV.2.3]{dunford1971linear}, denote by $bv$ the Banach space of all sequences with bounded variation, that is sequences such that the norm
\begin{equation*}
 \|a\|_{bv} = |a_0| + \sum_{k=0}^\infty |a_{k+1}-a_k|,
\end{equation*}
is finite.
\end{definition}

The following result is immediate from the definition of the norm on $bv$.
\begin{lemma}\label{lem:bvintolinfty}
 There is a continuous embedding of $bv$ into the Banach space of convergent sequences (endowed with the supremum norm) i.e. for all $(a_k)_{k=0}^\infty \in bv$, $\lim_{k\to\infty} a_k$ exists, and $\sup_{k}|a_k| \leq \|(a_k)_{k=0}^\infty\|_{bv}$. Furthermore, $\lim_{k\to\infty} |a_k| \leq \|a\|_{bv}$.
\end{lemma}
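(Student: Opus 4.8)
The plan is to reduce everything to the telescoping identity $a_k = a_0 + \sum_{j=0}^{k-1}(a_{j+1}-a_j)$, which expresses each term of the sequence in terms of precisely the quantities controlled by the $bv$-norm. First I would establish convergence: for $m > n$ the triangle inequality gives $|a_m - a_n| \leq \sum_{j=n}^{m-1}|a_{j+1}-a_j|$, and since finiteness of $\|a\|_{bv}$ forces the series $\sum_{j=0}^\infty |a_{j+1}-a_j|$ to converge, its tails vanish as $n \to \infty$. Hence $(a_k)_{k=0}^\infty$ is Cauchy in the (complete) scalar field and the limit $\lim_{k\to\infty} a_k$ exists, which is exactly the containment $bv \subset c$.

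Next, the uniform bound follows directly from the same identity: taking absolute values,
\begin{equation*}
|a_k| \leq |a_0| + \sum_{j=0}^{k-1}|a_{j+1}-a_j| \leq |a_0| + \sum_{j=0}^\infty |a_{j+1}-a_j| = \|a\|_{bv},
\end{equation*}
and taking the supremum over $k$ yields $\sup_k |a_k| \leq \|a\|_{bv}$, i.e.\ the continuity of the embedding into $c$.

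Finally, for the last inequality I would pass to the limit in the uniform bound just obtained. Writing $L = \lim_{k\to\infty} a_k$, continuity of the modulus gives $\lim_{k\to\infty}|a_k| = |L|$, and since $|a_k| \leq \|a\|_{bv}$ holds for every $k$, the limit inherits the bound: $\lim_{k\to\infty}|a_k| \leq \|a\|_{bv}$.

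I do not anticipate any genuine obstacle, as the whole argument rests on the telescoping decomposition together with completeness of the scalars. The only point meriting care is keeping the two bounds distinct: the supremum estimate applies to the entire sequence, whereas the final inequality is its specialisation to the tail limit, and both descend from the single elementary estimate above.
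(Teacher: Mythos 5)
Your proof is correct and is exactly the standard telescoping argument the paper has in mind; the paper itself states the lemma as "immediate from the definition of the norm on $bv$" and omits the details you supply. No issues.
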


\begin{definition}[Geometrically weighted $\ell^1$]
For any $R > 0$, e define the Banach space $\ell^1_R$ to be the space of sequences such that the norm
\begin{equation*}
 \|v\|_{\ell_R^1} = \sum_{k=0}^\infty R^k |v_k|,
\end{equation*}
is finite.
\end{definition}

\begin{proposition}\label{prop:lR1norm}
The operator norm on $\ell^1_R$ is equal to
\begin{equation*}
 \|A\|_{\ell_R^1 \to \ell_R^1} = \sup_{j} \sum_{i} R^{i-j} |a_{ij}|.
\end{equation*}
\end{proposition}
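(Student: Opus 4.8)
The plan is to compute the operator norm of $A$ on $\ell^1_R$ directly from the definition by reducing to the action of $A$ on the standard basis vectors. The key observation is that the $\ell^1_R$ norm is, up to the geometric weights, a weighted $\ell^1$ norm, and operator norms on weighted $\ell^1$ spaces are always attained (or approached) on basis vectors because the unit ball is the convex hull of the weighted basis vectors.

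First I would introduce the change of weights $v_k = R^{-k} u_k$, which gives an isometric isomorphism $T : \ell^1_R \to \ell^1$ defined by $(Tv)_k = R^k v_k$, since $\|v\|_{\ell^1_R} = \sum_k R^k|v_k| = \sum_k |(Tv)_k| = \|Tv\|_{\ell^1}$. Under this conjugation, $A$ on $\ell^1_R$ corresponds to the operator $\tilde A = T A T^{-1}$ on $\ell^1$, whose matrix entries are $\tilde a_{ij} = R^i a_{ij} R^{-j} = R^{i-j} a_{ij}$. Hence $\|A\|_{\ell^1_R \to \ell^1_R} = \|\tilde A\|_{\ell^1 \to \ell^1}$, and the problem is reduced to the well-known formula for the operator norm of a matrix acting on $\ell^1$.

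Next I would establish (or cite) the standard fact that for any matrix $B = (b_{ij})$ acting on $\ell^1$, the operator norm equals the supremum of the column sums, $\|B\|_{\ell^1 \to \ell^1} = \sup_j \sum_i |b_{ij}|$. The upper bound follows from the triangle inequality: for $u \in \ell^1$,
\begin{equation*}
\|Bu\|_{\ell^1} = \sum_i \left| \sum_j b_{ij} u_j \right| \leq \sum_j |u_j| \sum_i |b_{ij}| \leq \left( \sup_j \sum_i |b_{ij}| \right) \|u\|_{\ell^1}.
\end{equation*}
The matching lower bound comes from testing on the basis vectors $e_j$, since $\|Be_j\|_{\ell^1} = \sum_i |b_{ij}|$ and $\|e_j\|_{\ell^1} = 1$, so the operator norm is at least each column sum and hence at least their supremum. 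Applying this with $b_{ij} = \tilde a_{ij} = R^{i-j} a_{ij}$ yields exactly $\|A\|_{\ell^1_R \to \ell^1_R} = \sup_j \sum_i R^{i-j}|a_{ij}|$, as claimed.

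I do not anticipate a serious obstacle here; the result is essentially a weighted restatement of the classical $\ell^1$ column-sum norm. The only point requiring mild care is domain and convergence: one should note that the supremum may be infinite, in which case $A$ is unbounded and the identity holds in the extended sense, and that the rearrangement of the double sum in the upper-bound estimate is justified by nonnegativity of the summands (Tonelli). If one prefers to avoid the conjugation entirely, the same two inequalities can be carried out directly in $\ell^1_R$ by testing on $e_j$ and estimating $\|Au\|_{\ell^1_R}$, but the isometry $T$ makes the bookkeeping cleanest.
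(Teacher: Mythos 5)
Your proof is correct. The paper states Proposition \ref{prop:lR1norm} without any proof at all, treating it as a standard fact, so there is no argument to compare against; your conjugation by the diagonal isometry $T:\ell^1_R\to\ell^1$, $(Tv)_k = R^k v_k$, followed by the classical column-sum formula for the $\ell^1\to\ell^1$ operator norm (upper bound by Tonelli and the triangle inequality, lower bound by testing on the basis vectors $e_j$) is exactly the right justification and fills in what the authors left implicit. Your closing remarks on the extended-value interpretation when the supremum is infinite and on justifying the interchange of sums by nonnegativity are the correct points of care; the only cosmetic slip is the motivational claim that the unit ball \emph{is} the convex hull of the weighted basis vectors (it is the closed convex hull of their scalar multiples of modulus one), but this plays no role in the actual argument.
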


The following Lemma and its Corollary show that it is natural to think of $\underline{c}$ as lying in the space $\ell^1_R(bv)$.
\begin{lemma}\label{lem:bigL}
 Let $J = \Delta + K$ be a Jacobi operator where $K$ is trace class and let $D = \Delta$. Then for any $R \in (0,1)$ the operator $\mathcal{L}$ in equation \eqref{eqn:LC} is bounded and invertible as an operator from $\ell_R^1(bv)$ to $\ell^1_R(\ell^1)$. Furthermore, if $\mathcal{L}^{[m]}$ is the operator in equation \eqref{eqn:LC} generated by the Toeplitz-plus-finite-rank truncation $J^{[m]}$, then
 \begin{equation*}
 \|\mathcal{L}-\mathcal{L}^{[m]}\|_{\ell^1_R(bv)\to\ell^1_R(\ell^1)} \to 0 \text{ as } m \to \infty.
 \end{equation*}
 \begin{proof}
 We can write $\mathcal{L}$ in equation \eqref{eqn:LC} in the form $\mathcal{L} = \mathcal{T} + \mathcal{K}$ where
  \begin{equation*}
  \mathcal{T} = \left(\begin{array}{ccccc} T &  & & &\\ 
  0 & T &  && \\
  T^T & 0 & T & &\\
  & T^T & 0 & T & \\
  &  & \sddots & \sddots & \sddots
  \end{array}\right), \quad T = \left(\begin{array}{cccc}
  1 & & & \\
  -1 & 1 & & \\
  & -1 & 1 & \\
  & & \sddots & \sddots
  \end{array} \right),
  \end{equation*}
  and
  \begin{equation*}
  \mathcal{K} = \left( \begin{array}{ccccc} K_{-1} &  & & & \\ 
  A_0 & K_0 & & & \\
  K_0 & A_1 & K_1 & & \\
  & K_1 & A_2 & K_2 & \\
  &  & \sddots & \sddots & \sddots
  \end{array}\right), \quad \begin{array}{c} A_i = 2\diag(\alpha_i,\alpha_{i+1},\ldots), \\ K_i = \diag(2\beta_i-1, 2\beta_{i+1}-1,\ldots). \end{array} 
  \end{equation*}
  
  This decomposition will allow us to prove that $\mathcal{L}$ is bounded and invertible as follows. We will show that as operators from $\ell^1_R(bv)$ to $\ell^1_R(\ell^1)$, $\mathcal{T}$ is bounded and invertible, and $\mathcal{K}$ is compact. This implies that $\mathcal{L}$ is a Fredholm operator with index $0$. Therefore, by the Fredholm Alternative Theorem, $\mathcal{L}$ is invertible if and only if it is injective. It is  indeed injective, because it is block lower triangular with invertible diagonal blocks, so forward substitution on the system $\mathcal{L} \underline{v} = \underline{0}$ implies that each entry of $\underline v$ must be zero.
  
First let us prove that $\mathcal{T}$ is bounded and invertible. It is elementary that $T$ is an isometric isomorphism from $bv$ to $\ell^1$ and $T^T$ is bounded with norm at most 1. Hence using Proposition \ref{prop:lR1norm} we have
\begin{equation*}
 \|\mathcal{T}\|_{\ell^1_R(bv) \to \ell^1_R(\ell^1)} = R^0\|T\|_{bv \to \ell^1} + R^2 \|T^T\|_{bv \to \ell^1} \leq 1 + R^2.
\end{equation*}
Because each operator is lower triangular, the left and right inverse of $\mathcal{T} : \ell_{\mathcal{F}}(\ell_{\mathcal{F}}) \rightarrow \ell_{\mathcal{F}}(\ell_{\mathcal{F}})$ is
\begin{equation*}
\mathcal{T}^{-1} = \left(\begin{array}{cccccc}
        T^{-1} & & & & & \\
        0 & T^{-1} & & & & \\
        -T^{-1}T^T T^{-1} & 0 & T^{-1} & & & \\
        0 & -T^{-1}T^T T^{-1} & 0 & T^{-1} & & \\
        T^{-1}(T^T T^{-1})^2 & 0 & -T^{-1}T^T T^{-1} & \sddots & \sddots & \\
        \vdots & \sddots & \sddots & \sddots & \sddots & \sddots
       \end{array}\right).
\end{equation*}
This matrix is block-lower triangular and block-Toeplitz with first column having $2i$th block of the form $T^{-1}(-T^T T^{-1})^i$ and $(2i+1)$th block zero. We must check that $\mathcal{T}^{-1}$ is bounded in the norms on $\ell_R^1(\ell^1)$ to $\ell_R^1(bv)$ so that it may be extended to $\ell_R^1(\ell^1)$ from the dense subspace $\ell_{\mathcal{F}}$. Again using Proposition \ref{prop:lR1norm} we have
\begin{align*}
 \|\mathcal{T}^{-1}\|_{\ell_R^1(\ell^1)\to\ell_R^1(bv)} &= \sup_j \sum_{i=j}^\infty R^{2(i-j)}\|T^{-1}(-T^T T^{-1})^{i-j}\|_{\ell^1\to bv} \\
 &= \sum_{k=0}^\infty R^{2k} \|T^{-1}(-T^T T^{-1})^k\|_{\ell^1\to bv} \\
 &\leq \sum_{k=0}^\infty R^{2k} \|T^{-1}\|_{\ell^1\to bv}\left(\|T^T\|_{bv \to \ell^1}\|T^{-1}\|_{\ell^1\to bv}\right)^k \\
 &\leq \sum_{k=0}^\infty R^{2k}  = (1-R^2)^{-1} < \infty.
\end{align*}

Now let us prove that $\mathcal{K} : \ell_R^1(bv) \to \ell_R^1(\ell^1)$ is compact. Consider the finite rank operator $\mathcal{K}^{[m]}$, where all elements are the same as in $\mathcal{K}$, except that all occurrences of $\alpha_i$ and $2\beta_i - 1$ are replaced by $0$ for $i \geq m$. Using Proposition \ref{prop:lR1norm} we have 
\begin{equation*}
 \|\mathcal{K}-\mathcal{K}^{[m]}\|_{\ell_R^1(bv) \to \ell_R^1(\ell^1)} = \sup_j R^0\|K_{j-1} - K^{[m]}_{j-1}\|_{bv \to \ell^1} + R^1\|A_j - A^{[m]}_{j}\|_{bv \to \ell^1} + R^2\|K_j - K^{[m]}_j\|_{bv \to \ell^1}.
\end{equation*}
By the continuous embedding in Lemma \ref{lem:bvintolinfty}, $\|\cdot\|_{bv \to \ell^1} \leq \|\cdot\|_{\ell^\infty \to \ell^1}$. Hence
\begin{align*}
 \|\mathcal{K}-\mathcal{K}^{[m]}\|_{\ell_R^1(bv) \to \ell_R^1(\ell^1)} &\leq \sum_{k=m}^\infty R^0|2\beta_{k-1} -1| + R^1|\alpha_k| + R^2|2\beta_k - 1| \\
                                                                       &\to 0 \text{ as } m \to \infty.
\end{align*}
This convergence is due to the fact that $J-\Delta$ is trace class. Since $\mathcal{K}$ is a norm limit of finite rank operators it is compact. This completes the proof that $\mathcal{L}$ is bounded and invertible.

Now consider the operator $\mathcal{L}^{[m]}$ defined in the statement of the Lemma, which is equal to $\mathcal{T} + \mathcal{K}^{[m]}$ (where $\mathcal{K}^{[m]}$ is precisely that which was considered whilst proving $\mathcal{K}$ is compact). Hence,
\begin{equation*}
\|\mathcal{L}-\mathcal{L}^{[m]}\|_{\ell^1_R(bv)\to\ell^1_R(\ell^1)} = \|\mathcal{K}-\mathcal{K}^{[m]}\|_{\ell^1_R(bv)\to\ell^1_R(\ell^1)} \to 0 \text{ as } m \to \infty.
\end{equation*}
This completes the proof.
  \end{proof}
 \end{lemma}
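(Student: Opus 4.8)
The plan is to realise $\mathcal{L}$ as a compact perturbation of an explicitly invertible block operator, obtain invertibility from Fredholm theory together with a cheap injectivity argument, and then read off the norm convergence $\mathcal{L}^{[m]}\to\mathcal{L}$ directly from the trace-class hypothesis. First I would split the block system of Lemma \ref{lem:systemforti}. Since $D=\Delta$ forces $\gamma_k=0$ and $\delta_k=\tfrac12$, each diagonal block $B_i$ factors as $T+K_i$, where $T$ is the lower-bidiagonal difference operator (ones on the diagonal, $-1$ on the subdiagonal) and $K_i=\diag(2\beta_i-1,2\beta_{i+1}-1,\ldots)$, while each off-diagonal block $A_i=2\diag(\alpha_i,\alpha_{i+1},\ldots)$. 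Collecting the $T$ and $T^T$ contributions into a block lower-triangular Toeplitz operator $\mathcal{T}$ and the diagonal remainders $K_i,A_i$ into $\mathcal{K}$ gives the decomposition $\mathcal{L}=\mathcal{T}+\mathcal{K}$.

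Next I would show that $\mathcal{T}\colon\ell^1_R(bv)\to\ell^1_R(\ell^1)$ is bounded and invertible. The elementary but decisive observation is that $T$ is an \emph{isometric isomorphism} from $bv$ onto $\ell^1$: for $a\in bv$ one has $(Ta)_0=a_0$ and $(Ta)_k=a_k-a_{k-1}$, so $\|Ta\|_{\ell^1}=|a_0|+\sum_k|a_{k+1}-a_k|=\|a\|_{bv}$, and surjectivity follows by taking partial sums. Likewise $\|T^T\|_{bv\to\ell^1}\le 1$. Feeding these block norms into the operator-norm formula of Proposition \ref{prop:lR1norm} (the nonzero blocks of column $j$ being $T$ at row $j$ and $T^T$ at row $j+2$) yields $\|\mathcal{T}\|\le 1+R^2$. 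For the inverse, $\mathcal{T}$ is block Toeplitz with formal symbol $T+T^T\zeta^2$, so computing the Neumann series gives a block lower-triangular Toeplitz operator whose $2k$-th subdiagonal block is $T^{-1}(-T^T T^{-1})^k$; using $\|T^{-1}\|_{\ell^1\to bv}=1$ and $\|T^T\|_{bv\to\ell^1}\le 1$ together with the weight $R^{2k}$ from Proposition \ref{prop:lR1norm} makes the resulting series sum to $(1-R^2)^{-1}<\infty$, so $\mathcal{T}^{-1}$ genuinely extends as a bounded operator between the stated spaces.

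Then I would establish compactness of $\mathcal{K}\colon\ell^1_R(bv)\to\ell^1_R(\ell^1)$ by norm-approximation. Let $\mathcal{K}^{[m]}$ zero out every $\alpha_i$ and $2\beta_i-1$ with $i\ge m$, so that $\mathcal{K}^{[m]}$ has finite rank. Using the continuous embedding $bv\hookrightarrow\ell^\infty$ from Lemma \ref{lem:bvintolinfty}, the $bv\to\ell^1$ norm of any diagonal block is controlled by its $\ell^\infty\to\ell^1$ norm, which is the sum of absolute values of its entries; Proposition \ref{prop:lR1norm} then bounds $\|\mathcal{K}-\mathcal{K}^{[m]}\|$ by the tail $\sum_{k\ge m}\bigl(|2\beta_{k-1}-1|+R|\alpha_k|+R^2|2\beta_k-1|\bigr)$, which tends to $0$ exactly because $K$ is trace class. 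Hence $\mathcal{K}$ is compact, and since $\mathcal{L}^{[m]}=\mathcal{T}+\mathcal{K}^{[m]}$ the identical estimate immediately delivers the ``furthermore'' clause $\|\mathcal{L}-\mathcal{L}^{[m]}\|\to 0$.

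Finally I would combine the pieces: $\mathcal{L}=\mathcal{T}+\mathcal{K}$ with $\mathcal{T}$ invertible and $\mathcal{K}$ compact is Fredholm of index $0$, so invertibility reduces to injectivity. Injectivity is clear because $\mathcal{L}$ is block lower-triangular with invertible diagonal blocks $B_i$, whence forward substitution on $\mathcal{L}\underline{v}=0$ forces $\underline{v}=0$. The hard part will not be any of the algebra but the functional-analytic bookkeeping in the weighted vector-valued spaces $\ell^1_R(bv)$ and $\ell^1_R(\ell^1)$: one must be careful that $T$ really is an isometry $bv\to\ell^1$, that the formally computed $\mathcal{T}^{-1}$ is bounded (not merely densely defined on $\ell_{\mathcal{F}}$) in the correct operator norms, and that the compactness approximation is taken in the same pair of norms throughout.
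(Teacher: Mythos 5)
Your proposal is correct and follows essentially the same route as the paper's proof: the identical decomposition $\mathcal{L}=\mathcal{T}+\mathcal{K}$, the observation that $T$ is an isometric isomorphism from $bv$ onto $\ell^1$, the explicit block-Toeplitz inverse of $\mathcal{T}$ summing to $(1-R^2)^{-1}$, compactness of $\mathcal{K}$ via the finite-rank truncations $\mathcal{K}^{[m]}$ controlled by the trace-class tail, and the Fredholm-plus-forward-substitution argument for invertibility. The ``furthermore'' clause is likewise obtained in the paper exactly as you describe, from $\mathcal{L}^{[m]}=\mathcal{T}+\mathcal{K}^{[m]}$.
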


\begin{corollary}\label{cor:cvecconverge}
  Let $J = \Delta + K$ be a Jacobi operator where $K$ is trace class and let $\underline{c} \in \ell_{\mathcal{F}}^\star(\ell_{\mathcal{F}}^\star)$ be the vector of diagonals of $C_{J\to\Delta}$ as in equation \eqref{eqn:LC}. Then $\underline{c} \in \ell^1_R(bv)$. If $J$ has Toeplitz-plus-finite-rank approximations $J^{[m]}$ and $\underline{c}^{[m]}$ denotes the vector of diagonals of $C^{[m]}$, then
  \begin{equation*}
  \|\underline{c} - \underline{c}^{[m]}\|_{\ell^1_R(bv)} \to 0 \text{ as } m \to \infty.
  \end{equation*}
  \begin{proof}
    
    By equation \eqref{eqn:LC} 
    \begin{equation*}
    \underline{c} - \underline{c}^{[m]}  = (\mathcal{L}^{-1} - (\mathcal{L}^{[m]})^{-1})e_0^0.
    \end{equation*}
    Since $\|e_0^0\|_{\ell_R^1(\ell^1)} = 1$, the proof is completed if we show $\|\mathcal{L}^{-1} - (\mathcal{L}^{[m]})^{-1}\|_{\ell_R^1(\ell^1) \to \ell_R^1(bv)} \to 0$ as $m \to \infty$.
    
    Suppose that $m$ is sufficiently large so that $\|\mathcal{L} - \mathcal{L}^{[m]}\| < \|\mathcal{L}^{-1}\|^{-1}$ (guaranteed by Lemma \ref{lem:bigL}). Note that $\mathcal{L}^{-1}$ is bounded by the Inverse Mapping Theorem and Lemma \ref{lem:bigL}. Then by a well-known result (see for example, \cite{atkinson2005theoretical}),
    \begin{equation*}
    \|\mathcal{L}^{-1} - (\mathcal{L}^{[m]})^{-1}\| \leq \frac{\|\mathcal{L}^{-1}\|^2 \|\mathcal{L} - \mathcal{L}^{[m]}\|}{1-\|\mathcal{L}^{-1}\|\|\mathcal{L}-\mathcal{L}^{[m]}\|}.
    \end{equation*}
    This tends to zero as $m \to \infty$, by Lemma \ref{lem:bigL}.
  \end{proof}
\end{corollary}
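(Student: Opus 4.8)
The plan is to deduce everything from Lemma \ref{lem:bigL}, which already supplies all the analytic content: that $\mathcal{L}$ is bounded and invertible as an operator $\ell^1_R(bv) \to \ell^1_R(\ell^1)$, and that $\|\mathcal{L} - \mathcal{L}^{[m]}\|_{\ell^1_R(bv)\to\ell^1_R(\ell^1)} \to 0$. Granting this, the first assertion $\underline{c} \in \ell^1_R(bv)$ is read off directly from the defining identity \eqref{eqn:LC}: the right-hand side $e_0^0$ lies in $\ell^1_R(\ell^1)$ (indeed $\|e_0^0\|_{\ell^1_R(\ell^1)} = 1$), so $\mathcal{L}^{-1} e_0^0$ is a bona fide element of $\ell^1_R(bv)$.

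The one point I would treat carefully is identification. The corollary defines $\underline{c}$ as the vector of diagonals of $C_{J\to\Delta}$, a priori only an element of the large space $\ell_{\mathcal{F}}^\star(\ell_{\mathcal{F}}^\star)$, whereas $\mathcal{L}^{-1} e_0^0$ is produced in $\ell^1_R(bv)$. To see these coincide I would invoke uniqueness of solutions of $\mathcal{L}\underline{v} = e_0^0$: since $\mathcal{L}$ is block lower triangular with invertible diagonal blocks $B_{-1}, B_0, B_1, \ldots$, forward substitution yields a unique solution even in $\ell_{\mathcal{F}}^\star(\ell_{\mathcal{F}}^\star)$ — this is precisely the injectivity argument already used inside the proof of Lemma \ref{lem:bigL}. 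Hence $\underline{c}$ and $\mathcal{L}^{-1} e_0^0$ agree, and in particular $\underline{c} \in \ell^1_R(bv)$. The same reasoning applies to each truncation: $J^{[m]} = \Delta + K^{[m]}$ is a finite-rank (hence trace-class) perturbation of $\Delta$, so Lemma \ref{lem:bigL} applies verbatim to $\mathcal{L}^{[m]}$ and gives $\underline{c}^{[m]} = (\mathcal{L}^{[m]})^{-1} e_0^0$.

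For the convergence claim I would then write the clean identity
\[
\underline{c} - \underline{c}^{[m]} = \bigl(\mathcal{L}^{-1} - (\mathcal{L}^{[m]})^{-1}\bigr) e_0^0,
\]
so that, since $\|e_0^0\|_{\ell^1_R(\ell^1)} = 1$, it suffices to show $\|\mathcal{L}^{-1} - (\mathcal{L}^{[m]})^{-1}\|_{\ell^1_R(\ell^1)\to\ell^1_R(bv)} \to 0$. Here I would invoke the standard continuity-of-inversion (Banach-lemma) bound \cite{atkinson2005theoretical}: for $m$ large enough that $\|\mathcal{L} - \mathcal{L}^{[m]}\| < \|\mathcal{L}^{-1}\|^{-1}$, the operator $\mathcal{L}^{[m]}$ is invertible and
\[
\|\mathcal{L}^{-1} - (\mathcal{L}^{[m]})^{-1}\| \le \frac{\|\mathcal{L}^{-1}\|^2 \, \|\mathcal{L} - \mathcal{L}^{[m]}\|}{1 - \|\mathcal{L}^{-1}\|\,\|\mathcal{L} - \mathcal{L}^{[m]}\|}.
\]
The smallness hypothesis is eventually satisfied, and the numerator tends to $0$ while the denominator tends to $1$, both by the norm convergence in Lemma \ref{lem:bigL}; hence the right-hand side vanishes as $m \to \infty$.

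The main obstacle here is not analytic but one of bookkeeping: making sure that the formally-defined diagonals $\underline{c}$ and $\underline{c}^{[m]}$ really are the honest Banach-space inverses $\mathcal{L}^{-1}e_0^0$ and $(\mathcal{L}^{[m]})^{-1}e_0^0$ rather than merely algebraic solutions, and that the perturbation estimate is applied throughout in the correct pair of norms $\ell^1_R(\ell^1) \to \ell^1_R(bv)$. Since the genuinely hard work — boundedness, invertibility, and the decay $\|\mathcal{L} - \mathcal{L}^{[m]}\| \to 0$ — has already been carried out in Lemma \ref{lem:bigL}, this corollary is essentially a continuity-of-inversion argument riding on it.
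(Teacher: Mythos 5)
Your proposal is correct and follows essentially the same route as the paper: the identity $\underline{c}-\underline{c}^{[m]}=(\mathcal{L}^{-1}-(\mathcal{L}^{[m]})^{-1})e_0^0$, the fact that $\|e_0^0\|_{\ell^1_R(\ell^1)}=1$, and the standard perturbation bound for inverses combined with the norm convergence from Lemma \ref{lem:bigL}. Your extra care in identifying the formally-defined diagonal vector with the Banach-space solution via uniqueness under forward substitution is a point the paper leaves implicit, but it does not change the argument.
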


 \begin{theorem}\label{thm:traceclassC}
   Let $J = \Delta + K$ be a Jacobi operator where $K$ is trace class. Then $C = C_{J\to\Delta}$ can be decomposed into 
   \begin{equation*}
   C = C_{\rm Toe} + C_{\rm com},
   \end{equation*}
   where $C_{\rm Toe}$ is upper triangular, Toeplitz and bounded as an operator from $\ell^1_{R}$ to $\ell^1_{R}$, and $C_{\rm com}$ is compact as an operator from $\ell^1_{R}$ to $\ell^1_{R}$, for all $R > 1$. Also, if $J$ has Toeplitz-plus-finite-rank approximations $J^{[m]}$ with connection coefficient matrices $C^{[m]}=C_{\rm Toe}^{[m]}+C_{\rm com}^{[m]}$, then
   \begin{equation*}
    C^{[m]} \to C, \quad C_{\rm Toe}^{[m]} \to C_{\rm Toe}, \quad C_{\rm com}^{[m]} \to C_{\rm com} \text{ as } m\to \infty,
   \end{equation*}
   in the operator norm topology over $\ell^1_{R}$ for all $R>1$.
 \begin{proof}
  By Lemma \ref{lem:bigL}, for each $k$ the sequence $(c_{0,0+k},c_{1,1+k},c_{2,2+k},\ldots)$ is an element of $bv$. By Lemma \ref{lem:bvintolinfty} each is therefore a convergent sequence, whose limits we call $t_k$. Hence we can define an upper triangular Toeplitz matrix $C_{\rm Toe}$ whose $(i,j)$th element is $t_{j-i}$, and define $C_{\rm com} = C-C_{\rm Toe}$.
  
  The Toeplitz matrix $C_{\rm Toe}$ is bounded from $\ell^1_{R}$ to $\ell^1_{R}$ for all $R>1$ by the following calculation.
  \begin{align*}
  \|C_{\rm Toe}\|_{\ell^1_{R}\to\ell^1_{R}} &= \sup_j \sum_{i=0}^j R^{i-j}|t_{j-i}| \\
                                                  &= \sum_{k=0}^\infty R^{-k}|t_k| \\
                                                  &\leq \sum_{k=0}^\infty R^{-k} \|c_{*,*+k}\|_{bv}  \\
                                                  &= \|\underline{c}\|_{\ell^1_{R^{-1}}(bv)}.
  \end{align*}
  By Lemma \ref{lem:bigL} this quantity is finite (since $R^{-1} \in (0,1)$).
  
  Now we show convergence results. The compactness of $C_{\rm com}$ will follow at the end. For all $R>1$,
  \begin{align*} 
  \|C-C^{[m]}\|_{\ell^1_{R}\to\ell^1_{R}} &= \sup_j \sum_{i=0}^j R^{i-j} |c_{i,j}-c_{i,j}^{[m]}| \\
                &= \sup_j \sum_{k=0}^j R^{-k} |c_{j-k,j} - c_{j-k,j}^{[m]}| \\
                &\leq \sup_j \sum_{k=0}^j R^{-k} \|c_{*,*+k} - c^{[m]}_{*,*+k}\|_{bv} \\
                &= \sum_{k=0}^\infty R^{-k} \|c_{*,*+k} - c^{[m]}_{*,*+k}\|_{bv} \\
                &= \|\underline{c}-\underline{c}^{[m]}\|_{\ell^1_{R^{-1}}(bv)}.
  \end{align*}
  For the third line of the above sequence of equations, note that for fixed $k$, $c_{0,k}-c^{[m]}_{0,k},c_{1,1+k}-c^{[m]}_{1,1+k},c_{2,2+k}-c^{[m]}_{2,2+k},\ldots$ is a $bv$ sequence, and refer to Lemma \ref{lem:bvintolinfty}.
  \begin{align*}
  \|C_{\rm Toe}-C_{\rm Toe}^{[m]}\|_{\ell^1_{R}\to\ell^1_{R}} &= \sup_j \sum_{i=0}^j R^{i-j} |t_{j-i}-t_{j-i}^{[m]}| \\
  &=\sum_{k=0}^j R^{-k} |t_k -t_k^{[m]}| \\
  &\leq \sum_{k=0}^\infty R^{-k} \|c_{*,*+k} - c^{[m]}_{*,*+k}\|_{bv} \\
  &= \|\underline{c}-\underline{c}^{[m]}\|_{\ell^1_{R^{-1}}(bv)}.
  \end{align*}
  For the third line of the above sequence, note that $t_k -t_k^{[m]}$ is the limit of the $bv$ sequence $c_{*,*+k} - c^{[m]}_{*,*+k}$, and refer to Lemma \ref{lem:bvintolinfty}.
  \begin{align*}
  \|C_{\rm com}-C_{\rm com}^{[m]}\|_{\ell^1_{R}\to\ell^1_{R}} \leq \|C-C^{[m]}\| + \|C_{\rm Toe}-C_{\rm Toe}^{[m]}\| \leq 2\|\underline{c}-\underline{c}^{[m]}\|_{\ell^1_{R^{-1}}(bv)}.
  \end{align*}
  Using Corollary \ref{cor:cvecconverge}, that $\|\underline{c}-\underline{c}^{[m]}\|_{\ell^1_R(bv)} \to 0$ as $m\to\infty$, we have the convergence results.
  
  By Theorem \ref{thm:connectioncoeffs}, $C_{\rm com}^{[m]}$ has finite rank. Therefore, since $C_{\rm com} = \lim_{m\to\infty} C_{\rm com}^{[m]}$ in the operator norm topology over $\ell^1_{R^{-1}}$, we have that $C_{\rm com}$ is compact in that topology.
 \end{proof}
 \end{theorem}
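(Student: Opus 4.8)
The plan is to build $C_{\rm Toe}$ directly from the diagonal limits supplied by Corollary~\ref{cor:cvecconverge}, and then to transfer every boundedness, convergence, and compactness claim from the diagonal-indexed space $\ell^1_R(bv)$ to the operator norm on $\ell^1_{R^{-1}}$ by means of the explicit norm formula in Proposition~\ref{prop:lR1norm}. First I would observe that the $k$th diagonal of $C$ is precisely the sequence $c_{*,*+k} = (c_{0,k}, c_{1,1+k}, c_{2,2+k}, \ldots)$, which by Corollary~\ref{cor:cvecconverge} lies in $bv$ and therefore, by the embedding of Lemma~\ref{lem:bvintolinfty}, converges to a limit $t_k$. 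I would then \emph{define} $C_{\rm Toe}$ to be the upper triangular Toeplitz matrix with $(i,j)$ entry $t_{j-i}$ and set $C_{\rm com} = C - C_{\rm Toe}$; the content of the theorem is that these two pieces possess the asserted analytic properties.

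The crux is the weight bookkeeping between the two spaces. Since $C$ acts on $\ell^1_{R^{-1}}$, Proposition~\ref{prop:lR1norm} produces operator norms of the form $\sup_j \sum_{i \leq j} (R^{-1})^{i-j} |a_{ij}| = \sup_j \sum_{i \leq j} R^{\,j-i}|a_{ij}|$ (the restriction to $i \leq j$ coming from upper triangularity). Reindexing by the diagonal label $k = j-i$ converts this into a sum $\sum_{k} R^k(\cdots)$ carrying exactly the geometric weight of the $\ell^1_R(bv)$ norm. For $C_{\rm Toe}$ the entry depends only on $k$, so the supremum over $j$ disappears and one obtains $\sum_{k} R^k |t_k| \leq \sum_k R^k \|c_{*,*+k}\|_{bv} = \|\underline c\|_{\ell^1_R(bv)}$, using $|t_k| \leq \|c_{*,*+k}\|_{bv}$ from Lemma~\ref{lem:bvintolinfty}; this is finite by Corollary~\ref{cor:cvecconverge}, so $C_{\rm Toe}$ is bounded.

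For the convergence statements I would run the same reindexing on the differences. The norm $\|C - C^{[m]}\|_{\ell^1_{R^{-1}}\to\ell^1_{R^{-1}}}$ equals $\sup_j \sum_{k=0}^j R^k |c_{j-k,j} - c^{[m]}_{j-k,j}|$; bounding each entry by the $bv$ norm of its diagonal via $|c_{j-k,j} - c^{[m]}_{j-k,j}| \leq \|c_{*,*+k} - c^{[m]}_{*,*+k}\|_{bv}$ (Lemma~\ref{lem:bvintolinfty} once more), the supremum over $j$ is dominated by $\sum_{k=0}^\infty R^k \|c_{*,*+k} - c^{[m]}_{*,*+k}\|_{bv} = \|\underline c - \underline c^{[m]}\|_{\ell^1_R(bv)}$, which tends to $0$ by Corollary~\ref{cor:cvecconverge}. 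The identical argument applied to the Toeplitz limits, each $t_k - t^{[m]}_k$ being the $bv$-limit of the corresponding diagonal difference, gives $\|C_{\rm Toe} - C_{\rm Toe}^{[m]}\| \to 0$, and the triangle inequality then yields $\|C_{\rm com} - C_{\rm com}^{[m]}\| \to 0$.

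Compactness of $C_{\rm com}$ then comes for free: by Theorem~\ref{thm:connectioncoeffs} each truncation $C^{[m]}$ is Toeplitz-plus-finite-rank, and for a finite-rank perturbation the diagonals are eventually constant, so the limit-of-diagonals construction recovers exactly its Toeplitz part and its complement $C_{\rm com}^{[m]}$ is the finite-rank remainder; a norm limit of finite-rank operators is compact. I expect the only real obstacle to be the bookkeeping of the second paragraph — verifying that the supremum over $j$ in the $\ell^1_{R^{-1}}$ operator-norm formula is controlled \emph{uniformly} in $j$ by the full-diagonal $bv$ norms, so that the diagonal sums are dominated by the $\ell^1_R(bv)$ norm. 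Once that correspondence between the two weighted spaces is pinned down, every assertion of the theorem follows mechanically from Corollary~\ref{cor:cvecconverge}.
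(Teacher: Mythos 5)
Your proposal is correct and follows essentially the same route as the paper's own proof: defining $C_{\rm Toe}$ from the $bv$-limits of the diagonals, using the $\ell^1_{R^{-1}}$ operator-norm formula with the reindexing $k=j-i$ to dominate everything by $\|\underline{c}\|_{\ell^1_R(bv)}$ and $\|\underline{c}-\underline{c}^{[m]}\|_{\ell^1_R(bv)}$, and deducing compactness of $C_{\rm com}$ as a norm limit of the finite-rank pieces $C_{\rm com}^{[m]}$. Your extra remark that the eventually-constant diagonals of $C^{[m]}$ make the limit-of-diagonals construction consistent with the finite-rank decomposition of Theorem \ref{thm:connectioncoeffs} is a detail the paper leaves implicit, and is worth having.
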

 \begin{corollary}\label{cor:traceclassCmu}
   Let $C^\mu$ be as defined in Definition \ref{def:muconnection} for $C$ as in Theorem \ref{thm:traceclassC}. Then $C^\mu$ can be decomposed into $C^\mu = C^\mu_{\rm Toe} + C^\mu_{\rm com}$ where $C^\mu_{\rm Toe}$ is upper triangular, Toeplitz and bounded as an operator from $\ell^1_{R}$ to $\ell^1_{R}$, and $C^\mu_{\rm com}$ is compact as an operator from $\ell^1_{R}$ to $\ell^1_{R}$, for all $R > 1$. Furthermore, if $J$ has Toeplitz-plus-finite-rank approximations $J^{[m]}$ with connection coefficient matrices $(C^\mu)^{[m]}=(C^\mu_{\rm Toe})^{[m]}+(C^\mu_{\rm com})^{[m]}$, then
   \begin{equation*}
   (C^\mu)^{[m]} \to C^\mu, \quad (C_{\rm Toe}^\mu)^{[m]} \to C^\mu_{\rm Toe}, \quad (C^\mu_{\rm com})^{[m]} \to C^\mu_{\rm com} \text{ as } m\to \infty,
   \end{equation*}
   in the operator norm topology over $\ell^1_{R}$.
   \begin{proof}
    This follows from Theorem \ref{thm:traceclassC} applied to $J^\mu$ as defined in Lemma \ref{lem:Jmu}.
   \end{proof}
  \end{corollary}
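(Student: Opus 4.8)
The plan is to deduce everything from Theorem~\ref{thm:traceclassC} applied to the once-shifted operator $J^\mu$, exploiting the identity from Lemma~\ref{lem:Jmu} that $C^\mu = \beta_0^{-1}\left(0\,,\,C_{J^\mu\to\Delta}\right)$, i.e. $C^\mu$ is $\beta_0^{-1}$ times the matrix obtained from $C_{J^\mu\to\Delta}$ by inserting a zero column on the left. First I would note that $J^\mu$ is again Toeplitz-plus-trace-class: since $J = \Delta + K$ with $\sum_k |\alpha_k| + |\beta_k - \tfrac12| < \infty$, deleting the first row and column only discards the $k=0$ term, so $\sum_k |\alpha_{k+1}| + |\beta_{k+1}-\tfrac12| < \infty$ as well. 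Hence Theorem~\ref{thm:traceclassC} applies verbatim to $J^\mu$, yielding a decomposition $C_{J^\mu\to\Delta} = \hat C_{\rm Toe} + \hat C_{\rm com}$ with $\hat C_{\rm Toe}$ upper triangular Toeplitz and bounded, $\hat C_{\rm com}$ compact, on every $\ell^1_{R^{-1}}$ with $R \in (0,1)$, together with norm convergence of the corresponding truncations.

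Next I would transfer this structure through the zero-column insertion. Inserting a zero first column is precisely right-composition with the backward shift $T$ defined by $Te_j = e_{j-1}$ for $j \geq 1$ and $Te_0 = 0$, so that $C^\mu = \beta_0^{-1}\,C_{J^\mu\to\Delta}\,T$. A direct estimate gives $\|Tv\|_{\ell^1_{R^{-1}}} = \sum_k R^{-k}|v_{k+1}| = R\sum_{j\geq1}R^{-j}|v_j| \leq R\,\|v\|_{\ell^1_{R^{-1}}}$, so $T$ is bounded on $\ell^1_{R^{-1}}$ with norm at most $R$. Setting $C^\mu_{\rm Toe} = \beta_0^{-1}\hat C_{\rm Toe}T$ and $C^\mu_{\rm com} = \beta_0^{-1}\hat C_{\rm com}T$, the Toeplitz part remains upper triangular Toeplitz (the insertion shifts the symbol $\hat c(z)$ to $z\,\hat c(z)$, now strictly upper triangular as required by the Remark following Definition~\ref{def:muconnection}) and bounded, while $C^\mu_{\rm com}$ is compact because it is a compact operator post-composed with a bounded one. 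This establishes the claimed decomposition of $C^\mu$.

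Finally, for the convergence of the truncations I would verify the commutation identity $(J^{[m]})^\mu = (J^\mu)^{[m-1]}$: deleting the first row and column of $J^{[m]}$ (which equals $J$ on its $m\times m$ principal block and $\Delta$ elsewhere) produces exactly $J^\mu$ on the $(m-1)\times(m-1)$ principal block and $\Delta$ elsewhere. By Lemma~\ref{lem:Jmu} applied to each $J^{[m]}$, $(C^\mu)^{[m]} = \beta_0^{-1}\,C_{(J^\mu)^{[m-1]}\to\Delta}\,T$, so all three required convergences follow by applying the convergence part of Theorem~\ref{thm:traceclassC} to $J^\mu$ (with the index relabelled $m \mapsto m-1$) and then composing with the fixed bounded operator $\beta_0^{-1}T$, which cannot destroy operator-norm convergence. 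The only points needing genuine care --- and hence the main obstacle --- are confirming that the zero-column insertion is bounded on the weighted space $\ell^1_{R^{-1}}$ and leaves the Toeplitz-plus-compact splitting intact, and pinning down the truncation identity so that the convergence indices line up; the analytic heavy lifting is already contained in Theorem~\ref{thm:traceclassC}.
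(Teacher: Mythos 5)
Your proposal is correct and follows exactly the paper's route: the paper's entire proof is ``apply Theorem \ref{thm:traceclassC} to $J^\mu$ via Lemma \ref{lem:Jmu}.'' You have simply filled in the details the paper leaves implicit (that $J^\mu$ is again Toeplitz-plus-trace-class, that the zero-column insertion is right-composition with a shift bounded on $\ell^1_{R^{-1}}$ and preserves the Toeplitz-plus-compact splitting, and that the truncation indices align), all of which check out.
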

 
 \begin{theorem}\label{thm:symbolsconverge}
Let $J$ be a Jacobi operator such that $J = \Delta + K$ where $K$ is trace class. The Toeplitz symbols $c$ and $c_\mu$ of the Toeplitz parts of $C_{J\to \Delta}$ and $C^\mu_{J\to\Delta}$ are both analytic in the unit disc. Furthermore, if $J$ has Toeplitz-plus-finite-rank approximations $J^{[m]}$ with Toeplitz symbols $c^{[m]}$ and $c_\mu^{[m]}$, then $c^{[m]} \to c$ and $c^{[m]}_\mu \to c_\mu$ as $m \to \infty$ uniformly on compact subsets of $\mathbb{D}$.
\begin{proof}
  Let $R > 1$, and let $0 \leq |z| \leq R^{-1} < 1$. Then by Lemma \ref{lem:bvintolinfty} we have
  \begin{align*}
  \left|\sum_{k=0}^\infty t_k z^k \right| &\leq \sum_{k=0}^\infty |t_k| R^{-k} \leq \sum_{k=0}^\infty \|c_{*,*+k}\|_{bv} R^{-k} = \|\underline{c}\|_{\ell^1_{R^{-1}}(bv)},
  \end{align*}
  where $\underline{c}$ is as defined in equation \eqref{eqn:LC}. By Lemma \ref{lem:bigL} this quantity is finite. Since $R$ is arbitrary, the radius of convergence of the series is 1. The same is true for $c_\mu$ by Lemma \ref{lem:Jmu}.
  
  Now we prove that the Toeplitz symbols corresponding to the Toeplitz-plus-finite-rank approximations converge.
  \begin{align*}
  \sup_{|z|\leq R^{-1}} |c(z)-c^{[m]}(z)| &= \sup_{|z|\leq R^{-1}} \left|\sum_{k=0}^\infty (t_k-t_k^{[m]})z^k \right| \\
  &\leq \sum_{k=0}^\infty |t_k - t^{[m]}_{k}| R^{-k} \\
  &\leq \sum_{k=0}^\infty \|c_{*,*+k} - c^{[m]}_{*,*+k}\|_{bv}R^{-k} 
  = \|\underline{c}-\underline{c}^{[m]}\|_{\ell_{R^{-1}}^1(bv)},
  \end{align*}
  To go between the first and second lines, note that for each $k$, $c_{*,*+k} - c^{[m]}_{*,*+k}$ is a $bv$ sequence whose limit is $t_k - t^{[m]}_{k}$ and refer to Lemma \ref{lem:bvintolinfty}. Now, $\|\underline{c}-\underline{c}^{[m]}\|_{\ell_{R^{-1}}^1(bv)} \to 0$ as $m \to \infty$ by Corollary \ref{cor:cvecconverge}. The same is true for $\sup_{|z|\leq R^{-1}} |c_\mu(z)-c_\mu^{[m]}(z)|$ by Lemma \ref{lem:Jmu}.
\end{proof}
 \end{theorem}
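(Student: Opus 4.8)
The plan is to recognise that essentially all the analytic content has already been packaged in Lemma~\ref{lem:bigL} and Corollary~\ref{cor:cvecconverge}, so that what remains is to transfer the $\ell^1_R(bv)$ bounds on the diagonals of $C$ into statements about the power series $c(z) = \sum_{k=0}^\infty t_k z^k$. Here $t_k$ is the Toeplitz coefficient of $C_{\rm Toe}$, namely the limit of the $k$th diagonal sequence $c_{*,*+k} = (c_{i,i+k})_{i=0}^\infty$. That this limit exists at all is precisely the assertion that $c_{*,*+k} \in bv$, which is contained in $\underline{c} \in \ell^1_R(bv)$ (Corollary~\ref{cor:cvecconverge}).

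First I would prove analyticity. Fixing $R \in (0,1)$ and $|z| \le R$, I would bound the series termwise: by the embedding $bv \subset c$ from Lemma~\ref{lem:bvintolinfty}, the diagonal limit satisfies $|t_k| \le \|c_{*,*+k}\|_{bv}$, so
\begin{equation*}
\sum_{k=0}^\infty |t_k| R^k \le \sum_{k=0}^\infty \|c_{*,*+k}\|_{bv} R^k = \|\underline{c}\|_{\ell^1_R(bv)},
\end{equation*}
which is finite by Lemma~\ref{lem:bigL}. Since $R < 1$ is arbitrary, the power series has radius of convergence at least $1$ and hence defines an analytic function on $\mathbb{D}$.

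Next, for convergence of the truncated symbols, I would fix a compact $E \subset \mathbb{D}$, choose $R < 1$ with $E \subset \{|z| \le R\}$, and estimate the uniform difference by the same device: the sequence $c_{*,*+k} - c^{[m]}_{*,*+k}$ lies in $bv$ with diagonal limit $t_k - t_k^{[m]}$, so Lemma~\ref{lem:bvintolinfty} gives $|t_k - t_k^{[m]}| \le \|c_{*,*+k} - c^{[m]}_{*,*+k}\|_{bv}$ and hence
\begin{equation*}
\sup_{|z|\le R} \bigl|c(z) - c^{[m]}(z)\bigr| \le \sum_{k=0}^\infty \|c_{*,*+k} - c^{[m]}_{*,*+k}\|_{bv} R^k = \|\underline{c} - \underline{c}^{[m]}\|_{\ell^1_R(bv)},
\end{equation*}
which tends to $0$ as $m \to \infty$ by Corollary~\ref{cor:cvecconverge}. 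The corresponding statements for $c_\mu$ follow by applying the identical argument to the shifted operator $J^\mu$ via Lemma~\ref{lem:Jmu} and Corollary~\ref{cor:traceclassCmu}.

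Finally, as for where the difficulty lies: there is in fact little genuine obstacle remaining at this stage, because the heavy lifting --- showing that $\mathcal{L}$ is Fredholm of index zero and invertible, and therefore that $\underline{c} \in \ell^1_R(bv)$ with $\underline{c}^{[m]} \to \underline{c}$ in that norm --- was already carried out in Lemma~\ref{lem:bigL} and Corollary~\ref{cor:cvecconverge}. The one point I would take care to state explicitly is the use of Lemma~\ref{lem:bvintolinfty} to dominate the \emph{diagonal limit} $t_k$ (and the differences $t_k - t_k^{[m]}$) by the $bv$ norm of the whole diagonal; this is exactly what converts $\ell^1_R(bv)$ control on the full matrix into a uniform power-series estimate, and it is the single place where the passage from the infinite matrix to its Toeplitz symbol would break down were the diagonal limits not themselves controlled.
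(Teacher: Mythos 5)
Your proposal is correct and follows essentially the same route as the paper's proof: bounding $|t_k|$ and $|t_k - t_k^{[m]}|$ by the $bv$ norms of the diagonals via Lemma \ref{lem:bvintolinfty}, summing against $R^k$ to reduce everything to the $\ell^1_R(bv)$ control from Lemma \ref{lem:bigL} and Corollary \ref{cor:cvecconverge}, and handling $c_\mu$ via Lemma \ref{lem:Jmu}. No gaps.
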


\begin{theorem}[See \cite{kato1995perturbation}]\label{thm:specpert}
  Let $A$ and $B$ be bounded self-adjoint operators on $\ell^2$. Then
  \begin{equation*}
  \mathrm{dist}(\sigma(A),\sigma(B)) \leq \|A-B\|_2.
  \end{equation*}
\end{theorem}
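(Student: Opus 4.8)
The plan is to establish the stronger one-sided estimate that every point of $\sigma(B)$ lies within distance $\|A-B\|_2$ of $\sigma(A)$, and then to appeal to the symmetry of the Hausdorff metric under interchange of $A$ and $B$. Recalling that the Hausdorff distance on subsets of the line is
\begin{equation*}
\mathrm{dist}(\sigma(A),\sigma(B)) = \max\left\{\sup_{\lambda \in \sigma(B)} \mathrm{dist}(\lambda,\sigma(A)),\ \sup_{\mu \in \sigma(A)} \mathrm{dist}(\mu,\sigma(B))\right\},
\end{equation*}
it suffices to prove that $\mathrm{dist}(\lambda,\sigma(A)) \leq \|A-B\|_2$ for every $\lambda \in \sigma(B)$; the second supremum is controlled identically after swapping the roles of $A$ and $B$.

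The key analytic ingredient I would use is the resolvent-norm identity
\begin{equation*}
\|(A-\lambda)^{-1}\|_2 = \frac{1}{\mathrm{dist}(\lambda,\sigma(A))}, \qquad \lambda \notin \sigma(A),
\end{equation*}
valid because $A$ is self-adjoint. I would derive this from the spectral theorem via the functional calculus, which gives $\|(A-\lambda)^{-1}\|_2 = \sup_{\mu\in\sigma(A)}|\mu-\lambda|^{-1} = \left(\inf_{\mu \in \sigma(A)}|\mu-\lambda|\right)^{-1}$. This is the sole place where self-adjointness is essential, and it is precisely where the theorem would break down for a general bounded operator, since there one retains only the inequality $\|(A-\lambda)^{-1}\|_2 \geq \mathrm{dist}(\lambda,\sigma(A))^{-1}$ (norm versus spectral radius), which points the wrong way.

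With the identity in hand, I would argue by contradiction using a Neumann series. Suppose $\lambda \in \sigma(B)$ but $\mathrm{dist}(\lambda,\sigma(A)) > \|A-B\|_2$; in particular $\lambda \notin \sigma(A)$, and the identity gives $\|(A-\lambda)^{-1}\|_2\,\|A-B\|_2 < 1$. Factoring $B - \lambda = (A-\lambda)\bigl(I + (A-\lambda)^{-1}(B-A)\bigr)$ and applying the Neumann series to the second factor (whose perturbation has norm strictly less than one) shows that $B-\lambda$ is invertible, contradicting $\lambda \in \sigma(B)$. Hence $\mathrm{dist}(\lambda,\sigma(A)) \leq \|A-B\|_2$ for all $\lambda \in \sigma(B)$, and by symmetry the Hausdorff bound follows. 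The main obstacle is really just the resolvent-norm identity; once that is granted, the remainder is a routine perturbation argument.
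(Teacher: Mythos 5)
Your argument is correct and complete. The paper itself gives no proof of this statement---it is quoted with a citation to Kato---and the argument you give is precisely the standard one found there: the identity $\|(A-\lambda)^{-1}\|_2 = \mathrm{dist}(\lambda,\sigma(A))^{-1}$ from the self-adjoint functional calculus, followed by the Neumann-series factorisation $B-\lambda = (A-\lambda)\bigl(I+(A-\lambda)^{-1}(B-A)\bigr)$ and symmetrisation to get the Hausdorff bound. Your remark that self-adjointness (or at least normality) enters only through the resolvent-norm identity, and that the inequality points the wrong way for general bounded operators, correctly identifies why the theorem fails without that hypothesis.
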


\begin{theorem}\label{thm:traceclasszresolvent}
 Let $J = \Delta + K$ be a Toeplitz-plus-trace-class Jacobi operator, and let $c$ and $c_\mu$ be the analytic functions as defined in Theorem \ref{thm:traceclassC} and Corollary \ref{cor:traceclassCmu}. Then for $\lambda(z) = \frac12(z+z^{-1})$ with $z\in\mathbb{D}$ such that $\lambda(z) \notin \sigma(J)$, the principal resolvent $G$ is given by the meromorphic function
 \begin{equation}\label{eqn:Gequalscmuoverctraceclass}
  G(\lambda(z)) = -\frac{c_\mu(z)}{c(z)}.
 \end{equation}
 Therefore, all eigenvalues of $J$ are of the form $\lambda(z_k)$, where $z_k$ is a root of $c$ in $\mathbb{D}$.
\begin{proof}
  Let $z \in \mathbb{D}$ such that $\lambda(z) \notin \sigma(J)$, and let $J^{[m]}$ denote the Toeplitz-plus-finite-rank approximations of $J$ with principal resolvents $G^{[m]}$. Then $J^{[m]} \to J$ as $m\to\infty$, so by Theorem \ref{thm:specpert} there exists $M$ such that for all $m\geq M$, $\lambda(z) \notin \sigma(J^{[m]})$. For such $m$, both $G(\lambda(z))$ and $G^{[m]}(\lambda(z))$ are well defined, and using a well-known result on the difference of inverses (see for example, \cite{atkinson2005theoretical}), we have
  \begin{align*}
     G^{[m]}(\lambda) - G(\lambda) &= \left\langle e_0,\left((J^{[m]} - \lambda)^{-1} - (J-\lambda)^{-1}\right)e_0\right\rangle \\
                               &\leq \|(J^{[m]} - \lambda)^{-1} - (J-\lambda)^{-1}\|_2\\
                              &\leq \frac{\|(J-\lambda)^{-1}\|_2^2\|J-J^{[m]}\|_2}{1-\|(J-\lambda)^{-1}\|_2\|J-J^{[m]}\|_2}\\
                              &\to 0 \text{ as } m \to \infty.
    \end{align*}
  
  Theorem \ref{thm:symbolsconverge} shows that $\lim_{m\to\infty} c^{[m]}_\mu(z)/c^{[m]}(z) = c_\mu(z)/c(z)$. Therefore by Theorem \ref{thm:joukowskiresolvent} these limits are the same and we have equation \eqref{eqn:Gequalscmuoverctraceclass}.
\end{proof}
\end{theorem}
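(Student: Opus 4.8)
The plan is to obtain the formula by a limiting argument from the Toeplitz-plus-finite-rank truncations $J^{[m]}$, for which Theorem \ref{thm:joukowskiresolvent} already supplies the clean rational expression $G^{[m]}(\lambda(z)) = -c_\mu^{[m]}(z)/c^{[m]}(z)$ for $z \in \mathbb{D}$. Fix $z \in \mathbb{D}$ with $\lambda(z) \notin \sigma(J)$; the goal is to show that both sides of this identity converge to the desired quantities as $m \to \infty$, so that the formula passes to the limit.

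First I would establish convergence of the left-hand sides. Since $K$ is trace class it is in particular compact, and $J - J^{[m]} = K - K^{[m]}$ is the operator obtained by zeroing the top-left $m \times m$ block of $K$; compactness gives $\|J - J^{[m]}\|_2 \to 0$. By Theorem \ref{thm:specpert}, $\mathrm{dist}(\sigma(J^{[m]}), \sigma(J)) \to 0$, so for $m$ large enough $\lambda(z) \notin \sigma(J^{[m]})$ and $G^{[m]}(\lambda(z))$ is well defined. The standard estimate on the difference of resolvents, together with the pairing against $e_0$, then yields $G^{[m]}(\lambda(z)) \to G(\lambda(z))$.

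Next I would handle the right-hand sides using Theorem \ref{thm:symbolsconverge}, which gives $c^{[m]} \to c$ and $c_\mu^{[m]} \to c_\mu$ uniformly on compact subsets of $\mathbb{D}$. At any fixed $z$ with $c(z) \neq 0$ this yields $c_\mu^{[m]}(z)/c^{[m]}(z) \to c_\mu(z)/c(z)$, and equating the two limits establishes \eqref{eqn:Gequalscmuoverctraceclass} at such $z$.

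The main obstacle is the treatment of points where $c(z) = 0$, since there the pointwise convergence of the ratio breaks down. I would argue instead at the level of meromorphic functions: $z \mapsto G(\lambda(z))$ is meromorphic on $\mathbb{D}$, being the composition of the resolvent (analytic off $\sigma(J)$) with the analytic Joukowski map, and it agrees with the meromorphic function $-c_\mu/c$ on the nonempty open set where $c \neq 0$ and $\lambda(z) \notin \sigma(J)$; by the identity theorem the two agree throughout $\mathbb{D}$. Finally, for the eigenvalue statement, every eigenvalue of $J$ is a pole of $G$, and by the meromorphic identity a pole can occur only at a zero of the denominator $c$; hence each eigenvalue has the form $\lambda(z_k)$ with $c(z_k) = 0$ and $z_k \in \mathbb{D}$, as claimed.
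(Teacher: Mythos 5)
Your proposal is correct and follows essentially the same route as the paper: truncate to $J^{[m]}$, use Theorem \ref{thm:specpert} and the resolvent-difference estimate to get $G^{[m]}(\lambda(z))\to G(\lambda(z))$, use Theorem \ref{thm:symbolsconverge} for the symbols, and equate the limits. Your extra care at zeros of $c$ via the identity theorem, and your explicit justification of the eigenvalue claim through cyclicity of $e_0$, are minor refinements of points the paper leaves implicit, not a different method.
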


 \section{Computability aspects}\label{sec:computability}

In this section we discuss computability questions \`a la Ben-Artzi--Colbrook--Hansen--Nevanlinna--Seidel \cite{ben2015can,ben2015new,hansen2011solvability}. This involves an informal definition of the Solvability Complexity Index (SCI), a recent development that rigorously describes the extent to which various scientific computing problems can be solved. It is in contrast to classical computability theory \`a la Turing, in which problems are solvable \emph{exactly} in finite time. In scientific computing we are often interested in problems which we can only \emph{approximate} the solution in finite time, such that in an ideal situation this approximation can be made as accurate as desired. For example, the solution to a differential equation, the roots of a polynomial, or the spectrum of a linear operator.

Throughout this section we will consider only real number arithmetic, and the results do not necessarily apply to algorithms using floating point arithmetic.

The Solvability Complexity Index (SCI) has a rather lengthy definition, but in the end is quite intuitive \cite{ben2015can}.

\begin{definition}[Computational problem]
  A computational problem is a 4-tuple, $\{\Xi,\Omega,\Lambda,\mathcal{M}\}$, where $\Omega$ is a set, called the \emph{input set}, $\Lambda$ is a set of functions from $\Omega$ into the complex numbers, called the \emph{evaluation set}, $\mathcal{M}$ is a metric space, and $\Xi : \Omega \to \mathcal{M}$ is the \emph{problem function}.
  \end{definition}

\begin{definition}[General Algorithm]
  Given a computational problem  $\{\Xi,\Omega,\Lambda,\mathcal{M}\}$, a general algorithm is a function $\Gamma : \Omega \to \mathcal{M}$ such that for each $A \in \Omega$, 
  \begin{enumerate}[(i)]
    \item The action of $\Gamma$ on $A$ only depends on the set $\{f(A) : f \in \Lambda_\Gamma(A) \}$ where $\Lambda_\Gamma(A)$ is a finite subset of $\Lambda$
    \item For every $B \in \Omega$, $f(B) = f(A)$ for all $f \in \Lambda_\Gamma(A)$ implies $\Lambda_\Gamma(B) = \Lambda_\Gamma(A)$.
    \end{enumerate}
  \end{definition}
This definition of an algorithm is very general indeed. There are no assumptions on \emph{how} $\Gamma$ computes its output; requirement $(i)$ ensures that it can only use a finite amount of information about its input, and requirement $(ii)$ ensures that the algorithm will only be affected by changes in the inputs which are actually measured. In short, $\Gamma$ depends only on, and is determined by, finitely many evaluable elements of each input.

\begin{definition}\label{def:SCI}[Solvability Complexity Index]
  A computational problem function $\{\Xi,\Omega,\Lambda,\mathcal{M}\}$ has Solvability Complexity Index $k$ if $k$ is the smallest integer such that, for each $(n_1,\ldots,n_k) \in \mathbb{N}^k$ there exists a general algorithm $\Gamma_{n_1,\ldots,n_k} : \Omega \to \mathcal{M}$, such that for all $A \in \Omega$,
 \begin{equation*}
  \Gamma(A) = \lim_{n_k \to \infty} \lim_{n_{k-1}\to\infty}\ldots\lim_{n_1 \to \infty} \Gamma_{n_1,\ldots,n_k}(A),
 \end{equation*}
 where the limit is taken in the metric on $\mathcal{M}$. In other words, the output of $\Gamma$ can be computed using a sequence of $k$ limits.
\end{definition}

We require a metric space for the SCI.
\begin{definition}\label{def:Hausdorff}
  The Hausdorff metric for two compact subsets of the complex plane $A$ and $B$ is defined to be
  \begin{equation*}
  d_H(A,B) = \mathrm{max} \left\{  \sup_{a \in A} \mathrm{dist}(a,B), \sup_{b \in B} \mathrm{dist}(b,A) \right\}.
  \end{equation*}
  If a sequence of sets $A_1,A_2,A_3,\ldots$ converges to $A$ in the Hausdorff metric, we write $A_n \xrightarrow{H} A$ as $n \to \infty$.
\end{definition}

The computational problems considered for the remainder of this paper have $\Omega$ as a set of bounded self-adjoint operators on $\ell^2$, $\Lambda$ is the set of functions which simply return each individual element of the matrix representation of the operator, $\mathcal{M}$ is the set of subsets of $\R$ equipped with the Hausdorff metric, and $\Xi$ returns the spectrum of the operator.

\begin{theorem}[\cite{ben2015can}]\label{thm:SCIgeneral}
 The Solvability Complexity Index of the problem of computing the spectrum of a self-adjoint operator $A \in \mathcal{B}(\ell^2)$ is equal to 2 with respect to the Hausdorff metric on $\mathbb{R}$. For compact operators and banded self-adjoint operators the SCI reduces to 1.
\end{theorem}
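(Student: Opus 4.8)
The plan is to prove the three assertions separately: the upper bound $\mathrm{SCI}\le 2$ for a general bounded self-adjoint $A$, the matching lower bound $\mathrm{SCI}\ge 2$, and the collapse to $\mathrm{SCI}=1$ in the compact and banded cases. The unifying object is the lower modulus $\gamma(z)=\mathrm{dist}(z,\sigma(A))$, which for a self-adjoint operator satisfies $\gamma(z)=\inf_{\|x\|=1}\|(A-z)x\|$ by the spectral theorem (since $\sigma((A-z)^*(A-z))=\{|t-z|^2:t\in\sigma(A)\}$). Recovering $\sigma(A)$ in the Hausdorff metric then reduces to locating the approximate zero set of $\gamma$ on a grid of $[-n_2,n_2]$ of spacing $1/n_2$, which exhausts the (bounded) spectrum as $n_2\to\infty$ without needing an a priori norm bound.

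For the upper bound I would first approximate $\gamma$ from finite data. Set $\gamma_{n,m}(z)=\min_{x\in\mathbb{C}^n,\,\|x\|=1}\|P_m(A-z)P_n x\|$, where $P_k$ is the projection onto the first $k$ basis vectors; this reads only the entries $\langle e_i,Ae_j\rangle$ with $i<m$, $j<n$ and is Turing computable, being the least singular value of a finite matrix. For fixed $n$ the operator $(A-z)P_n$ has finite rank, so $P_m(A-z)P_n\to(A-z)P_n$ in norm and $\lim_{m\to\infty}\gamma_{n,m}(z)=\gamma_n(z):=\min_{x\in P_n\ell^2,\,\|x\|=1}\|(A-z)x\|$; since $\bigcup_n P_n\ell^2$ is dense and $\gamma_n$ is monotone decreasing in $n$, $\lim_{n\to\infty}\gamma_n(z)=\gamma(z)$. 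Feeding this double limit into a grid-thresholding scheme (output the grid points where the approximate modulus falls below a tolerance tied to the spacing), with the entry count $m=n_1$ as the inner parameter and the subspace size and grid spacing both driven by $n_2$, produces computable $\Gamma_{n_1,n_2}$ with $\Gamma(A)=\lim_{n_2\to\infty}\lim_{n_1\to\infty}\Gamma_{n_1,n_2}(A)$ in $d_H$, giving $\mathrm{SCI}\le 2$.

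The collapse to $\mathrm{SCI}=1$ then comes from eliminating one of the two limits. If $A$ is banded with bandwidth $b$, then $(A-z)P_n$ already has range inside $P_{n+b}\ell^2$, so $\gamma_{n,n+b}(z)=\gamma_n(z)$ exactly: the inner $m$-limit is unnecessary and a single parameter (enlarging the subspace and refining the grid together) suffices. If instead $A$ is compact and self-adjoint, I would exploit norm convergence of the truncations: $\|A-P_nAP_n\|\to 0$, whence Theorem \ref{thm:specpert} gives $d_H(\sigma(A),\sigma(P_nAP_n))\le\|A-P_nAP_n\|\to 0$; computing the eigenvalues of the finite matrix $P_nAP_n$ to accuracy $1/n$ (a genuinely finite Turing computation) and bundling the two error sources into the single index $n$ again yields one limit.

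The hard part will be the lower bound $\mathrm{SCI}\ge 2$ for general bounded self-adjoint operators, i.e.\ showing that no single sequence of finite-information algorithms can converge. Here I would argue by contradiction via an adversary construction: given any candidate $\{\Gamma_n\}$, one builds two self-adjoint operators whose entries agree on the finite window inspected by $\Gamma_n$ — so $\Gamma_n$ returns the identical set on both — yet whose spectra are forced a fixed Hausdorff distance apart, for instance by hiding an isolated eigenvalue far out in the interval through a far-off-diagonal perturbation invisible to the window. Since this can be arranged at every level $n$, no single limit resolves both cases, so $\mathrm{SCI}>1$. The delicate point is realising the hidden spectral mass so that it is simultaneously self-adjoint-admissible and genuinely undetectable at every truncation; this indistinguishability-with-persistent-gap mechanism is precisely the content established in \cite{ben2015can}.
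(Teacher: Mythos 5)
This theorem is not proved in the paper at all: it is quoted verbatim from \cite{ben2015can} and used as a black box, so there is no internal proof to compare your argument against. Judged on its own terms, your upper-bound construction (injection modulus $\gamma(z)=\inf_{\|x\|=1}\|(A-z)x\|=\mathrm{dist}(z,\sigma(A))$ for self-adjoint $A$, approximated by least singular values of rectangular finite sections, with the inner limit removing the row truncation and the outer limit enlarging the subspace and refining the grid) is essentially the standard argument from the Hansen school, and your two collapses to $\mathrm{SCI}=1$ — exactness of $\gamma_{n,n+b}=\gamma_n$ in the banded case, and norm convergence of finite sections plus the self-adjoint perturbation bound in the compact case — are also the right mechanisms. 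Minor loose ends you should tie up: you need uniform convergence $\gamma_n\to\gamma$ on the compact grid region to make a single-parameter thresholding scheme converge (this follows from Dini, since the $\gamma_n$ are $1$-Lipschitz and decrease pointwise to the continuous $\gamma$), and in the compact case $0\in\sigma(A)$ and $0\in\sigma(P_nAP_n)$ always, so the output should adjoin $\{0\}$ to the finite eigenvalue list.

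The genuine gap is the lower bound. Your argument — for every $n$ one can exhibit two self-adjoint operators agreeing on the window read by $\Gamma_n$ whose spectra are a fixed Hausdorff distance apart — does not establish $\mathrm{SCI}>1$. A height-one tower only requires $\lim_n\Gamma_n(A)=\sigma(A)$ for each \emph{fixed} input $A$, with no uniformity over inputs, so pairwise indistinguishability at each finite level is compatible with $\mathrm{SCI}=1$: indeed exactly this indistinguishability holds for compact operators (any finite window is consistent with many different compact completions), yet the SCI there is $1$, as you yourself prove. To get $\mathrm{SCI}\geq 2$ one must diagonalise against the \emph{entire} algorithm sequence: construct a single operator $A$, built adaptively from the outputs $\Gamma_1,\Gamma_2,\dots$ evaluated along the construction, on which $\Gamma_n(A)$ provably fails to converge (e.g.\ oscillates between two sets at positive Hausdorff distance). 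Your sketch never produces such a single adversarial input, and the "persistent gap" you invoke is precisely the nontrivial combinatorial content of the cited result rather than something your construction delivers.
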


Theorem \ref{thm:SCIgeneral} implies that the SCI of computing the spectrum of bounded Jacobi operators in the Hausdorff metric is 1. In loose terms, the problem is solvable using only one limit of computable outputs. What more can we prove about the computability?

The results of Section \ref{sec:Toeplitzplusfinite} reduce the computation of the spectrum of a Toeplitz-plus-finite-rank Jacobi operator to finding the roots of a polynomial. From an uninformed position, one is lead to believe that polynomial rootfinding is a solved problem, with many standard approaches used every day. One common method is to use the QR algorithm to find the eigenvalues of the companion matrix for the polynomial. This can be done stably and efficiently in practice \cite{aurentz2015fast}. However, the QR algorithm is not necessarily convergent for non-normal matrices (companion matrices are normal if and only if they are unitary, which is exceptional). Fortunately, the SCI of polynomial rootfinding with respect to the Hausdorff metric in for subsets of $\C$ is 1, but if one requires the multiplicities of these roots then the SCI is not yet known \cite{ben2015can}.

A globally convergent polynomial rootfinding algorithm is given in \cite{hubbard2001find}. For any degree $d$ polynomial the authors describe a procedure guaranteed to compute fewer than $1.11 d(\log d)^2$ points in the complex plane, such that for each root of the polynomial, a Newton iteration starting from at least one of these points will converge to this root.

Let $\eps > 0$. If a polynomial $p$ of degree $d$ has $r$ roots, how do we know when to stop so that we have $r$ points in the complex plane each within $\eps$ of a distinct root of $p$? This leads us to the concept of error control.
  
  \begin{definition}\label{def:errorcontrol}[Error control]
 A function $\Gamma$ which takes inputs to elements in a metric space $\mathcal{M}$ is computable with error control if it has solvability complexity index 1, and for each $\eps$ we can compute $n$ to guarantee that
 \begin{equation*}
  d_{\mathcal{M}}(\Gamma_{n}(A),\Gamma(A)) < \eps. 
 \end{equation*}
In other words, the output of $\Gamma$ can be computed using a single limit, and an upper bound for the error committed by each $\Gamma_n$ is also computable.
\end{definition}

 Besides providing $\mathcal{O}(d(\log d)^2)$ initial data for the Newton iteration (to find the complex roots of a degree $d$ polynomial), the authors of \cite{hubbard2001find} discuss stopping criteria. In Section 9 of \cite{hubbard2001find}, it is noted therein that for Newton iterates $z_1, z_2, \ldots$, if $|z_{k}-z_{k-1}| < \eps/d$, then there exists a root $\xi$ of the polynomial in question such that $|z_k-\xi| < \eps$. It is then noted, however, that if there are multiple roots then it is in general impossible to compute their multiplicities with complete certainty. This is because the Newton iterates can pass arbitrarily close to a root to which this iterate does not, in the end, converge. Another consequence of this possibility is that roots could be missed out altogether because all of the iterates can be found to be close to a strict subset of the roots.

To salvage the situation, we give the following lemma, which adds some assumptions to the polynomial in question.

\begin{lemma}\label{lem:computingroots}
 Let $p$ be a polynomial and $\Omega \subset \C$ an open set such that, a priori, the degree $d$ is known and it is known that there are $r$ distinct roots of $p$ in $\Omega$ and no roots on the boundary of $\Omega$. Then the roots of $p$ in $\Omega$ is computable with error control in the Hausdorff metric (see Definition \ref{def:Hausdorff} and Definition \ref{def:errorcontrol}).
 \begin{proof}
Use Newton's method with the $\mathcal{O}(d(\log d)^2)$ complex initial data given in \cite{hubbard2001find}. Using the stopping criteria in the discussion preceding this lemma, the algorithm at each iteration produces $\mathcal{O}(d(\log d)^2)$ discs in the complex plane, within which all roots of $p$ must lie. To be clear, these discs have centres $z_k$ and radii $d\cdot|z_k - z_{k-1}|$. Let $R_k \subset \Omega$ denote the union of the discs which lie entirely inside $\Omega$ and have radius less than $\epsilon$ (the desired error). Note that this set may be empty if none of the discs are sufficiently small.

Because the Newton iterations are guaranteed to converge from these initial data, we  must have \emph{eventually}, for some sufficiently large $k$, that $R_k$ has $r$ connected components each with diameter less than $\epsilon$. Terminate when this verifiable condition has been fulfilled.
 \end{proof}
\end{lemma}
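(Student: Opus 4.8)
The plan is to run Newton's method from the $\mathcal{O}(d(\log d)^2)$ initial points of \cite{hubbard2001find} and to use the a priori value of $r$ as a verifiable stopping rule, establishing separately that this rule is eventually triggered and that, once triggered, it certifies an output within $\eps$ of the root set in the Hausdorff metric. First I would attach a certified disc to each iterate: by the criterion recalled before the lemma, whenever $|z_k - z_{k-1}| < \delta/d$ the closed disc $\overline{B}(z_k,\delta)$ contains a root of $p$, so taking $\delta = \delta_k := d\,|z_k - z_{k-1}|$ gives, at step $k$, one root-enclosing disc per initial point. Since the chosen initial points yield convergent Newton sequences, consecutive differences tend to $0$, hence $\delta_k \to 0$ and each disc shrinks onto a genuine root. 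I then set $R_k$ to be the union of those discs that lie entirely inside $\Omega$ and have radius below $\eps/2$; because such a disc lies in $\Omega$, the root it encloses lies in $\Omega$, of which there are exactly $r$ by hypothesis.

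The correctness half is then a counting argument. Suppose $R_k$ has exactly $r$ connected components, each of diameter less than $\eps$. Each component is a nonempty union of discs and so contains at least one root of $p$ in $\Omega$; as the components are pairwise disjoint these roots are distinct, so the $r$ components enclose at least $r$ distinct roots in $\Omega$. Since there are only $r$ roots in $\Omega$, each component encloses exactly one, and all $r$ are accounted for. As each component has diameter below $\eps$, any representative point chosen from it lies within $\eps$ of its unique root and vice versa; outputting one point per component therefore yields a finite set whose Hausdorff distance (Definition \ref{def:Hausdorff}) to the set of roots of $p$ in $\Omega$ is below $\eps$, which is exactly the error control demanded by Definition \ref{def:errorcontrol}. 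Note that this step needs no smallness of $\eps$ relative to the root spacing: the counting does all the work.

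For termination I would use the hypotheses. Let $\rho > 0$ be the least distance between distinct roots of $p$ in $\Omega$ and $\eta > 0$ the distance from the (finite) root set to $\partial\Omega$; both are positive because $p$ has finitely many roots and, by assumption, none on $\partial\Omega$. By the global convergence from the $\mathcal{O}(d(\log d)^2)$ initial points, each of the $r$ roots in $\Omega$ is the limit of at least one Newton sequence. For $k$ large, every convergent sequence has $z_k$ and $\delta_k$ within $\tfrac14\min(\rho,\eta,\eps)$ of its limit root and of $0$ respectively, so: discs from sequences converging to a root outside $\overline{\Omega}$ fall outside $\Omega$ and are discarded; discs from sequences converging to a common root in $\Omega$ cluster together; and discs from sequences converging to distinct roots in $\Omega$ are separated by about $\rho$ and are disjoint. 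Hence for all large $k$, $R_k$ consists of exactly $r$ clusters, one per root in $\Omega$, with diameters tending to $0$ and so eventually below $\eps$. The stopping rule is checkable at each step, so the procedure halts.

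The main obstacle is this termination step: a priori the many Newton discs could overlap across roots or straddle $\partial\Omega$, making the observed component count fluctuate. The three hypotheses are exactly what control this. Knowing $d$ is needed both to invoke \cite{hubbard2001find} and to calibrate the radii $\delta_k = d\,|z_k - z_{k-1}|$; the absence of roots on $\partial\Omega$ forces $\eta > 0$, so that for large $k$ every disc is unambiguously inside or outside $\Omega$; and knowing $r$ is what makes the stopping rule both verifiable and, through the counting argument, sufficient to guarantee that no root in $\Omega$ is missed and none is spuriously split.
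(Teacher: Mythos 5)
Your proposal is correct and follows essentially the same route as the paper's proof: Newton's method from the initial points of \cite{hubbard2001find}, certified root-enclosing discs from the stopping criterion $|z_k-z_{k-1}|<\eps/d$, restriction to discs lying inside $\Omega$ with small radius, and termination once exactly $r$ components of diameter less than $\eps$ appear. You merely spell out in more detail the two steps the paper leaves implicit — the counting argument showing the condition, once met, certifies Hausdorff error below $\eps$, and the separation argument (using the minimal root spacing and the positive distance of the roots to $\partial\Omega$) showing the condition is eventually met.
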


\begin{theorem}\label{thm:computToeplusfinite}
 Let $J = \Delta + F$ be a Toeplitz-plus-finite-rank Jacobi operator such that the rank of $F$ is known a priori. Then its point spectrum $\sigma_p(J)$ is computable with error control in the Hausdorff metric (see Definition \ref{def:Hausdorff} and Definition \ref{def:errorcontrol}).
 \begin{remark}
  Note that the full spectrum is simply $[-1,1] \cup \sigma_p(J)$.
 \end{remark}
 \begin{proof}
  Suppose $F$ is zero outside the $n \times n$ principal submatrix. The value of $n$ can be computed given that we know the rank of $F$. Compute the principal $2n \times 2n$ submatrix of the connection coefficients matrix $C_{J \to \Delta}$ using formulae \eqref{connectionrecurrence1}--\eqref{connectionrecurrence2}. The entries in the final column of this $2n \times 2n$ matrix give the coefficients of the Toeplitz symbol $c$, which is a degree $2n-1$ polynomial. 
  
  Decide if $\pm 1$ are roots by evaluating $p(\pm 1)$. Divide by the linear factors if necessary to obtain a polynomial $\tilde{p}$ such that $\tilde{p}(\pm 1) \neq 0$. Use Sturm's Theorem to determine the number of roots of $\tilde{p}$ in $(-1,1)$, which we denote $r$ \cite{rahman2002analytic}. Since all roots in $\overline{\mathbb{D}}$ are real, there are $r$ roots of $\tilde{p}$ in the open unit disc $\mathbb{D}$ and none on the boundary.
  
  By Lemma \ref{lem:computingroots}, the roots $z_1,\ldots,z_r$ of this polynomial $c$ which lie in $(-1,1)$ can be computed with error control. By Theorem \ref{thm:joukowskimeasure}, for the point spectrum of $J$ we actually require $\lambda_k = \frac12(z_k + z_k^{-1})$ to be computed with error control. Note that since $|\lambda_k| \leq \|J\|_2$ for each $k$, we have that $|z_k| \geq (1+2\|J\|_2)^{-1}$. We should ensure that this holds for the computed roots  $\hat{z}_k \in \mathbb{D}$ too. By the mean value theorem,
  \begin{align*}
   |\lambda(z_k) - \lambda(\hat{z}_k)| &\leq \sup_{|z|\geq (1+2\|J\|_2)^{-1}}|\lambda'(z)||z_k - \hat{z}_k| \\
                                       &= \frac12\left((1+2\|J\|_2)^2-1\right)|z_k - \hat{z}_k| \\
                                       &= 2\|J\|_2(1+\|J\|_2)|z_k - \hat{z}_k| \\
                                       &\leq 2(1+\|F\|_2)(2+\|F\|_2)|z_k - \hat{z}_k|.
  \end{align*}
Therefore it suffices to compute $\hat{z}_k$ such that $|z_k - \hat{z}_k| \leq \frac{\eps}{2}(1+\|F\|_2)^{-1}(2+\|F\|_2)^{-1}$, where $\eps$ is the desired error in the eigenvalues.
 \end{proof}
\end{theorem}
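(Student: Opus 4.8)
The plan is to reduce the computation of $\sigma_p(J)$ to a real rootfinding problem for the Toeplitz symbol $c$ and then invoke Lemma~\ref{lem:computingroots}, whose hypotheses (known degree, known number of roots in an open region, no roots on the boundary) are exactly what the structural results of Section~\ref{sec:Toeplitzplusfinite} supply. First, since the rank of $F$ is known a priori, I would locate the integer $n$ such that $F$ vanishes outside the $n \times n$ principal submatrix: reading entries of $J - \Delta$, the rank bound guarantees that once a block containing a full rank's worth of nonzero structure has been seen, no further deviation from $\Delta$ can occur, so $n$ is determined in finitely many operations. With $n$ in hand, compute the principal $2n \times 2n$ block of $C = C_{J\to\Delta}$ via the explicit recurrences \eqref{connectionrecurrence1}--\eqref{connectionrecurrence2}. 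By Theorem~\ref{thm:connectioncoeffs} the finite-rank part $C_{\rm fin}$ is supported in the $(n-1) \times (2n-2)$ principal submatrix, so the final column of this block lies entirely in $C_{\rm Toe}$ and therefore reads off the coefficients $t_0, \ldots, t_{2n-1}$ of the Toeplitz symbol $c$, a polynomial of known degree $2n-1$.

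Next I would prepare $c$ for the rootfinder. By Theorem~\ref{thm:joukowskimeasure} every root of $c$ in $\overline{\mathbb{D}}$ is real, and the only possible roots on the unit circle are $\pm 1$, each necessarily simple. Hence I evaluate $c(\pm 1)$, divide out any linear factors $(z \mp 1)$ that occur, and obtain a polynomial $\tilde{p}$ with $\tilde{p}(\pm 1) \neq 0$; this guarantees $\tilde{p}$ has no roots on $\partial\mathbb{D}$. Because all roots of $\tilde{p}$ in $\mathbb{D}$ are real, they all lie in the open interval $(-1,1)$, so I can apply Sturm's theorem to compute \emph{exactly} the number $r$ of distinct roots of $\tilde{p}$ there. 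Now the hypotheses of Lemma~\ref{lem:computingroots} are met with $\Omega = \mathbb{D}$: the degree is known, there are exactly $r$ distinct roots in $\Omega$, and none on $\partial\Omega$. Thus the roots $z_1, \ldots, z_r$ are computable with error control in the Hausdorff metric.

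Finally, the point spectrum is recovered through the Joukowski map, $\lambda_k = \frac12(z_k + z_k^{-1})$, and the remaining work is to transfer error control from the $z$-plane to the $\lambda$-plane. The map $\lambda$ is Lipschitz away from $z = 0$, so I would use the a priori bound $|\lambda_k| \leq \|J\|_2 \leq 1 + \|F\|_2$, which forces $|z_k| \geq (1 + 2\|J\|_2)^{-1}$, to estimate $|\lambda(z_k) - \lambda(\hat{z}_k)|$ by the mean value theorem in terms of $|z_k - \hat{z}_k|$; choosing the rootfinding tolerance proportionally then delivers the desired accuracy in the eigenvalues.

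The main obstacle is not the rootfinding itself but securing the three a priori certificates needed to stop with guaranteed error: the exact degree of $c$ (from the known rank), the exact count $r$, and the absence of boundary roots (from the reality and simplicity statements of Theorem~\ref{thm:joukowskimeasure} together with Sturm's theorem). Without the rank being known, the degree of $c$ could not be pinned down and this certification would fail, which is precisely why the hypothesis on $\mathrm{rank}(F)$ enters the statement.
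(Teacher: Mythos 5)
Your proposal is correct and follows essentially the same route as the paper's own proof: extracting the degree-$(2n-1)$ symbol $c$ from the final column of the $2n\times 2n$ block of $C_{J\to\Delta}$, removing possible roots at $\pm 1$, counting the real roots in $(-1,1)$ by Sturm's theorem so that Lemma \ref{lem:computingroots} applies with $\Omega=\mathbb{D}$, and transferring error control through the Joukowski map via the same lower bound $|z_k|\geq(1+2\|J\|_2)^{-1}$ and mean value estimate. No gaps; the argument matches the paper step for step.
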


The following Theorem shows that taking Toeplitz-plus-finite rank approximations of a Toeplitz-plus-compact Jacobi operator is sufficient for computing the spectrum with error control with respect to the Hausdorff metric.

\begin{theorem}\label{thm:computeTpluscomp}
  Let $J = \Delta + K$ be a Toeplitz-plus-compact Jacobi operator. If for all $\epsilon > 0$ an integer $m$ can be computed such that
  \begin{equation}
  \sup_{k\geq m} |\alpha_k| + \sup_{k\geq m} \left|\beta_k -\frac12 \right| < \epsilon,
  \end{equation}
  then the spectrum can be computed with error control in the Hausdorff metric.
  \begin{proof}
    Let $\epsilon > 0$. By the oracle assumed in the statement of the theorem, compute $m$ such that 
    \begin{equation*}
    \sup_{k\geq m} |\alpha_k| + \sup_{k\geq m} \left|\beta_k -\frac12 \right| < \frac{\epsilon}{6}.
    \end{equation*}
    
    Now compute the point spectrum of the Toeplitz-plus-finite-rank approximation $J^{[m]}$ such that $d_H(\Sigma,\sigma(J^{[m]})) < \eps/2$, where $\Sigma$ denotes the computed set. Then, using Theorem \ref{thm:specpert}, we have
    \begin{align*}
    d_H(\Sigma,\sigma(J)) &\leq d_H(\Sigma,\sigma(J^{[m]})) + d_H(\sigma(J^{[m]}),\sigma(J)) \\
    &\leq \frac{\eps}{2} + \|J^{[m]}-J\|_2 \\
    &\leq \frac{\eps}{2} + 3 \frac{\eps}{6}\\
    &= \eps.
    \end{align*}
    Here we used the fact that for a self-adjoint tridiagonal operator $A$, 
    $$
    \|A\|_2 \leq 3(\sup_{k\geq 0} |a_{k,k}| + \sup_{k\geq 0}|a_{k,k+1}|).
    $$ 
This completes the proof.
  \end{proof}
\end{theorem}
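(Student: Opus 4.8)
The plan is to reduce the problem to the Toeplitz-plus-finite-rank case already settled in Theorem~\ref{thm:computToeplusfinite}, using the truncations $J^{[m]}$ of \eqref{eqn:Jm} and controlling the truncation error and the computation error separately so that their sum can be forced below any prescribed tolerance. Fix $\epsilon > 0$. First I would invoke the oracle hypothesised in the statement to compute an index $m$ with $\sup_{k\geq m}|\alpha_k| + \sup_{k\geq m}|\beta_k - \frac12| < \epsilon/6$, and form the truncation $J^{[m]}$. Since $J^{[m]} - \Delta$ is supported in a principal submatrix of known size, $J^{[m]}$ is Toeplitz-plus-finite-rank with a priori known rank, so Theorem~\ref{thm:computToeplusfinite} lets me compute its point spectrum with error control; adjoining the exactly-known interval $[-1,1]$ produces a finite set $\Sigma = [-1,1] \cup \Sigma_p$ with $d_H(\Sigma, \sigma(J^{[m]})) < \epsilon/2$, using $\sigma(J^{[m]}) = [-1,1]\cup\sigma_p(J^{[m]})$ together with the elementary fact that $d_H(A\cup C, B\cup C) \leq d_H(A,B)$.

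Next I would estimate the truncation error via the triangle inequality for the Hausdorff metric,
\begin{equation*}
d_H(\Sigma, \sigma(J)) \leq d_H(\Sigma, \sigma(J^{[m]})) + d_H(\sigma(J^{[m]}), \sigma(J)),
\end{equation*}
and bound the second term using Theorem~\ref{thm:specpert} by $\|J^{[m]} - J\|_2$. The operator $J - J^{[m]}$ is self-adjoint and tridiagonal, with diagonal entries $\alpha_k$ and off-diagonal entries $\beta_k - \frac12$ all vanishing for small indices, so applying the estimate $\|A\|_2 \leq 3(\sup_k |a_{k,k}| + \sup_k|a_{k,k+1}|)$ yields $\|J - J^{[m]}\|_2 \leq 3(\sup_{k\geq m}|\alpha_k| + \sup_{k\geq m}|\beta_k - \frac12|) < \epsilon/2$. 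There is a minor index-bookkeeping point here: the lone off-diagonal entry at position $(m-1,m)$ contributes $\beta_{m-1} - \frac12$, which is absorbed either by evaluating the oracle at $m-1$ or by the slack in the constant $3$ against the factor $\epsilon/6$. Combining the two estimates gives $d_H(\Sigma, \sigma(J)) < \epsilon$, as required.

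I expect no serious obstacle, since the argument is essentially an assembly of machinery established earlier in the paper. The one genuinely essential ingredient, and the reason this yields error control rather than mere computability of index $1$, is the oracle: without an a priori estimate on the tail $\sup_{k\geq m}|\alpha_k| + \sup_{k\geq m}|\beta_k - \frac12|$ one cannot certify the operator-norm distance $\|J - J^{[m]}\|_2$, and hence cannot bound how much of the spectrum the truncation might have missed. The workhorse is Theorem~\ref{thm:computToeplusfinite}, whose error-controlled computation of the finite-rank point spectrum is exactly what makes the first Hausdorff term controllable; everything else reduces to the triangle inequality, the Kato-type perturbation bound of Theorem~\ref{thm:specpert}, and the tridiagonal operator-norm estimate.
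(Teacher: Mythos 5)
Your proposal is correct and follows essentially the same route as the paper's proof: oracle to pick $m$ with tail bound $\epsilon/6$, error-controlled computation of $\sigma(J^{[m]})$ via Theorem~\ref{thm:computToeplusfinite} to within $\epsilon/2$, then the Hausdorff triangle inequality, Theorem~\ref{thm:specpert}, and the tridiagonal norm bound $\|A\|_2 \leq 3(\sup_k|a_{k,k}| + \sup_k|a_{k,k+1}|)$. Your added remarks on adjoining $[-1,1]$ and on the off-diagonal entry at position $(m-1,m)$ are sensible points of bookkeeping that the paper leaves implicit.
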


An immediate question following Lemma \ref{lem:computingroots} and Theorem \ref{thm:computToeplusfinite} is why we have opted to use a Newton iteration in the complex plane instead of a purely real algorithm. We do this purely because Lemma \ref{lem:computingroots} is an interesting point to make in and of itself with regards to the Solvability Complexity Index of polynomial rootfinding with error control. The key point is that while there exist algorithms to compute all of the roots of a polynomial (without multiplicity) in a single limit (i.e. with SCI equal to 1), one does not necessarily know when to stop the algorithm to achieve a desired error. Lemma \ref{lem:computingroots} provides a basic condition on the polynomial to allow such control, which applies to this specific spectral problem.

 \section{Conclusions}\label{sec:conclusions}

In this paper we have proven new results about the relationship between the connection coefficients matrix between two different families of orthonormal polynomials, and the spectral theory of their associated Jacobi operators. We specialised the discussion to finite-rank perturbations of the free Jacobi operator and demonstrated explicit formulas for the principal resolvent and the spectral measure in terms of entries of the connection coefficients matrix. We showed that the results extend to trace class perturbations. Finally, we discussed computability aspects of the spectra of Toeplitz-plus-compact Jacobi operators. We showed that the spectrum of a Toeplitz-plus-compact Jacobi operator can be computed with error control, as long as the tail of the coefficients can be suitably estimated.

There are some immediate questions.  Regarding regularity properties of the Radon-Nikodym derivative $\frac{\d\nu}{\d\mu}$ between the spectral measures $\nu$ and $\mu$ of Jacobi operators $D$ and $J$ respectively given in Propositions \ref{prop:Cmeasuremultplier} and \ref{prop:Cbounded} and Corollary \ref{cor:Cbddinvert}: can  weaker regularity of $\frac{\d\nu}{\d\mu}$ be related to weak properties of $C = C_{J\to D}$? For example, the present authors conjecture that the Kullbeck--Leibler divergence,
\begin{equation*}
K(\mu|\nu) = \left\{\begin{array}{rl}
               \int \frac{\d\nu}{\d\mu}(s) \log \frac{\d\nu}{\d\mu}(s) \,\d\nu(s) & \text{ if $\nu$ is absolutely continuous w.r.t. $\mu$} \\
               \infty & \text{ otherwise,}
              \end{array} \right.
\end{equation*}
is finite if and only if the function of operators, $C^T C \log(C^T C)$ is well-defined as an operator mapping $\ell_{\mathcal{F}}\to \ell_{\mathcal{F}}^\star$. The reasoning comes from Lemma \ref{lem:CTC}. Making such statements more precise for the case where $D = \Gamma$ or $D = \Delta$ (see equation \eqref{eqn:DeltaGamma}) could give greater insight into Sz\H{e}go and quasi-Sz\H{e}go asymptotics (respectively) for orthogonal polynomials \cite{gamboa2016sum,damanik2006jost1,killip2003sum}.

Regarding computability: is there a theorem that covers the ground between  Theorem \ref{thm:computToeplusfinite} (for Toeplitz-plus-finite-rank Jacobi operators) and Theorem \ref{thm:computeTpluscomp} (for Toeplitz-plus-compact Jacobi operators)? What can be said about the convergence of the continuous part of the spectral measure of a Toeplitz-plus-finite-rank truncations of a Toeplitz-plus-trace-class Jacob operator? Proposition \ref{prop:weakconvergence} implies that this convergence is at least weak sense when tested against $f \in C_b(\R)$.

The computability theorems in Section \ref{sec:computability} all assume real arithmetic. What can be said about floating point arithmetic? Under what situations can the computation fail to give an unambiguously accurate solution?  Answering this question is related to the mathematical problem of  stability of the spectral measure under small perturbations of the Jacobi operator.  

This paper also opens some broader avenues for future research.  The connection coefficient matrix can be defined for any two Jacobi operators $J$ and $D$. It is natural to explore what structure $C_{J\to D}$ has when $D$ is a different reference operator to $\Delta$, and $J$ is a finite rank, trace class, or compact perturbation of $D$. For example, do perturbations of the Jacobi operator with periodic entries \cite{damanik2010perturbations,geronimo1986orthogonal} have structured connection coefficient matrices?
 Beyond periodic Jacobi operators, it would be interesting from the viewpoint of ergodic theory if we could facilitate the study and computation of almost-periodic Jacobi operators, such as the discrete almost-Mathieu operator \cite{deift2008some}.   Perturbations of the Jacobi operators for Laguerre polynomials and the Hermite polynomials could also be of interest, but challenges associated with the unboundedness of these operators could hamper progress \cite{olverDLMF}.  Discrete Schr\"odinger operators with non-decaying potentials will also be of interest in this direction.

Spectra of banded self-adjoint operators may be accessible with these types of techniques too. Either using connection coefficient matrices between matrix orthogonal polynomials \cite{damanik2008analytic}, or developing tridiagonalisation techniques are possible approaches, but the authors also consider this nothing more than conjecture at present. The multiplicity of the spectrum for operators with bandwidth greater than 1 appears to be a major challenge here. This becomes even more challenging for non-banded operators, such as Schr\"odinger operators on ${\mathbb Z}^d$ lattices.

Lower Hessenberg operators define polynomials orthogonal with respect to Sobolev inner products \cite[pp.~40--43]{gautschi2004orthogonal}. Therefore, we have two families of (Sobolev) orthogonal polynomials with which we may define connection coefficient matrices, as discussed in \cite[p.~77]{golub2009matrices}. Whether the connection coefficient matrices (which are still upper triangular) have structure which can be exploited for studying and computing the spectra of lower Hessenberg operators is yet to be studied.

Besides spectra of \emph{discrete} operators defined on $\ell^2$, we conjecture that the results of this paper will also be applicable to continuous Schr\"odinger operators on $L^2(\R)$, which are of the form $L_V[\phi](x) = -\phi''(x) + V(x)\phi(x)$ for a potential function $V :\R \to \R$. The reference operator is the negative Laplacian $L_0$ (which is the ``free'' Schr\"odinger operator). In this scenario, whereas the entries of a discrete connection coefficient matrix satisfy a discrete second order recurrence relation on $\mathbb{N}_0^2$ (see Lemma \ref{lem:5ptsystem}), the continuous analogue of the connection coefficient operator $C_{L_V\to L_0}$ is an integral operator whose (distributional) kernel satisfies a second order PDE on $\R^2$.

\begin{appendix}
  
\section{Numerical results and the \textit{SpectralMeasures} package}\label{sec:numericalexperiments}

\newlength{\figheight}
\setlength{\figheight}{.14\textheight}

In this appendix we demonstrate some of the features of the SpectralMeasures.jl Julia package \cite{SpectralMeasuresjl} that the authors have written to implement the ideas in the paper. This is part of the JuliaApproximation project, and builds on the package ApproxFun.jl \cite{olverapproxfun}. {ApproxFun} is an extensive piece of software influenced by the Chebfun package \cite{driscoll2014chebfun} in Matlab, which can represent functions and operators \cite{olver2014practical,olverapproxfun}. The code is subject to frequent changes and updates.

Given a Jacobi operator $J$ which is a finite-rank perturbation of the free Jacobi operator $\Delta$ with entries given by $\alpha_k = 0$, $\beta_{k-1}=\frac12$ for all $k \geq n$, SpectralMeasures.jl enables calculation of the following:
\begin{enumerate}[(i)]
  \item The connection coefficients matrix $C_{J\to\Delta}$: This is computed using the recurrences in equation \eqref{connectionrecurrence1}--\eqref{connectionrecurrence2}. By Theorem \ref{thm:connectioncoeffs}, we only need to compute $n(n+1)$ entries of $C$ to have complete knowledge of all entries. In SpectralMeasures.jl, there is a type of operator called {\tt PertToeplitz}, which allows such an operator to be stored and manipulated as if it were the full infinite-dimensional operator.
  \item The spectral measure $\mu(s)$: By Theorem \ref{thm:mainmeasure}, this measure has the form
  \begin{equation*}
  \d\mu(s) = \frac{1}{p_C(s)} \frac{2}{\pi}\sqrt{1-s^2}\d s + \sum_{k=1}^r w_k \delta_{\lambda_k}(s),
  \end{equation*}
  where $p_C$ is the polynomial given by the computable formula $p_C(s) = \sum_{k=0}^{2n-1} \langle C^Te_k ,C^T e_0 \rangle U_k(s)$ and $r \leq n$. By Theorem \ref{thm:joukowskimeasure}, the numbers $\lambda_k$ are found by finding the distinct real roots $z_k$ of $c$ (the Toeplitz symbol of the Toeplitz part of $C$, which here is a polynomial of degree $2n-1$) in the interval $(-1,1)$. Also by Theorem \ref{thm:joukowskimeasure}, the weights $w_k$ can be computed using the formula
  \begin{equation*}
  w_k = \frac12 z_k^{-1}(z_k-z_k^{-1})\frac{c_\mu(z_k)}{c'(z_k)}.
  \end{equation*}
  \item The principal resolvent $G(\lambda)$: For any $\lambda \in \C\setminus\sigma(J)$, by Theorem \ref{thm:mainresolvent}, this function can be defined by the formula
  \begin{equation*}
  G(\lambda) = \frac{G_\Delta(\lambda) - p_C^{\mu}(\lambda)}{p_C(\lambda)},
  \end{equation*}
  where $p_C$ is as above and $p^\mu_C(\lambda) = \sum_{k=0}^{2n-1} 
  \langle (C^\mu)^T e_k, C^T e_0 \rangle U_k(\lambda)$.
  \item The mapped principal resolvent $G(\lambda(z))$: which is the principal resolvent mapped to $z$ in the unit disc by the Joukowski map $\lambda:z \to \frac12(z+z^{-1})$. This is computed using the simple formula from Theorem \ref{thm:joukowskiresolvent},
  \begin{equation*}
  G(\lambda(z)) = -\frac{c_\mu(z)}{c(z)},
  \end{equation*}
  where $c$ and $c_\mu$ are the Toeplitz symbols of the Toeplitz parts of $C$ and $C^\mu$ respectively (these are polynomials of degree $2n-1$ and $2n-2$ respectively).
\end{enumerate}

Consider a concrete example of a Toeplitz-plus-finite-rank Jacobi operator,
\begin{equation}\label{eqn:explicitex}
J = \left(\begin{array}{cccccc}
\frac34 & 1 & & & & \\
1 & -\frac14 & \frac34 & & &\\
& \frac34 & \frac12 & \frac12 & & \\
&         & \frac12 & 0 & \frac12 & \\
&         &         & \ddots & \ddots & \ddots
\end{array}\right).
\end{equation}
The connection coefficients operator $C = C_{J\to\Delta}$ is
\begin{equation*}
C = \left(\begin{array}{cccccccc}
1 & -\frac34 & -\frac54 & \frac{49}{24} & -\frac{1}{12} & -\frac13      &               & \\
  & \frac12  & -\frac13 & -\frac43      & \frac{41}{24} & -\frac{1}{12} & -\frac13      & \\
  &          & \frac13  & -\frac23      & -\frac43      & \frac{41}{24} & -\frac{1}{12} & \ddots \\
  &          &          &  \frac13      & -\frac23      & -\frac43      & \frac{41}{24} & \ddots \\
  &          &          &               & \frac13       & -\frac23      & -\frac43      & \ddots \\
  &          &          &               &               & \ddots        & \ddots        & \ddots
\end{array}\right).
\end{equation*}
As was noted above, we only need to explicitly compute $3\cdot4 = 12$ entries and the rest are defined to be equal to the entry above and left one space.  Because the perturbation is rational, the connection coefficients operator can be calculated exactly using the {\tt Rational} type.

In Figure \ref{fig:3by3pert} we present plots of the spectral measure, the principal resolvent and the mapped principal resolvent. This format of figure is repeated for other Jacobi operators in the appendix. 

The plot on left in Figure \ref{fig:3by3pert} is the spectral measure. There is a continuous part supported in the interval $[-1,1]$ and Dirac deltas are represented by vertical lines whose heights are precisely their weights. As noted in Theorem \ref{thm:computToeplusfinite}, it is possible to compute the eigenvalues of $J$ with guaranteed error control. Computations with guaranteed error control are made quite straightforward and flexible using the \textit{ValidatedNumerics.jl} package \cite{ValidatedNumerics}, in which computations are conducted using interval arithmetic, and the desired solution is rigorously guaranteed to lie within the interval the algorithm gives the user \cite{tucker2011validated}. Using this open source Julia package, we can compute the two eigenvalues for this operator to be $-1.1734766767874558$ and $1.5795946563898884$ with a guaranteed maximum error of $8.9\times 10^{-16}$. This can be replicated using the command \texttt{validated\_spectrum([.75;-.25;.5],[1;.75])} in \textit{SpectralMeasures}.

The coloured plots in the middle and the right of Figure \ref{fig:3by3pert} are \emph{Wegert plots} (sometimes called phase portraits) \cite{wegert2012visual,ComplexPhasePortrait}. For a function $f : \C \to \C$, a Wegert plot assigns a colour to every point $z \in \C$ by the argument of $f(z)$. Specifically, if $f(z) = re^{i\theta}$, then $\theta = 0$ corresponds to the colour red, then cycles upwards through yellow, green, blue, purple as $\theta$ increases until at $\theta = 2\pi$ it returns to red. This makes zeros and poles very easy to see, because around them the argument cycles through all the colours the same number of times as the degree of the root or pole. In these particular Wegert plots, we also plot lines of constant modulus as shadowed steps.

The middle plot in Figure \ref{fig:3by3pert} is the principal resolvent $G(\lambda)$, which always has a branch cut along the interval $[-1,1]$ and roots and poles along the real line. The poles correspond to Dirac delta measures in the spectral measure.

The third plot is the principal resolvent of $J$ mapped to the unit disc by the Joukowski map. Poles and roots of this resolvent in the unit disc correspond to those of the middle plot outside $[-1,1]$.

\begin{figure}[!h]
  \begin{center}
    \includegraphics[height=\figheight,trim=0 -1 0 10]{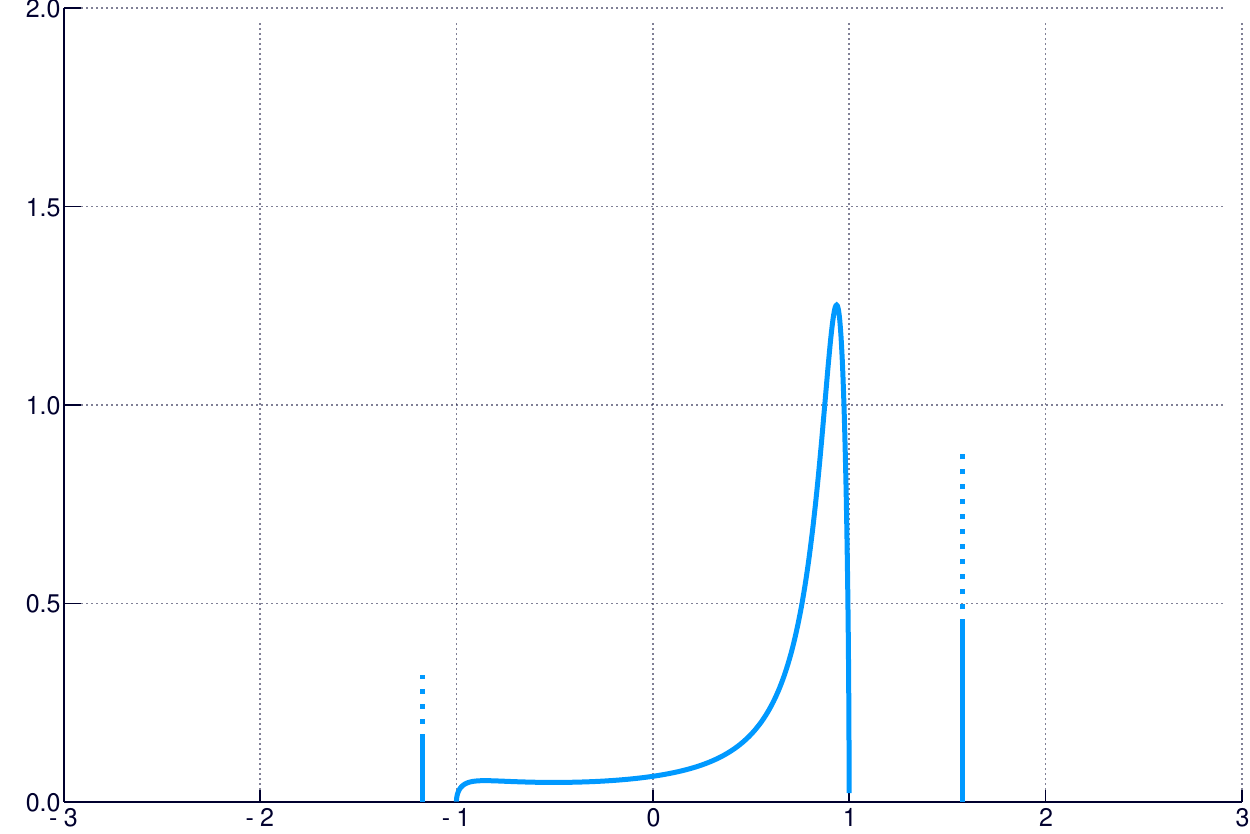}\hskip8pt\includegraphics[height=\figheight]{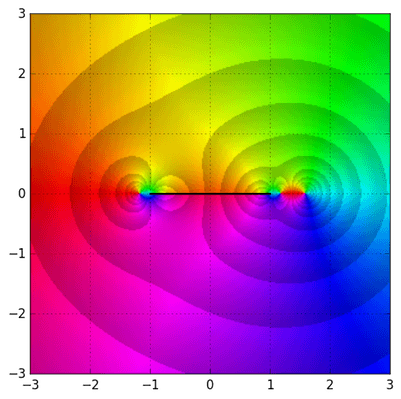}\hskip8pt\includegraphics[height=\figheight]{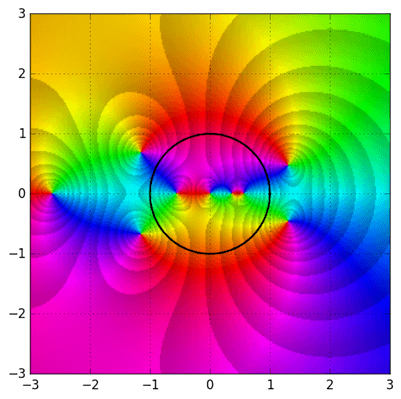}
    \caption{The left plot is the spectral measure $\mu(s)$ of the Jacobi operator in equation \eqref{eqn:explicitex}. The middle plot is a Wegert plot (explained in the text above) depicting the principal resolvent of the same Jacobi operator, and the right plot is the principal resolvent under the Joukowski mapping. The two Dirac deltas in the spectral measure correspond to two poles along the real line for the middle plot and two poles inside the unit disc for the right plot.}\label{fig:3by3pert}
  \end{center}
\end{figure}

In Figure \ref{fig:basic1revisited} we have plotted the spectral measure and principal resolvent of the Basic Perturbation 1 (see Examples \ref{ex:basic1}, \ref{ex:basic1rev}, \ref{ex:basic1rerev}) in which the top-left entry of the operator has been set to $\alpha/2$ for values $\alpha = 0, 0.15, 0.35, 0.5, 0.75, 1$. For the first four cases, the perturbation from the free Jacobi operator is small, and so the spectrum is purely continuous, which corresponds to no poles in the principal resolvent, and in the mapped resolvent there are only poles \emph{outside} the unit disc. For the cases $\alpha = 0.75$ and $1$, the Jacobi operator has a single isolated point of discrete spectrum. This is manifested as a Dirac delta in the spectral measure and a single pole in the principal resolvent.

In Figure \ref{fig:basic2revisited} we have plotted the spectral measure and principal resolvent of  Basic Perturbation 2 (see Examples \ref{ex:basic2}, \ref{ex:basic2rev}, \ref{ex:basic2rerev}) in which the $(0,1)$ and $(1,0)$ entries have been set to $\beta/2$ for values $\beta = 0.5, 0.707, 0.85, 1.0, 1.2, 1.5$. The effect is similar to that observed in Figure \ref{fig:basic1revisited}. For small perturbations the spectrum remains purely continuous, but for larger perturbations here two discrete eigenvalues emerge corresponding to Dirac deltas in the spectral measure and poles in the principal resolvent.

In Figure \ref{fig:legendre} we have plotted a sequence of approximations to the Jacobi operator for the Legendre polynomials, which has entries $\alpha_k = 0$ for $k = 0,1,2,\ldots$ and 
\begin{equation*}
\beta_{k-1} = \frac{1}{\sqrt{4k^2-1}}, \text{ for } k = 1,2,3,\ldots.
\end{equation*}
This is a Toeplitz-plus-trace-class Jacobi operator because $\beta_k = \frac12 + \mathcal{O}(k^{-2})$, and by taking Toeplitz-plus-finite-rank approximations $J^{[n]}$ as in equation \eqref{eqn:Jm}, we can compute approximations to the spectral measure and principal resolvent. Figure \ref{fig:legendre} depicts the spectral measure and the principal resolvent for the Toeplitz-plus-finite-rank Jacobi operators $J^{[n]}$ for the values $n = 1,2,3,10,30,100$. For the spectral measures, we see that there is no discrete part for any $n$, and as $n$ increases, the spectral measure converges to the scaled Lebesgue measure $\frac12 \d s$ restricted to $[-1,1]$. The convergence is at least weak by Proposition \ref{prop:weakconvergence}, but it would be interesting (as mentioned in the conclusions) to determine if there is a stronger form of convergence at play due to the perturbation of $\Delta$ lying in the space of trace class operators. There is a Gibbs-like effect occurring at the boundaries, which suggests that this convergence occur pointwise everywhere up to the boundary of $[-1,1]$. For the principal resolvents, the middle plots do not vary greatly, as the difference between the functions in the complex plane is not major. However, in right plots, there are hidden pole-root pairs in the resolvent lying outside the unit disc which coalesce around the unit disc and form a barrier. The meaning of this barrier is unknown to the authors.

Figures \ref{fig:ultraspherical} and \ref{fig:jacobi} demonstrate similar features to Figure \ref{fig:legendre}, except that the polynomials sequences they correspond to are the ultraspherical polynomials with parameter $\gamma =0.6$ (so that the spectral measure is proportional to $(1-s^2)^{1.1}$) and the Jacobi polynomials with parameters $(\alpha,\beta) = (0.4,1.9)$ (so that the spectral measure is proportional to $(1-s)^{0.4}(1+s)^{1.9}$). Similar barriers of pole-root pairs outside the unit disc occur for these examples as well.

Figure \ref{fig:random} presents a Toeplitz-plus-trace-class Jacobi operator with pseudo-randomly generated entries. With a random vector $\mathbf{r}$ containing entries uniformly distributed in the interval $[0,1)$, the following entries were used
\begin{equation*}
\alpha_k = 3\frac{2r_k - 1}{(k+1)^2}, \qquad \beta_k = \frac12.
\end{equation*}
Then the Toeplitz-plus-finite-rank approximations $J^{[n]}$ (see equation \eqref{eqn:Jm}) of this operator were taken for values $n=1,2,3,10,50,100$. Since the off-diagonal elements are constant, this is a scaled and shifted version of a discrete Schr\"odinger operator with a random, decaying potential.

\begin{figure}[!h]
 \begin{center}
  \includegraphics[height=\figheight,trim=0 -1 0 10]{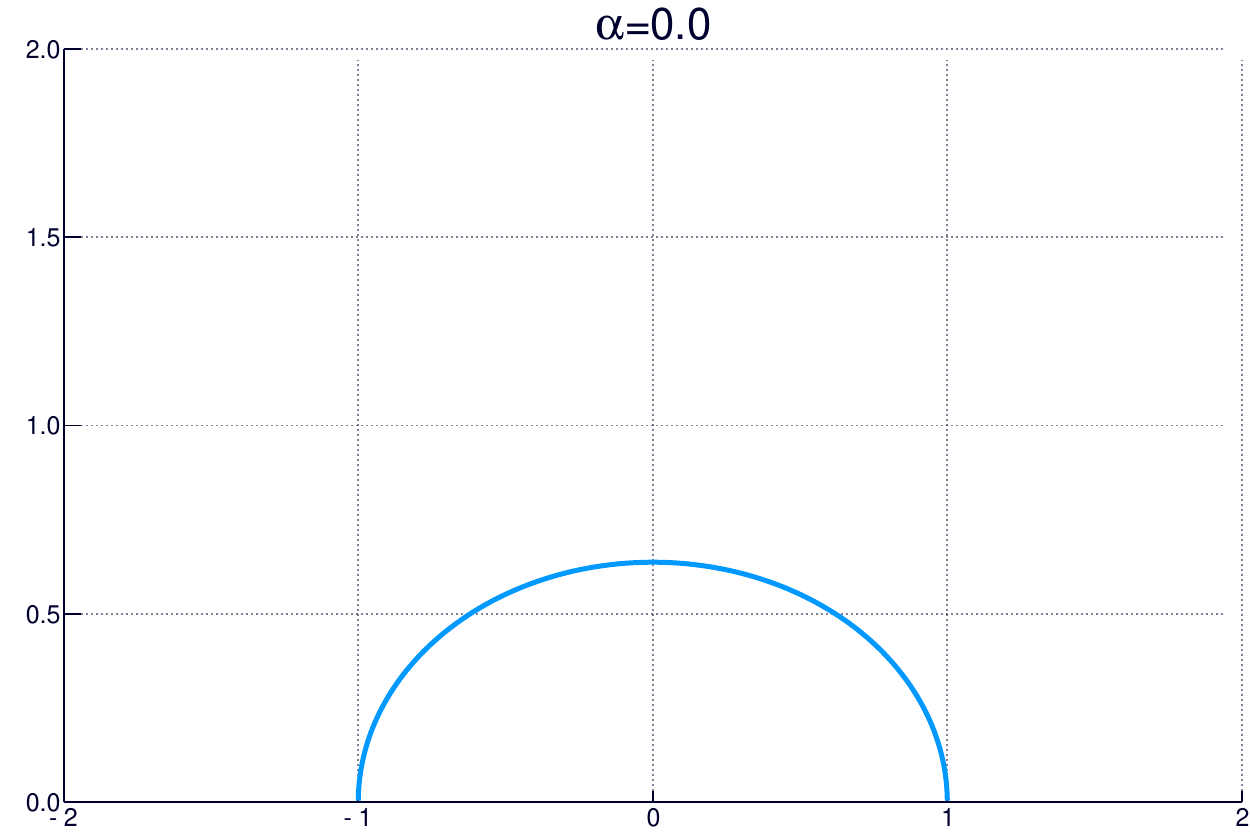}\hskip8pt\includegraphics[height=\figheight]{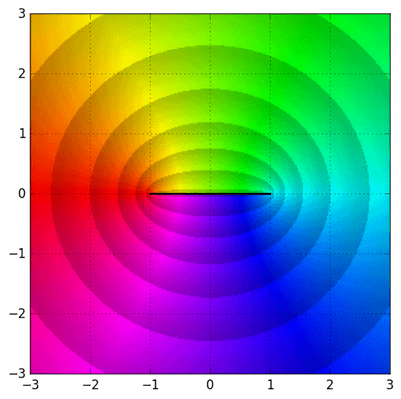}\hskip8pt\includegraphics[height=\figheight]{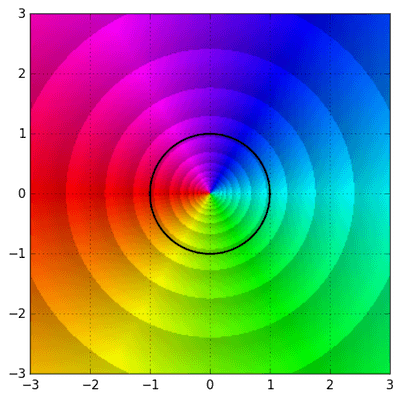}\\\vskip8pt
  \includegraphics[height=\figheight,trim=0 -1 0 10]{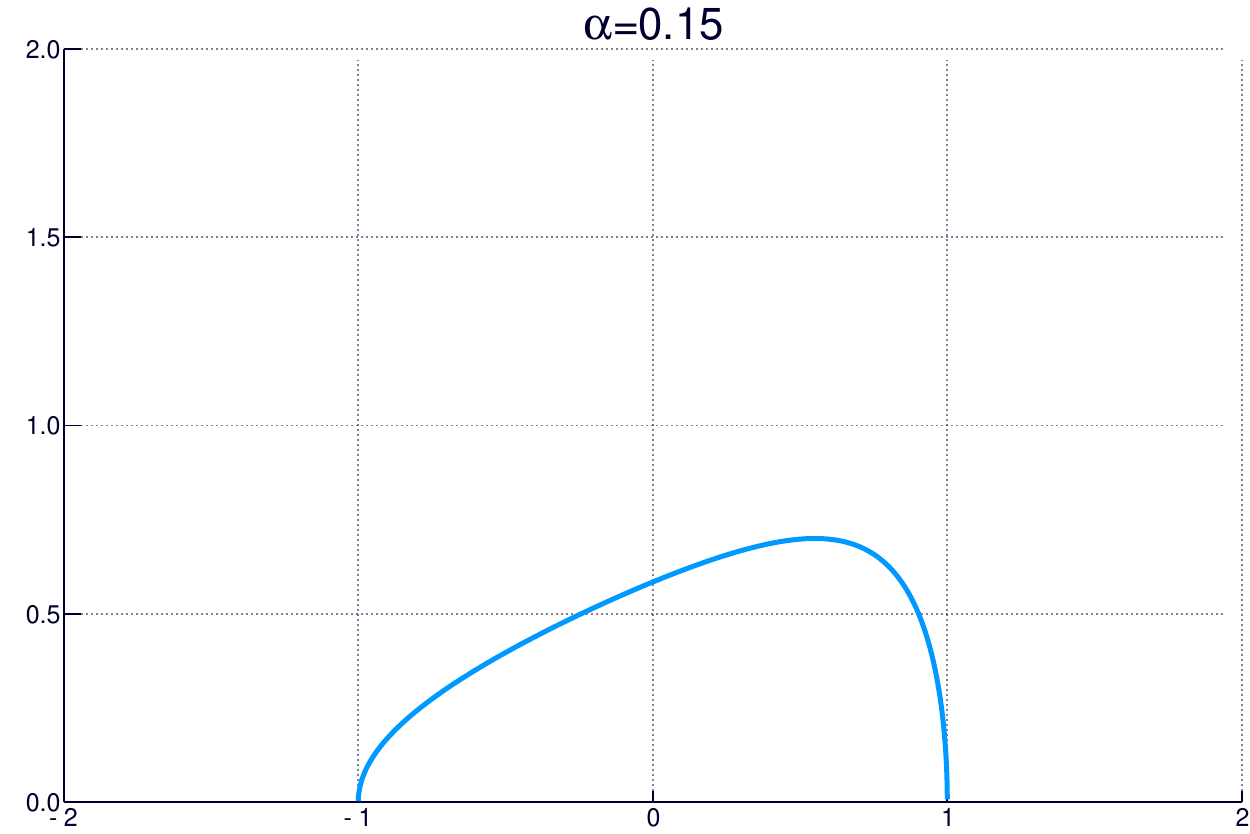}\hskip8pt\includegraphics[height=\figheight]{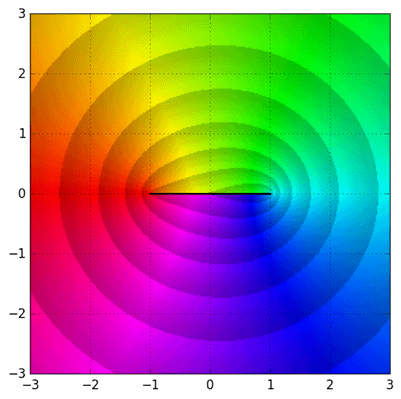}\hskip8pt\includegraphics[height=\figheight]{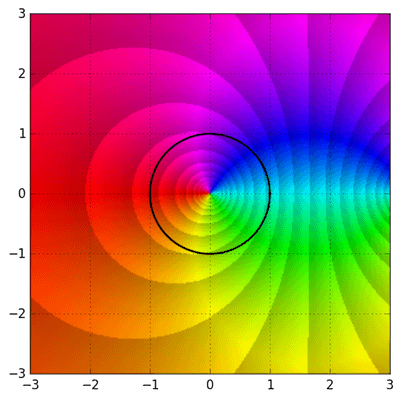}\\\vskip8pt
  \includegraphics[height=\figheight,trim=0 -1 0 10]{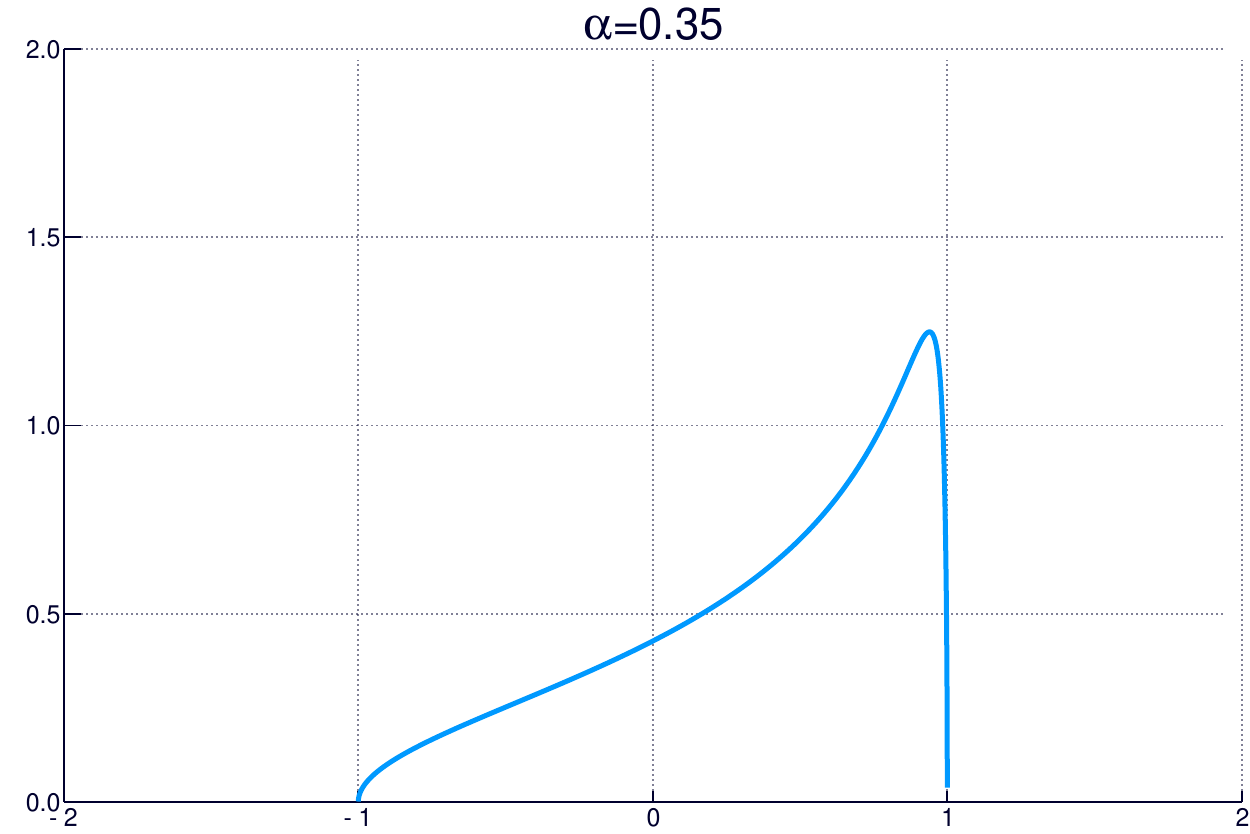}\hskip8pt\includegraphics[height=\figheight]{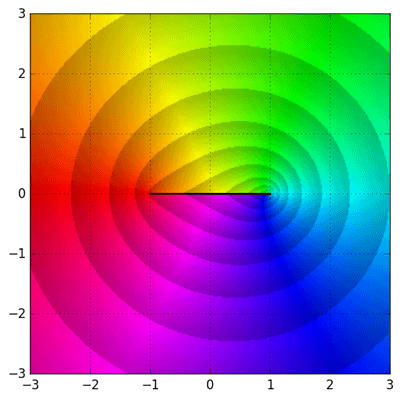}\hskip8pt\includegraphics[height=\figheight]{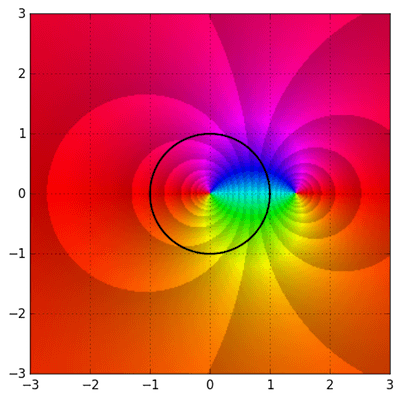}\\\vskip8pt
  \includegraphics[height=\figheight,trim=0 -1 0 10]{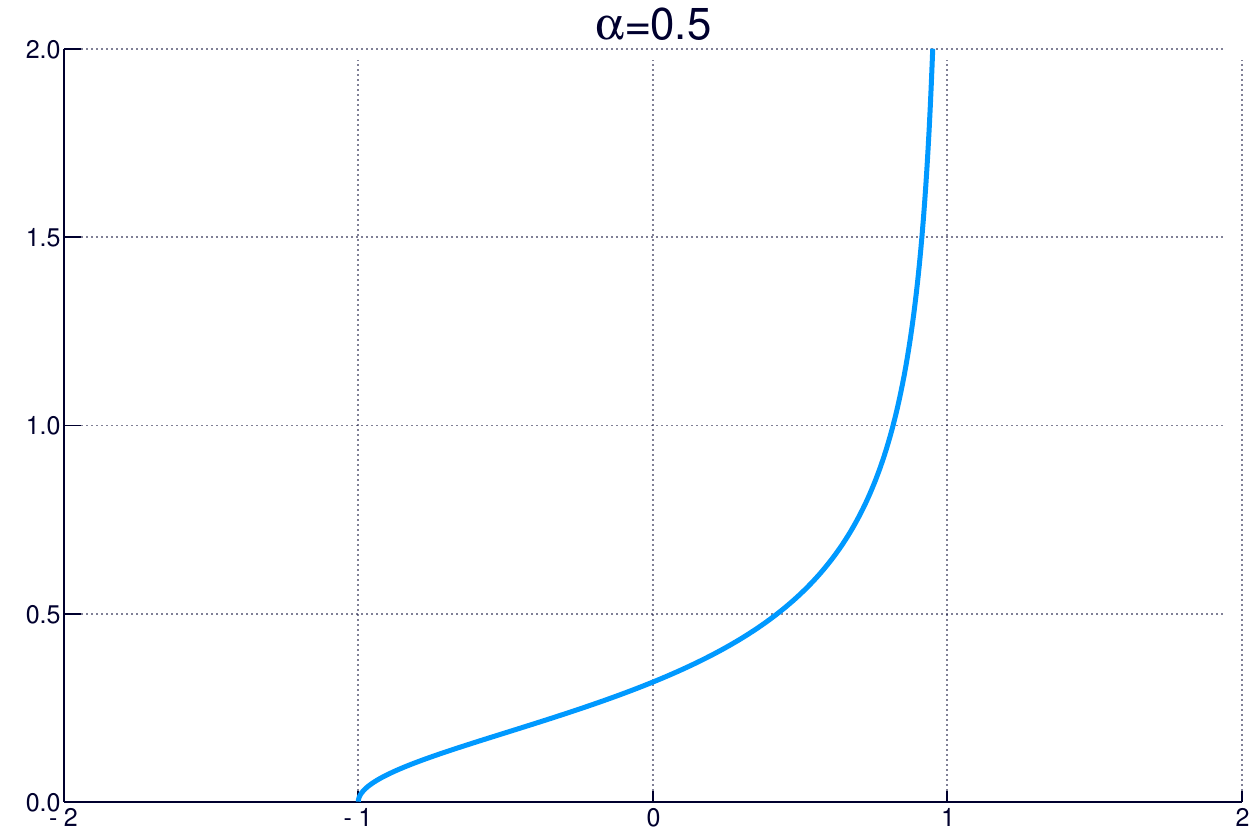}\hskip8pt\includegraphics[height=\figheight]{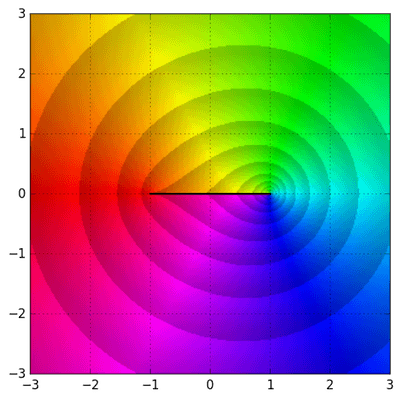}\hskip8pt\includegraphics[height=\figheight]{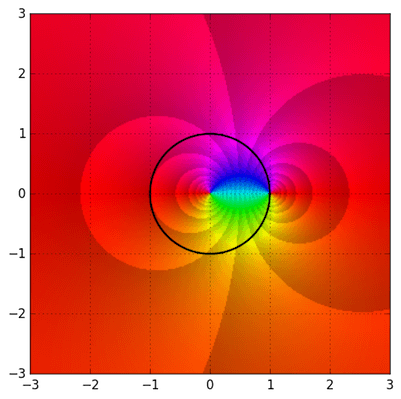}\\\vskip8pt
  \includegraphics[height=\figheight,trim=0 -1 0 10]{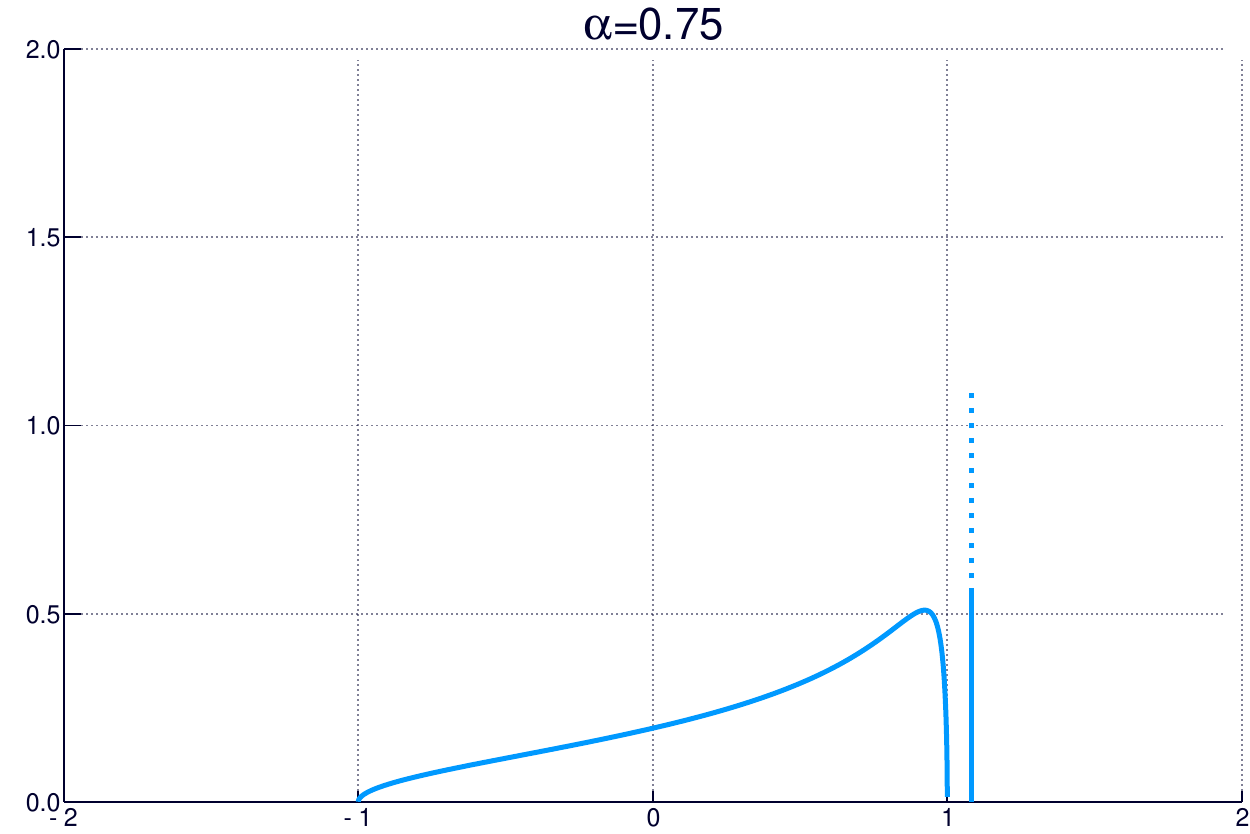}\hskip8pt\includegraphics[height=\figheight]{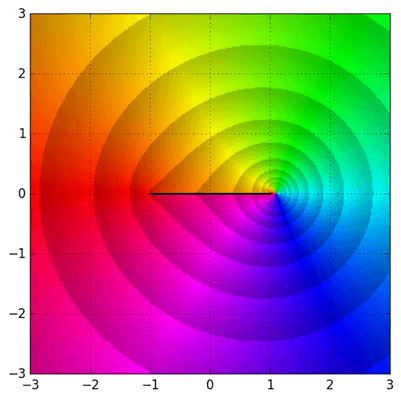}\hskip8pt\includegraphics[height=\figheight]{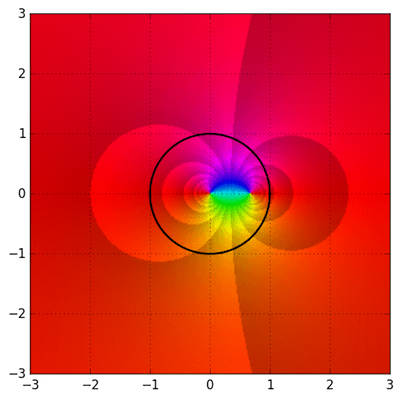}\\\vskip8pt
  \includegraphics[height=\figheight,trim=0 -1 0 10]{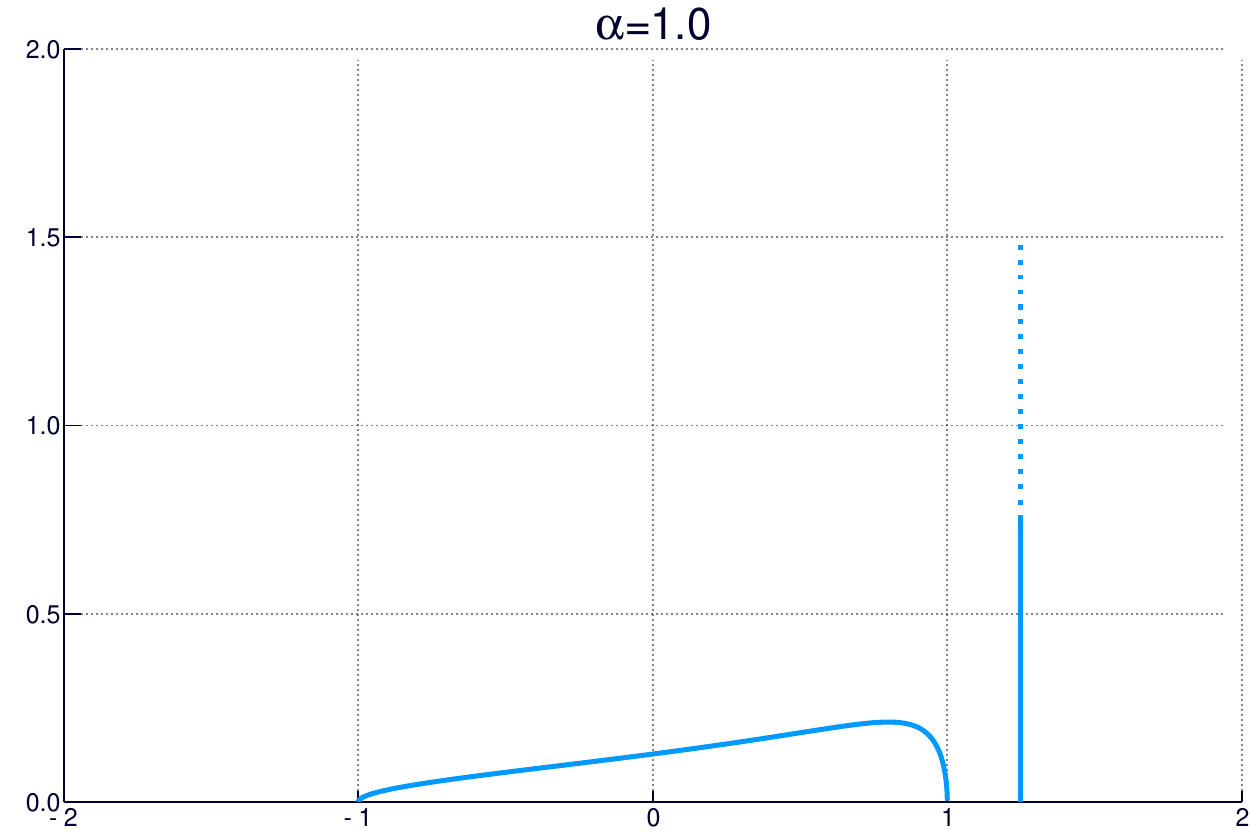}\hskip8pt\includegraphics[height=\figheight]{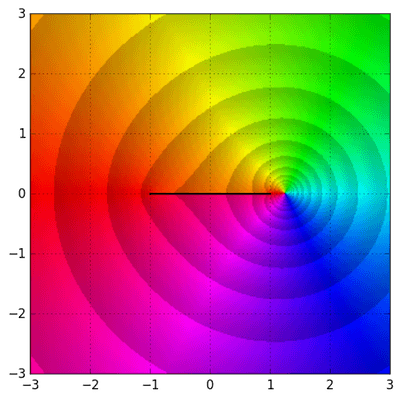}\hskip8pt\includegraphics[height=\figheight]{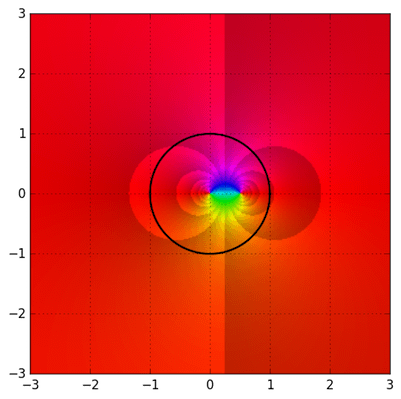}
 \caption{The left hand, centre and right hand figures show the spectral measures $\mu(s)$, principal resolvents $G(\lambda)$ and disc resolvents $G(\lambda(z))$ (analyticially continued outside the disc) respectively for $J_\alpha$, the Basic perturbation 1 example, with $\alpha = 0,0.15,0.35,0.5,0.75,1$. We see that a Dirac mass in the measure corresponds to a pole of the disc resolvent \emph{inside} the unit disc, which corresponds to a pole in the principal resolvent outside the interval $[-1,1]$.}\label{fig:basic1revisited}
 \end{center}
\end{figure}

\begin{figure}[!h]
 \begin{center}
  \includegraphics[height=\figheight,trim=0 -1 0 10]{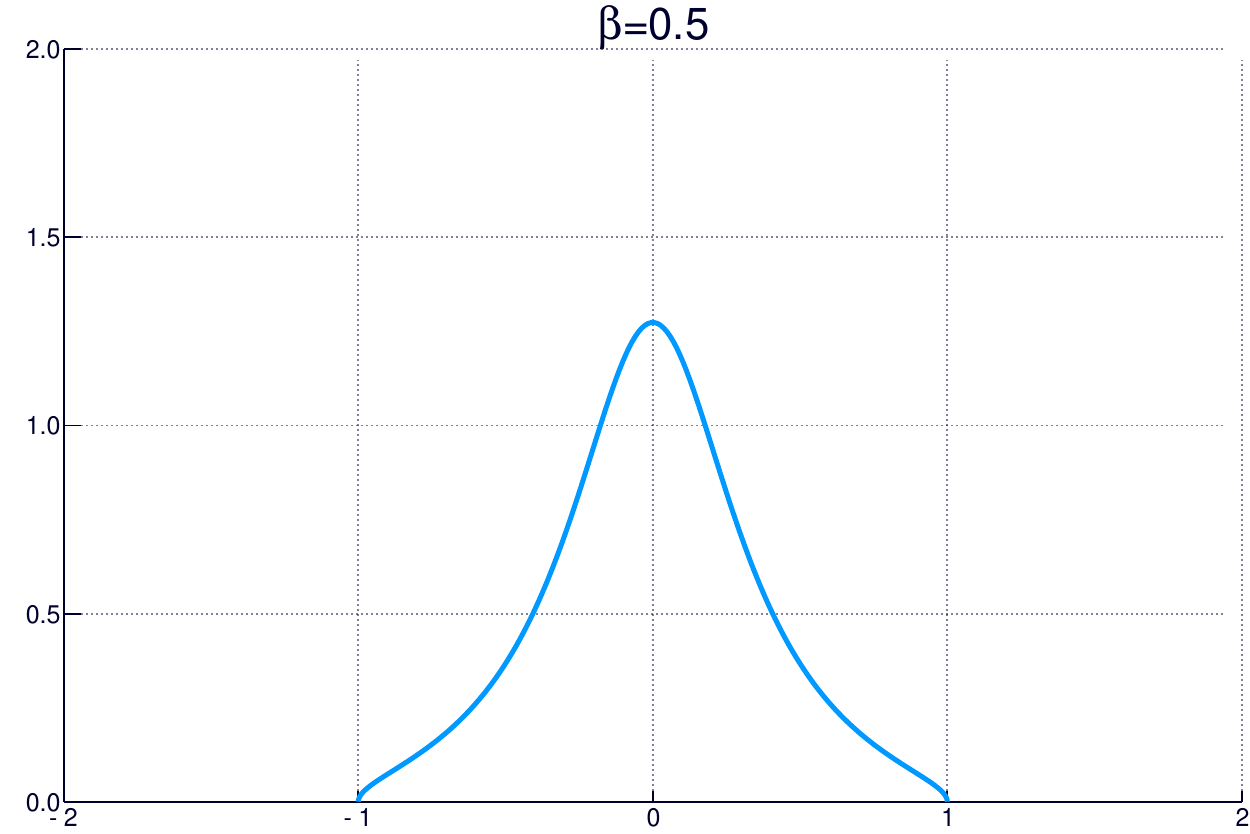}\hskip8pt\includegraphics[height=\figheight]{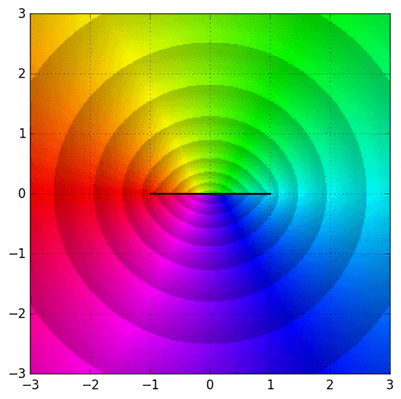}\hskip8pt\includegraphics[height=\figheight]{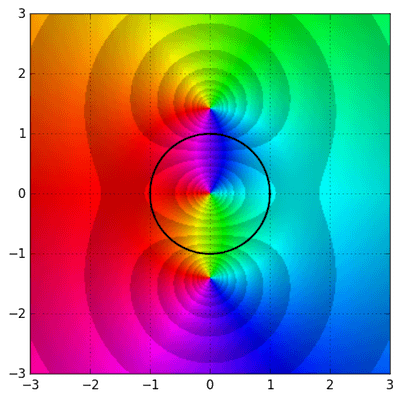}\\\vskip8pt
  \includegraphics[height=\figheight,trim=0 -1 0 10]{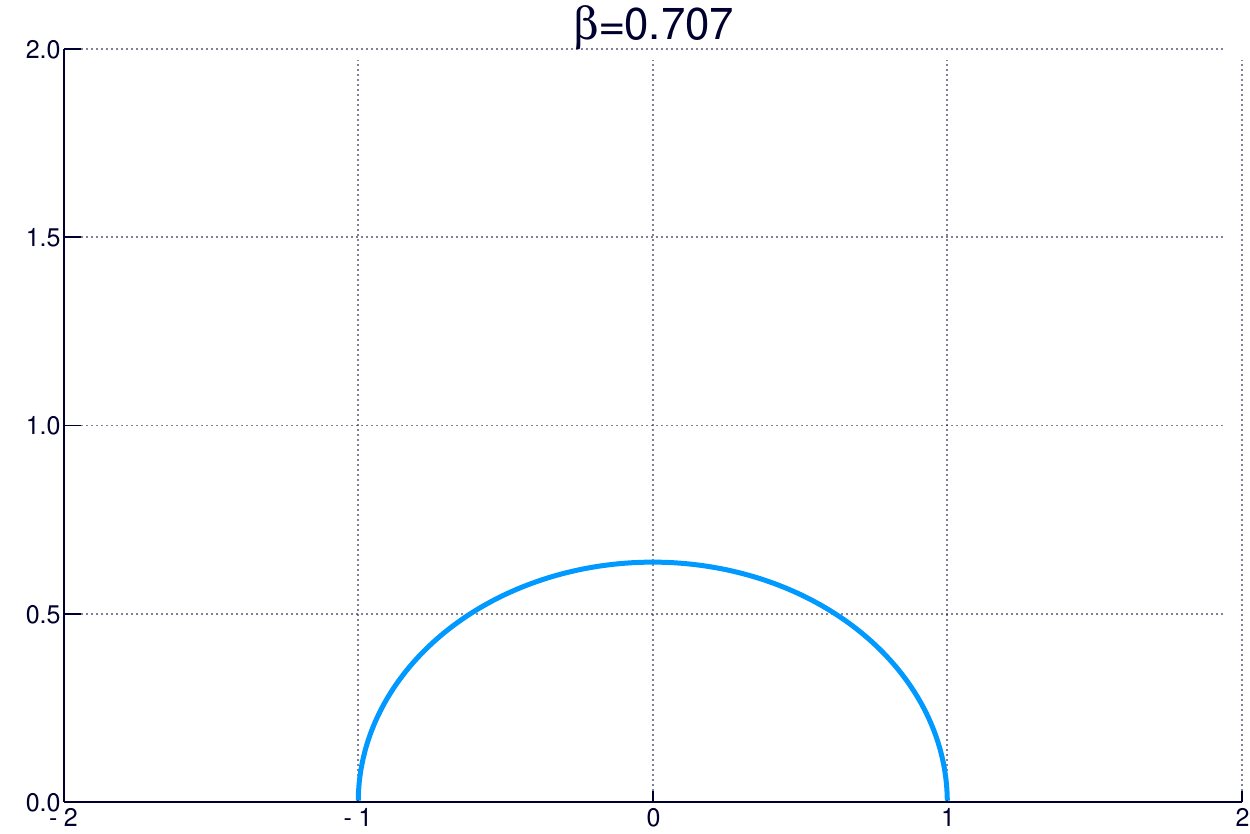}\hskip8pt\includegraphics[height=\figheight]{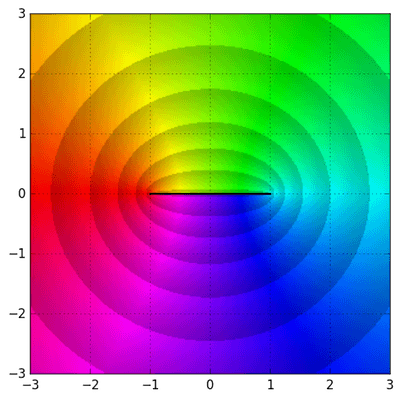}\hskip8pt\includegraphics[height=\figheight]{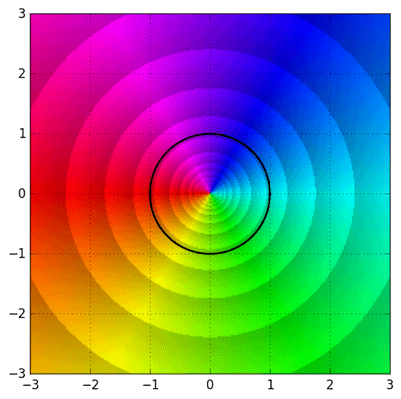}\\\vskip8pt
  \includegraphics[height=\figheight,trim=0 -1 0 10]{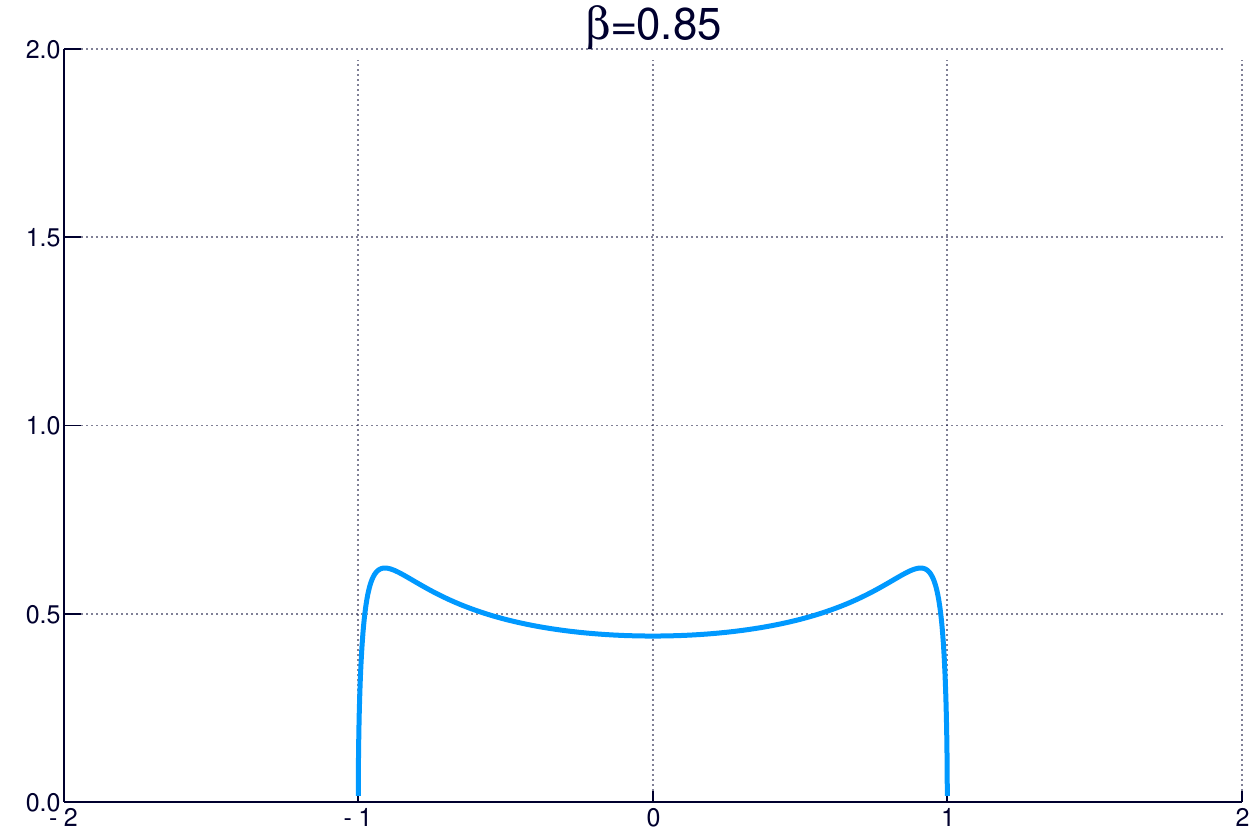}\hskip8pt\includegraphics[height=\figheight]{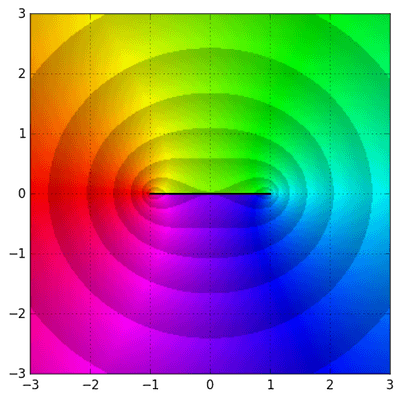}\hskip8pt\includegraphics[height=\figheight]{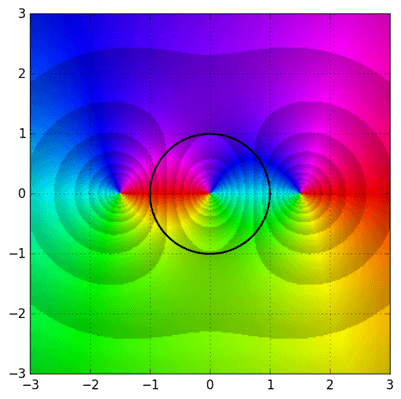}\\\vskip8pt
  \includegraphics[height=\figheight,trim=0 -1 0 10]{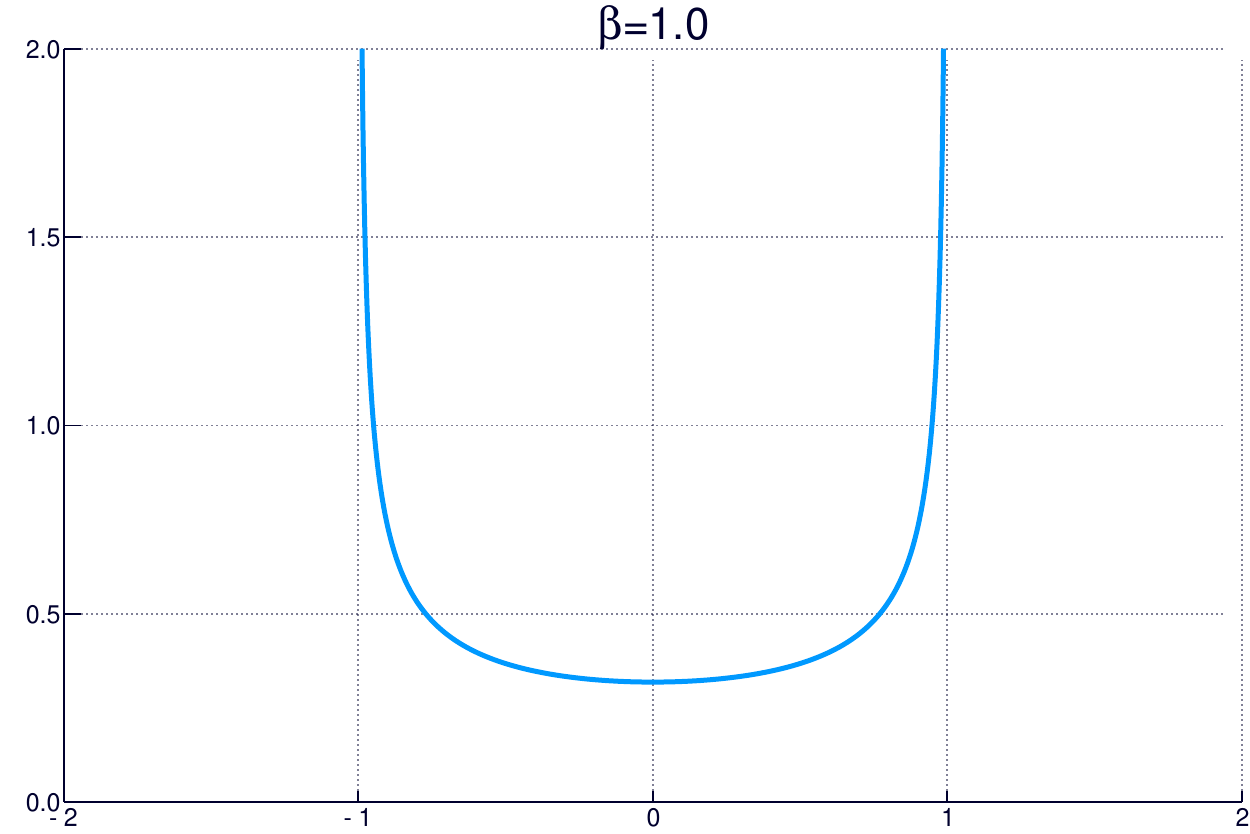}\hskip8pt\includegraphics[height=\figheight]{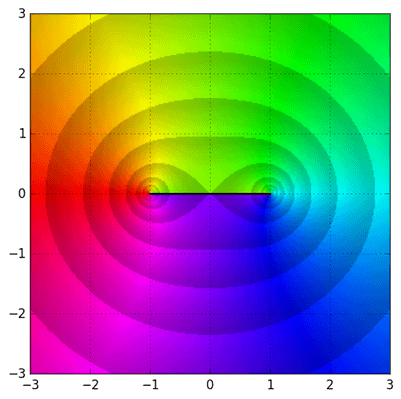}\hskip8pt\includegraphics[height=\figheight]{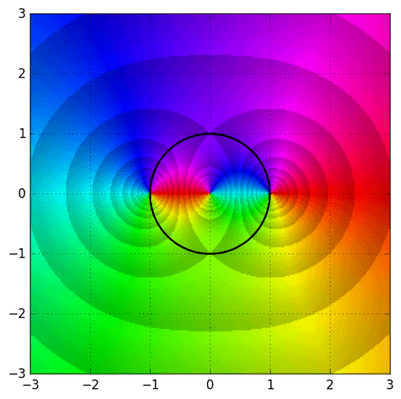}\\\vskip8pt
  \includegraphics[height=\figheight,trim=0 -1 0 10]{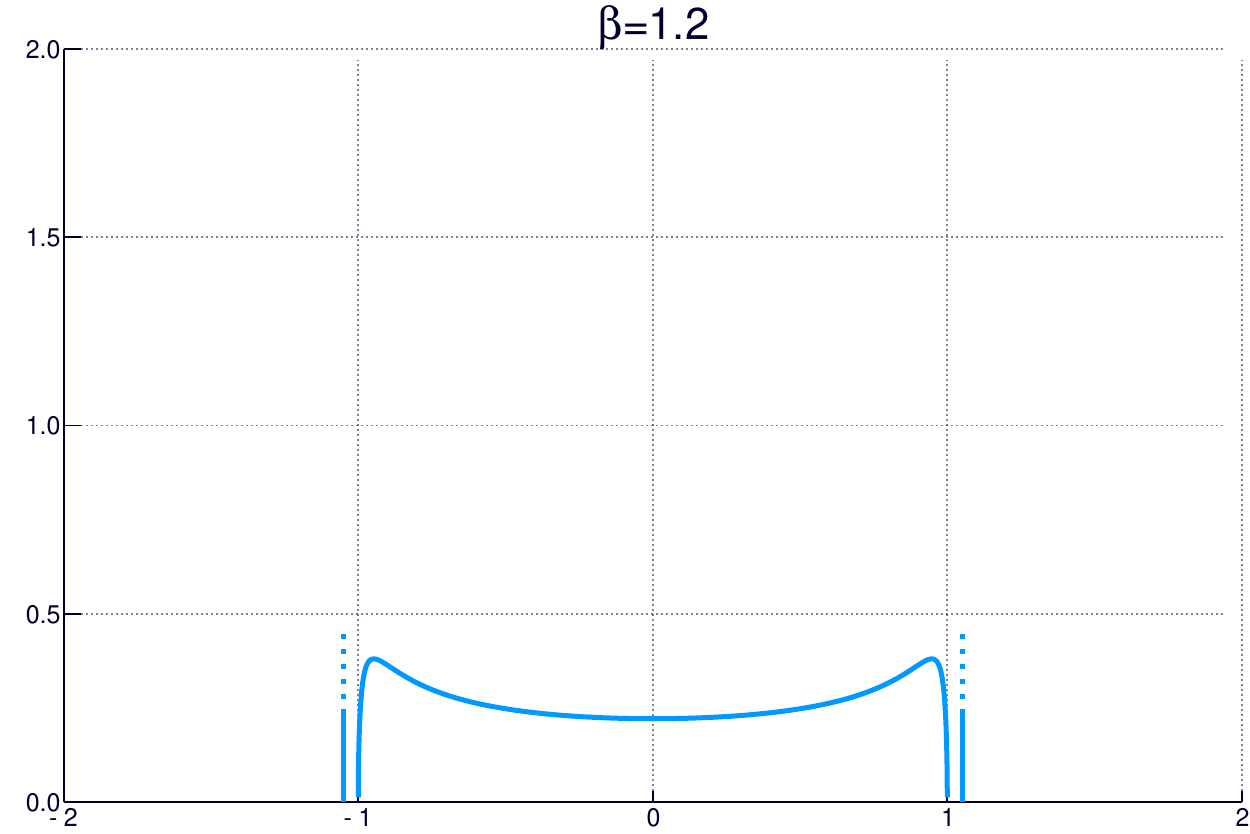}\hskip8pt\includegraphics[height=\figheight]{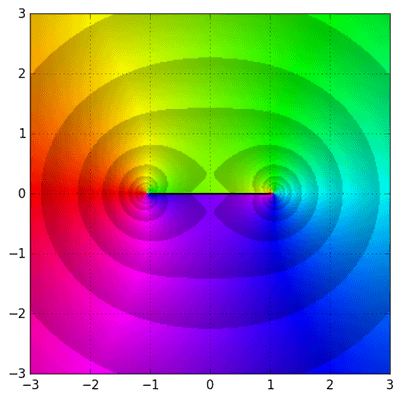}\hskip8pt\includegraphics[height=\figheight]{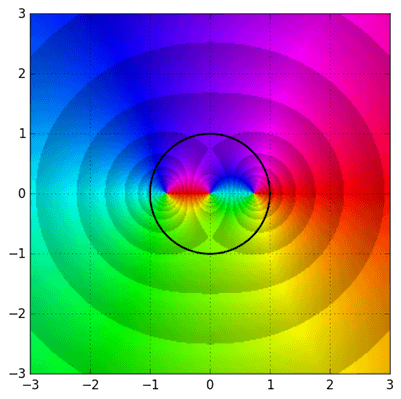}\\\vskip8pt
  \includegraphics[height=\figheight,trim=0 -1 0 10]{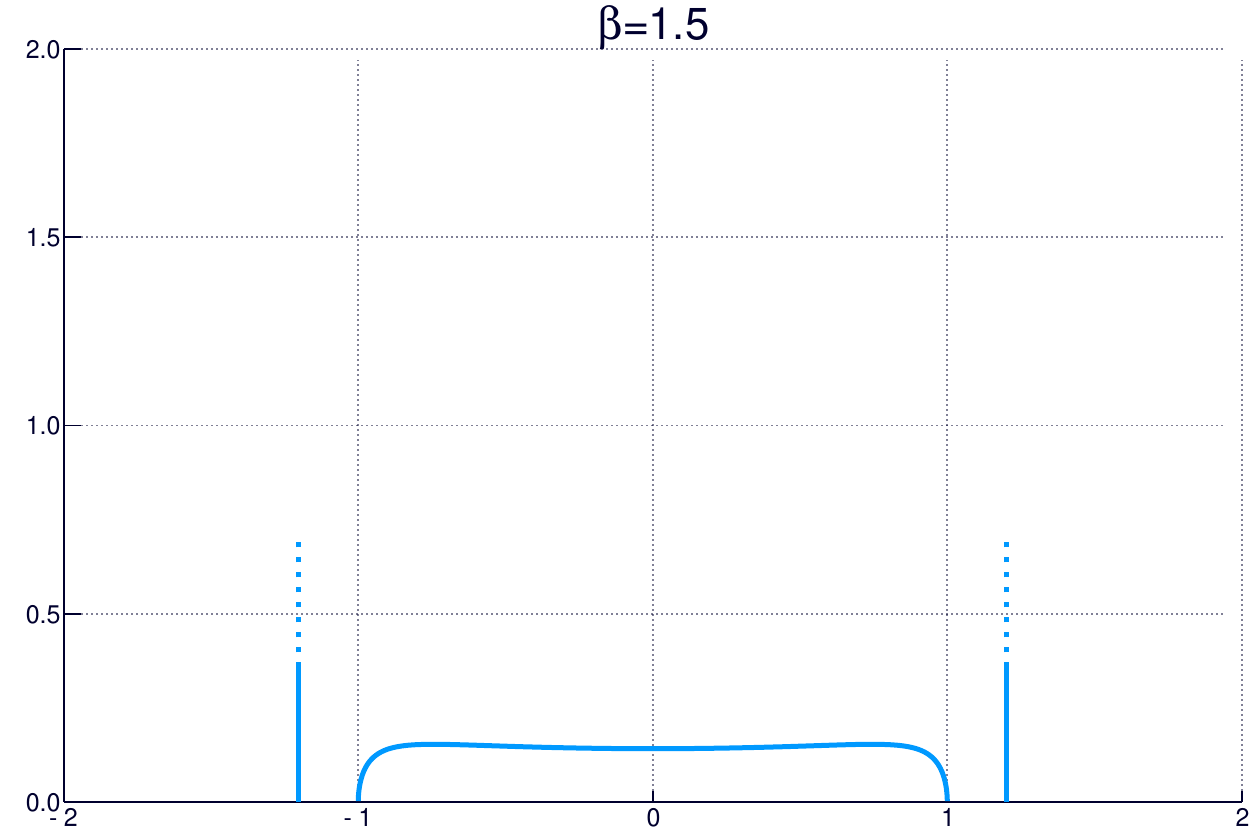}\hskip8pt\includegraphics[height=\figheight]{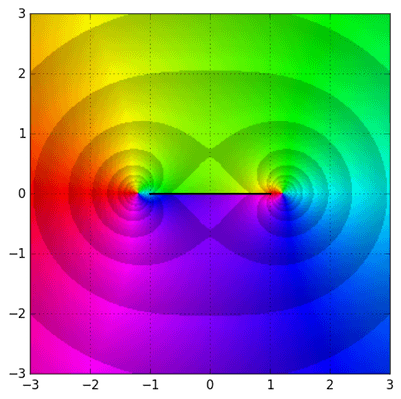}\hskip8pt\includegraphics[height=\figheight]{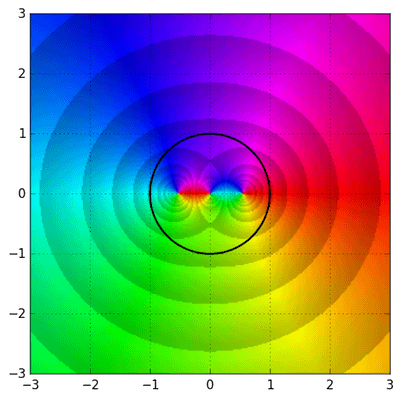}
\caption{The left hand, centre and right hand figures show the spectral measures $\mu(s)$, principal resolvents $G(\lambda)$ and disc resolvents $G(\lambda(z))$ (analyticially continued outside the disc) respectively for $J_\beta$, the Basic perturbation 2 example, with $\beta = 0.5,0.707,0.85,1,1.2,1.5$. Again, we see that a Dirac mass in the measure corresponds to a pole of the disc resolvent \emph{inside} the unit disc, which corresponds to a pole in the principal resolvent outside the interval $[-1,1]$.}\label{fig:basic2revisited}
 \end{center}
\end{figure}

\begin{figure}[!h]
 \begin{center}
  \includegraphics[height=\figheight,trim=0 -1 0 10]{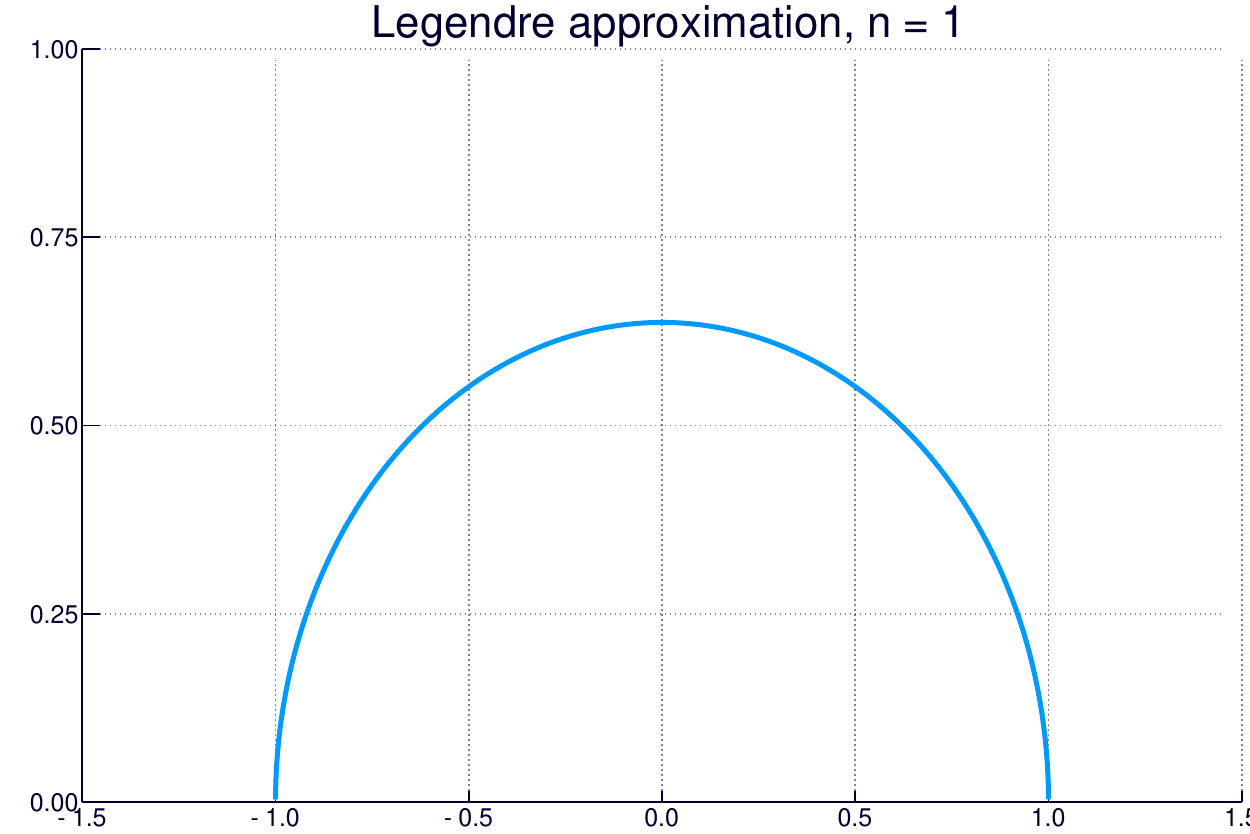}\hskip8pt\includegraphics[height=\figheight]{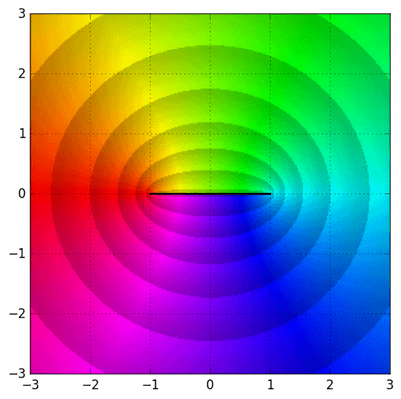}\hskip8pt\includegraphics[height=\figheight]{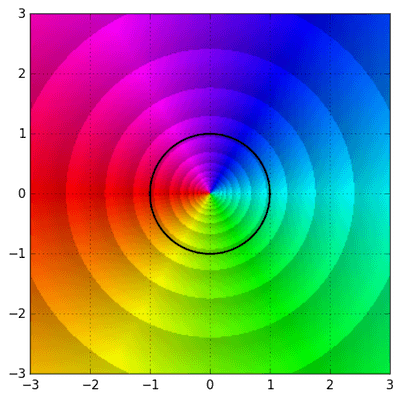}\\\vskip8pt
  \includegraphics[height=\figheight,trim=0 -1 0 10]{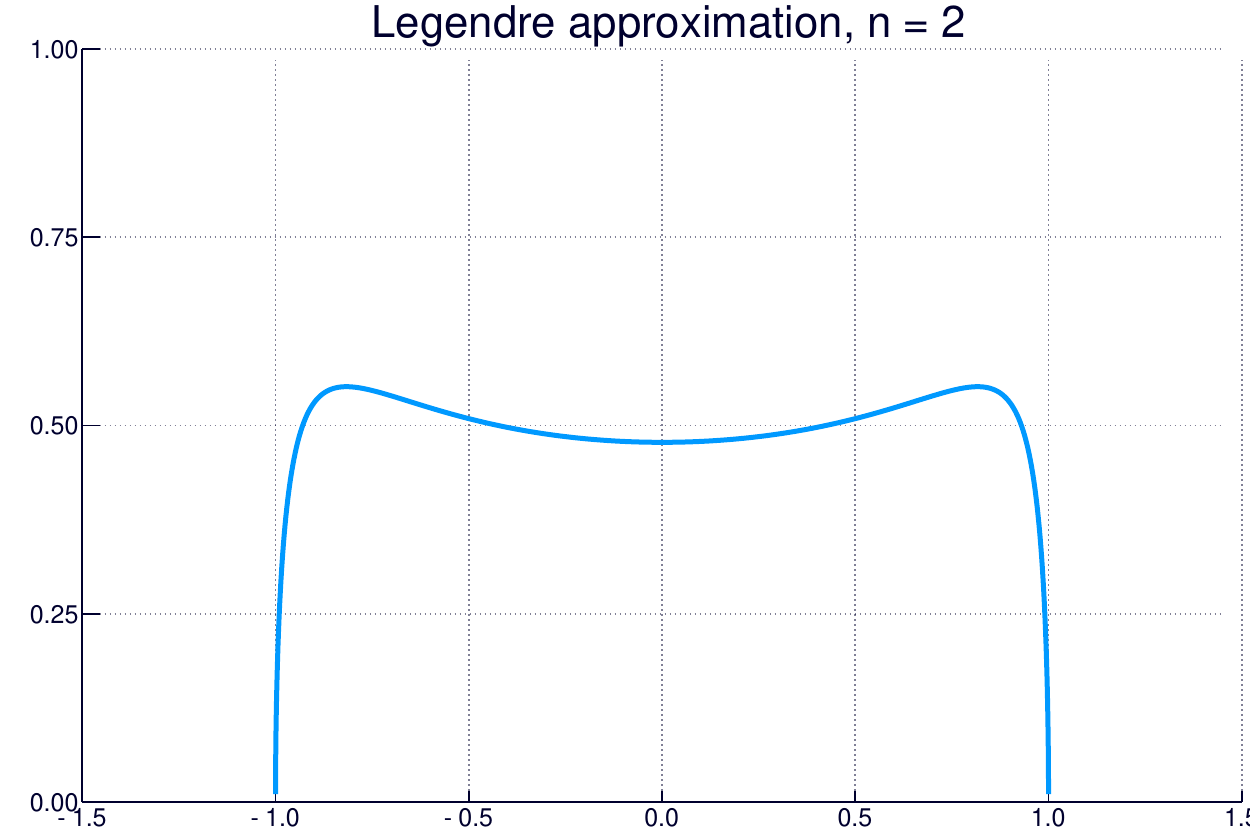}\hskip8pt\includegraphics[height=\figheight]{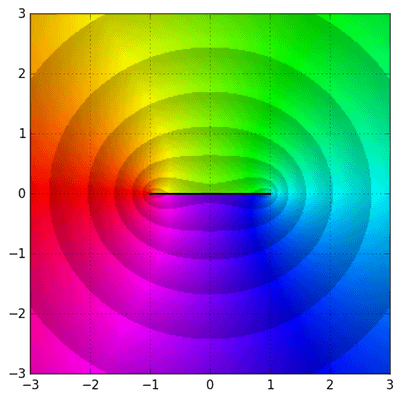}\hskip8pt\includegraphics[height=\figheight]{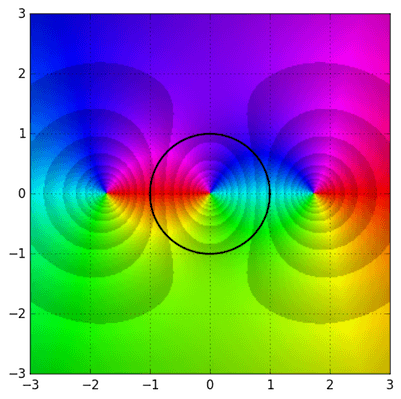}\\\vskip8pt
  \includegraphics[height=\figheight,trim=0 -1 0 10]{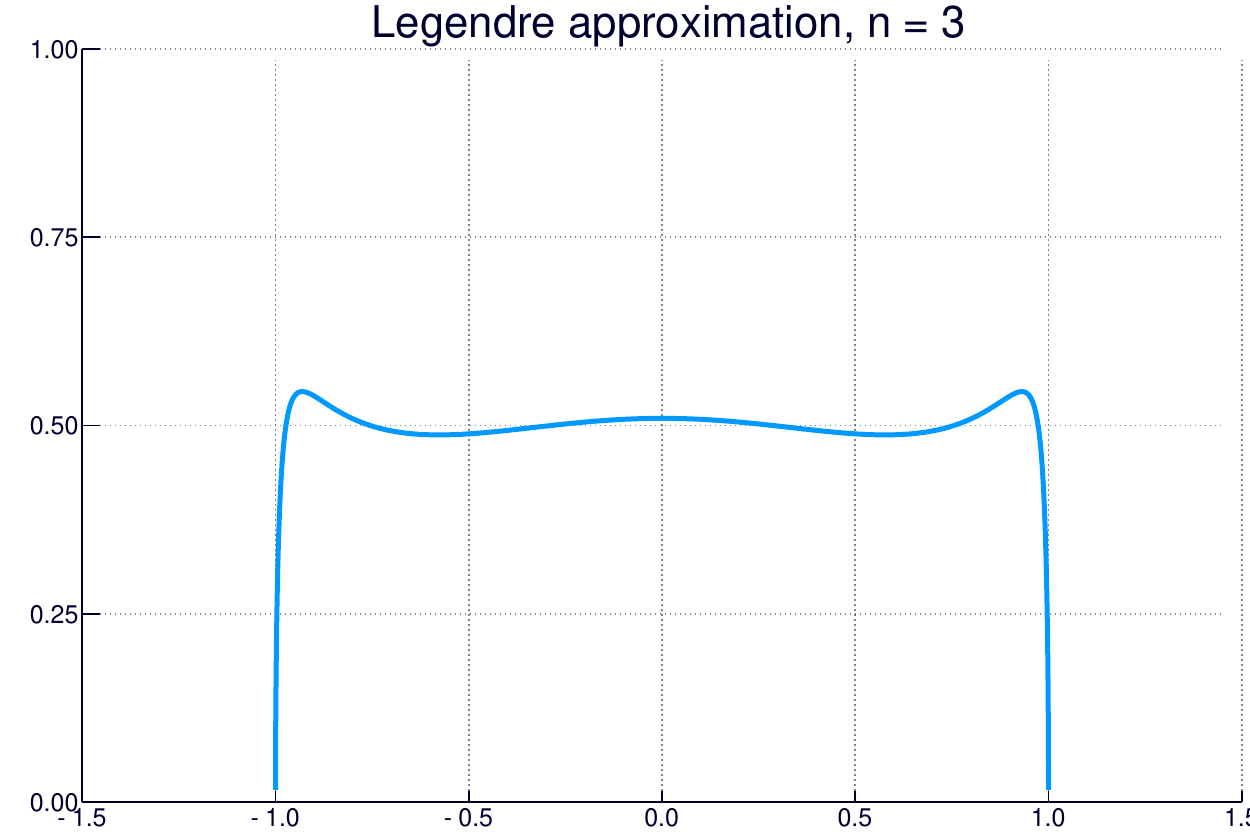}\hskip8pt\includegraphics[height=\figheight]{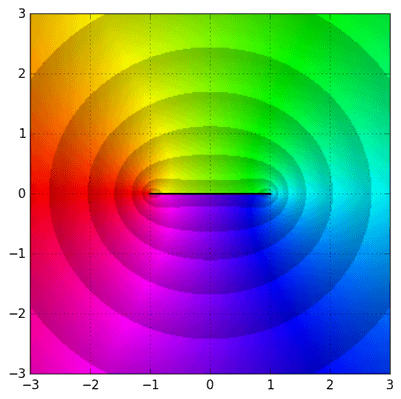}\hskip8pt\includegraphics[height=\figheight]{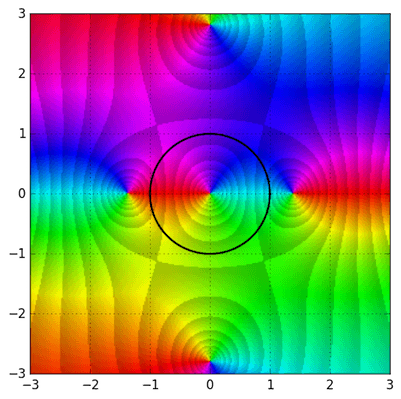}\\\vskip8pt
  \includegraphics[height=\figheight,trim=0 -1 0 10]{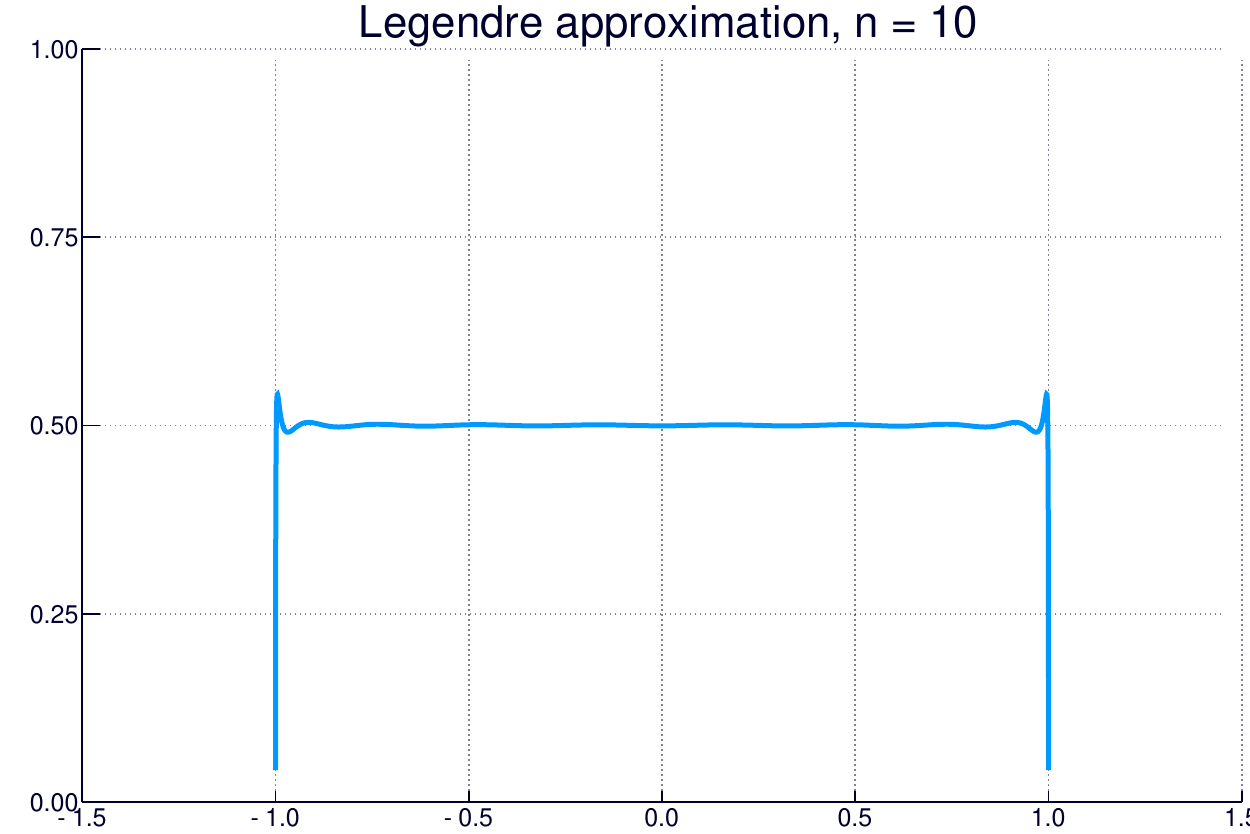}\hskip8pt\includegraphics[height=\figheight]{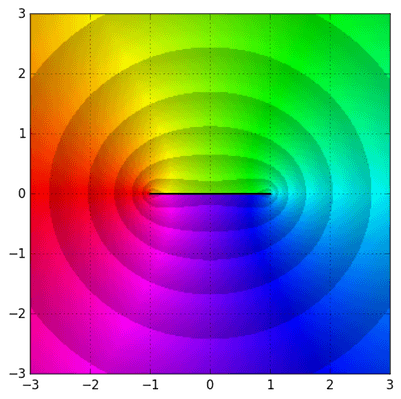}\hskip8pt\includegraphics[height=\figheight]{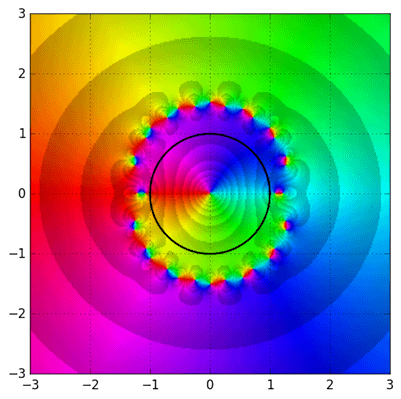}\\\vskip8pt
  \includegraphics[height=\figheight,trim=0 -1 0 10]{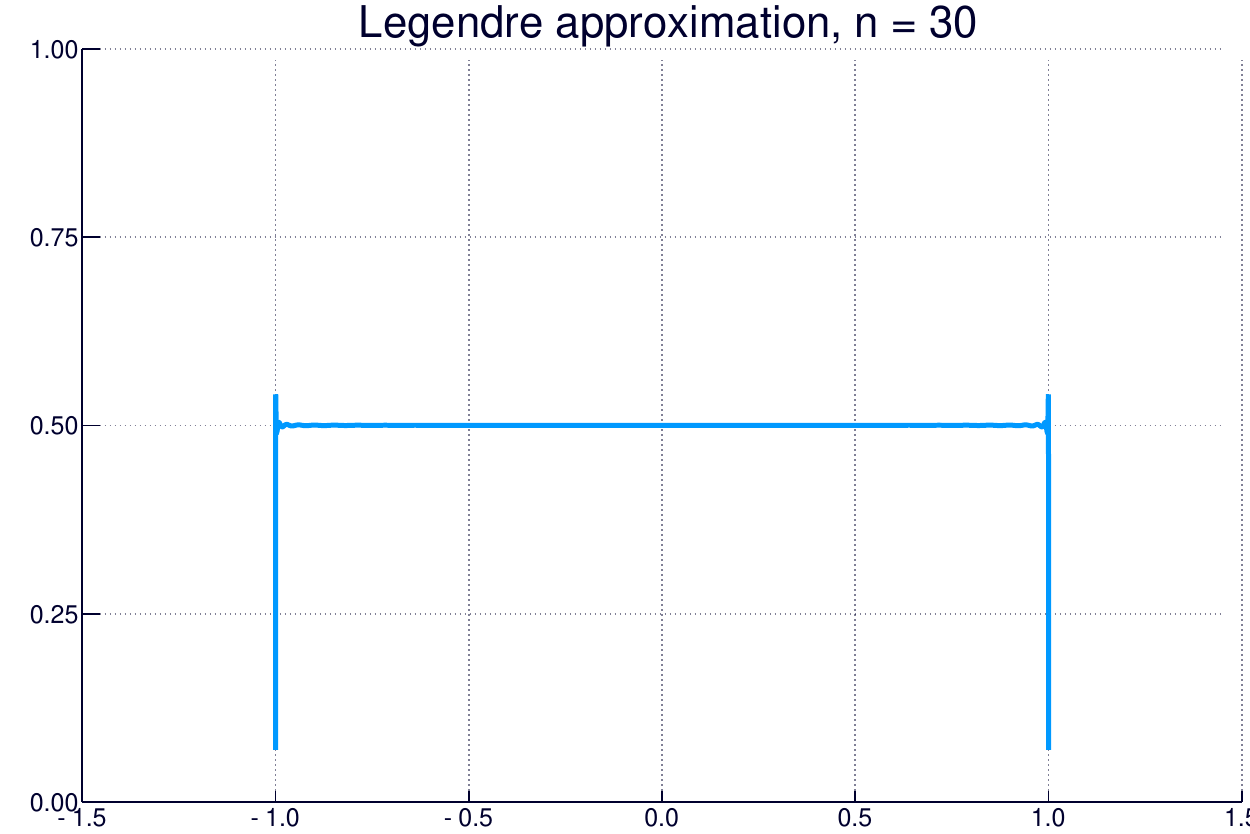}\hskip8pt\includegraphics[height=\figheight]{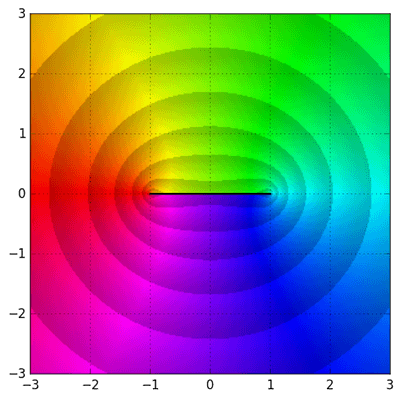}\hskip8pt\includegraphics[height=\figheight]{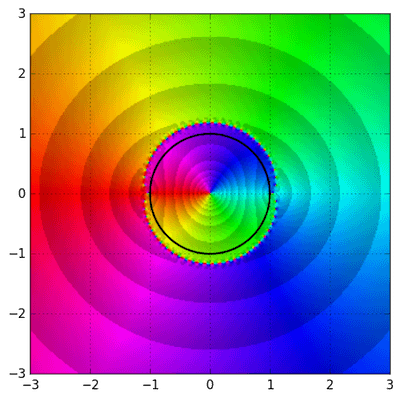}\\\vskip8pt
  \includegraphics[height=\figheight,trim=0 -1 0 10]{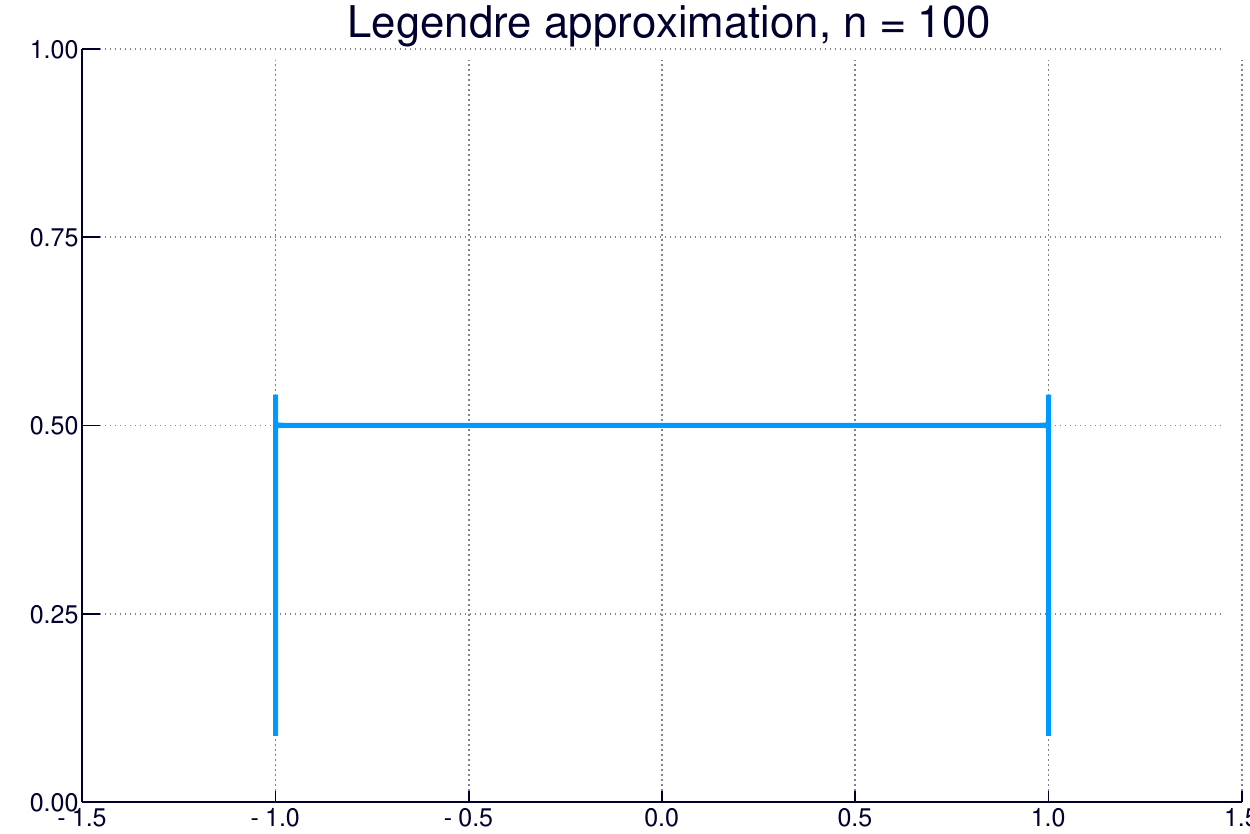}\hskip8pt\includegraphics[height=\figheight]{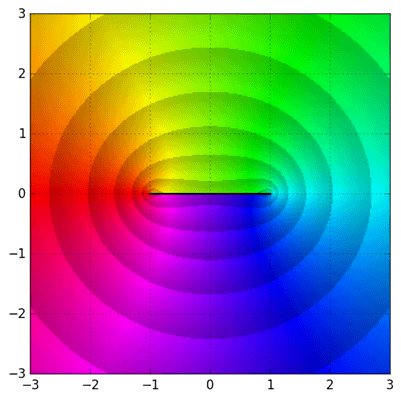}\hskip8pt\includegraphics[height=\figheight]{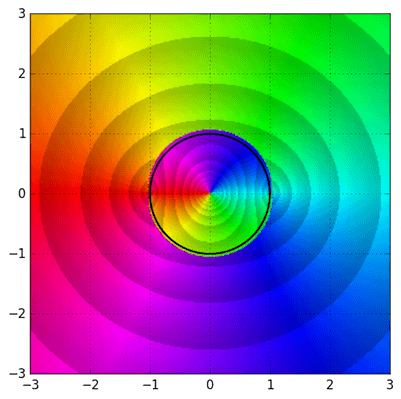}
\caption{These plots are of approximations to the spectral measure and principal resolvents of the Legendre polynomials, which has a Toeplitz-plus-trace-class Jacobi operator. The Jacobi operator can be found in Subsection \ref{subsec:Jacobipolys}. As the parameter $n$ of the approximation increases, a barrier around the unit circle forms. Also notice that a Gibbs phenomenon forms at the end points, showing that there are limitations to how good these approximations can be to the final measure.}\label{fig:legendre}
 \end{center}
\end{figure}

\begin{figure}[!h]
 \begin{center}
  \includegraphics[height=\figheight,trim=0 -1 0 10]{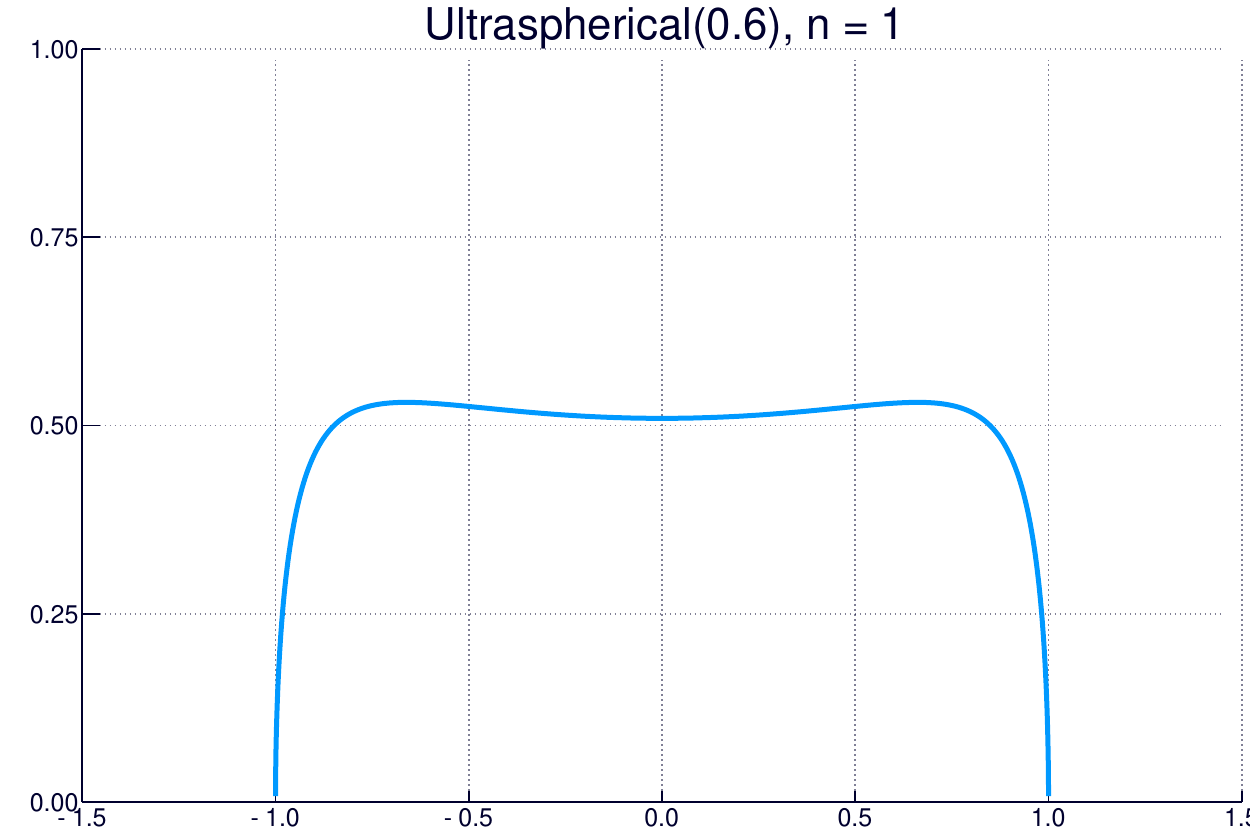}\hskip8pt\includegraphics[height=\figheight]{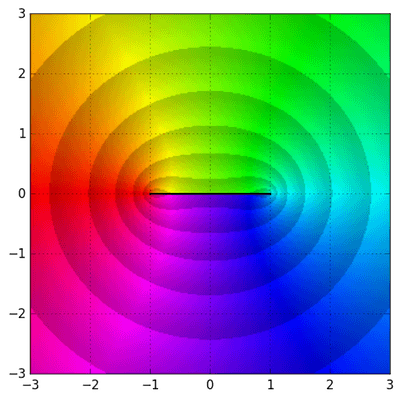}\hskip8pt\includegraphics[height=\figheight]{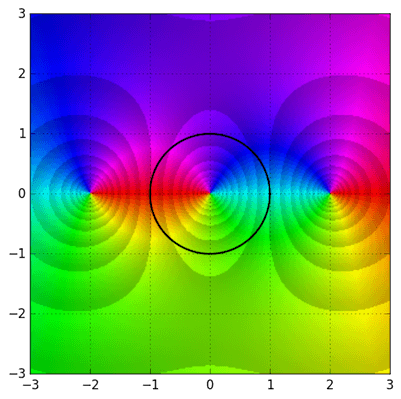}\\\vskip8pt
  \includegraphics[height=\figheight,trim=0 -1 0 10]{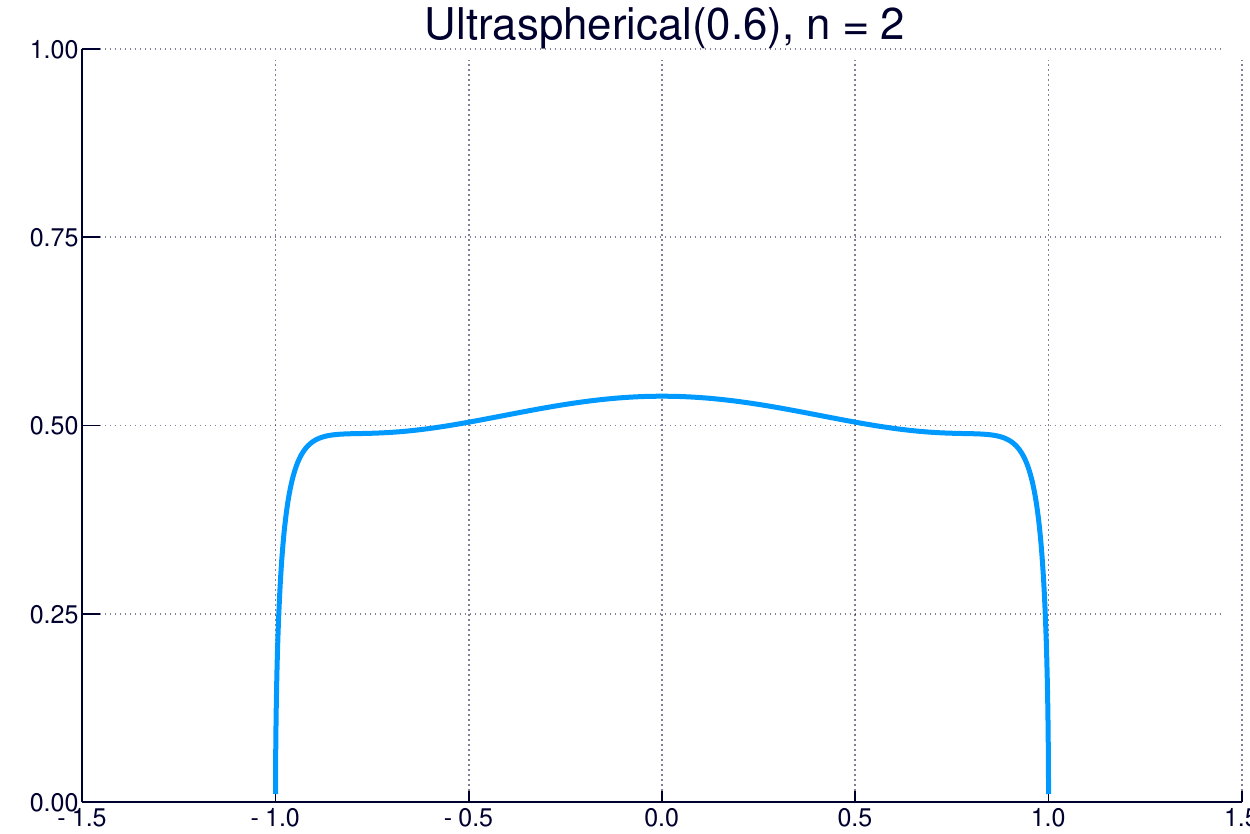}\hskip8pt\includegraphics[height=\figheight]{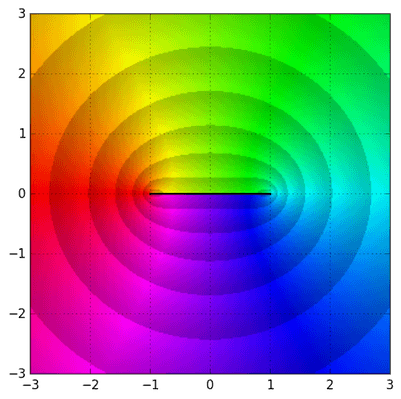}\hskip8pt\includegraphics[height=\figheight]{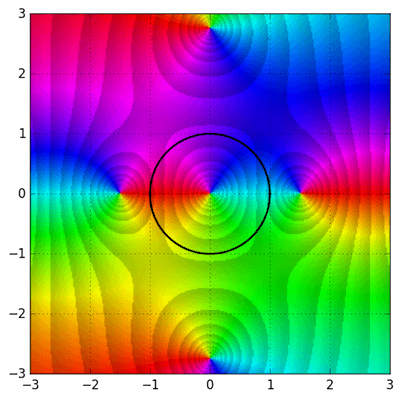}\\\vskip8pt
  \includegraphics[height=\figheight,trim=0 -1 0 10]{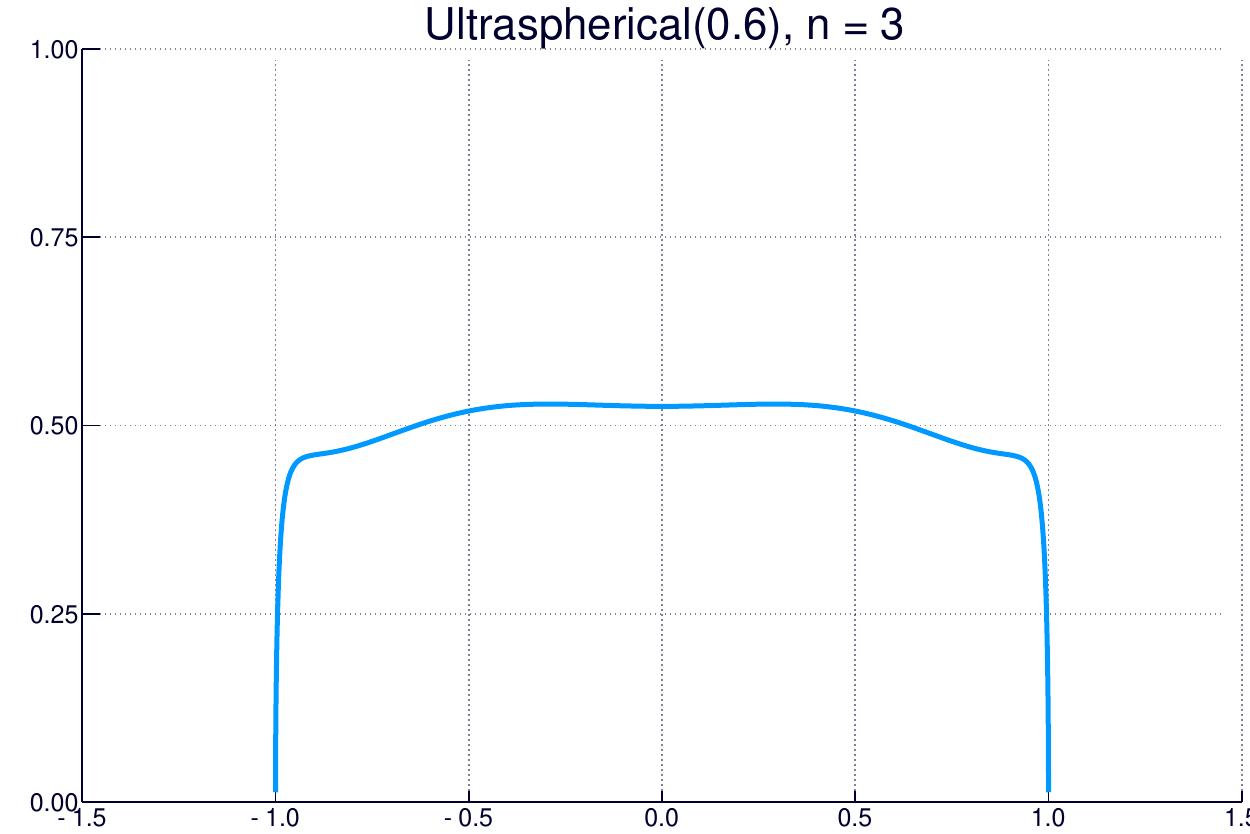}\hskip8pt\includegraphics[height=\figheight]{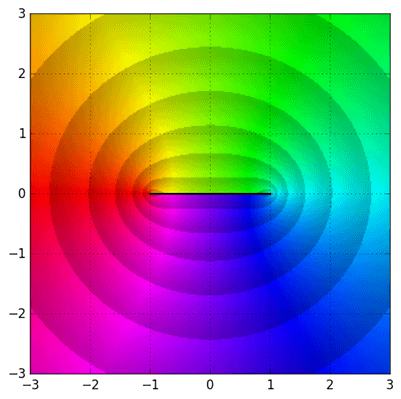}\hskip8pt\includegraphics[height=\figheight]{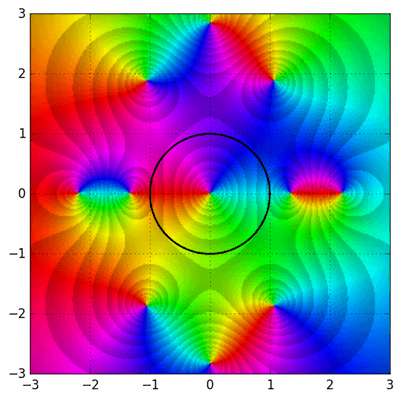}\\\vskip8pt
  \includegraphics[height=\figheight,trim=0 -1 0 10]{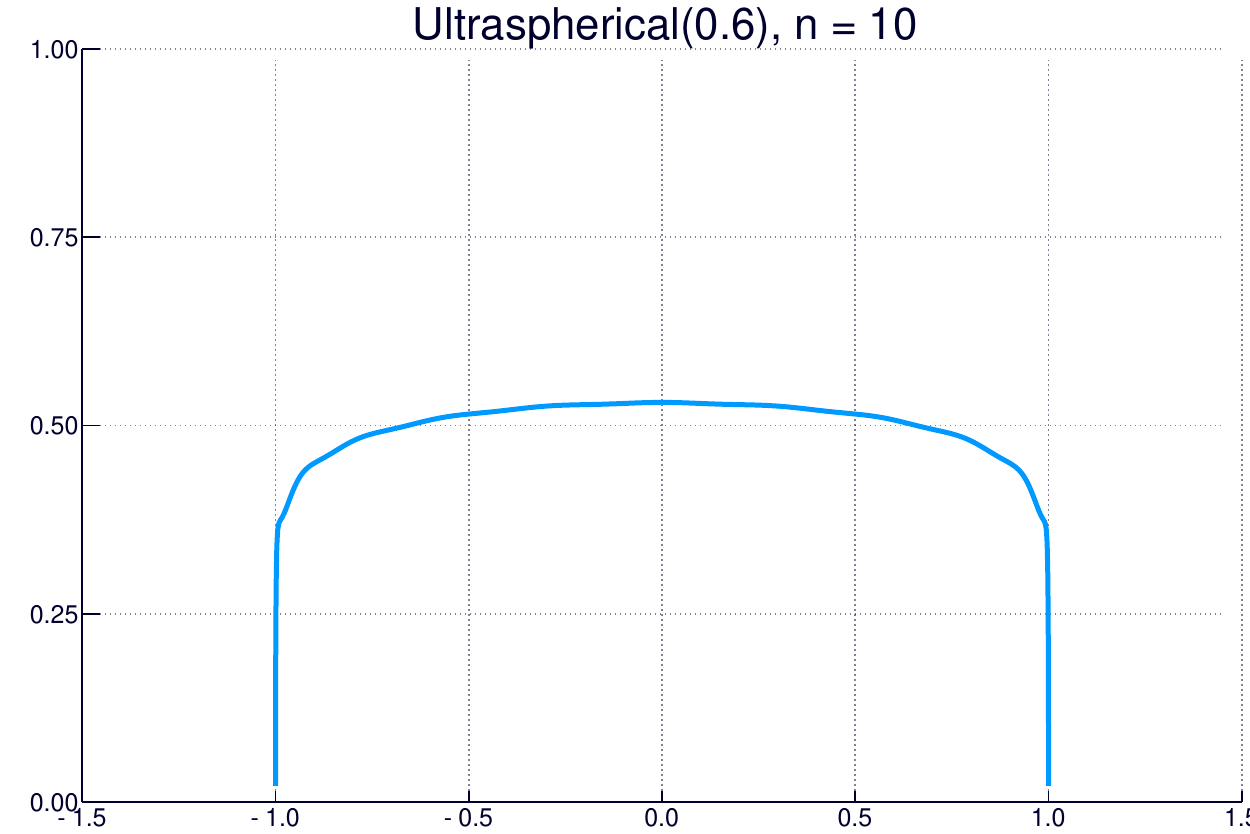}\hskip8pt\includegraphics[height=\figheight]{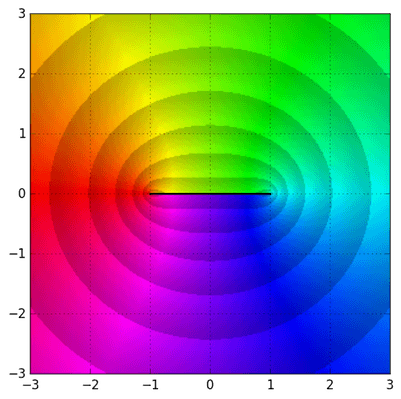}\hskip8pt\includegraphics[height=\figheight]{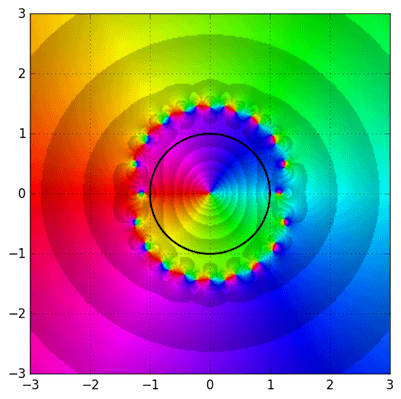}\\\vskip8pt
  \includegraphics[height=\figheight,trim=0 -1 0 10]{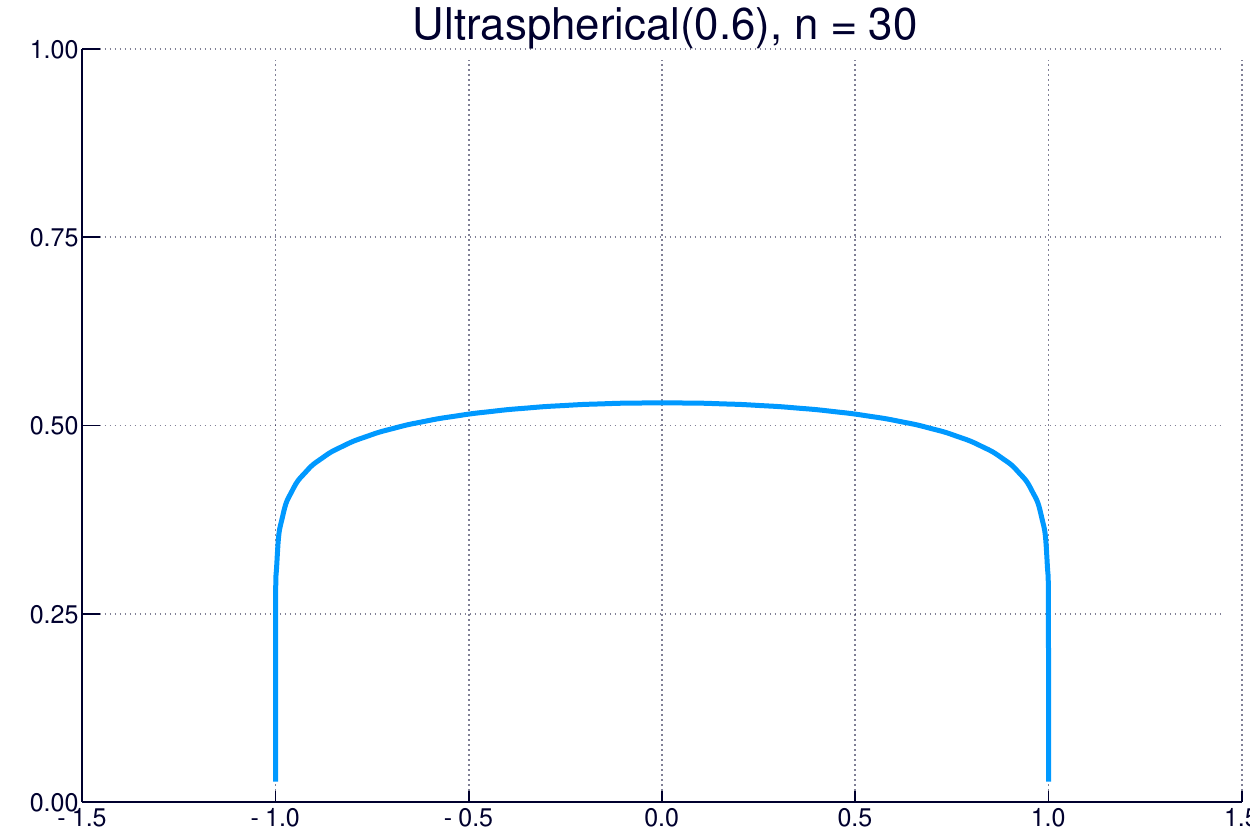}\hskip8pt\includegraphics[height=\figheight]{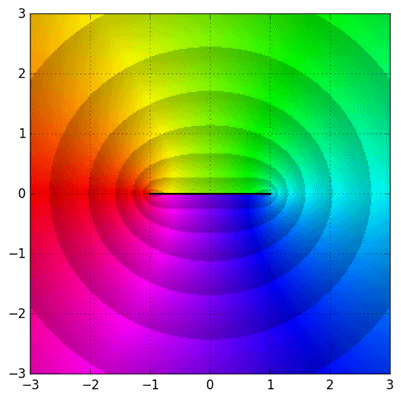}\hskip8pt\includegraphics[height=\figheight]{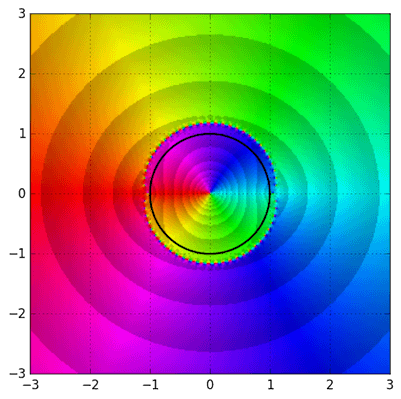}\\\vskip8pt
  \includegraphics[height=\figheight,trim=0 -1 0 10]{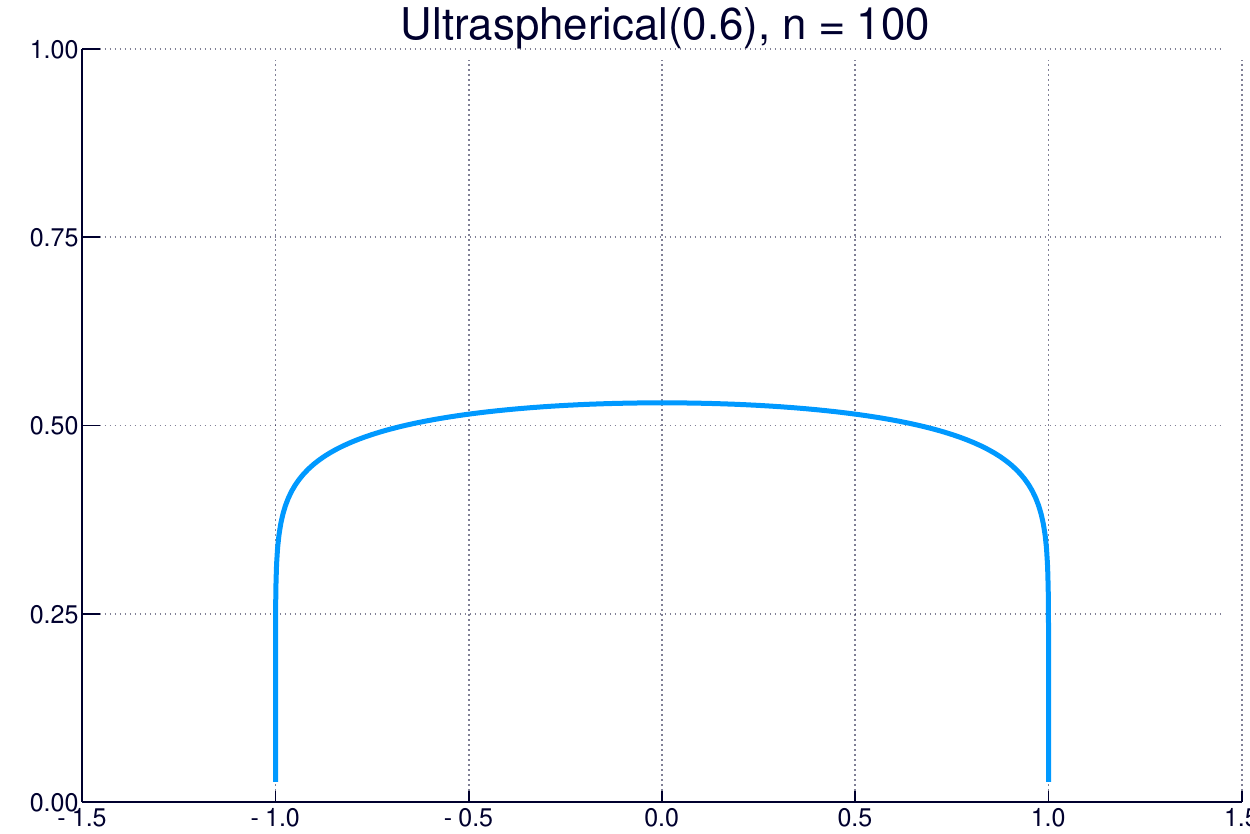}\hskip8pt\includegraphics[height=\figheight]{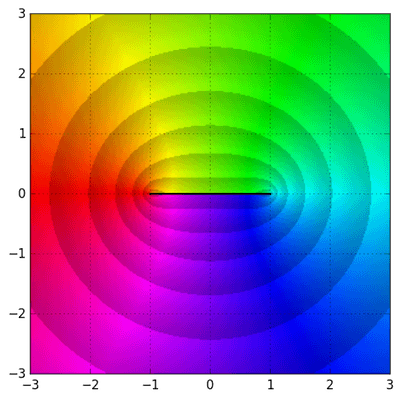}\hskip8pt\includegraphics[height=\figheight]{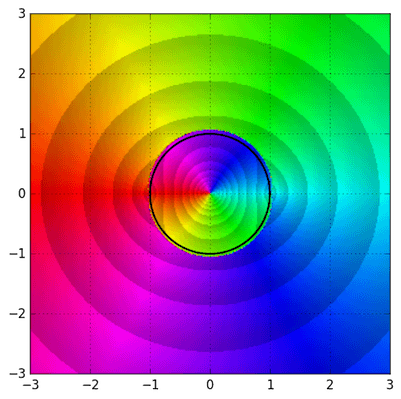}
\caption{These plots are of approximations to the spectral measure and principal resolvents of the Ultraspherical polynomials with parameter $\gamma = 0.6$, which has a Toeplitz-plus-trace-class Jacobi operator. The Jacobi operator can be found in Subsection \ref{subsec:Jacobipolys}. As the parameter $n$ of the approximation increases, a barrier around the unit circle forms.}\label{fig:ultraspherical}
 \end{center}
\end{figure}

\begin{figure}[!h]
 \begin{center}
  \includegraphics[height=\figheight,trim=0 -1 0 10]{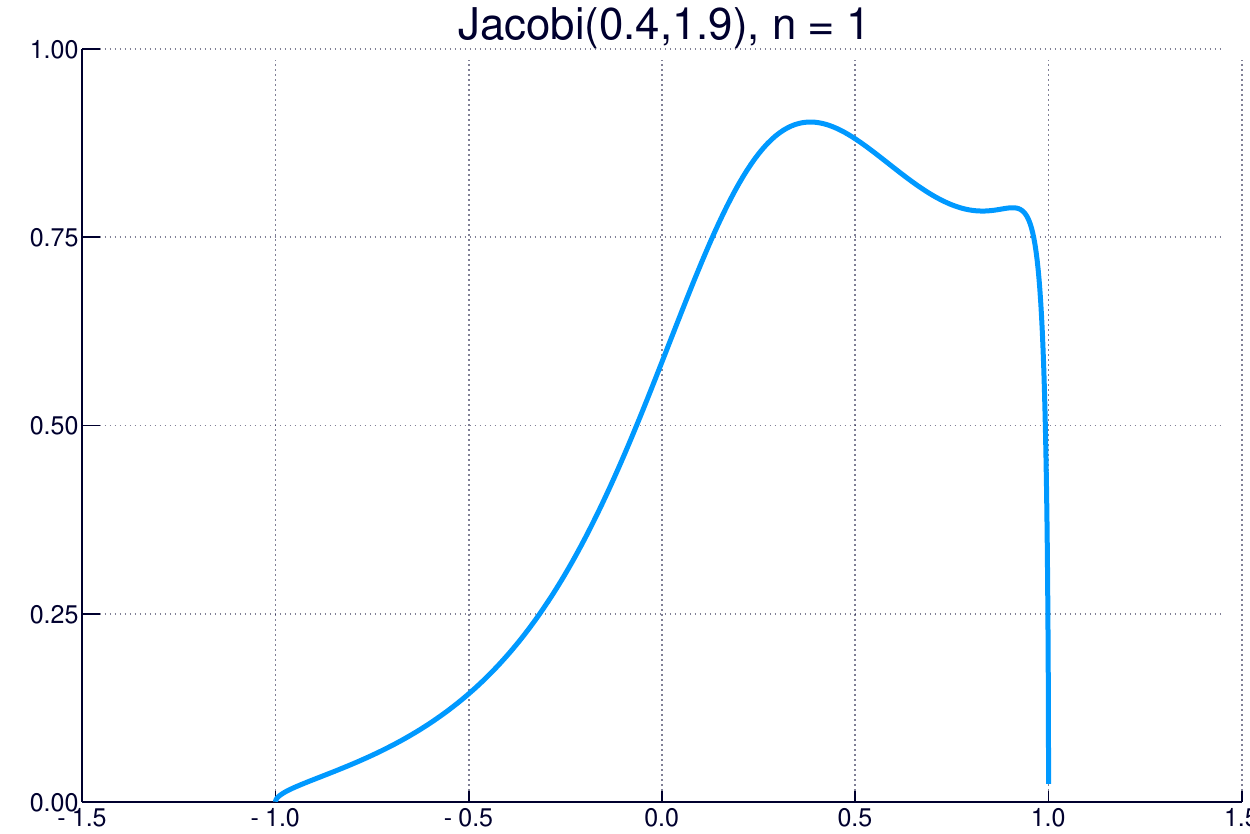}\hskip8pt\includegraphics[height=\figheight]{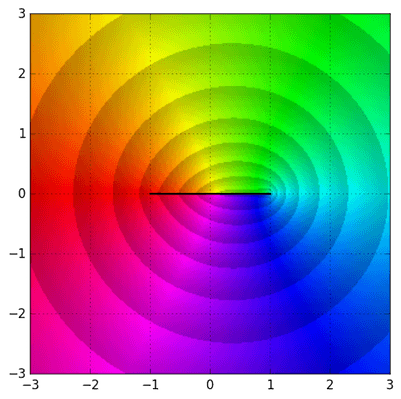}\hskip8pt\includegraphics[height=\figheight]{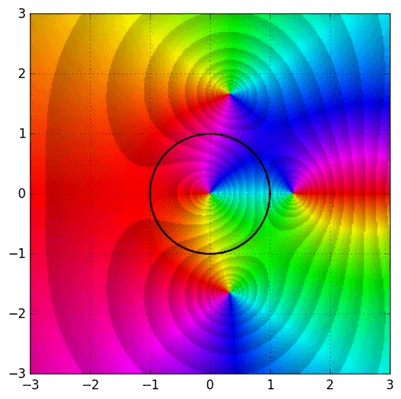}\\\vskip8pt
  \includegraphics[height=\figheight,trim=0 -1 0 10]{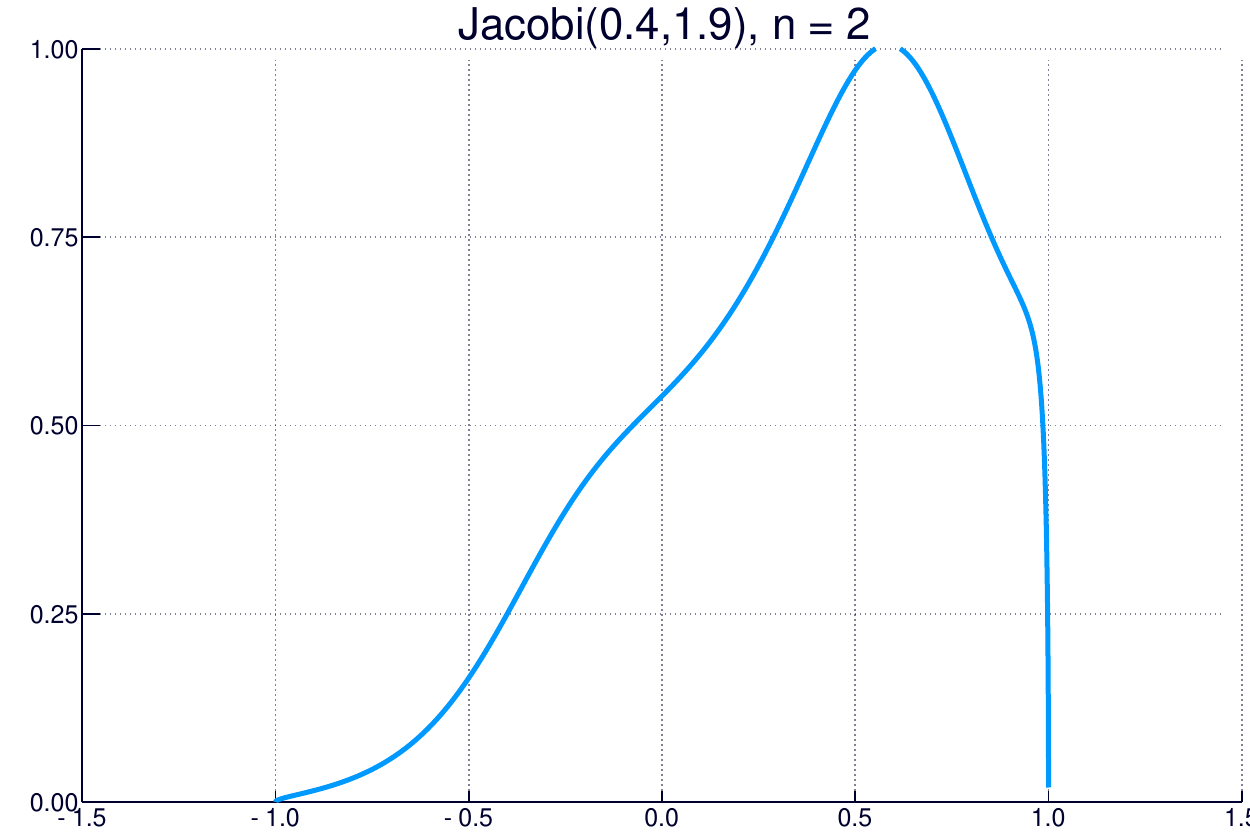}\hskip8pt\includegraphics[height=\figheight]{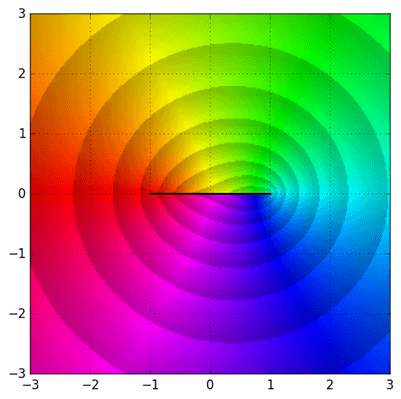}\hskip8pt\includegraphics[height=\figheight]{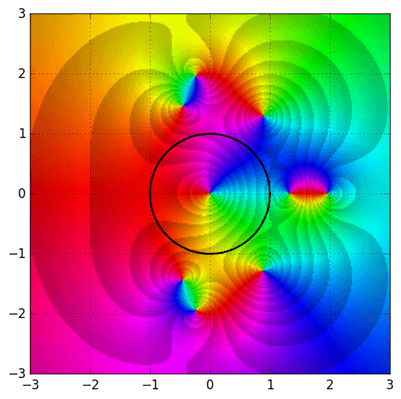}\\\vskip8pt
  \includegraphics[height=\figheight,trim=0 -1 0 10]{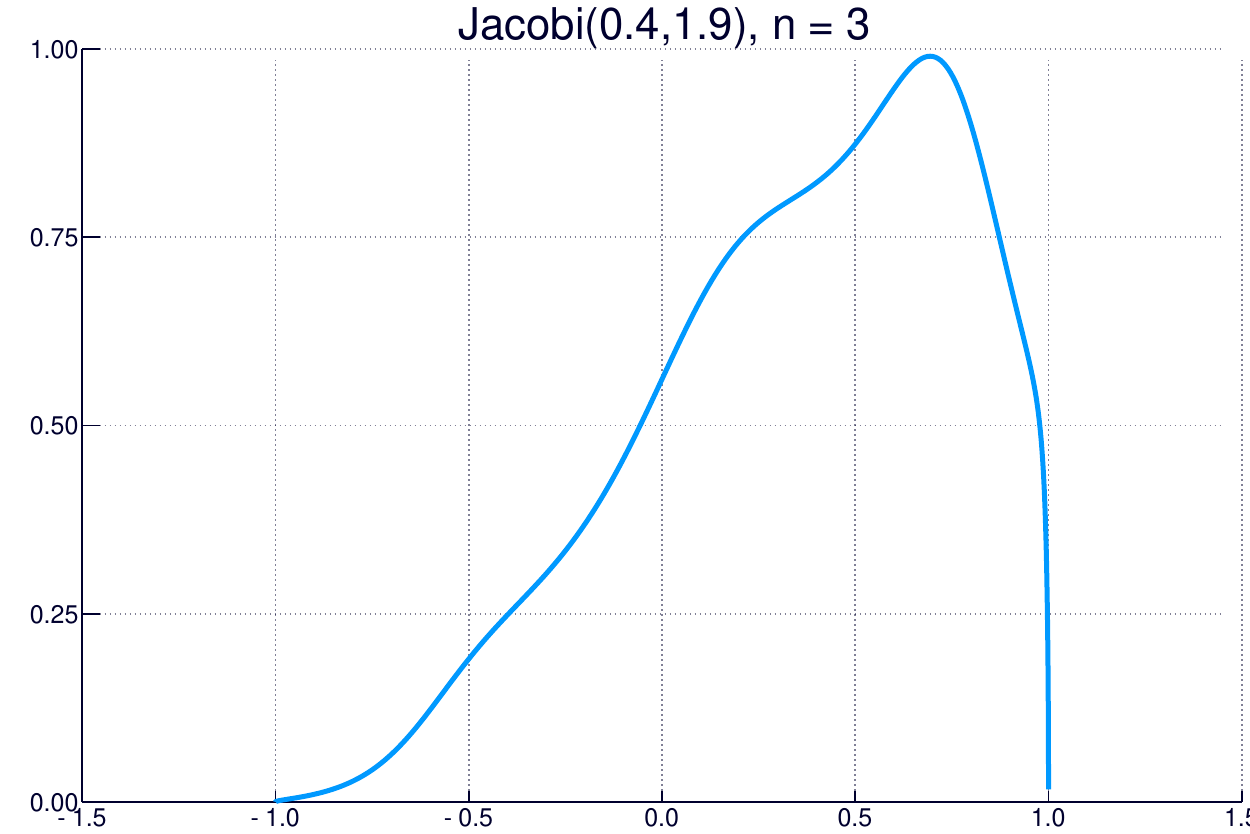}\hskip8pt\includegraphics[height=\figheight]{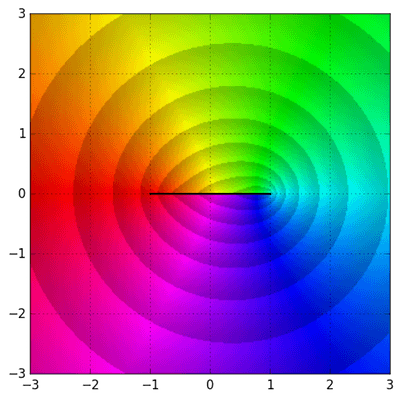}\hskip8pt\includegraphics[height=\figheight]{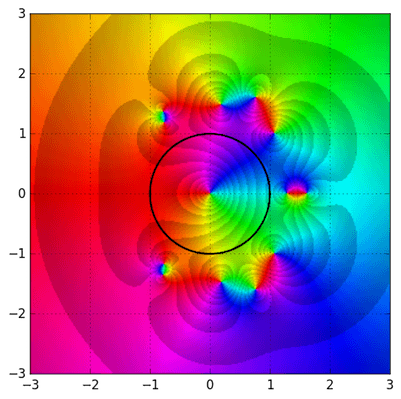}\\\vskip8pt
  \includegraphics[height=\figheight,trim=0 -1 0 10]{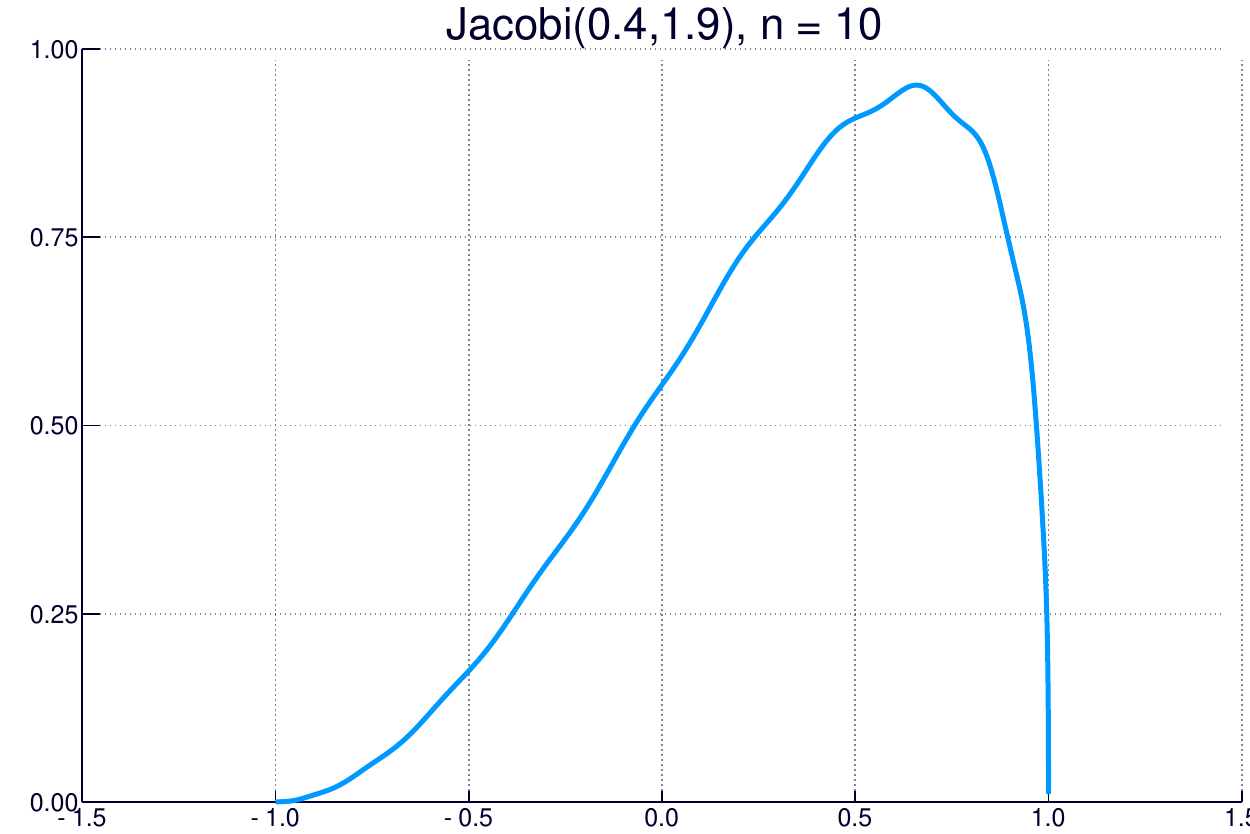}\hskip8pt\includegraphics[height=\figheight]{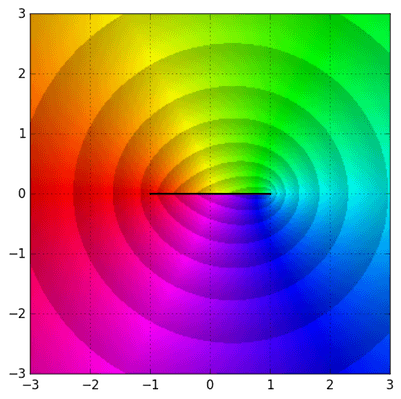}\hskip8pt\includegraphics[height=\figheight]{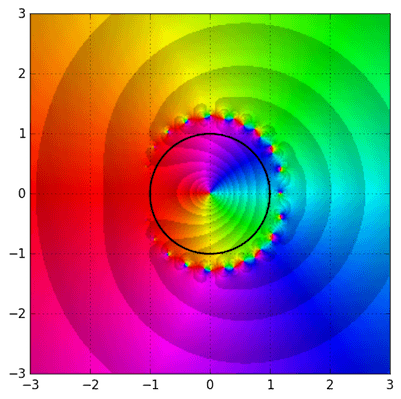}\\\vskip8pt
  \includegraphics[height=\figheight,trim=0 -1 0 10]{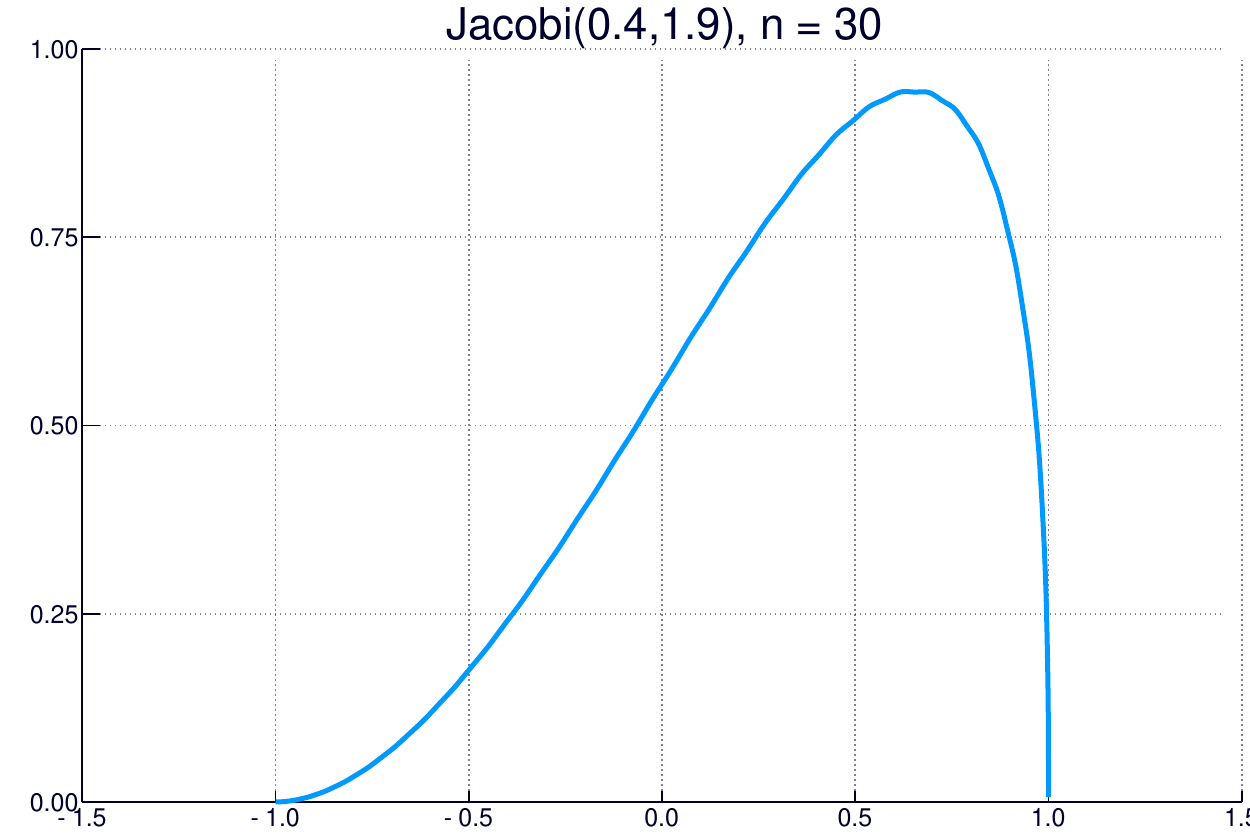}\hskip8pt\includegraphics[height=\figheight]{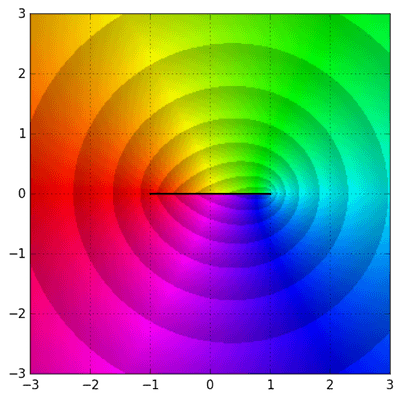}\hskip8pt\includegraphics[height=\figheight]{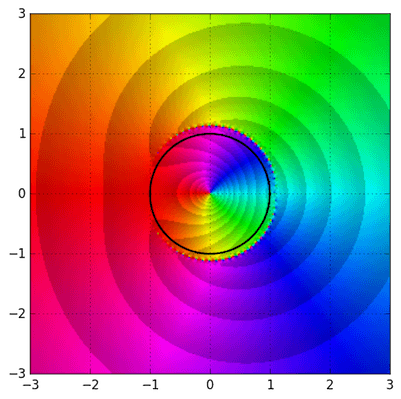}\\\vskip8pt
  \includegraphics[height=\figheight,trim=0 -1 0 10]{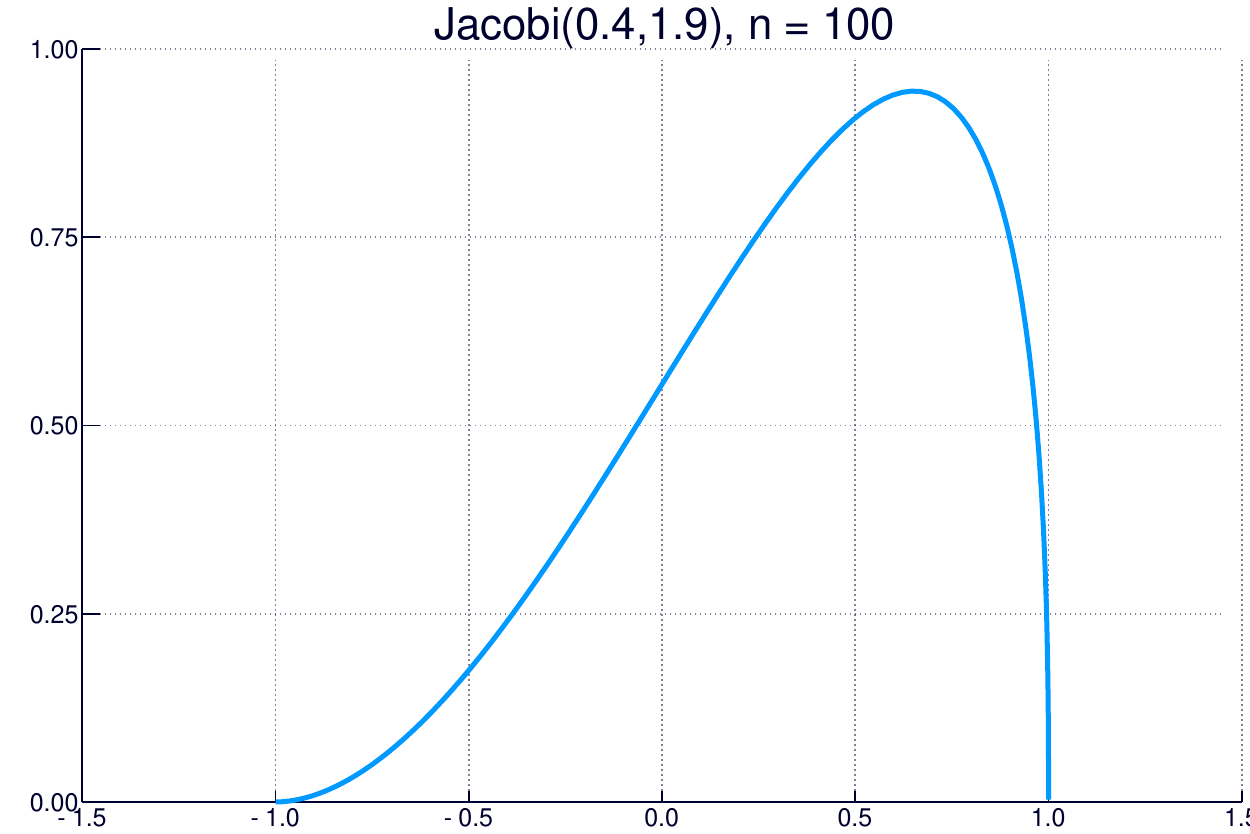}\hskip8pt\includegraphics[height=\figheight]{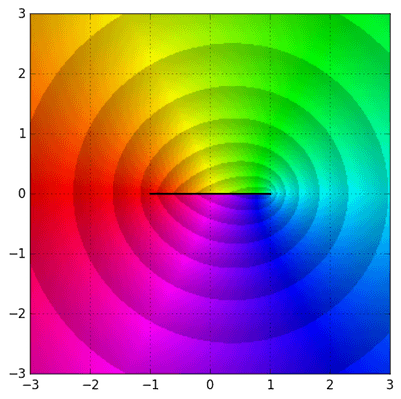}\hskip8pt\includegraphics[height=\figheight]{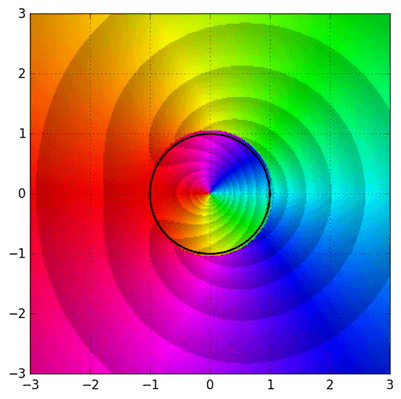}
\caption{These plots are of approximations to the spectral measure and principal resolvents of the Jacobi polynomials with parameter $\alpha,\beta = 0.4,1.9$, which has a Toeplitz-plus-trace-class Jacobi operator. The Jacobi operator can be found in Subsection \ref{subsec:Jacobipolys}. As the parameter $n$ of the approximation increases, a barrier around the unit circle forms.}\label{fig:jacobi}
 \end{center}
\end{figure}

\begin{figure}[!h]
 \begin{center}
  \includegraphics[height=\figheight,trim=0 -1 0 10]{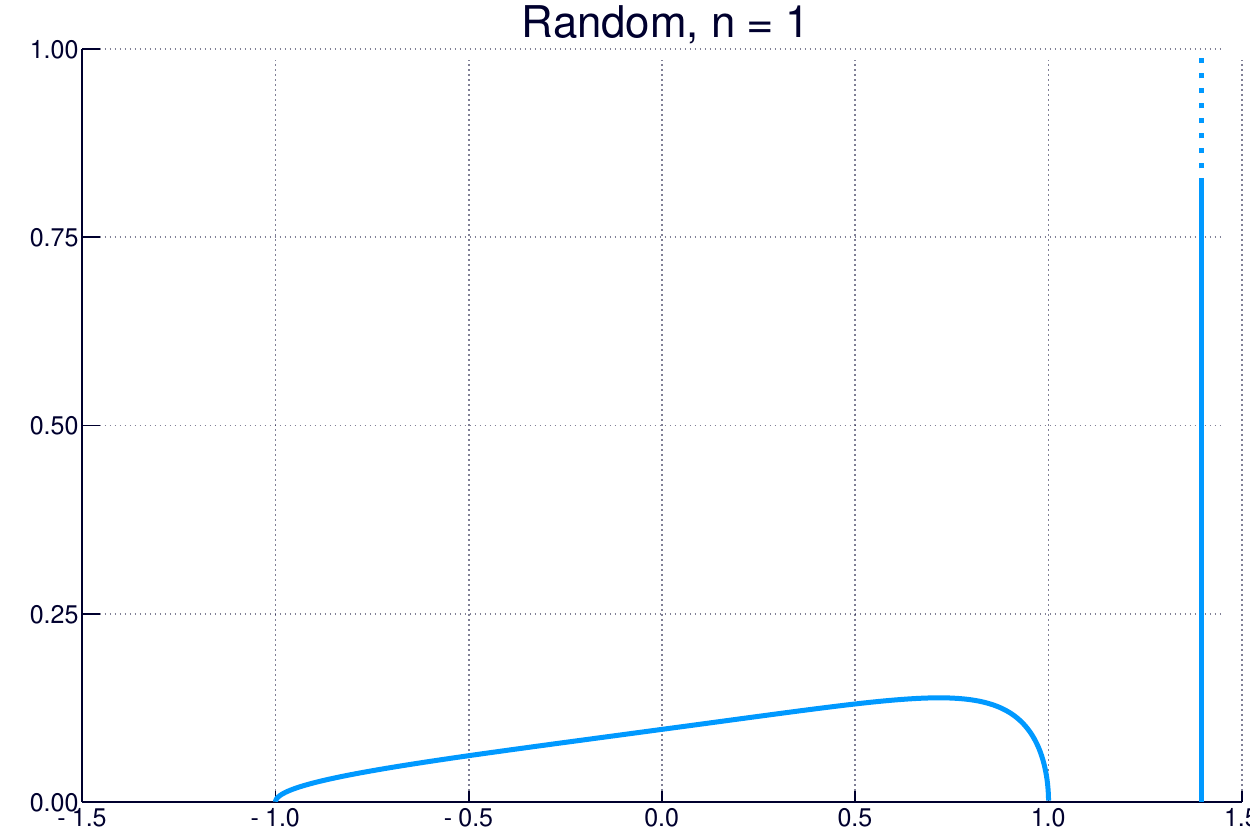}\hskip8pt\includegraphics[height=\figheight]{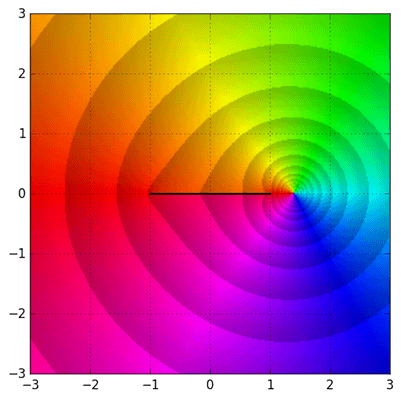}\hskip8pt\includegraphics[height=\figheight]{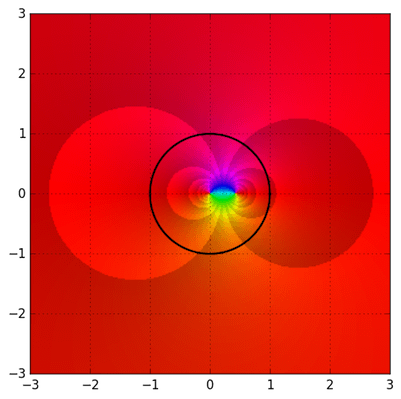}\\\vskip8pt
  \includegraphics[height=\figheight,trim=0 -1 0 10]{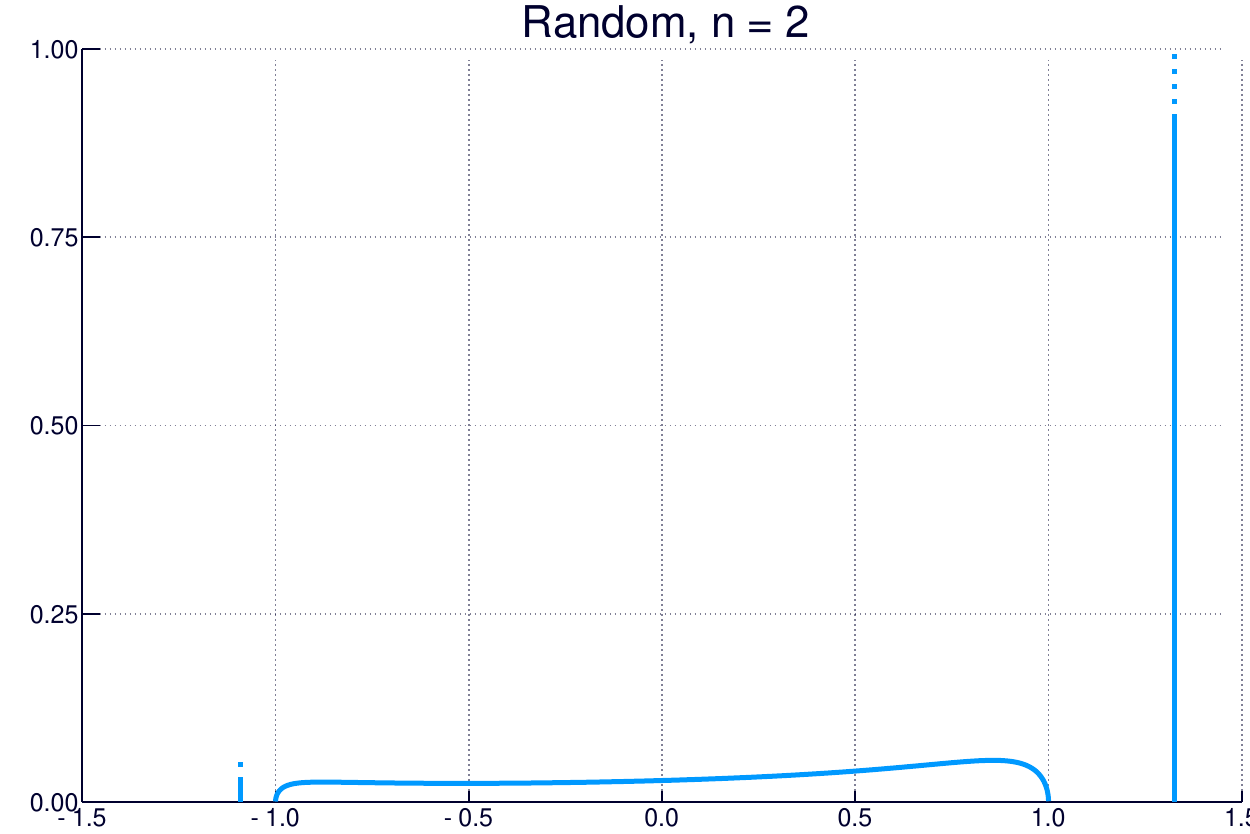}\hskip8pt\includegraphics[height=\figheight]{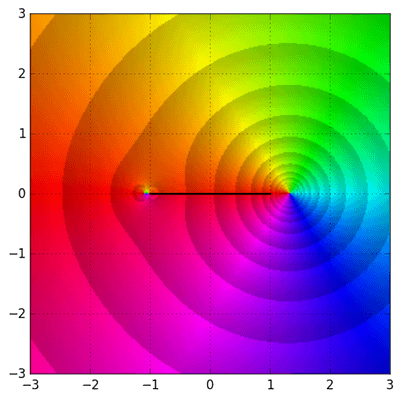}\hskip8pt\includegraphics[height=\figheight]{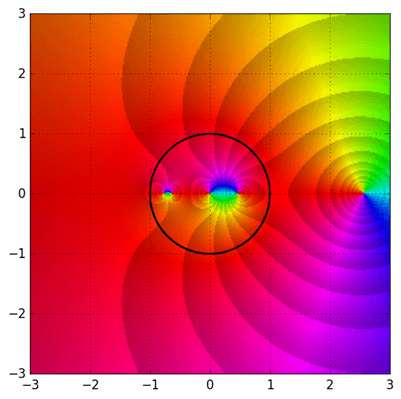}\\\vskip8pt
  \includegraphics[height=\figheight,trim=0 -1 0 10]{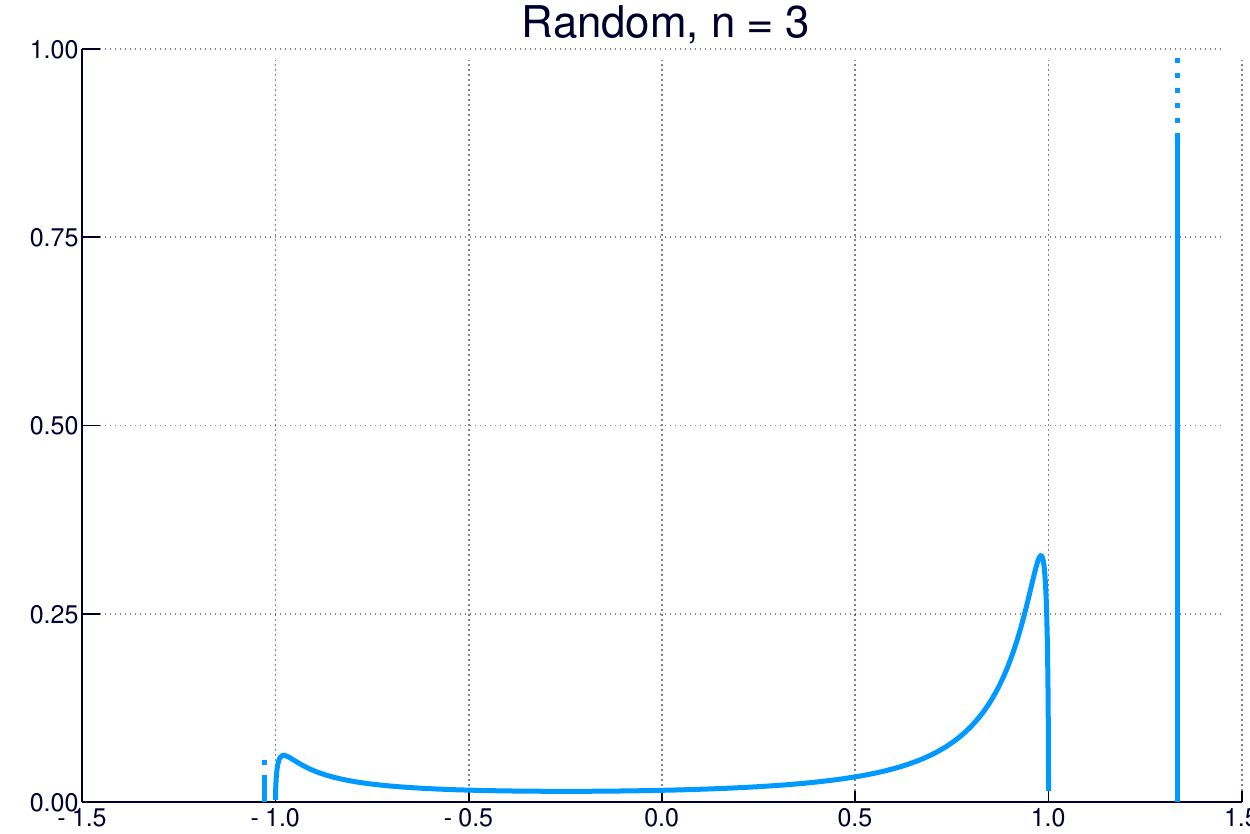}\hskip8pt\includegraphics[height=\figheight]{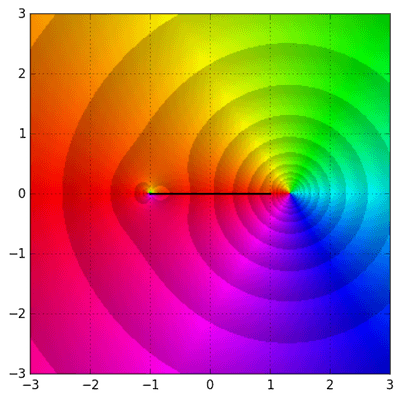}\hskip8pt\includegraphics[height=\figheight]{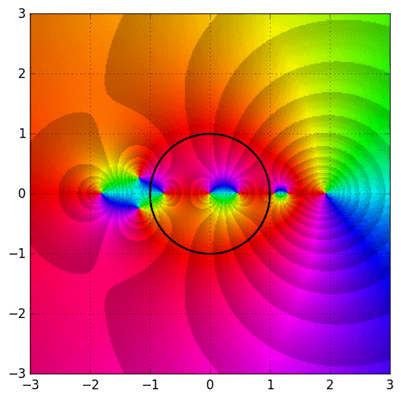}\\\vskip8pt
  \includegraphics[height=\figheight,trim=0 -1 0 10]{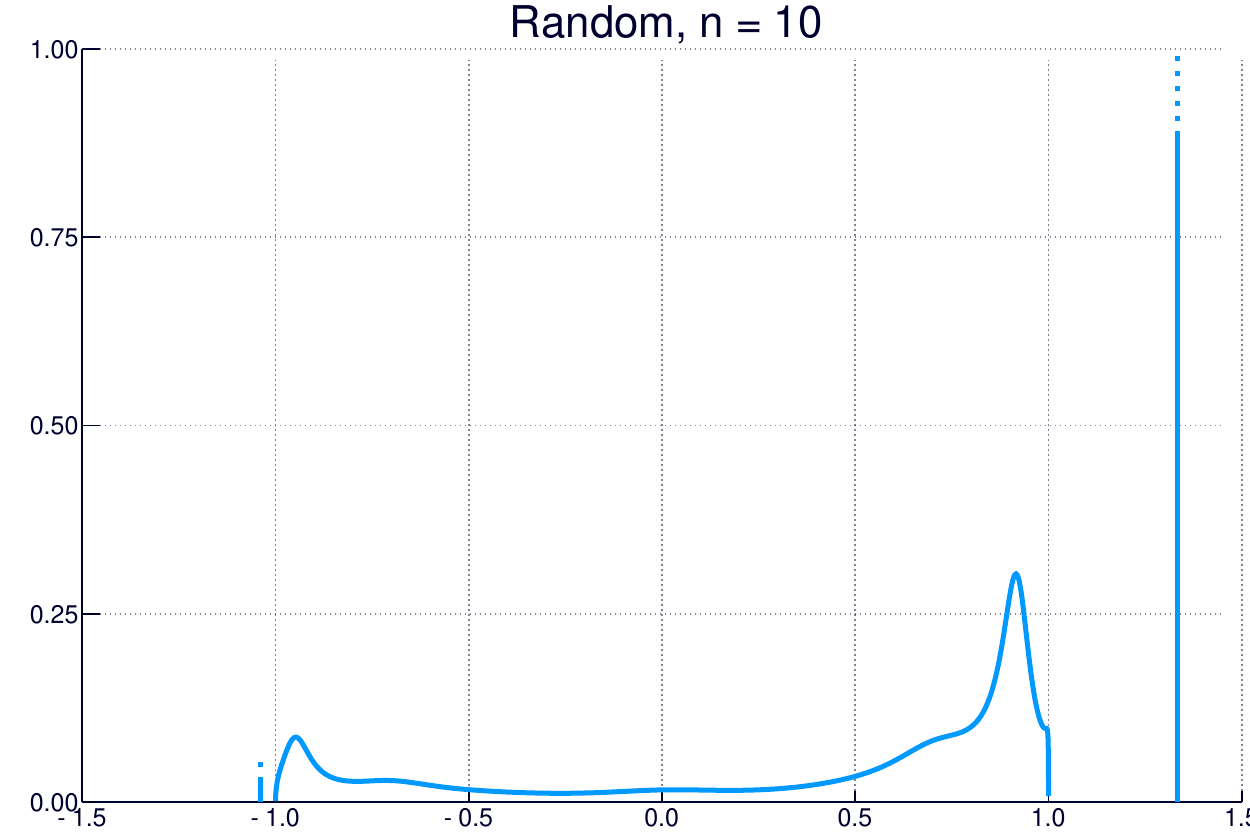}\hskip8pt\includegraphics[height=\figheight]{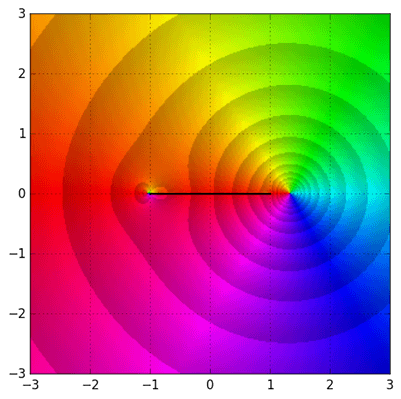}\hskip8pt\includegraphics[height=\figheight]{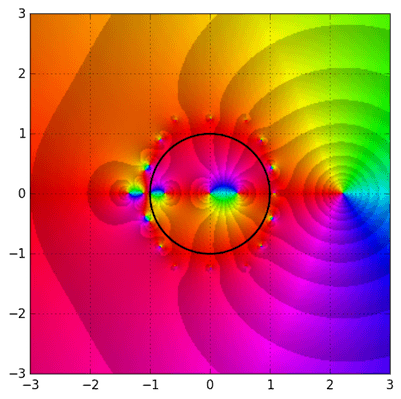}\\\vskip8pt
  \includegraphics[height=\figheight,trim=0 -1 0 10]{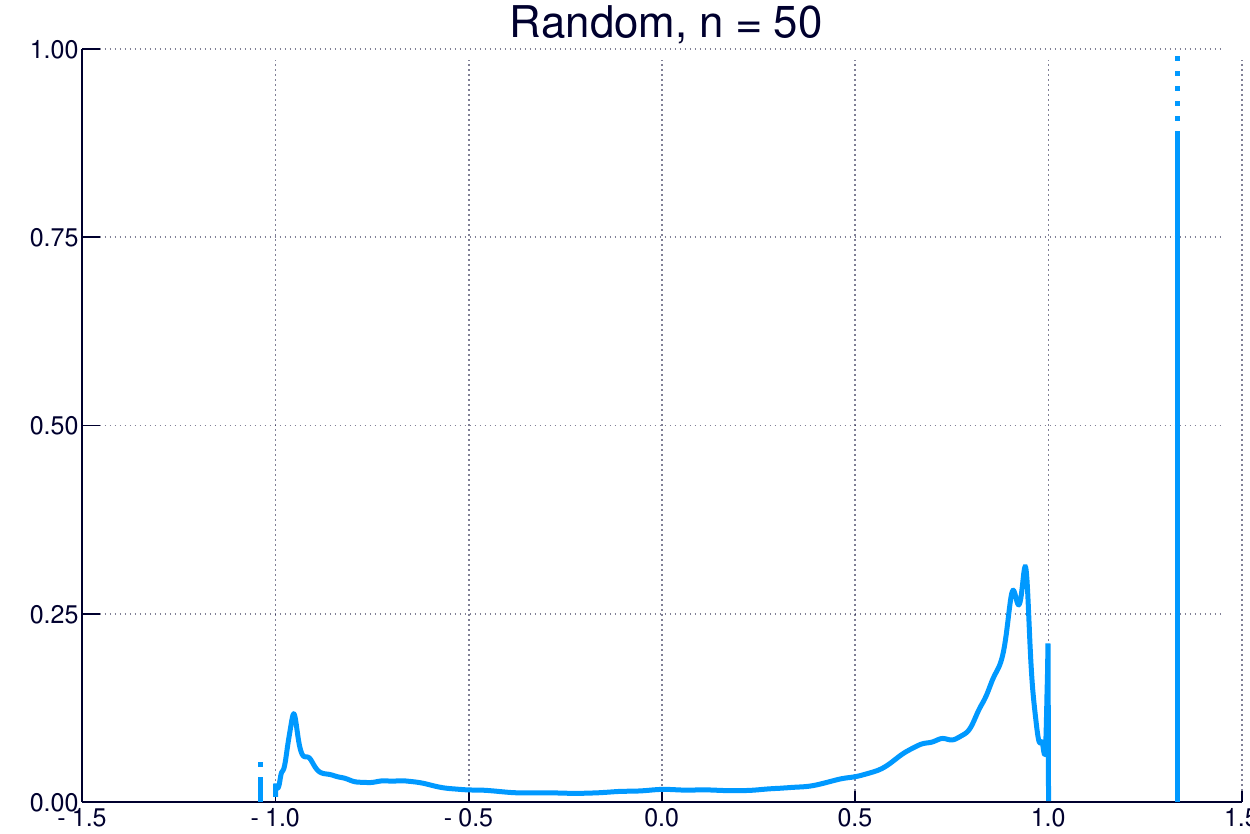}\hskip8pt\includegraphics[height=\figheight]{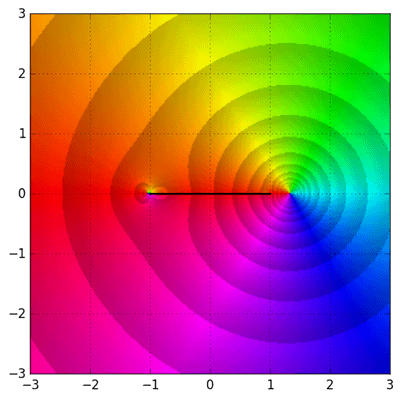}\hskip8pt\includegraphics[height=\figheight]{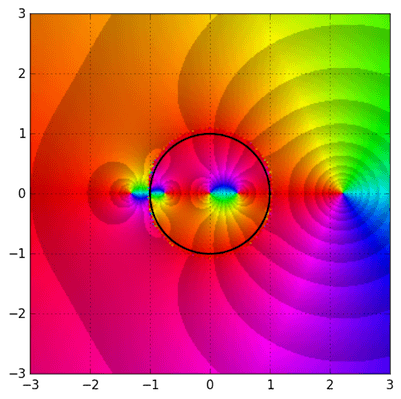}\\\vskip8pt
  \includegraphics[height=\figheight,trim=0 -1 0 10]{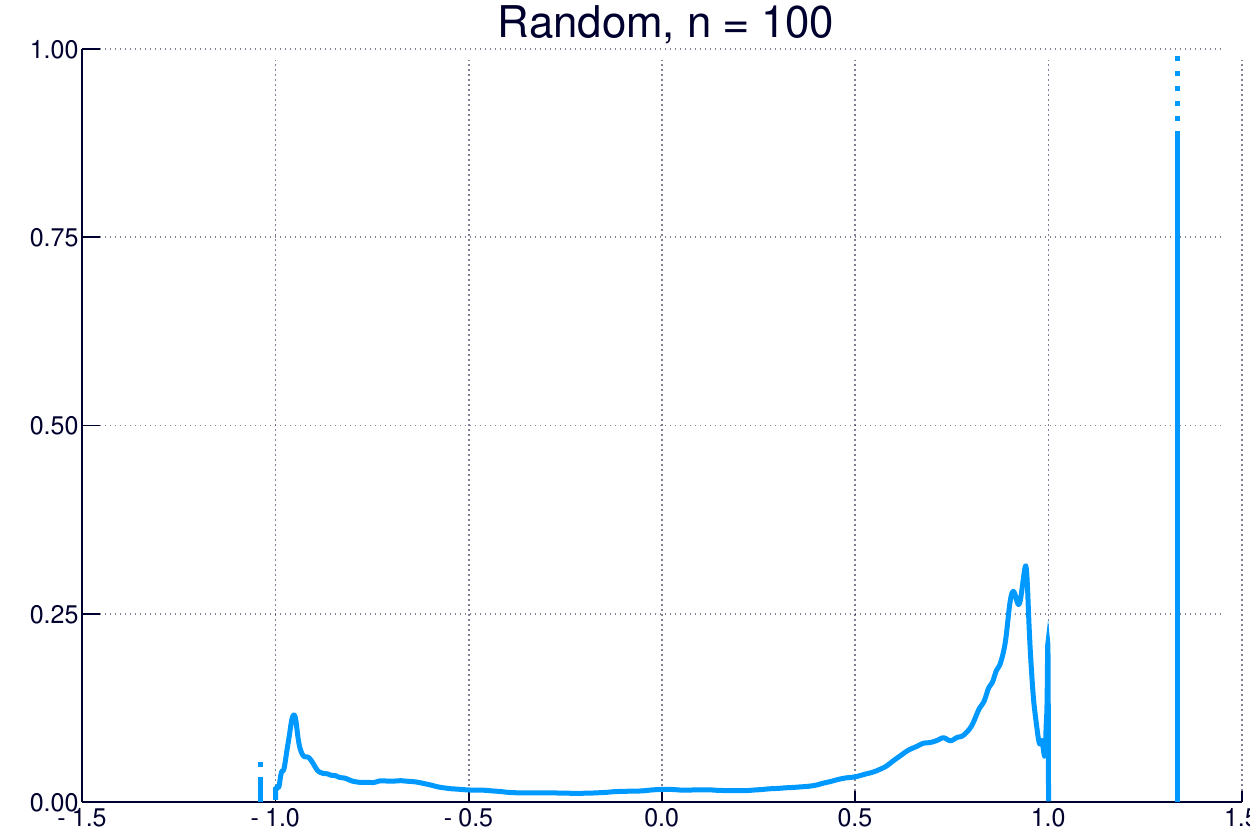}\hskip8pt\includegraphics[height=\figheight]{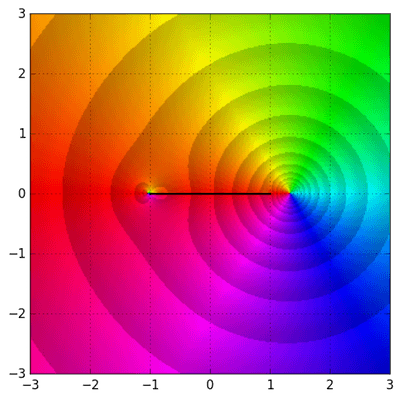}\hskip8pt\includegraphics[height=\figheight]{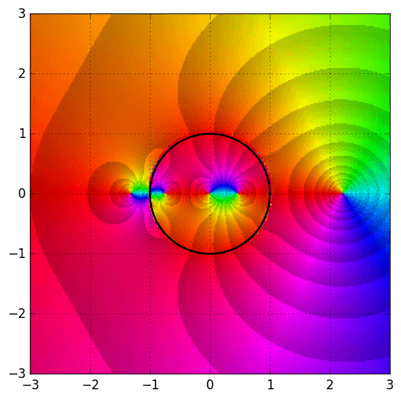}
\caption{These plots are of approximations to the spectral measure and principal resolvents of a trace-class pseudo-random diagonal perturbation of the free Jacobi operator.}\label{fig:random}
 \end{center}
\end{figure}

\end{appendix}
\newpage
\bibliographystyle{amsplain}
\bibliography{CCpaper}

\end{document}